\newtheorem{thm}{Theorem}[section]
\newtheorem{prop}[thm]{Proposition}
\newtheorem{conj}[thm]{Conjecture}
\newtheorem{lemma}[thm]{Lemma}
\newtheorem{corol}[thm]{Corollary}
\theoremstyle{remark} \newtheorem{rmk}{Remark}
\theoremstyle{definition} \newtheorem{defn}[thm]{Definition}
\theoremstyle{definition} 
\theoremstyle{definition} \newtheorem{ej}[thm]{Example}
\newcommand{\git}{\mathbin{/\mkern-6mu/}}
\newcommand{\Sc}{\mathrm{sc}}
\newcommand{\C}{\mathbb C}
\newcommand{\CP}{\mathbb{CP}}
\newcommand{\Q}{\mathbb Q}
\newcommand{\RR}{\mathbb R}
\newcommand{\Z}{\mathbb Z}
\newcommand{\Gm}{\mathbb G_{\mathrm{m}}}
\newcommand{\Ga}{\mathbb G_{\mathrm{a}}}
\newcommand{\bbA}{\mathbb A}
\newcommand{\bbR}{\mathbb R}
\newcommand{\bbE}{\mathbb E}
\newcommand{\bbP}{\mathbb P}
\newcommand{\cA}{\mathcal A}
\newcommand{\cH}{\mathcal H}
\newcommand{\cM}{\mathcal M}
\newcommand{\cP}{\mathcal P}
\newcommand{\fB}{\mathfrak B}
\newcommand{\fD}{\mathfrak D}
\newcommand{\fL}{\mathfrak L}
\newcommand{\ft}{\mathfrak t}
\newcommand{\fg}{\mathfrak{g}}
\newcommand{\fc}{\mathfrak c}
\newcommand{\sO}{\mathscr O}
\newcommand{\sF}{\mathscr F}
\newcommand{\sL}{\mathscr L}
\newcommand{\sJ}{\mathscr J}
\newcommand{\sS}{\mathscr S}
\newcommand{\sfA}{\mathsf A}
\newcommand{\sfB}{\mathsf B}
\newcommand{\sfC}{\mathsf C}
\newcommand{\sfD}{\mathsf D}
\newcommand{\sfE}{\mathsf E}
\newcommand{\sfF}{\mathsf F}
\newcommand{\sfG}{\mathsf G}
\newcommand{\bD}{\mathbf{D}}
\newcommand{\bH}{\mathbf{H}}
\newcommand{\bW}{\mathbf{W}}
\newcommand{\bS}{\mathbf{S}}
\newcommand{\bM}{\mathbf{M}}
\newcommand{\R}{\mathrm{R}}
\newcommand{\Ad}{\mathrm{Ad}}
\newcommand{\ad}{\mathrm{ad}}
\newcommand{\der}{\mathrm{der}}
\newcommand{\Hit}{\mathrm{Hit}}
\newcommand{\Mon}{\mathrm{Mon}}
\DeclareMathOperator{\coker}{coker}
\DeclareMathOperator{\Aut}{Aut}
\DeclareMathOperator{\Out}{Out}
\DeclareMathOperator{\Hom}{Hom}
\DeclareMathOperator{\Map}{Map}
\DeclareMathOperator{\Pic}{Pic}
\DeclareMathOperator{\Jac}{Jac}
\DeclareMathOperator{\Env}{Env}
\DeclareMathOperator{\Spec}{Spec}
\DeclareMathOperator{\tr}{tr}
\DeclareMathOperator{\Bun}{Bun}
\DeclareMathOperator{\Nm}{Nm}
\DeclareMathOperator{\Mat}{Mat}
\DeclareMathOperator{\GL}{GL}
\DeclareMathOperator{\SL}{SL}
\DeclareMathOperator{\PGL}{PGL}
\DeclareMathOperator{\id}{id}
\DeclareMathOperator{\reg}{reg}
\DeclareMathOperator{\Ss}{ss}
\DeclareMathOperator{\rs}{rs}
\DeclareMathOperator{\gl}{gl}
\DeclareMathOperator{\ab}{ab}
\DeclareMathOperator{\QCoh}{QCoh}
\DeclareMathOperator{\treg}{\theta \text- reg}
\DeclareMathOperator{\tSs}{\theta \text- ss}
\DeclareMathOperator{\trs}{\theta \text- rs}
\DeclareMathOperator{\tgl}{\theta \text- gl}
\DeclareMathOperator{\Bil}{Bil}
\DeclareMathOperator{\Diff}{Diff}
\DeclareMathOperator{\mHiggs}{mHiggs}
\DeclareMathOperator{\bmHiggs}{\mathbf{mHiggs}}
\DeclareMathOperator{\Conn}{\mathbf{Conn}}
\DeclareMathOperator{\QC}{\mathbf{QC}}
\DeclareMathOperator{\dDiff}{\mathbf{Diff}}
\subjclass[2020]{14D23, 14D24}
\author{Guillermo Gallego}
\title{Multiplicative Hitchin fibrations and Langlands duality} 
\address{Guillermo Gallego Sánchez \newline
\indent Freie Universität Berlin \newline
\indent Institut für Mathematik \newline
\indent Arnimallee 3 \newline
\indent 14195 Berlin, Germany}
\email{\normalfont\href{mailto:guillermo.gallego.sanchez@fu-berlin.de}{guillermo.gallego.sanchez@fu-berlin.de}}
\urladdr{\normalfont\href{https://guillegallego.xyz}{https://guillegallego.xyz}}
\thanks{The author's research is supported by the FU Berlin under a postdoctoral contract (DFG-CoNaRe-Projekt).}
\begin{document}
\maketitle

\begin{abstract}
We identify pairs of (twisted) multiplicative Hitchin fibrations which are ``dual" in the sense that their bases are identified and their generic fibres are dual Beilinson $1$-motives. More precisely, we match the following: (1) an untwisted multiplicative Hitchin fibration associated with a simply-laced semisimple group $G$ with an untwisted multiplicative Hitchin fibration associated with the Langlands dual group $G^\vee$;  (2) a twisted multiplicative Hitchin fibration associated with a simply-laced and simply-connected semisimple group $G$, without factors of type $\sfA_{2\ell}$, and a diagram automorphism $\theta \in \Aut(G)$ with an untwisted multiplicative Hitchin fibration associated with the Langlands dual group $H^\vee$ of the invariant group  $H=G^\theta$;  (3) two twisted multiplicative Hitchin fibrations associated with $G=\SL_{2\ell +1}$ and two special automophisms of order $2$ and $4$, respectively. These results are consistent with a conjecture of Elliott and Pestun \cite{Elliott-Pestun}.
\end{abstract}

\section{Introduction}
\subsection{Hitchin fibrations and multiplicative Hitchin fibrations} 
 Let $C$ be a smooth complex projective curve and let $G$ be a reductive algebraic group over $\C$.
The \emph{Hitchin fibration} is an algebraically completely integrable system introduced by Hitchin in his seminal paper \cite{Hitchin_Systems}. The total space of this fibration is the moduli space of \emph{Higgs bundles}, pairs $(E,\varphi)$ formed by a principal $G$-bundle $E\rightarrow C$ and a section  $\varphi \in H^0(C,\ad(E)\otimes K_C)$, where $K_C$ is the canonical line bundle of $C$. If  $K_C$ is replaced by a different line bundle $L$, one can  analogously define ``$L$-twisted" Higgs bundles and formulate a generalization of the Hitchin fibration. When $L\neq K_C$, the complex symplectic structure is lost, but some of the symmetry is retained. In particular, if the degree of  $L$ is large enough, the generic fibres are (discrete unions of) abelian varieties. The Hitchin fibration can be extended to the whole moduli stack of ($L$-twisted) Higgs bundles which, as explained by Ngô \cite{Ngo_Lemme}, can be identified as the stack of maps from $C$ to the quotient stack $[\fg/G\times \Gm]$ lying over the natural map $C\rightarrow B\Gm$ induced by  $L$. Here, $\fg$ denotes the Lie algebra of $G$, which is acted on by $G$ through the adjoint action, and by $\Gm$ by homothecy. The Hitchin fibration can in turn be understood as a ``global analogue"  of the Chevalley restriction map
\begin{equation*}
\chi: \fg \longrightarrow \fg \git G \cong \ft/W \cong \bbA^r.
\end{equation*} 
Here, $r$ is the rank of  $G$, $\ft$ denotes the Lie algebra of a maximal torus  $T\subset G$ and $W$ is the Weyl group. At the stacky level, generic fibres are commutative group stacks whose inertia groups are groups of multiplicative type and whose coarse moduli spaces are discrete unions of abelian varieties. Stacks of this form are known as \emph{Beilinson $1$-motives} \cite[Section 5.2]{Donagi-Pantev}.

The reductive group $G$ acts on itself by conjugation. More generally, it acts by the adjoint action on any other reductive group $G'$ with $\Ad(G)=\Ad(G')$. In particular, if $G$ is semisimple, it acts on its simply-connected cover $G^{\Sc}$.
\emph{Multiplicative Hitchin fibrations} arise as a way to formulate a global version of the restriction map
\begin{equation*}
\chi: G' \longrightarrow G' \git G \cong T'/W.
\end{equation*} 
Here, $T'\subset G'$ is a maximal torus. Roughly, one wants to consider the mapping stack from $C$ to the quotient stack  $[G'/G]$. Thus, one is led to study \emph{multiplicative Higgs bundles}: pairs $(E,\varphi)$ formed by a principal $G$-bundle $E\rightarrow C$ and a section of the associated group bundle $\varphi \in H^0(C,E\times^G G')$. However, note that, in constrast with the Lie algebra, if $G'$ is also semisimple, then it is not naturally endowed with a $\Gm$-action. Therefore, there is no natural way a priori to introduce twisting or singularities on the ``multiplicative Higgs fields". There are two natural ways to remedy this:
\begin{itemize}
	\item The ``meromorphic approach". This was the original approach proposed by Hurtubise and Markman \cite{Hurtubise-Markman} and also used, for example, in the works of Charbonneau--Hurtubise \cite{Charbonneau-Hurtubise} and Elliott--Pestun \cite{Elliott-Pestun}. With this approach, one considers pairs $(E,\varphi)$ as above, but such that $\varphi$ is defined only away from a finite subset $\left\{p_1,\dots,p_n\right\}\subset C$, and has a rational singularity at each point $p_i$. The ``pole order" of the rational singularity at a point $p_i$ is determined by a dominant cocharacter $\lambda_i$ of $T'$. 
	\item The ``monoid approach". Another way of introducing singularities is given by first extending the semisimple group $G'$ to a larger reductive group  $G'_+$ with a central torus $Z$, and then partially compactifying the group $G'_+$ over the curve $C$. We obtain a \emph{reductive monoid} $M$ with unit group $G'_+$. The adjoint action of $G$ on $G'$ extends to an action on $M$. One is then led to consider the stack of maps from  $C$ to the quotient stack $[M/G\times Z]$ lying over a map $C\rightarrow BZ$ determined by a prescribed $Z$-torsor $L\rightarrow C$. Prescribing the  $Z$-torsor $L$ and a section of the associated bundle $L\times^Z \bbA_M$ where  $\bbA_M=M\git(G' \times G')$ is the \emph{abelianization} of $M$, amounts to prescribing the points $p_i$ and ``pole orders" on the ``meromorphic approach". This ``monoid approach" was originally proposed in the work of Frenkel and Ngô \cite{Frenkel-Ngo} and developed in the subsequent works of Bouthier, J. Chi and G. Wang \cite{Bouthier_Dimension,Bouthier_Hitchin,JChi,Griffin_Lemma}. 
\end{itemize}

Multiplicative Hitchin fibrations were originally introduced in the mathematics literature by Hurtubise and Markman \cite{Hurtubise-Markman}. In that paper, they show that, when $C$ is assumed to be an elliptic curve, then multiplicative Hitchin fibrations are in fact algebraically completely integrable systems. The same arguments show that, for a general choice of $C$ and with a good ``ampleness" assumption on the ``pole data", generic fibres of a multiplicative Hitchin fibration are Beilinson $1$-motives. This fact also follows, in the generality we use it here, from the more recent study of the symmetries of a multiplicative Hitchin fibration developed by G. Wang in \cite{Griffin_Lemma}.  

\subsection{Langlands duality} Let us assume now that $G$ is semisimple and fix the maximal torus $T\subset G$. The real vector space $\mathbb{E}=X^*(T) \otimes_{\Z} \bbR$ is endowed with the Killing form $(-,-)$, and the pair $(G,T)$ determines a root system $\Phi=\Phi(G,T)$ on $(\mathbb{E},(-,-))$. The dual vector space of the complexification of $\mathbb{E}$ is naturally identified with  the Lie algebra $\ft$ of $T$. The set of closed points of $T$ is the complex torus $T(\C)=\ft/X_*(T)$.
The dual algebraic torus $T^\vee$ of $T$ is the Cartier dual of the cocharacter lattice $X_*(T)$; its set of closed points  is the complex torus $T^\vee(\C)=\ft^*/X^*(T)$.

Let $G^\vee$ denote the Langlands dual group of the reductive group $G$. By definition, the dual torus $T^\vee$ is a maximal torus of $G^\vee$, and the root system $\Phi(G^\vee,T^\vee)\subset \mathbb{E}^*$ is the dual root system 
\begin{equation*}
\Phi^\vee = \left\{\alpha^\vee=2(\alpha,-)/(\alpha,\alpha): \alpha \in \Phi\right\}
\end{equation*} 
of  $\Phi=\Phi(G,T)$. The Killing form is $W$-equivariant, so it induces an isomorphism $$\ft/W \longrightarrow \ft^*/W.$$ In turn, this allows us to identify the base of the Hitchin fibration associated with  $G$ with the base of the Hitchin fibration associated with the Langlands dual group  $G^\vee$. Given any point of the base of the Hitchin fibration associated with $G$ such that its fibre is a Beilinson $1$-motive $\cP$, the corresponding fibre of the Hitchin fibration associated with $G^\vee$ is the dual Beilinson $1$-motive $\bD(\cP)$. More precisely, the inertia group of $\cP$ is Cartier dual to the component group of  $\bD(\cP)$ and vice-versa, and the underlying abelian varieties  $P$ and  $\hat{P}$ are dual abelian varieties.   This phenomenon was first detected by Hausel and Thaddeus \cite{Hausel-Thaddeus} in the case of type $\sfA_n$ and proven in general by Donagi and Pantev \cite{Donagi-Pantev}. It is also worth mentioning that there is an analogous result in positive characteristic proven by T.H. Chen and X. Zhu \cite{Chen-Zhu}.

Multiplicative Hitchin fibrations are different. Indeed, in general one cannot match the maximal tori of $G$ and $G^\vee$ in a $W$-equivariant way. However, if $G$ is simply-laced (that is, if all the roots in $\Phi$ have the same length), then we can normalize the Killing form in such a way that $(\alpha,\alpha)=2$ for every root, and then the isomorphism $\ft^* \rightarrow \ft, x \mapsto (x,-)$ induced by the Killing form exchanges roots with coroots and weights with coweights. That is, when $G$ is simply-laced, we have $\alpha^\vee=(\alpha,-)$. In particular, this gives an identification of the simply-connected covers and the adjoint groups of $G$ and of $G^\vee$; that is $G^{\Sc}=(G^\vee)^{\Sc}$ and $\Ad(G)=\Ad(G^\vee)$. Therefore, if $G$ is simply-laced, and we prescribe some ``pole data" (either through the ``meromorphic approach" or by fixing a monoid $M$ whose unit group is a reductive group with semisimple part $G^{\Sc}$ and with center $Z$, and a $Z$-torsor), then we can consider multiplicative Hitchin fibrations $h_{G}:\cM_{G,G^{\Sc}}\rightarrow \cA_{G^{\Sc}}$ and $h_{G^\vee}:\cM_{G^\vee,G^{\Sc}}\rightarrow \cA_{G^{\Sc}}$ associated with the adjoint action on $G^{\Sc}$ of $G$ and $G^\vee$, respectively. Our first result is the following (this is Theorem \ref{thm:duality_simplylaced_text}).

\begin{thm} \label{thm:duality_simplylaced}
If $G$ is simply-laced, then, assuming some ``ampleness" conditions on the pole data, for a generic $a\in \cA_{G^{\Sc}}(\C)$, the fibres $h_G^{-1}(a)$ and $h_{G^\vee}^{-1}(a)$ are dual Beilinson $1$-motives.
\end{thm}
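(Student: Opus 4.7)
The plan is to adapt the cameral cover techniques of Donagi and Pantev \cite{Donagi-Pantev} to the multiplicative setting, making essential use of the analysis of symmetries of multiplicative Hitchin fibrations by G. Wang \cite{Griffin_Lemma} and the monoid approach of \cite{Frenkel-Ngo,Bouthier_Hitchin,JChi}. For a generic $a\in \cA_{G^{\Sc}}(\C)$, one attaches a cameral cover $\tilde C_a \to C$ whose data depends only on $a$ and on the Weyl group $W$. Under the identification of bases induced by the (normalized) Killing form, this cameral cover is common to the fibrations $h_G$ and $h_{G^\vee}$. The first step is then to present the connected component of the generic fibre of $h_G$ as a Prym-type commutative group stack of $W$-equivariant $T$-torsors on $\tilde C_a$ satisfying the boundary/pole conditions determined by the monoid $M$ (or by the prescribed pole profile in the meromorphic approach), and symmetrically to present the fibre of $h_{G^\vee}$ in terms of $W$-equivariant $T^\vee$-torsors.

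With this description in place, the duality statement reduces to the combination of two autodualities: Cartier duality between the tori $T$ and $T^\vee$, which is canonical and Weyl-equivariant precisely because of the simply-laced assumption, and autoduality of $\mathrm{Pic}^0(\tilde C_a)$ via the Weil pairing. Together these identify the abelian variety parts of the two generic fibres as dual abelian varieties. The discrete pieces of the Beilinson $1$-motive structure should match as well: the inertia group of the fibre of $h_G$, coming from the center $Z(G^{\Sc})$, ought to be Cartier dual to the component group of the fibre of $h_{G^\vee}$, which is controlled by the fundamental group $\pi_1(G^\vee)$, and vice versa. This exchange is exactly Langlands duality of tori, encoded in the pair of lattices $(X^*(T),X_*(T))=(X_*(T^\vee),X^*(T^\vee))$.

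The main obstacle I anticipate is the careful tracking of all these discrete pieces and of their interaction with the pole/monoid data on the two sides. Concretely, one must verify that the boundary conditions imposed by $M$ and by its counterpart on the $G^\vee$-side cut out Cartier-dual sublattices in the Prym-type description. In the simply-laced case this is reasonable because the discriminant divisor and the ramification profile of $\tilde C_a\to C$ are intrinsic to the Weyl-invariant data common to $G$ and $G^\vee$; the role of the ``ampleness'' hypothesis is to ensure that the resulting Prym is a bona fide abelian variety and that the generic fibre is a genuine Beilinson $1$-motive rather than a degenerate object. Once this matching is established, the proof closes along the lines of \cite{Donagi-Pantev} and \cite{Chen-Zhu}, with the additive regular centralizers replaced by their multiplicative counterparts from \cite{Griffin_Lemma}.
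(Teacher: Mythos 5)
Your proposal takes essentially the same route as the paper: over the generic locus $\cA^{\sharp}$, identify the fibre with a Picard stack of torsors for the regular centralizer, describe this group scheme via a cameral cover $\tilde{C}_a \to C$ with simple Galois ramification, and apply the Donagi--Pantev duality theorem (Corollary \ref{corol:duality}) to the pair of group schemes built from Langlands-dual tori over the common cameral curve. The proof in the text is exactly this, packaged as an immediate application of the material from section \ref{sec:AbelianCameral} once the cameral curve and regular centralizer from section \ref{sec:UntwistedMultHitchin} are in place.

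One misattribution is worth correcting. Cartier duality between $T$ and $T^\vee$ is canonically $W$-equivariant for any semisimple group; that is not what the simply-laced hypothesis buys. What it actually provides is an isomorphism $\Phi\cong\Phi^\vee$ via the normalized Killing form, hence $G^{\Sc}\cong(G^\vee)^{\Sc}$ and $\Ad(G)\cong\Ad(G^\vee)$, and therefore $M\git\Ad(G)\cong M\git\Ad(G^\vee)$, so that both fibrations live over the same base and share the same cameral covers. Without the simply-laced hypothesis, $T$ and $T^\vee$ are not isomorphic as $W$-tori (in the multiplicative setting one cannot rescale by the Killing form as one does for $\ft$), the cameral covers belong to genuinely different root systems, and one is forced into the twisted fibrations of sections \ref{sec:TwistedInvTheory} and \ref{sec:TwistedMultHitchin}. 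A smaller point: the inertia of the fibre of $h_G$ is $T^W=Z_G$ for $T\subset G$, not $Z_{G^{\Sc}}$, though the rest of your finite-group bookkeeping is as it should be.
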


\subsection{Twisted multiplicative Hitchin fibrations} When the structure group is not simply-laced, we detect a new kind of duality, which is not observed at the Lie algebra level. Multiplicative Hitchin fibrations associated with non simply-laced groups are dual to \emph{twisted multiplicative Hitchin fibrations} associated with simply-laced groups.

Let $G$ be a semisimple simply-laced and simply-connected group, and let $\theta \in \Aut(G)$ be a \emph{diagram automorphism} of $G$. By definition, $\theta$ is the representative of an outer class $[\theta]\in \Out(G)=\Aut(G)/\Ad(G)$ preserving a prescribed pinning of $G$. We can now consider the $\theta$-twisted conjugation action of $G$ on itself,
\begin{equation*}
g * h = g h \theta(g)^{-1}.
\end{equation*} 
Equivalently, we can consider the exterior component $G\theta$ of the semidirect product $G \rtimes \langle \theta \rangle$ and consider the conjugation action of  $G$ on $G\theta$.
Twisted multiplicative Hitchin fibrations arise as a global analogue of the restriction map
 \begin{equation*}
\chi_\theta: G \longrightarrow G\theta \git G.
\end{equation*} 
That is, we consider maps $C\rightarrow [G\theta/G]$ (``singular", in some sense, and with an appropriate notion of ``pole data"). Thus we are led to consider pairs $(E,\varphi)$ formed by a principal $G$-bundle $E\rightarrow C$ and a section of the associated group bundle  $\varphi \in H^0(C,E\times^{G,\theta} G)$.

The data $(G,\theta)$ determines two different reductive groups: the \emph{invariant group} $G^\theta$ and the \emph{coinvariant group} $G_\theta$. The invariant group is just the subgroup of fixed points of $G$ under the automorphism $\theta$. Since  $G$ is simply-connected, $G^\theta$ is connected. If $T$ is a $\theta$-stable maximal torus of $G$, then $T^\theta$ is a maximal torus of $G^\theta$. The root system  $\Phi(G^\theta,T^\theta)$ is the \emph{folded root system} associated with the diagram automorphism $\theta$. The coinvariant group is more conveniently defined in terms of root data. The coinvariant torus  $T_\theta=T/(1-\theta)(T)$ is a maximal torus of $G_\theta$, while its root system is the dual root system of  the folded root system. The character lattice of $T_\theta$ is the fixed point subgroup of the character lattice of $T$, and thus it is the weight lattice of $G_\theta$. This implies that  $G_\theta$ is also simply-connected. The invariant group $G^\theta$ is simply-connected except if $G$ has a component isomorphic to $\SL_{2\ell+1}$ for some $\ell$.

The possible pairs $(G,\theta)$, with $G$ simple and $\theta \neq \id_G$, are classified by the \emph{twisted affine Dynkin diagrams}, see Table \ref{tab:DiagramAutomorphisms}. The \emph{folded Dynkin diagrams} associated with the folded root systems can be read from Table \ref{tab:DiagramAutomorphisms} by removing the node marked with a red cross {\color{red} $\times$} in the the corresponding twisted affine Dynkin diagram.

\begin{table}[ht]
\caption{Diagram automorphisms of the simple simply-connected groups, with their corresponding twisted affine Dynkin diagrams, and their folded and finite parts. Notations: $N_{r}$ is the antidiagonal $r\times r$ matrix $\mathrm{antidiag}(1,-1,1,\dots)$, $P_\ell$ is the  diagonal $(2\ell+2) \times (2\ell +2)$ matrix $\mathrm{diag}(1,\dots,1,-1)$. The red cross {\color{red}$\times$} marks the node of the affine Dynkin diagram one has to remove to obtain the folded diagram. The finite part is given by the black nodes in the affine Dynkin diagram.}
\begin{adjustbox}{max width=\textwidth}
\begin{tabular}{@{}cccccccc@{}}
\toprule
$G$ &
  $\theta$ &
  $G^\theta$ &
  $G_\theta$ &
  Folding pattern &
  Affine Dynkin diagram &
  Folded diagram &
  Finite part \\ \midrule
$\SL_{2\ell}$ &
  $A \mapsto N_{2\ell} (A^T)^{-1} N_{2\ell}^{-1}$ &
  $\mathrm{Sp}_{2\ell}$ &
  $\mathrm{Spin}_{2\ell+1}$ &
    \dynkin[involutions={19;28;37;46}]{A}{***.***.***}&
  \dynkin[labels={{\color{red} \times},}] A[2]{odd} $\sfA_{2\ell-1}^{(2)}$ &   
 \dynkin C{} $\sfC_{\ell}$  &
  \dynkin C{} $\sfC_{\ell}$  \\
$\SL_{3}$ &
  $A \mapsto N_{3} (A^T)^{-1} N_{3}^{-1}$ &
  $\mathrm{SO}_{3}\cong \PGL_2$ &
  $\mathrm{Sp}_{2} \cong \SL_2$ &
    \dynkin[involutions={12}]{A}{**}&
  \dynkin[labels={,{\color{red} \times}}] A[2]{*} $\sfA_{2}^{(2)}$ &   
 \dynkin A{o} $\sfA_{1}$  &
  \dynkin A{1} $\sfA_{1}$  \\
$\SL_{2\ell+1}$ &
  $A \mapsto N_{2\ell+1} (A^T)^{-1} N_{2\ell+1}^{-1}$ &
  $\mathrm{SO}_{2\ell+1}$ &
  $\mathrm{Sp}_{2\ell}$ &
    \dynkin[involutions={18;27;36;45}]{A}{**.****.**}&
  \dynkin[labels={,,,,,,{\color{red} \times}}] A[2]{even} $\sfA_{2\ell}^{(2)}$ &   
 \dynkin B{**.**o} $\sfB_{\ell}$  &
  \dynkin C{} $\sfC_{\ell}$  \\
$\mathrm{Spin}_{2\ell+2}$ &
  $A \mapsto P_\ell A P_\ell^{-1}$ &
  $\mathrm{Spin}_{2\ell+1}$ &
  $\mathrm{Sp}_{2\ell}$ &
    \dynkin[involutions={65}]{D}{**.****}&
  \dynkin[labels={{\color{red} \times},}] D[2]{} $\sfD_{\ell+1}^{(2)}$ &   
 \dynkin B{} $\sfB_{\ell}$  &
  \dynkin B{} $\sfB_{\ell}$  \\
$\sfE_{6}$ &
   - &
  $\sfF_4$ &
  $\sfF_4$ &
    \dynkin[involutions={16;35}]{E}{6}&
  \dynkin[labels={{\color{red} \times},}] E[2]{6} $\sfE_{6}^{(2)}$ &   
 \dynkin F{4} $\sfF_{4}$  &
  \dynkin F{4} $\sfF_{4}$  \\
$\sfD_{4}$ &
   - &
  $\sfG_2$ &
  $\sfG_2$ &
    \dynkin[involutions={31;43;14}]{D}{4}&
  \dynkin[labels={{\color{red} \times},}] D[3]{4} $\sfD_{4}^{(3)}$ &   
 \dynkin G{2} $\sfG_{2}$  &
  \dynkin G{2} $\sfG_{2}$  \\
 \bottomrule
\end{tabular}
\end{adjustbox}
	\label{tab:DiagramAutomorphisms}
\end{table}

The adjoint action of the coinvariant group on itself is related to the twisted adjoint action of the larger group on itself. Indeed, Mohrdieck \cite{Mohrdieck} showed that there is a natural isomorphism of the GIT quotients
\begin{equation*}
G\theta \git G \cong G_\theta \git G_\theta.
\end{equation*} 

Let $(G,\theta)$ be as above, and let us assume that $G$ does not have a component isomorphic to $\SL_{2\ell+1}$, for some $\ell$. We denote by $H=G^{\theta}$ the corresponding invariant group, and let $H^\vee$ denote its Langlands dual group. Note that, under our assumptions, $(H^\vee)^{\Sc}=G_\theta$ is the coinvariant group, so $H^\vee$ acts on $G_\theta$ through the adjoint action. We denote by $h_{G,\theta}:\cM_{G,\theta}\rightarrow \cA_{G_\theta}$ the twisted multiplicative Hitchin fibration associated with the pair $(G,\theta)$, and by $h_{H^\vee}:\cM_{H^\vee,G_\theta}\rightarrow \cA_{G_\theta}$ the (untwisted) multiplicative Hitchin fibration associated with the adjoint action of $H^\vee$ on  $G_\theta$. Note that, by Mohrdieck's isomorphism $G\theta \git G\cong G_\theta \git G_\theta$, the bases of these two fibrations coincide. Duality for non simply-laced groups is summarized in the following result (this is Theorem \ref{thm:duality_twisted_text}).

\begin{thm}\label{thm:duality_twisted}
Asumming ``ampleness" conditions on the pole data, for a generic $a\in \cA_{G_\theta}(\C)$, the fibres $h_{G,\theta}^{-1}(a)$ and $h_{H^\vee}^{-1}(a)$ are dual Beilinson $1$-motives. 	
\end{thm}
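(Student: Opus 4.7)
The plan is to adapt the cameral-cover and regular-centralizer strategy underlying Theorem \ref{thm:duality_simplylaced} to the twisted setting. By Mohrdieck's isomorphism $G\theta\git G\cong G_\theta\git G_\theta$, both fibrations $h_{G,\theta}$ and $h_{H^\vee}$ share the same base $\cA_{G_\theta}$. Moreover, since $G$ is simply-laced and simply-connected without $\sfA_{2\ell}$-factors, the Weyl group $W_H$ of $H=G^\theta$ coincides with the $\theta$-invariants $W^\theta\subset W$, and $H$ is itself simply-connected; consequently $W_{H^\vee}=W_H=W^\theta$, so it is reasonable to expect that a single cameral cover $\widetilde{C}_a\to C$, Galois with group $W^\theta$, will control both fibres over a generic $a\in\cA_{G_\theta}(\C)$.

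The next step is to give, for generic $a$, explicit descriptions of the two fibres as Beilinson $1$-motives of torsors for regular centralizer group schemes. On the untwisted side, the symmetries analysis of G.~Wang \cite{Griffin_Lemma} realises $h_{H^\vee}^{-1}(a)$ as torsors for a commutative group scheme $J^{H^\vee}_a$ over $C$ which, pulled back to $\widetilde{C}_a$, becomes a $W^\theta$-equivariant $T_\theta$-bundle (recall that $T_\theta$ is the maximal torus of $(H^\vee)^{\Sc}=G_\theta$). On the twisted side, one needs the analogous description for the action of $G$ on $G\theta$: the twisted regular centralizer $J^{G,\theta}_a$ is obtained from the $G$-centralizers of twisted-regular elements of $G\theta$, and through a twisted Steinberg--Kostant section it becomes, on the same cameral cover, a $W^\theta$-equivariant $T^\theta$-bundle.

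The heart of the proof is then to exhibit a Cartier duality between $J^{G,\theta}_a$ and $J^{H^\vee}_a$ compatible with the cameral descent. Pointwise this is the duality between the invariant and coinvariant tori: the character lattice $X^*(T_\theta)=X^*(T)^\theta$ is exactly the cocharacter lattice of $T^\theta$, so $T^\theta$ and $T_\theta$ are naturally Cartier dual; and because $G$ is simply-laced, the Killing form identifies the $W^\theta$-action on coroots and coweights with that on roots and weights, so the equivariant structures match. Standard arguments as in Donagi--Pantev \cite{Donagi-Pantev} and Chen--Zhu \cite{Chen-Zhu} then promote this Cartier duality of regular centralizers to duality of Beilinson $1$-motives: the underlying abelian varieties become dual via Weil-type pairings on $\widetilde{C}_a$, while the inertia group on one side is Cartier dual to the component group on the other.

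The main obstacle is the construction of the twisted regular centralizer and the verification that its cameral description genuinely matches the untwisted one for $H^\vee$. Concretely, one has to produce a twisted Kostant-type section behaving well in families over $\cA_{G_\theta}$, and carefully track the action of $\theta$ on the relevant character lattices at the integral (and not merely rational) level. It is precisely the exclusion of $\sfA_{2\ell}$-factors that guarantees $H$ is simply-connected and avoids the delicate parity/isogeny corrections that would otherwise obstruct this comparison.
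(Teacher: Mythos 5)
Your proposal is essentially the same proof as the paper's: identify the common base via Mohrdieck, pass to the shared cameral cover with Galois group $W^\theta$ for a generic $a$, describe both fibres as torsors under regular centralizer group schemes that are open-and-closed subschemes of invariant Weil restrictions of the dual tori $T^\theta$ and $A^\vee\cong T_\theta$, and invoke Donagi--Pantev. One point worth making explicit (which you gesture at with ``the inertia group on one side is Cartier dual to the component group on the other'') is that the Donagi--Pantev theorem matches $\cP^1$ with $\check{\cP}^0$, and the reason the twisted regular centralizer is the full $J^1$ is that $H$ is simply-connected, while the reason the untwisted regular centralizer is $J^0$ is that $H^\vee$ is of adjoint type; this $J^1$-versus-$J^0$ asymmetry is precisely where the exclusion of $\sfA_{2\ell}$-factors enters. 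Also note that the identification $X^*(T)^\theta\cong X_*(T)^\theta$ (making $T^\theta$ and $T_\theta$ Cartier dual) itself requires the normalized Killing form, not only the identification of $W^\theta$-actions.
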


The case in which $G$ has a component isomorphic to $\SL_{2\ell +1}$ requires an special treatment. Indeed, in this case, apart from the diagram automorphism $\theta$ of $\SL_{2\ell+1}$, we also have to consider the ``standard automorphism" of order $4$ introduced by Kac in his book \cite{Kac}. Here, we are using the terminology of \cite{Besson-Hong}, where they use the name ``standard automorphism" to refer to the automorphism $\vartheta$ of a Lie algebra $\fg$ such that the fixed point subalgebre $\fg^{\vartheta}$ coincides with the standard finite part of the associated twisted affine Lie algebra determined by the corresponding twisted affine Dynkin diagram. In Table \ref{tab:DiagramAutomorphisms}, this finite part is the Lie algebra determined by the black nodes in the affine Dynkin diagram. We have $\vartheta=\theta$ in all simple cases except in the case $G=\SL_{2\ell+1}$. In this case, we have $G^\vartheta=\mathrm{Sp}_{2\ell}$. We can also consider the twisted multiplicative Hitchin fibration $h_{G,\vartheta}:\cM_{G,\vartheta}\rightarrow \cA_{G_\theta}$ associated with the pair $(G,\vartheta)$. Duality in the case $\sfA_{2\ell}^{(2)}$ is summarized in the following result (this is Theorem \ref{thm:duality_Aeven2_text}).

\begin{thm}\label{thm:duality_Aeven2}
	Let $G=\SL_{2\ell+1}$, $\theta\in \Aut_2(G)$ the diagram automorphism and $\vartheta \in \Aut_4(G)$ the standard automorphism defined as above.
Asumming ``ampleness" conditions on the pole data, for a generic $a\in \cA_{G_\theta}(\C)$, the fibres $h_{G,\theta}^{-1}(a)$ and $h_{G,\vartheta}^{-1}(a)$ are dual Beilinson $1$-motives. 	
\end{thm}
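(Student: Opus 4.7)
The plan is to parallel the proof of Theorem~\ref{thm:duality_twisted}, with the $\vartheta$-twisted fibration $h_{G,\vartheta}$ playing the role that the untwisted Langlands-dual fibration $h_{H^\vee}$ plays there. The reason this substitution is necessary is precisely that $G^\theta = \SO_{2\ell+1}$ fails to be simply-connected, which is the case excluded in Theorem~\ref{thm:duality_twisted}; the standard automorphism $\vartheta$ has instead $G^\vartheta = \mathrm{Sp}_{2\ell}$, which \emph{is} simply-connected, even though $\vartheta$ has order~$4$ rather than~$2$, and this is what makes it a suitable replacement for $H^\vee$. The argument breaks into three steps: identification of the bases, description of the generic fibres as Beilinson~$1$-motives, and verification of the duality pairing.

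First, I would show that both fibrations sit over the common base $\cA_{G_\theta}$. For $h_{G,\theta}$ this is Mohrdieck's isomorphism $G\theta\git G \cong G_\theta\git G_\theta$. For $h_{G,\vartheta}$ I would establish an analogous identification $G\vartheta\git G \cong G_\theta\git G_\theta$, exploiting that $\vartheta$ and $\theta$ lie in the same outer class in $\Out(G)=\Z/2$ and that they determine the same twisted affine Dynkin diagram $\sfA_{2\ell}^{(2)}$; concretely, one adapts Mohrdieck's argument using a common $\theta$- and $\vartheta$-stable maximal torus $T\subset G$ and compares the $\theta$- and $\vartheta$-twisted Chevalley restriction maps, both of which should factor through the same finite quotient of $T$ controlled by the folded root system of type $\sfC_\ell$. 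Then, using the cameral-cover formalism in the multiplicative setting developed by Bouthier, Chi and Wang, I would describe the generic fibres of both $h_{G,\theta}$ and $h_{G,\vartheta}$ as Beilinson~$1$-motives: the abelian-variety parts arise as Prym-type varieties attached to twisted cameral covers $\widetilde{C}_{\theta,a}\to C$ and $\widetilde{C}_{\vartheta,a}\to C$, while the inertia and component groups are controlled on the two sides by the fundamental group $\pi_1(\SO_{2\ell+1})\cong\Z/2$ and the center $Z(\mathrm{Sp}_{2\ell})\cong\Z/2$ respectively.

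Third, I would verify the duality between the two Beilinson~$1$-motives. The discrete part is essentially immediate: the Cartier duality between the inertia and component groups on the two sides reduces to the canonical self-duality of $\Z/2$ pairing $\pi_1(\SO_{2\ell+1})$ with $Z(\mathrm{Sp}_{2\ell})$. The main obstacle, and the step requiring the most care, is the duality of the underlying abelian varieties. Unlike in Theorem~\ref{thm:duality_twisted}, one cannot invoke an ambient Langlands-dual reductive group to produce the duality for free, since the natural candidate $H^\vee$ is precisely what has been excluded; instead I would build the duality directly between the Prym-type varieties of $\widetilde{C}_{\theta,a}$ and $\widetilde{C}_{\vartheta,a}$ by constructing a common refined cameral cover dominating both, endowed with a compatible action of a slightly enlarged finite group, so that the two Pryms arise as complementary isogeny factors of its Jacobian. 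The key structural input here should be that $\vartheta$ and $\theta$ determine the same outer class in $\Out(G)$, which is exactly what allows the $\vartheta$- and $\theta$-twisted spectral data to be compared over the same base in the first place.
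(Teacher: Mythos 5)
Your overall scaffolding (identify the bases, abelianize via cameral covers, dualize) is the right scaffolding, but the third step contains a genuine misconception about where the difficulty lies, and the machinery you propose to resolve it would not do the job.

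The first issue is minor but telling: the cameral covers $\tilde C_{\theta,a}$ and $\tilde C_{\vartheta,a}$ are not two different covers that need to be ``dominated by a common refined cameral cover.'' Since $\theta$ and $\vartheta$ lie in the same outer class, the two GIT quotients $\fc_{M\theta}$ and $\fc_{M\vartheta}$ are canonically identified with $\fc_{M_\theta} \cong \bar T_{M_\theta}/W^\theta$, and the cameral cover in both cases is the same finite flat map $\pi_a: \tilde C_a \to C$. There is nothing to refine: the curve is one and the same. The dual abelian varieties do not sit as ``complementary isogeny factors of a larger Jacobian''; they both live on the Prym of a single cover.

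The real technical content of the case $\sfA_{2\ell}^{(2)}$ — which is entirely absent from your proposal — is that the regular $\theta$-centralizer $J_{M\theta}$ is \emph{not} of the form $J_{\pi,H}$ for any semisimple group $H$. Over the relevant boundary hyperplane $H_+ = \{a_\ell = 2\}$ (resp.\ $H_- = \{a_\ell = -2\}$) of the base, the groups $J^0 \subset J_{M\theta} \subset J^1 = \pi_*(\bar T_{M_\theta} \times T^\theta)^{W^\theta}$ can differ, and the proof of Proposition~\ref{prop:galoisJ} breaks down precisely because of the adjacent $\theta$-orbit of nodes $\{\alpha_\ell, \alpha_{\ell+1}\}$. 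After reducing to $\SL_3$ one finds four a priori candidates ($J^0$, $J^1$, $J_+$, $J_-$, the latter two interpolating with different component groups over $a_\ell = \pm 2$), and an explicit computation due to Wang pins down $J_{M\theta} \cong J_-$. The companion computation for $\vartheta$ gives $J_{M\vartheta} \cong \check J_+$. The duality of the fibres is then a direct application of Corollary~\ref{corol:duality} together with Remark~\ref{rmk:dualJ}: $J_-$ and $\check J_+$ are dual open-and-closed subschemes of $J^1$ and $\check J^1$, in the precise sense that the canonical isomorphism $J^1/J^0 \to \check J^1 / \check J^0$ matches the subgroup $J^1/J_-$ (supported over $H_+$) with the quotient $\check J_+ / \check J^0$ (also supported over $H_+$). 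The sign asymmetry — $J_-$ matching with $\check J_+$, not $\check J_-$ — is exactly what makes the theorem true and is invisible in your framing of the discrete duality as a bare self-duality of $\Z/2$ pairing $\pi_1(\SO_{2\ell+1})$ with $Z(\mathrm{Sp}_{2\ell})$. Without computing which of the four candidate subschemes $J_{M\theta}$ actually is, the proof has no foothold; and with that computation in hand, the Donagi--Pantev machinery already built in Section~\ref{sec:CameralCurvesAbelianDuality} takes care of the abelian-variety duality with no new Prym-theoretic construction required.
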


Note that in this case of $G=\SL_{2\ell+1}$, the moduli stacks $\cM_{G,\theta}$ and $\cM_{G,\vartheta}$ are indeed isomorphic, so the above result can be reinterpreted as the existence of an involution on the moduli space relating dual fibres. In section \ref{sec:A22} we provide an explicit description of this involution in the case of $\SL_3$.

\subsection{Duality of affine Dynkin diagrams}
Our duality results can be succintly expressed in terms of the duality of affine Dynkin diagrams. Indeed, we can understand untwisted multiplicative Hitchin fibrations as twisted multiplicative Hitchin fibrations associated with pairs of the form $(G,\id)$ and, with each pair $(G,\theta)$, we can associate an affine Dynkin diagram. If we do this, then the multiplicative Hitchin fibrations which are matched under duality are precisely those with dual affine Dynkin diagrams associated with them. 
When matching two untwisted multiplicative Hitchin fibrations, we act on both sides on the simply-connected group, but interchange the acting group by its Langlands dual (that is, we swap the centre and the fundamental group). When matching a twisted multiplicative Hitchin fibration with an untwisted one, the acting group on the twisted side is simply-connected, and on the untwisted side it is of adjoint type.
We sumarize the situation for simple groups in Table \ref{tab:PairsWithDiagram}.

\begin{table}[ht]
\caption{Pairs of simple groups matched under duality. The notation $(G_1,G_2\theta)$ where $G_1$ and $G_2$ are groups and $\theta$ is an automorphism indicates that the group $G_1$ acts on $G_2$ by  $\theta$-twisted conjugation. We omit $\theta$ when it is the identity. We denote by  $\theta_3$ the order $3$ diagram automorphism of  $\mathrm{Spin}_8$. The horizontal lines separate the three situations: untwisted simply-laced vs untwisted simply-laced, twisted simply-laced vs untwisted non simply-laced and twisted vs twisted.}
\begin{adjustbox}{max width=\textwidth}
\begin{tabular}{@{}cccc@{}}
\toprule
Pair              & Affine Dynkin diagram & Dual affine Dynkin diagram & Dual pair \\ \midrule
$(\SL_r,\SL_r)$               & \dynkin A[1]{} $\sfA_r^{(1)}$                 & \dynkin A[1]{} $\sfA_r^{(1)}$                       & $(\PGL_r,\SL_r)$                  \\
$(\mathrm{SO}_{2r},\mathrm{Spin}_{2r})$             & \dynkin D[1]{} $\sfD_r^{(1)}$                 & \dynkin D[1]{} $\sfD_r^{(1)}$                      & $(\mathrm{SO}_{2r},\mathrm{Spin}_{2r})$                \\
$(\mathrm{Spin}_{2r},\mathrm{Spin}_{2r})$           & \dynkin D[1]{} $\sfD_r^{(1)}$                  & \dynkin D[1]{} $\sfD_r^{(1)}$                       & $(\mathrm{PSO}_{2r},\mathrm{Spin}_{2r})$               \\
$(\sfE_6,\sfE_6)$                    & \dynkin E[1]{6} $\sfE_6^{(1)}$                  & \dynkin E[1]{6} $\sfE_6^{(1)}$                       & $(\sfE_6,\sfE_6)$                    \\
$(\sfE_7,\sfE_7)$                    & \dynkin E[1]{7} $\sfE_7^{(1)}$                  & \dynkin E[1]{7} $\sfE_7^{(1)}$                       & $(\sfE_7,\sfE_7)$                    \\
$(\sfE_8,\sfE_8)$                    & \dynkin E[1]{8} $\sfE_8^{(1)}$                  & \dynkin E[1]{8} $\sfE_8^{(1)}$                       & $(\sfE_8,\sfE_8)$                    \\
\midrule
$(\SL_{2\ell},\SL_{2\ell}\theta)$ & \dynkin A[2]{odd} $\sfA_{2\ell-1}^{(2)}$              & \dynkin B[1]{****.***}  $\sfB_{\ell}^{(1)}$                     & $(\mathrm{SO}_{2\ell+1},\mathrm{Spin}_{2\ell+1})$                \\
$(\mathrm{Spin}_{2\ell+2},\mathrm{Spin}_{2\ell+2}\theta)$     & \dynkin D[2]{}  $\sfD_{\ell+1}^{(2)}$                & \dynkin C[1]{} $\sfC_{\ell}^{(1)}$                       & $(\mathrm{PSp}_{2\ell},\mathrm{Sp}_{2\ell})$                 \\
$(\sfE_6,\sfE_6\theta)$         & \dynkin E[2]{6} $\sfE_6^{(2)}$                  & \dynkin F[1]{4} $\sfF_4^{(1)}$                       & $(\sfF_4,\sfF_4)$                  \\
$(\mathrm{Spin}_8,\mathrm{Spin}_8 \theta_3)$    & \dynkin D[3]{4}   $\sfD_4^{(3)}$               & \dynkin G[1]{2}  $\sfG_2^{(1)}$                     & $(\sfG_2,\sfG_2)$                 \\
\midrule
$(\SL_3,\SL_3 \theta)$       & \dynkin A[2]{2}   $\sfA_2^{(2)}$              & \dynkin A[2]{2} $\sfA_2^{(2)}$                    & $(\SL_3,\SL_3 \vartheta)$       \\
$(\SL_{2\ell+1},\SL_{2\ell+1} \theta)$       & \dynkin A[2]{even}   $\sfA_{2\ell}^{(2)}$              & \dynkin A[2]{even} $\sfA_{2\ell}^{(2)}$                    & $(\SL_{2\ell+1},\SL_{2\ell+1} \vartheta)$       \\
\bottomrule
\end{tabular}
\end{adjustbox}
\label{tab:PairsWithDiagram}
\end{table}

\subsection{Strategy of the proofs}
The proofs of Theorems \ref{thm:duality_simplylaced}, \ref{thm:duality_twisted} and \ref{thm:duality_Aeven2} follow the same strategy, which is in fact very similar to the one for proving the analogous result for usual Hitchin fibrations. 

The first step is to understand the symmetries of any of the ``Hitchin-type" fibrations that we study. In general, all these fibrations are associated with the action of some reductive group $G$ on some variety $M$ and the natural map $[M/G]\rightarrow M\git G$. For any element $a$ of the  ``Hitchin base", the fibre of $a$ is acted on by the stack of $J_a$-torsors, where $J_a\rightarrow C$ is a group scheme naturally constructed from the group scheme $J\rightarrow M\git G$ of regular stabilizers of the action of  $G$ on $M$, which in turn is a descent of the group scheme $I\rightarrow M$ of stabilizers of the $G$-action. What makes the fibrations that we study specially nice is the existence of ``abelianization". By this we mean that there is a ``cameral cover" of $M\git G$ such that the group scheme $J$ can be described as an open and closed subscheme of the Weil restriction of a torus through the cameral cover. 

These ``cameral covers" are generally more complicated than the ``standard" cameral covers studied by Donagi and Gaitsgory \cite{Donagi-Gaitsgory}, which are étale locally pullbacks of the cover $\pi:\mathfrak{a}\rightarrow \mathfrak{a}/W_{H}$, where $\mathfrak{a}\subset \mathfrak{h}$ is the maximal Cartan subalgebra of a semisimple Lie algebra $\mathfrak{h}$ and $W_H$ is the corresponding Weyl group. However, as Hurtubise and Markman observed already in \cite{Hurtubise-Markman}, away from a codimension $2$ locus on $M\git G$, the cameral cover is indeed a cameral cover in the sense of Donagi-Gaitsgory. Cameral covers of this kind are determined by the semisimple Lie algebra $\mathfrak{h}$, and we observe that the Killing form matches cameral covers determined by $\mathfrak{h}$ with cameral covers determined by its dual Lie algebra $\mathfrak{h}^\vee$. If we prescribe the extra data of a semisimple group $H$ with Lie algebra $\mathfrak{h}$, then we can consider the following group schemes over $\mathfrak{a}/W_H$: first we let $J_H^1=\pi_*(\mathfrak{a} \times A)^{W_H}$ denote the invariant Weil restriction of the maximal torus $A$ of $H$ along the cameral cover; we can also consider the fibrewise connected component $J_H^0\subset J_H^1$; finally, we define $J_H$ as in section \ref{sec:cameral_simple_Galois} in terms of the kernels of the roots, regarded as characters $\alpha:A\rightarrow \Gm$. For all the fibrations that we study, with the exception of the case  $\sfA_{2\ell}^{(2)}$, the group scheme $J\rightarrow M\git G$ is identified, away from the codimension $2$ locus that we mention, with one of these $J_H$. In Table \ref{tab:PairsWithCameral}, we summarize the situation for the simple groups. In the case $\sfA_{2\ell}^{(2)}$, the group schemes associated with $(G,\theta)$ and with $(G,\vartheta)$ are open and closed subschemes of $J^1_{G^\theta}$ and $J^1_{G^\vartheta}$, respectively, but not isomorphic to any $J_H$.
 
\begin{table}[ht]
\caption{Pairs matched under duality, with the types of their cameral covers and of their regular centralizers. Horizontal lines separate the three situations: untwisted simply-laced vs untwisted simply-laced, twisted simply-laced vs untwisted non simply-laced and twisted vs twisted. The notation $(*)$ in the twisted vs twisted cases indicates that the regular centralizers are ``non-standard" in the sense that they are not of the form $J_H$, for some group $H$.}
\begin{adjustbox}{max width=\textwidth}
\begin{tabular}{@{}ccccc@{}}
\toprule
Pair              & Type of $J$ & Type of cameral cover & Type of dual $J$ & Dual pair \\ \midrule
$(\SL_r,\SL_r)$               & $\SL_r$ & $\sfA_r$ &    $\PGL_r$                                    & $(\PGL_r,\SL_r)$                  \\
$(\mathrm{SO}_{2r},\mathrm{Spin}_{2r})$             & $\mathrm{SO}_{2r}$       & $\sfD_r$          & $\mathrm{SO}_{2r}$                      & $(\mathrm{SO}_{2r},\mathrm{Spin}_{2r})$                \\
$(\mathrm{Spin}_{2r},\mathrm{Spin}_{2r})$           & $\mathrm{Spin}_{2r}$  & $\sfD_r$                 & $\mathrm{PSO}_{2r}$                       & $(\mathrm{PSO}_{2r},\mathrm{Spin}_{2r})$               \\
$(\sfE_6,\sfE_6)$                    & $\sfE_6$ & $\sfE_6$                 & $\sfE_6$                       & $(\sfE_6,\sfE_6)$                    \\
$(\sfE_7,\sfE_7)$                    & $\sfE_7$       & $\sfE_7$           & $\sfE_7$                       & $(\sfE_7,\sfE_7)$                    \\
$(\sfE_8,\sfE_8)$                    & $\sfE_8$  & $\sfE_8$                & $\sfE_8$                       & $(\sfE_8,\sfE_8)$                    \\
\midrule
$(\SL_{2\ell},\SL_{2\ell}\theta)$ & $\mathrm{Sp}_{2\ell}$ & $\sfC_\ell \sim \sfB_{\ell}$              & $\mathrm{SO}_{2\ell+1}$                     & $(\mathrm{SO}_{2\ell+1},\mathrm{Spin}_{2\ell+1})$                \\
$(\mathrm{Spin}_{2\ell+2},\mathrm{Spin}_{2\ell+2}\theta)$     & $\mathrm{Spin}_{2\ell+1}$    &  $\sfB_{\ell}\sim \sfC_{\ell}$            & $\mathrm{PSp}_{2\ell}$                       & $(\mathrm{PSp}_{2\ell},\mathrm{Sp}_{2\ell})$                 \\
$(\sfE_6,\sfE_6\theta)$         & $\sfF_4$ & $\sfF_4$                  & $\sfF_4$                       & $(\sfF_4,\sfF_4)$                  \\
$(\mathrm{Spin}_8,\mathrm{Spin}_8 \theta_3)$    & $\sfG_2$    & $\sfG_2$           & $\sfG_2$                     & $(\sfG_2,\sfG_2)$                 \\
\midrule
$(\SL_3,\SL_3 \theta)$       & $\mathrm{SO}_3$ $(*)$ & $\sfA_1$              & $\mathrm{Sp}_2$ $(*)$                   & $(\SL_3,\SL_3 \vartheta)$       \\
$(\SL_{2\ell+1},\SL_{2\ell+1} \theta)$       & $\mathrm{SO}_{2\ell+1}$ $(*)$ &  $\sfB_{\ell}\sim \sfC_{\ell}$              & $\mathrm{Sp}_{2\ell}$  $(*)$                  & $(\SL_{2\ell+1},\SL_{2\ell+1} \vartheta)$       \\
\bottomrule
\end{tabular}
\end{adjustbox}
\label{tab:PairsWithCameral}
\end{table}

Finally, our statements about the duality of the fibres follow from the main result of Donagi and Pantev \cite{Donagi-Pantev}: that the Picard stacks of $J_H$-torsors and of $J_{H^\vee}$-torsors are dual Beilinson $1$-motives. The conclusion for the case $\sfA_{2\ell}^{(2)}$ follows from a slight modification of this. Indeed, the corresponding $J_{(G,\theta)}$ and $J_{(G,\vartheta)}$ are not of the form $J_H$ for any $H$, but they are both open and closed subschemes of some group schemes of the form $J^1_H$ and  $J^1_{H^\vee}$, respectively. Duality then follows from the fact that the group $J^1_H/J_{(G,\theta)}$ is exchanged with the component group $J_{(G,\vartheta)}/J^0_{H^\vee}$ under the natural isomorphism $J^1_H/J^0_H \rightarrow J^1_{H^\vee}/J^0_{H^\vee}$.

\subsection{Global abelianized duality}
One of the most important properties of dual Beilinson $1$-motives is that the Fourier-Mukai functor determines an isomorphism between their corresponding bounded derived categories of quasi-coherent sheaves. This isomorphism respects some structure: namely, it exchanges skyscraper sheaves with line bundles naturally determined by the universal line bundle, and intertwines tensorization and convolution. 

The Beilinson $1$-motives we study are quite special: they are stacks of torsors under group schemes which are essentially Weil restrictions of tori through finite ramified covers. This allows us to define \emph{translation and tensorization operators} on these Picard stacks, associated with points of the cameral curve and characters and cocharacters of the corresponding tori. At the level of the derived categories of sheaves, translation and tensorization determine \emph{abelian Hecke and Wilson operators}. As Donagi and Pantev already noted in \cite{Donagi-Pantev}, the duality of Beilinson $1$-motives associated with cameral covers intertwines the abelian Hecke and Wilson operators.

The existence of a ``Hitchin section" in each situation determines the total space of the corresponding fibration, over the generic locus we are considering, as a relative Beilinson $1$-motive. The fibrewise isomorphisms from Theorems \ref{thm:duality_simplylaced}, \ref{thm:duality_twisted} and \ref{thm:duality_Aeven2} can then be extended as generic global isomorphisms of Beilinson $1$-motives. The abelian Hecke and Wilson operators can in turn be defined in terms of the total space (that is, in terms of multiplicative Higgs bundles, instead of in terms of cameral data). These are the \emph{abelianized} Hecke and Wilson operators. 

Let $h:\cM \rightarrow \cA$ and $\check{h}:\check{\cM}\rightarrow \cA$ any pair of ``dual" fibrations in the sense of Theorems \ref{thm:duality_simplylaced}, \ref{thm:duality_twisted} and \ref{thm:duality_Aeven2}, so that the generic fibres of $h$ and $\check{h}$ are dual Beilinson $1$-motives. We denote by $\cA^{\sharp}\subset \cA$ the corresponding ``generic locus" where the cameral curves are smooth and of Donagi-Gaitsgory type. The ``global abelianized duality" theorem is the following.

\begin{corol}
There exists an equivalence of derived categories
\begin{equation*}
\mathbf{S}: D^b(\QCoh(\cM/\cA^{\sharp})) \longrightarrow D^b(\QCoh(\check{\cM}/\cA^{\sharp}))
\end{equation*} 
which satisfies the following properties:
\begin{enumerate}
	\item \emph{(Normalization)}. The structure sheaf of the ``Hitchin section"  is mapped to the structure sheaf of the total space. 
	\item \emph{(Hecke compatibility)}. The equivalence intertwines Hecke and Wilson operators. 
\end{enumerate}
\end{corol}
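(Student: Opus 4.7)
The plan is to leverage the fibrewise duality provided by Theorems~\ref{thm:duality_simplylaced}, \ref{thm:duality_twisted} and \ref{thm:duality_Aeven2} and promote it to a derived-categorical statement via a relative Fourier--Mukai transform. First, I would use the Hitchin sections that exist on both sides under the ampleness assumptions to identify $\cM|_{\cA^{\sharp}}$ and $\check{\cM}|_{\cA^{\sharp}}$ with the relative Picard stacks $\cP\to\cA^{\sharp}$ and $\check{\cP}\to\cA^{\sharp}$ of $J_a$-torsors and $\check{J}_a$-torsors respectively. Over $\cA^{\sharp}$, where the cameral covers are smooth and of Donagi--Gaitsgory type, these relative stacks are relative Beilinson $1$-motives, and the fibrewise duality statements of the three theorems globalize to a duality $\bD(\cP)\simeq \check{\cP}$ over $\cA^{\sharp}$, by the naturality of the cameral construction in the base (the cameral covers, regular centralizers, and their duality pairings all fit into flat families over $\cA^{\sharp}$).

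The second step is to apply the Fourier--Mukai transform for dual Beilinson $1$-motives developed by Donagi--Pantev, now in the relative setting over $\cA^{\sharp}$. Concretely, I would construct a Poincaré sheaf $\cL$ on $\cM\times_{\cA^{\sharp}}\check{\cM}$ (equivalently, on $\cP\times_{\cA^{\sharp}}\check{\cP}$ after the Hitchin-section trivialization) and set
\begin{equation*}
\mathbf{S}(-) := R p_{2,*}\bigl(p_1^*(-)\otimes \cL\bigr),
\end{equation*}
where $p_1$ and $p_2$ are the projections. That $\mathbf{S}$ is an equivalence then follows from the fibrewise Fourier--Mukai equivalence of Donagi--Pantev, together with a relative cohomology and base change argument to conclude in families.

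The two asserted properties are then essentially formal consequences of standard Fourier--Mukai yoga once the trivializations are in place. For normalization, the Hitchin section corresponds to the identity section of the trivialized $\cP$, and Fourier--Mukai sends the structure sheaf of the identity section of a Beilinson $1$-motive to the structure sheaf of its dual. For Hecke compatibility, the exchange law for Fourier--Mukai on relative Beilinson $1$-motives states that translation by a point $p\in \cP$ is intertwined with tensorization by the line bundle $\cL|_{\{p\}\times\check{\cP}}$, and vice versa. Since the abelianized Hecke operators are, by construction, translations in the Picard stack by torsors encoded in cameral data, and the abelianized Wilson operators are tensorizations by line bundles associated with characters of the cameral tori, this exchange is precisely the required intertwining.

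The main obstacle I anticipate lies in the case $\sfA_{2\ell}^{(2)}$, where, as recorded in Table~\ref{tab:PairsWithCameral}, the regular centralizers $J_{(G,\theta)}$ and $J_{(G,\vartheta)}$ are not of the standard form $J_H$ but only open-and-closed subgroup schemes of $J^1_H$ and $J^1_{H^\vee}$ respectively. In that setting I must check that the restriction of the Poincaré sheaf to the open-and-closed subtorsor is still the correct duality kernel, using the isomorphism $J^1_H/J^0_H \simeq J^1_{H^\vee}/J^0_{H^\vee}$ that exchanges the relevant component groups, so that the Fourier--Mukai equivalence and its Hecke/Wilson intertwining descend cleanly. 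A secondary, more technical point is to verify that the Poincaré kernel behaves well across the whole of $\cA^{\sharp}$ (and not only on a smaller smooth locus), which is precisely what motivates the restriction to $\cA^{\sharp}$ in the statement.
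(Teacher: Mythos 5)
Your proposal is correct and follows essentially the same route as the paper: trivialize the $\cP$- and $\check{\cP}$-torsor structures over $\cA^{\sharp}$ via the Hitchin sections, invoke the Donagi--Pantev duality (Theorem~\ref{thm:duality} / Corollary~\ref{corol:duality} and Remark~\ref{rmk:dualJ} for the $\sfA_{2\ell}^{(2)}$ case) to get $\bD(\cP)\simeq\check{\cP}$, and derive normalization and Hecke compatibility from the Fourier--Mukai formalism on Beilinson $1$-motives exactly as the paper does in section~\ref{sec:CameralCurvesAbelianDuality} and the abelian Hecke compatibility corollary. The only cosmetic difference is that you phrase the argument in terms of a relative Poincar\'e kernel over $\cA^{\sharp}$ with a base-change step, whereas the paper bundles this into the fibrewise statement and the relative Picard-stack identification $u:\cP/\cA^{\sharp}\to\cM/\cA^{\sharp}$.
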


\subsection{Mirror symmetry} In their seminal paper \cite{Hausel-Thaddeus}, Hausel and Thaddeus gave an interpretation of the duality of generic fibres of the Hitchin fibration for (usual, $K_C$-twisted) Higgs bundles as an example of Strominger--Yau--Zaslow (SYZ) \emph{mirror symmetry} \cite{SYZ}. In particular, this means that the Hitchin moduli spaces for Langlands dual groups should be mirror Calabi-Yau manifolds. A consequence of this is a certain symmetry in the $E$-polynomials of these two spaces. Such a symmetry was explicitly formulated and conjectured by Hausel and Thaddeus \cite{Hausel-Thaddeus} for the moduli spaces of $\PGL_n$-Higgs bundles and  (twisted) $\SL_n$-Higgs bundles with the name \emph{topological mirror symmetry}. Their conjecture was later verified by Groechenig, Wyss and Ziegler \cite{GWZ_Mirror} using techniques of $p$-adic integration. A different proof using perverse sheaves was given by Maulik and Shen \cite{Maulik-Shen_Mirror}. It is also worth mentioning that the SYZ mirror symmetry of Hitchin fibrations is an important ingredient in the alternative proof of Ngô's geometric stabilization theorem (the main result of \cite{Ngo_Lemme}, which implies the Fundamental Lemma of Langlands-Shelstad) provided by Groechenig, Wyss and Ziegler \cite{GWZ_Ngo}.

Mirror Calabi-Yau manifolds are also conjectured to satisfy \emph{homological mirror symmetry}: an equivalence between the derived category of coherent sheaves of one of the manifolds and the Fukaya category of the other. Kapustin and Witten \cite{Kapustin-Witten} provided an interpretation of the Fukaya category of Hitchin moduli space as the category of $D$-modules on the stack $\Bun_G$, in turn giving a physical explanation of the geometric Langlands program as mirror symmetry in Hitchin's moduli space. To account for the extra structures of geometric Langlands duality (normalization and Hecke compatibility), Kapustin and Witten explain that this mirror symmetry is in fact ``hyper-Kähler enhanced", in the sense that it tracks down the holomorphicity or Lagrangianity of branes with respect to each of the complex structures of Hitchin's moduli space. In particular, for example, branes of type (BBB), which are supported on holomorphic symplectic submanifolds, must be mirror partners to (BAA)-branes, supported on holomorphic Lagrangian submanifolds. We refer to Hausel's survey \cite{Hausel_Enhanced} for more details on this program.

A priori, multiplicative Hitchin fibrations are algebraically integrable systems only in the case in which $C$ is an elliptic curve, which was the original situation studied by Hurtubise and Markman. As explained by Elliott and Pestun, one can also obtain multiplicative Hitchin fibrations which are algebraically integrable systems by working over $\bbP^1$ with framings. Morally, this amounts to the base curve being $\bbA^1$ or $\Gm$, so precisely we obtain something hyper-Kähler in cases where the base curve is itself Calabi-Yau. It is worth mentioning that in these cases there is indeed a ``nonabelian Hodge theory", mediated by the Charbonneau--Hurtubise correspondence with singular monopoles \cite{Charbonneau-Hurtubise} (so with the Bogomolny equations playing the role of the Hitchin equations) and the correspondences with difference modules studied by Mochizuki \cite{Mochizuki_Doubly,Mochizuki_Periodic,Mochizuki_Triply} (which provide the ``de Rham side"). A ``Betti side" is also proposed in unpublished work of Kontsevich and Soibelman \cite{Kontsevich-Soibelman}.

In these cases where the multiplicative Hitchin fibration is an algebraically integrable system, we can indeed interpret our results as instances of SYZ mirror symmetry. Homological mirror symmetry for the moduli space of multiplicative Higgs bundles should be related to the ``multiplicative geometric Langlands program" proposed by Elliott and Pestun \cite{Elliott-Pestun}. We refrain from trying to formulate a ``multiplicative analogue" of topological mirror symmetry, but nonetheless comment that multiplicative Hitchin fibrations play a fundamental role in G. Wang's proof of the Fundamental Lemma for the spherical Hecke algebras.  

\subsection{Physical interpretation} Kapustin and Witten \cite{Kapustin-Witten} explain that the geometric Langlands conjecture can be understood as a consequence of the S-duality between two $4$-dimensional supersymmetric Yang-Mills quantum field theories. Elliott and Pestun similarly formulate their ``multiplicative geometric Langlands program" as a physical consequence of the S-duality of $5$-dimensional field theories. More generally, S-duality of both the $4$-dimensional and $5$-dimensional theories are conjectured to be consequences of these theories arising from different dimensional reductions of a certain $6$-dimensional theory \cite{Witten_6D}. The physics papers of Tachikawa \cite{Tachikawa} and Duan--Lee--Nahmgoong--Wang \cite{Duanetal} explore this in detail, and have been great sources of inspiration for this paper. It is worth noting that these physics papers give a physical interpretation of the special features of the case $\sfA_{2\ell}^{(2)}$ as the manifestation of the presence of a ``discrete theta-angle", which is not detected by the $4$-dimensional theories.

\subsection{Contents of the paper} This paper consists of 6 sections (including this introduction) and 3 appendices. 

In section \ref{sec:AbelianCameral} we recall the ``abelian duality" associated with cameral covers of Donagi--Gaitsgory type with simple Galois ramification. We start by recalling some basic notions about Picard stacks in general and Beilinson $1$-motives in particular. We then recall the definition of cameral cover given by Donagi and Gaitsgory \cite{Donagi-Gaitsgory}, and the main results of Donagi and Pantev \cite{Donagi-Pantev} concerning the duality of certain Beilinson $1$-motives associated with cameral covers. We also explain how this duality behaves with respect to the abelian Hecke and Wilson operators, and how the general theory of abelianization of Hitchin-type fibrations works. We finish the section by providing once again a summary of the dualities we prove in this paper.

In section \ref{sec:UntwistedMultHitchin} we review the theory of untwisted multiplicative Hitchin fibrations as developed in the papers of Bouthier, J. Chi and G. Wang \cite{Bouthier_Dimension,Bouthier_Hitchin,JChi,Griffin_Lemma}. We start by giving the main definitions and explaining the formulation in terms of reductive monoids. We also explain the construction of the Steinberg quasi-section and its extension to monoids. We continue by recalling the study of the symmetries of multiplicative Hitchin fibrations and their cameral description. We finish by proving the first of our results, Theorem \ref{thm:duality_simplylaced} (=Theorem \ref{thm:duality_simplylaced_text}): the duality in the simply-laced case. We also include the example of $\SL_2$ and $\PGL_2$, worked out in  detail.

Section \ref{sec:TwistedInvTheory} is dedicated to twisted invariant theory. We start by explaining folding and the construction of the invariant and coinvariant groups. We continue by reviewing the main results of invariant theory for the twisted conjugation action, originally formulated by Mohrdieck and reviewed and extended in unpublished work by G. Wang \cite{Griffin_Twisted}. We then explain the construction of a ``coinvariant monoid", which is relevant for our results and review G. Wang's construction of ``twisted Steinberg sections". We introduce G. Wang's cameral description of the regular stabilizer associated with the twisted adjoint action. We finish the section by explaining the subtleties of the case $\sfA_{2\ell}^{(2)}$.

In section \ref{sec:TwistedMultHitchin} we introduce the global incarnation of twisted invariant theory: twisted multiplicative Hitchin fibrations. This section roughly mirrors section \ref{sec:UntwistedMultHitchin}, by explaining the global Steinberg section, the symmetries and the cameral description of twisted multiplicative Hitchin fibrations. In the last two subsections we prove our second and third results, Theorem \ref{thm:duality_twisted} (=Theorem \ref{thm:duality_twisted_text}) and Theorem \ref{thm:duality_Aeven2} (=Theorem \ref{thm:duality_Aeven2_text}): duality for twisted multiplicative Hitchin fibrations, in the twisted vs. untwisted case, and in the special case $\sfA_{2\ell}^{(2)}$.

Our last section \ref{sec:A22} is dedicated to the detailed study of the case $\sfA_{2}^{(2)}$. Twisted multiplicative Higgs bundles in this case can be interpreted as \emph{twisted bilinear bundles} of rank $3$. The twisted multiplicative Hitchin fibration is then a  Hitchin-type fibration which is associated with these bilinear bundles, and we can explicitly describe the fibres in terms of spectral data. Twisted bilinear bundles up to rank $3$ are related to the \emph{conic bundles} studied by Gómez and Sols in \cite{Gomez-Sols} as a preliminary step in the study the moduli of orthogonal bundles. A Hitchin-type fibration for rank $2$ (symmetric) bilinear bundles and the corresponding spectral data were studied by Gothen and Oliveira \cite{Gothen-Oliveira}. The results of this section can then be interpreted as a rank $3$ analogue of their results. This section was deeply inspired and in fact follows very closely the arguments of section 4 in Hitchin's paper \cite{Hitchin_G2}.

The paper contains 3 appendices. The first appendix, appendix \ref{app:MultLanglands} is dedicated to review the ``multiplicative nonabelian Hodge theory" developed by Mochizuki \cite{Mochizuki_Doubly,Mochizuki_Periodic,Mochizuki_Triply}  and to give a rather informal explanation of the ``multiplicative geometric Langlands program" of Elliott and Pestun \cite{Elliott-Pestun}. Appendix \ref{app:Proof} is included in order to make the paper as self-contained as possible, and explains the main topological arguments behind the duality of Beilinson $1$-motives associated with cameral covers. The proof explained in this appendix combines the arguments of the papers of Donagi--Pantev \cite{Donagi-Pantev} and of Chen--Zhu \cite{Chen-Zhu}. Finally, in appendix \ref{app:Prym}, we give a succint review of Prym varieties which is important to understand section \ref{sec:A22}. It is worth mentioning that the construction of an abelian variety associated with a subset of the ramification divisor explained in section \ref{sec:AbelianVariety_Subset} is original, and that we have not found any other previous appearance of these varieties in the literature.

\subsection{Acknowledgements} The main project leading to this paper was started in collaboration with Benedict Morrissey, through several conversations initiated in the autumn of 2022 and continued during the spring and summer of 2023. Benedict's insights and conjectures, the references provided by him and our joint discussions were crucial for the development of this project at its early stages. I am thus deeply grateful to him and would like to remark his contribution to the results of this paper.

I also want to acknowledge the tremendous help provided by Griffin Wang. As the reader might notice, many of the results of this paper depend directly on his study of twisted invariant theory and twisted multiplicative Hitchin fibrations developed in his unpublished work \cite{Griffin_Twisted}. Griffin has been very kind by providing early access to his work and by being available for questions and discussions. 

Many of the conversations leading to this project, as well as some early expositions of parts of this paper, have been mantained under the support of the Hitchin--Ngô Laboratory at ICMAT. Thus I would like to thank the two chairmen of the Laboratory as well as its coordinator, Oscar García-Prada. I would also like to thank Oscar García-Prada and Jacques Hurtubise for many comments, discussions and references.

\section{Review of abelian duality and cameral covers} \label{sec:AbelianCameral}
\subsection{Recollections on Picard stacks} \label{sec:PicardStacks}
Recall that a (strictly commutative) \emph{Picard groupoid} is a (small) symmetric monoidal groupoid such that every object has a monoidal inverse. A \emph{Picard stack} or \emph{commutative group stack} over a scheme $S$ is a stack over the fpqc site $S_{\mathrm{fpqc}}$ taking values in the category of Picard groupoids. Picard stacks form a $2$-category, which by a theorem of Deligne \cite{Deligne} is equivalent to the $2$-category whose objects are $2$-term complexes of sheaves of abelian groups $(K_{-1}\rightarrow K_0)$ such that $K_{-1}$ is injective, and whose $1$-morphisms and $2$-morphisms are given by chain maps and homotopies, respectively. We fix an inverse $(-)^\flat$ of this equivalence; that is, to each Picard stack $\cP$ we associate a $2$-term complex $\cP^\flat=(K_{-1}\rightarrow K_0)$ with $K_{-1}$ injective. For any such complex $K$, we denote by $\mathrm{ch}(K)$ the corresponding Picard stack.

Every Picard stack $\cP$ has an \emph{inertia group} $\Aut_0(\cP)$ and a \emph{coarse moduli space} $|\cP|$. We can also consider the neutral connected component  $P=|\cP|^0$ and the group of connected components  $\pi_0(\cP)$. Under Deligne's equivalence, we obtain
\begin{equation*}
|\cP| = H^0(\cP^\flat) \text{ and } \Aut_0(\cP)=H^{-1}(\cP^\flat).
\end{equation*} 

\begin{ej}
Let $J\rightarrow S$ be a commutative group over  $S$ and let $\sJ$ denote its sheaf of sections. We can realize $J$ as a Picard stack $\cP=J$ by identifying it with the complex concentrated in degree $0$, that is, we set $J^\flat=(0\rightarrow \sJ)$. We can also consider $BJ$, the classifying stack of $J$-torsors, by taking $(BJ)^\flat$ to be a complex quasi-isomorphic to the shifted complex $\sJ[1]=(\sJ\rightarrow 0)$. If $p:X\rightarrow S$ is another scheme over  $S$, we can consider the \emph{relative Picard stack of $J$-torsors over $X$} to be
\begin{equation*}
\Bun_{J/X}=p_* BJ := \mathrm{ch}(\tau_{\leq 0} \mathrm{R}p_*(\sJ[1])),
\end{equation*} 
where $\tau_{\leq 0}$ denotes the standard truncation to degrees $\leq 0$.
In particular, $\Aut_0(\Bun_{J/X})=p_* \sJ$ and $|\Bun_{J/X}|=\mathrm{R}^1 p_* \sJ$. At the level of $S$-points, we have
\begin{equation*}
\Aut_0(\Bun_{J/X})(S)= H^0(X,J) \text{ and } |\Bun_{J/X}|(S)=H^1(X,J).
\end{equation*} 
When $J=\Gm$ is the multiplicative group over  $S$, we recover the relative Picard stack
 \begin{equation*}
\Pic_{X/S} = \Bun_{\Gm/X}.
\end{equation*} 
\end{ej}

The \emph{dual Picard stack} of a Picard stack $\cP$ is defined as
\begin{equation*}
\bD(\cP):= \underline{\Hom}(\cP,B\Gm) =\mathrm{ch}(\tau_{\leq 0}\mathrm{R}\underline{\Hom}(\cP^\flat,\sO^*_S[1])).
\end{equation*} 
There is a canonical $1$-morphism $\cP \rightarrow \bD(\bD(\cP))$. When this morphism is an isomorphism, the Picard stack $\cP$ is said to be \emph{dualizable}. In that case, there is the universal \emph{Poincaré line bundle} $\mathbf{P}\rightarrow \cP \times \bD(\cP)$, and we can define the \emph{Fourier-Mukai functor}
\begin{align*}
\Phi_{\cP}: D^b(\QCoh(\cP)) & \longrightarrow D^b(\QCoh(\bD(\cP))) \\
\sF & \longmapsto \mathrm{R}p_{2,*}(\mathrm{L}p_1^* \sF \otimes \mathbf{P}),
\end{align*} 
where $p_1$ and $p_2$ are the natural projections from $\cP \times \bD(\cP)$. When this functor is an equivalence of categories, we say that $\cP$ is a \emph{good Picard stack} in the sense of Arinkin \cite[Appendix A]{Donagi-Pantev_Arinkin}.

An immediate property of Fourier-Mukai transforms is that, since $\mathbf{P}$ is the universal line bundle, a skyscraper sheaf $\sO_{x}$ at a point $x$ of  $\cP$ corresponding to a line bundle $\mathscr{L}^x\rightarrow \bD(\cP)$ is mapped to the line bundle
 \begin{equation*}
\Phi_{\cP}(\sO_{x}) = \mathbf{P}|_{\left\{x\right\}\times \bD(\cP)}= \mathscr{L}^x.
\end{equation*} 

\begin{ej}
Duality for Picard stacks unifies several types of duality associated with some commutative group schemes. In particular, it generalizes the duality of abelian varieties. Namely, if $P\rightarrow S$ is an abelian scheme, then
 \begin{equation*}
\bD(P)= \underline{\Hom}(P,B\Gm) = \mathrm{Ext}^1(P,\Gm)=\hat{P},
\end{equation*} 
is the dual abelian scheme of $P$. It is a classical result of Mukai that the Fourier-Mukai transform for abelian schemes determines an equivalence.

Duality of Picard stacks also generalizes Cartier duality. Recall that if $\Lambda$ is a finitely generated abelian group over $S$, its Cartier dual $\Lambda^*$ is the affine group scheme $\Spec(\sO_S[\Lambda])$, which represents the sheaf $U\mapsto \Hom_U(\Lambda \times_S U,\mathbb{G}_{\mathrm{m},U})$. One can easily show that
\begin{equation*}
\bD(\Lambda) = B\Lambda^*.
\end{equation*} 
Conversely, a group scheme $J$ is \emph{of multiplicative type} if $J=\Lambda^*$ is the Cartier dual of some finitely generated abelian group over  $S$. The group $\Lambda$ can be recovered as the character group $\Lambda=\Hom(J,\Gm)$. Again, we have 
\begin{equation*}
\bD(BJ) = \Lambda.
\end{equation*} 
The Fourier-Mukai transform in this case is determined by the correspondence between characters and irreducible representations, and thus it also gives an equivalence of categories.

A less trivial example is the Picard stack $\Pic_{C}$ of a smooth complex projective curve of genus $g$. In this case we can write  $\Pic_{C}= B\Gm \times \Jac(C) \times \Z$, where $\Jac(C)$ is the \emph{Jacobian} of the curve, an abelian variety of dimension $g$ parametrizing degree  $0$ line bundles on  $C$. The Jacobian of a curve is well known to be self-dual, which makes $\Pic_{C}$ a self-dual Picard stack. More precisely, there is a canonical isomorphism
\begin{equation*}
\Pic_C \overset{\sim}{\longrightarrow} \bD(\Pic_C)
\end{equation*} 
mediated by the universal line bundle $\mathscr{U}\rightarrow C \times \Pic_C$, with fibres $\mathscr{U}|_{C\times \left\{L\right\}}=L$. In particular, a line bundle of the form $\sO_C(p)$ is sent to $\mathscr{U}|_{\left\{p\right\}\times \Pic_C}\rightarrow \Pic_C$.

More generally, we can consider an algebraic torus $T$, and the stack of  $T$-bundles
\begin{equation*}
\Bun_{T/C}= BT \times (\Jac(C)\otimes X_*(T)) \times X_*(T).
\end{equation*} 
It follows from the self-duality of $\Jac(C)$ that its dual Picard stack is the stack of $T^\vee$-bundles
\begin{equation*}
\bD(\Bun_{T/C})\cong X^*(T) \times (\Jac(C)\otimes X^*(T)) \times BT^\vee = \Bun_{T^\vee/C}.
\end{equation*} 
Again, there is the canonical isomorphism
\begin{equation*}
\Bun_{T/C} \overset{\sim}{\longrightarrow} \bD(\Bun_{T^\vee/C})
\end{equation*} 
mediated by the universal $T^\vee$-bundle $\mathscr{U}\rightarrow C \times \Bun_{T^\vee/C}$. Let $\lambda \in X_*(T)$ be a cocharacter and let $\sO_C(\lambda p)$ denote the $T$-bundle determined by taking $z^\lambda$ as transition function near $p$. Then the canonical isomorphism sends this $T$-bundle $\sO_C(\lambda p)$ to the line bundle $\mathscr{U}_\lambda \rightarrow \Bun_{T^\vee/C}$ with fibres
\begin{equation*}
	(\mathscr{U}_\lambda)_L = L \times^{T^\vee,\lambda} \bbA^1,
\end{equation*} 
where we understand $\lambda \in X_*(T)=X^*(T^\vee)$ as a character of $T^\vee$.
\end{ej}

The examples considered above are particular cases of \emph{Beilinson $1$-motives}. These are Picard stacks $\cP$ admitting a  $2$-step filtration $W_\bullet \cP$
 \begin{equation*}
W_{-1}=0 \subset W_0 \subset W_1 \subset W_2=\cP
\end{equation*} 
such that
\begin{itemize}
	\item $\mathrm{Gr}^W_0 \cong BJ$ is isomorphic to the classifying stack of a group $J$ of multiplicative type,	
	\item $\mathrm{Gr}^W_1\cong P$ is an abelian scheme, and
	\item $\mathrm{Gr}^W_2\cong \Lambda$ is a finitely generated abelian group.
\end{itemize}
Note that, if $\cP$ is a Beilinson $1$-motive, then $\Aut_0(\cP)=\pi_0(\bD(\cP))^*$. 

Beilinson $1$-motives are good in the sense of Arinkin. Moreover, on a Beilinson $1$-motive $\cP$, the Fourier-Mukai transform $\Phi_\cP$ intertwines tensorization with the \emph{convolution product}
\begin{align*}
*: D^b(\QCoh(\cP)) \times D^b(\QCoh(\cP)) & \longrightarrow  D^b(\QCoh(\cP)) \\
(\sF_1,\sF_2) & \longmapsto \R m_* (p_1^*\sF_1 \otimes p_2^*\sF_2),
\end{align*} 
where $p_1,p_2:\cP \times \cP \rightarrow \cP$ are the natural projections, and $m:\cP \times \cP \rightarrow \cP$ is the group law on  $\cP$. More precisely, there are canonical isomorphisms
 \begin{equation*}
\Phi_\cP(\sF_1 * \sF_2) \cong \Phi_{\cP}(\sF_1) \otimes \Phi_{\cP}(\sF_2)
\end{equation*} 
 \begin{equation*}
\Phi_\cP(\sF_1 \otimes \sF_2) = (\Phi_{\cP}(\sF_1) * \Phi_{\cP}(\sF_2)) \otimes \omega_{\bD(\cP)/S}[g],
\end{equation*} 
for $\omega_{\bD(\cP)/S}$ the dualizing sheaf of $\bD(\cP)$ over $S$ and $g$ the relative dimension of $\cP/S$.

\subsection{Cameral covers with simple Galois ramification} \label{sec:cameral_simple_Galois}
Let $\Phi$ be a root system and let $W$ denote its Weyl group. We let $\mathbf{R}(\Phi)$ denote the root lattice and write $\ft=\mathbf{R}(\Phi)^\vee\otimes_{\Z}\C$. This $\ft$ is a vector space acted on by $W$. The projection $\ft\rightarrow \ft/W$ is the \emph{standard cameral cover} associated with $\Phi$.
More generally, a \emph{cameral cover} associated with the root system $\Phi$ is a finite flat morphism of $\C$-schemes $\tilde{X}\rightarrow X$ with Galois group $W$ and such that, étale-locally on $X$, it is a pull-back of the standard cameral cover $\ft \rightarrow \ft/W$. 
If $\pi:\tilde{X}\rightarrow X$ is a cameral cover, we denote by $B\subset X$ the branch locus of $\pi$ and, for each point $b\in B$, we denote by $W_b$ the corresponding inertia group.

We say that the cameral cover $\pi$ has \emph{simple Galois ramification} if
\begin{enumerate}
	\item for each point $\tilde{x}\in \tilde{X}$, the stabilizer $W_{\tilde{x}}=\left\{w\in W: w\cdot \tilde{x}=\tilde{x}\right\}$	has order at most $2$,
	\item for each root $\alpha \in \Phi$, fixed point subset
		\begin{equation*}
		\tilde{X}_\alpha = \left\{\tilde{p}\in \tilde{X}: s_\alpha(\tilde{p})=\tilde{p}\right\}
		\end{equation*} 
		is non-empty.
\end{enumerate} 
Here, we denote by $s_\alpha \in W$ the reflection associated with the root  $\alpha$. If a cameral cover has simple Galois ramification and we fix the positive roots $\Phi_+\subset \Phi$, we can describe the branch locus as a union
\begin{equation*}
B = \bigcup_{\alpha \in \Phi_+} B_\alpha, \text{ for } B_\alpha=\pi(\tilde{X}_\alpha),
\end{equation*} 
and there is a natural isomorphism of the inertia group $W_b$, for $b\in B_{\alpha}$, with the subgroup of $W$ generated by $s_\alpha$. Moreover, $\tilde{X}$ is smooth if and only if the divisors $\tilde{X}_{\alpha}$ (and in turn the $B_\alpha$) are pairwise disjoint.

Let $G$ be a semisimple group with maximal torus $T\subset G$ such that  $\Phi(G,T)=\Phi$.
We can consider the Weil restriction
\begin{equation*}
\Pi=\Pi_{\pi,G} := \pi_*(\tilde{X}\times T)
\end{equation*} 
and the invariant subgroup schemes
\begin{equation*}
J^1=J^1_{\pi,G}:=\Pi^W \text{ and } J^0=J^0_{\pi,G}:=\Pi^{W,0},
\end{equation*} 
where the superindex $0$ stands for taking fibrewise connected components. 
We can also consider the subgroup $J=J_{\pi,G}\subset J^1$ characterized as follows: for every  $X$-scheme $S$, the group  $J(S)$ is the subgroup of $J^1(S)$ consisting of morphisms $g:S\times_X \tilde{X}\rightarrow T$ such that, for every $\tilde{x} \in (S\times_X \tilde{X})(\C)$ with $s_\alpha(\tilde{x})=\tilde{x}$ for some root $\alpha \in \Phi$, we have  $\alpha(g(x))=1$. There are inclusions
\begin{equation*}
J^0 \subset J \subset J^1.
\end{equation*}  
These are group schemes over $X$ which coincide away from the branch locus of the cameral cover. If we assume that $\pi$ has simple Galois ramification then, over a branch point $b\in B_{\alpha}\subset X$, the fibres of these groups are commutative algebraic groups over $\C$ whose maximal reductive quotient is given by
\begin{equation*}
	(J^0_b)^{\mathrm{red}}=T^{s_\alpha,0} \subset (J_b)^{\mathrm{red}}=\ker(\alpha:T\rightarrow \Gm) \subset (J^1_b)^{\mathrm{red}}=T^{s_\alpha}.
\end{equation*} 

We recall the description of the groups of the form $\ker \alpha$ from \cite{Donagi-Pantev}. For every root $\alpha \in \Phi$ we can define the numbers  $\varepsilon_\alpha$ and $\varepsilon_\alpha^\vee$ to be the positive generators of the images of the maps  $\langle \alpha,-\rangle:X_*(T)\rightarrow \Z$ and $\langle \alpha^\vee,-\rangle: X^*(T)\rightarrow \Z$, respectively. There is the following commutative diagram, with short exact rows and columns
\begin{center}
\begin{tikzcd}
 T^{s_\alpha,0} \dar \rar & \ker \alpha \rar \dar & \Z/\varepsilon_\alpha \dar  \\ 
 T^{s_\alpha,0} \dar  \rar & T^{s_\alpha} \dar \rar & \Z/\varepsilon_\alpha \varepsilon_{\alpha^\vee}\dar \\
        1 \rar & \Z/\varepsilon_{\alpha^\vee} \rar & \Z/\varepsilon_{\alpha^\vee}.  
\end{tikzcd}
\end{center}
Recall that, if $G$ is simple, then we have
\begin{align*}
	\varepsilon_\alpha &= 
\begin{cases}
	2 \text{ if } G=\mathrm{Sp}_r \text{ and $\alpha$ is a long root}, \\	
	1 \text{ in all other cases },
\end{cases}
\\
	\varepsilon^\vee_\alpha &= 
\begin{cases}
	2 \text{ if } G=\mathrm{SO}_{2r+1} \text{ and $\alpha$ is a short root}, \\	
	1  \text{ in all other cases }.
\end{cases}
\end{align*} 
In particular, it is important to remark that $\varepsilon_\alpha=2$ for a root of $\SL_2$.
For general semisimple $G$, we make the following observations
\begin{enumerate}
	\item if $r>1$ and the Dynkin diagram of $G$ is simply-laced, then $T^{s_{\alpha}}$ is connected,
	\item if $G$ is simply-connected, then $T^{s_\alpha}=\ker \alpha$,
	\item if $G$ is of adjoint type, then  $T^{s_\alpha,0}=\ker \alpha$.
\end{enumerate}
Globally, we obtain the following diagram, with short exact rows and columns
\begin{center}
\begin{tikzcd}
 J^0 \dar \rar & J \rar \dar & \bigoplus_{\alpha \in \Phi_+} B_\alpha \times \Z/\varepsilon_{\alpha} \dar  \\ 
 J^0 \dar  \rar & J^1 \dar \rar & \bigoplus_{\alpha \in \Phi_+} B_\alpha \times \Z/\varepsilon_{\alpha} \varepsilon_{\alpha^\vee}\dar \\
        1 \rar &\bigoplus_{\alpha \in \Phi_+}B_\alpha \times \Z/\varepsilon_{\alpha^\vee} \rar &\bigoplus_{\alpha \in \Phi_+} B_\alpha \times \Z/\varepsilon_{\alpha^\vee}.  
\end{tikzcd}
\end{center}
We make the following observations
\begin{enumerate}
	\item if $r>1$ and the Dynkin diagram of $G$ is simply-laced, then $J^0=J=J^1$,	
	\item if $G$ is simply-connected, then  $J^1=J$,
	\item if $G$ is of adjoint type, then $J^0=J$.
\end{enumerate}

\subsection{The dual group}
Let $G^\vee$ denote the Langlands dual group of $G$. This group has as maximal torus the dual torus $T^\vee=X_*(T)^*$, and as root system the dual root system $\Phi(G^\vee,T^\vee)=\Phi^\vee$. Recall that the Killing form determines a $W$-equivariant isomorphism $\ft^* \overset{\sim}{\rightarrow} \ft$, $x\mapsto (x,-)$. We pick the usual normalization of the Killing form in which short roots $\alpha$ are imposed to satisfy $(\alpha,\alpha)=2$. Choosing this normalization, the Killing form sends short roots to long coroots, and long roots to multiples of short coroots. Therefore, the root lattice $\mathbf{R}(\Phi)$ is matched with $\mathbf{R}(\Phi^\vee)$ and the weight lattice $\mathbf{P}(\Phi)$ is matched with $\mathbf{P}(\Phi^\vee)$. The isomorphism also allows us to regard the cameral cover $\pi:\tilde{X}\rightarrow X$ as a cameral cover associated with $\Phi^\vee$. Therefore, we can define the group schemes
\begin{equation*}
\check{J}^0=J^0_{\pi,G^\vee} \subset \check{J}=J_{\pi,G^\vee} \subset \check{J}^1=J^1_{\pi,G^\vee},
\end{equation*} 
by replacing $T$ with $T^\vee$. Moreover, note that the Killing form also identifies 
\begin{enumerate}
	\item the component group $J^1/J^0$ with the component group  $\check{J}^1/\check{J}^0$,	
	\item the component group $J/J^0$ with the quotient  $\check{J}^1/\check{J}$.	
\end{enumerate}

\subsection{Cameral curves and abelian duality} \label{sec:CameralCurvesAbelianDuality}
Fix $G$ a complex semisimple group and $T\subset G$ a maximal torus. Let $\tilde{C}$ and $C$ be smooth complex projective curves and let $\pi:\tilde{C}\rightarrow C$ be a cameral cover associated with $\Phi(G,T)$ with simple Galois ramification. We can consider the groups $J^0,J,J^1$ and $\check{J}^0,\check{J},\check{J}^1$ from the previous section, and their associated Picard stacks of torsors
\begin{align*}
\cP^0=\Bun_{J^0/C}, \ \ \cP=\Bun_{J/C}, \ \ \cP^1=\Bun_{J^1/C}, \\
\check{\cP}^0=\Bun_{\check{J}^0/C}, \ \ \check{\cP}=\Bun_{\check{J}/C}, \ \ \check{\cP}^1=\Bun_{\check{J}^1/C}. 
\end{align*} 

The inclusion $J^1 \hookrightarrow \Pi$ induces a morphism of Picard stacks
\begin{equation*}
\iota: \cP^1 \longrightarrow \Bun_{T/\tilde{C}}.
\end{equation*} 
Dually, we get a morphism
\begin{equation*}
\bD(\iota): \bD(\Bun_{T/\tilde{C}}) \longrightarrow \bD(\cP^1).
\end{equation*} 
On the other hand, we have the \emph{norm map}
\begin{align*}
\Nm: T^\vee & \longrightarrow T^\vee \\
t & \longmapsto \prod_{w\in W} w\cdot t.
\end{align*} 
This map induces a morphism $\Pi \rightarrow \check{J}^0$, and in turn a morphism of Picard stacks
\begin{equation*}
\Nm: \Bun_{T^\vee/\tilde{C}} \longrightarrow \check{\cP}^0.
\end{equation*} 
Recall that we have the canonical isomorphism
\begin{equation*}
\bD(\Bun_{T/\tilde{C}}) \overset{\sim}{\longrightarrow} \Bun_{T^\vee/\tilde{C}}.
\end{equation*} 
Consider the map $\mathrm{S}_0: \bD(\cP^1)\rightarrow \check{\cP}^0$ induced by the diagram
\begin{center}
\begin{tikzcd}
\bD(\Bun_{T/\tilde{C}}) \ar{r}{\sim} \ar{d}{\bD(\iota)} & \Bun_{T^\vee/\tilde{C}} \ar{d}{\Nm} \\
\bD(\cP^1) \ar{r}{\mathrm{S}_0} & \check{\cP}^0. 
\end{tikzcd}
\end{center}

The main result of \cite{Donagi-Pantev} is the following.

\begin{thm} \label{thm:duality}
The Picard stacks $\cP^1$ and  $\check{\cP}^0$ are Beilinson $1$-motives and the map $$\mathrm{S}_0:\bD(\cP^1)\rightarrow \check{\cP}^0$$ is an isomorphism. In particular, the Fourier-Mukai transform $\Phi_{\cP^1}$ induces an equivalence of categories
\begin{equation*}
\bS^{\ab}_0=(\mathrm{S}_0^{-1})^*\circ \Phi_{\cP^1}: D^b(\QCoh(\cP^1)) \longrightarrow D^b(\QCoh(\check{\cP}^0)).
\end{equation*} 
\end{thm}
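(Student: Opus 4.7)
The plan is to prove the theorem by exhibiting both $\cP^1$ and $\check{\cP}^0$ as Beilinson $1$-motives with matched graded pieces, and then identifying $\mathrm{S}_0$ as the canonical duality map between them. I will break this into three steps and describe the technical heart at the end.

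\textbf{Step 1: Weight filtrations.} I would first analyze $\cP^1$ using the short exact sequence of sheaves on $C$
\[
1 \longrightarrow \sJ^0 \longrightarrow \sJ^1 \longrightarrow \cF \longrightarrow 1,
\]
where $\cF := \sJ^1/\sJ^0$ is a skyscraper sheaf along the branch locus $B$ whose stalk at $b \in B_\alpha$ is $\Z/\varepsilon_{\alpha^\vee}$ (from the diagrams at the end of Section~\ref{sec:cameral_simple_Galois}). Pushing forward to $C$ and truncating, this yields a three-term filtration of $\cP^1$: inertia $H^0(C, \sJ^0)$ (a group of multiplicative type, since $J^0$ is fibrewise an extension of a torus by a finite multiplicative group); abelian part the neutral component of $\R^1 p_* \sJ^0$ (a generalized Prym variety of $\tilde C / C$); and component group computed from the long exact sequence. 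The same argument applied to $\check J^0$—which, being connected, has trivial analogue of $\cF$—realizes $\check{\cP}^0$ as a Beilinson $1$-motive as well.

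\textbf{Step 2: Construction and functoriality of $\mathrm{S}_0$.} The stack $\Bun_{T/\tilde C}$ is self-dual via the Poincaré line bundle arising from the Killing form on $X_*(T) \otimes X^*(T)$ composed with the Weil pairing on $\Jac(\tilde C)$. I would verify that $\iota$ realizes $\cP^1$ as the Weyl-invariant sub-Picard stack of $\Bun_{T/\tilde C}$, while $\Nm$ exhibits $\check{\cP}^0$ as a Weyl-coinvariant quotient of $\Bun_{T^\vee/\tilde C}$. Dualizing the defining diagram involving $\iota$ and $\Nm$ then produces $\mathrm{S}_0$ and automatically equips it with compatibility with the filtrations from Step~1.

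\textbf{Step 3: Termwise isomorphism.} With both sides filtered and $\mathrm{S}_0$ filtration-preserving, I would verify the isomorphism graded piece by graded piece: (i) the inertia of $\cP^1$ is Cartier dual to $\pi_0(\check{\cP}^0)$, which reduces locally at each branch point to the combinatorial duality $(\varepsilon_\alpha, \varepsilon_{\alpha^\vee}) \leftrightarrow (\varepsilon_{\alpha^\vee}, \varepsilon_\alpha)$ under $\Phi \leftrightarrow \Phi^\vee$; (ii) the neutral Prym varieties are dual, which follows from the self-duality of $\Jac(\tilde C)$ paired with the Killing-form duality between $W$-invariants in $H^1(\tilde C, \C) \otimes X_*(T)$ and $W$-coinvariants in $H^1(\tilde C, \C) \otimes X^*(T)$; (iii) the component groups match via the Cartier dual of (i). The induced equivalence $\bS_0^{\ab}$ then follows from general goodness of Beilinson $1$-motives.

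The main obstacle will be the detailed local bookkeeping at each branch point. The groups $J^0 \subset J \subset J^1$ differ by finite contributions governed by $\varepsilon_\alpha$ and $\varepsilon_{\alpha^\vee}$, and these interact nontrivially with the connecting map $H^0(C, \cF) \to \R^1 p_* \sJ^0$ in the long exact sequence of the sequence above. Tracking these contributions simultaneously on the $G$ and $G^\vee$ sides—so that the Killing form correctly exchanges the inertia of $\cP^1$ with the component group of $\check{\cP}^0$—is the technical heart of the argument, and is precisely why the theorem pairs $J^1$ on one side with $J^0$ on the other.
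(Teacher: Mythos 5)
Your overall strategy---filter both Picard stacks, show the filtrations match, and verify graded piece by graded piece---is in the same spirit as the paper's argument (Appendix~\ref{app:Proof}). The matching of the finite pieces in your Step~3(i) and (iii) is essentially correct and routine: the inertia of $\cP^1$ is $T^W$, its Cartier dual $X^*(T)_W$ agrees with $\pi_0(\check\cP^0)$ by Kottwitz, and the isomorphism $J^1/J^0 \to \check J^1/\check J^0$ handles the other pair, just as in the paper.

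However, you have mislocated the technical heart. You assert in Step~3(ii) that the duality of the abelian parts ``follows from the self-duality of $\Jac(\tilde C)$ paired with the Killing-form duality between $W$-invariants and $W$-coinvariants,'' and then identify the real difficulty as the $\varepsilon_\alpha$-bookkeeping at branch points. This is backwards. The $\varepsilon$-bookkeeping enters the finite-group matching, which is the easy part. The genuinely hard step, and the one your proposal skips entirely, is establishing that the abelian variety $\check P^0 = |\check\cP^0|^0$ is isomorphic to the coinvariant Prym $(\Jac(\tilde C)\otimes X^*(T))_W$. Note that $\check P^0$ is not defined on the cameral curve $\tilde C$: it is $H^1$ of the sheaf $\check{\mathscr J}^0$ on the base curve $C$, which over $U = C\setminus B$ is the local system $\fL = (\pi_*^0 X^*(T))^W$ extended by $j_*$ across the branch locus. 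Relating $H^1(C, j_*\fL)$ to $H^1(\tilde C, X^*(T))_{W,\mathrm{tf}}$ requires the Hodge-theoretic exponential-sequence computation of $H_1(\check P^0)$, plus a Poincar\'e--Verdier duality argument on $U$ combined with the norm map to match it with $W$-coinvariants upstairs (the paper's Lemmas~\ref{lemma:coinvariant} and its supporting lemmas). There is no short-cut via ``Killing form on $H^1(\tilde C,\C)$'': the two sides live on different curves and the identification is the content of the theorem, not an input to it. Separately, your identification of $P^1 = |\cP^1|^0$ with $(\Jac(\tilde C)\otimes X_*(T))^{W,0}$ also uses a nontrivial injectivity statement (the Chen--Zhu lemma that $H^1(W,T)\to\bigoplus_\alpha T_{s_\alpha}$ is injective, needed to show $\iota:\cP^1\to\Bun_{T/\tilde C}$ has trivial kernel), which you should not take for granted. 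Until these two points are supplied, the proof has a gap precisely at its center.
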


More generally, they prove the following.

\begin{corol} \label{corol:duality}
The Picard stacks $\cP=\Bun_{J/C}$ and  $\check{\cP}=\Bun_{\check{J}/C}$ are Beilinson $1$-motives and the isomorphism $\mathrm{S}_0:\bD(\cP^1)\rightarrow \check{\cP}^0$ induces a natural isomorphism $\mathrm{S}:\bD(\cP)\rightarrow \check{\cP}$. In particular, the Fourier-Mukai transform $\Phi_{\cP}$ determines an equivalence of categories
\begin{equation*}
\bS^{\ab}=(\mathrm{S}^{-1})^* \circ \Phi_{\cP}: D^b(\QCoh(\cP)) \longrightarrow D^b(\QCoh(\check{\cP})).
\end{equation*} 
\end{corol}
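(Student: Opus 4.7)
My plan is to bootstrap the corollary from Theorem \ref{thm:duality} by tracking the controlled differences between $J$ and $J^1$ on one side, and between $\check{J}^0$ and $\check{J}$ on the other. The crucial combinatorial input, visible in the double-column diagram of subsection \ref{sec:cameral_simple_Galois}, is the pair of short exact sequences
\begin{equation*}
1 \to J \to J^1 \to K \to 1 \quad \text{and} \quad 1 \to \check{J}^0 \to \check{J} \to K \to 1,
\end{equation*}
in which $K = \bigoplus_{\alpha \in \Phi_+} B_\alpha \times \Z/\varepsilon_{\alpha^\vee}$ is the \emph{same} finite étale group scheme supported on the branch locus; the equality of these two quotients is exactly the identification $J^1/J \cong \check{J}/\check{J}^0$ provided by the Killing form.

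The first step is to upgrade the Beilinson $1$-motive structure from $\cP^1$ to $\cP$. Because $K$ is torsion and supported at discrete points, $\R p_*\sK = p_*\sK =: \mathcal{K}$ is a finite abelian group, and the above short exact sequence produces a distinguished triangle of Picard stacks
\begin{equation*}
\cP \to \cP^1 \to B\mathcal{K}.
\end{equation*}
Since $J$ and $J^1$ coincide away from the branch locus, the abelian-variety summand of $\cP$ matches that of $\cP^1$; only the inertia $\Aut_0(\cP)=H^0(C,J)$ and the component group $\pi_0(\cP)=H^1(C,J)$ get modified, through the long exact sequence attached to the triangle. Thus $\cP$ inherits a Beilinson $1$-motive structure, and the symmetric argument does the same for $\check{\cP}$.

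The morphism $\mathrm{S}$ is then produced by the same norm-and-Weil-restriction recipe as $\mathrm{S}_0$: the composition $J \hookrightarrow J^1 \hookrightarrow \Pi$ extends $\iota$ to $\iota': \cP \to \Bun_{T/\tilde C}$, while the inclusion $\check{J}^0 \hookrightarrow \check{J}$ lets us postcompose the norm with $\check{\cP}^0 \to \check{\cP}$. Chasing through the resulting diagram produces a morphism $\mathrm{S}: \bD(\cP) \to \check{\cP}$ which fits, via the natural maps $\bD(\cP^1) \to \bD(\cP)$ and $\check{\cP}^0 \to \check{\cP}$, into a commutative square with $\mathrm{S}_0$.

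The main obstacle is upgrading $\mathrm{S}$ to an isomorphism. I would dualize the triangle $\cP \to \cP^1 \to B\mathcal{K}$ to obtain $\mathcal{K}^* \to \check{\cP}^0 \to \bD(\cP)$, and compare it with the rotated check-side triangle $\mathcal{K} \to \check{\cP}^0 \to \check{\cP}$. Both sit over the isomorphism $\bD(\cP^1) \cong \check{\cP}^0$ of Theorem \ref{thm:duality}, so the five-lemma promotes $\mathrm{S}$ to an isomorphism provided one identifies $\mathcal{K}^* \cong \mathcal{K}$ canonically and compatibly with these maps. This last identification is the genuinely delicate point: it relies on the Cartier duality $(\Z/\varepsilon_{\alpha^\vee})^* \cong \mu_{\varepsilon_{\alpha^\vee}}$ together with a canonical root of unity selected, at each branch point $b$, by the local monodromy of $\pi$. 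Once $\mathrm{S}$ is an isomorphism, the Beilinson $1$-motive structure makes $\cP$ good in Arinkin's sense, $\Phi_{\cP}$ is an equivalence, and $\bS^{\ab}=(\mathrm{S}^{-1})^*\circ \Phi_{\cP}$ delivers the desired equivalence of derived categories.
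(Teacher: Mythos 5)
Your proof follows the same overall plan as the paper's: realize the inclusion $J \hookrightarrow J^1$ (with skyscraper quotient $K = J^1/J$) as a distinguished triangle of Picard stacks $\cP \to \cP^1 \to B\,H^0(C,K)$, dualize, and match the resulting finite kernel against the kernel of $\check{\cP}^0 \to \check{\cP}$, which is $H^0(C,\check{J}/\check{J}^0)$. The difference lies entirely in how the finite-group comparison is closed. You correctly single this out as ``the genuinely delicate point,'' but the device you propose --- a Cartier duality $(\Z/\varepsilon_{\alpha^\vee})^* \cong \mu_{\varepsilon_{\alpha^\vee}}$ promoted to a canonical isomorphism $\mathcal{K}^* \cong \mathcal{K}$ by ``a canonical root of unity selected by the local monodromy of $\pi$'' --- is not what is needed, and in this form is not really a proof: no auxiliary choice of roots of unity enters. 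The paper instead observes that the isomorphism $\mathrm{S}_0 : \bD(\cP^1) \to \check{\cP}^0$, \emph{by the way it is constructed in the proof of Theorem \ref{thm:duality}}, already intertwines the relevant finite pieces: Appendix \ref{app:Proof} shows that $\mathrm{S}_0$ is compatible with the Killing-form identification $H^0(C,J^1/J^0) \xrightarrow{\sim} H^0(C,\check{J}^1/\check{J}^0)$. Since the Killing form matches $J^1/J$ with $\check{J}/\check{J}^0$ (this is item (2) of the list at the end of subsection ``The dual group''), $\mathrm{S}_0$ necessarily carries $\ker\bigl(\bD(\cP^1) \to \bD(\cP)\bigr)$ onto $\ker\bigl(\check{\cP}^0 \to \check{\cP}\bigr)$, and then simply descends. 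In short: you do not need to construct an isomorphism $\mathcal{K}^* \cong \mathcal{K}$ from scratch and then verify compatibility; the compatibility is an output of the proof of Theorem \ref{thm:duality}, and the argument should point there rather than to local monodromy data. With that replacement, the rest of your proposal --- the Beilinson $1$-motive structure on $\cP$, the commuting square producing $\mathrm{S}$, and the concluding five-lemma step --- is sound and is essentially what the paper does.
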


\begin{proof}[Proof of Corollary \ref{corol:duality}]
Note that we have an exact sequence (i.e. a distinguished triangle on the associated complexes) of Picard stacks
\begin{center}
\begin{tikzcd}
	0 \rar & H^0(C,J^1/J) \rar & \cP \rar & \cP^1 \rar & 0.
\end{tikzcd}
\end{center}
Dualizing, we obtain an exact sequence
\begin{center}
\begin{tikzcd}
	0 \rar & H^0(C,J^1/J) \rar & \bD(\cP^1) \rar & \bD(\cP) \rar & 0.
\end{tikzcd}
\end{center}
Now, since $\mathrm{S}_0:\bD(\cP^1)\rightarrow \check{\cP}^0$ induces the isomorphism $H^0(C,J^1/J^0) \rightarrow H^0(C,\check{J}^1/\check{J}^0)$ (see appendix \ref{app:Proof}), applying $\mathrm{S}_0$ to the above exact sequence yields
\begin{center}
\begin{tikzcd}
	0 \rar & H^0(C,\check{J}/\check{J^0}) \rar & \check{\cP}^0 \rar & \mathrm{S}_0(\bD(\cP))) \rar & 0,
\end{tikzcd}
\end{center}
so we conclude that $\mathrm{S}_0$ induces an isomorphism $\bD(\cP)\rightarrow \check{\cP}$.
\end{proof}

Theorem \ref{thm:duality} is the most important duality theorem for Picard stacks that we apply in this paper. The proof of this result is due to Donagi and Pantev \cite{Donagi-Pantev}. Chen and Zhu \cite{Chen-Zhu} proved an analogue for positive characteristic. In an aim to make this paper as self-contained as possible, and in order to not leave too many details for the reader to fill in, we dedicate appendix \ref{app:Proof} to review the main arguments of the proof.

\begin{rmk} \label{rmk:dualJ}
More generally, we can consider a commutative group scheme $J' \rightarrow C$ which is fiberwise open and closed inside $J^1$. 
We can then define the group $\check{J}' \rightarrow C$ as the open and closed subscheme of $\check{J}^1$ such that the isomorphism $J^1/J^0 \rightarrow \check{J}^1/\check{J}^0$ matches $J^1/J'$ with $\check{J}'/\check{J}^0$. The same proof of Corollary \ref{corol:duality} shows that the Picard stacks $\cP'=\Bun_{J'/C}$ and $\check{\cP}'=\Bun_{\check{J}'/C}$ are dual Beilinson $1$-motives.
\end{rmk}

\subsection{Abelian Hecke compatibility}
A very important property of geometric Langlands duality is the compatibility of tensorization (or Wilson) operators with Hecke (or t'Hooft) operators. An ``abelian version" of this compatibility is already detectable from Theorem \ref{thm:duality} and the properties of the Fourier-Mukai transform on Beilinson $1$-motives.

With a point $\tilde{p} \in \tilde{C}$ and a cocharacter $\lambda \in X_*(T)$ we can associate a $T$-bundle $\sO_{\tilde{C}}(\lambda \tilde{p})$ on $\tilde{C}$. The natural map $\Bun_{T/\tilde{C}}\rightarrow \cP$ induced from the norm map sends this bundle to a $J$-torsor on $C$ that we denote by $\sS_{\lambda,\tilde{p}}$. The \emph{translation operator} associated with $(\lambda,\tilde{p})$ is the morphism
\begin{align*}
\mathbf{Trans}_{\lambda,\tilde{p}}: \cP & \longrightarrow \cP \\
\mathscr{E} & \longmapsto \mathscr{E} \otimes\sS_{\lambda,\tilde{p}}.
\end{align*} 
The \emph{abelian Hecke operator} associated with $(\lambda,\tilde{p})$ is the morphism of derived categories obtained by taking convolution with the skyscraper sheaf $\sO_{\sS_{\lambda,\tilde{p}}}$
\begin{align*}
\bH^{\ab}_{\lambda,\tilde{p}}:D^b(\QCoh(\cP)) & \longrightarrow D^b(\QCoh(\cP)) \\
\sF & \longmapsto \sF * \sO_{\sS_{\lambda,\tilde{p}}}.
\end{align*} 
Equivalently, $\bH^{\ad}_{\lambda,\tilde{p}}$ is the integral transform defined by the structure sheaf of the graph of $\mathbf{Trans}_{\lambda,\tilde{p}}$ inside $\cP\times \cP$.

Dually, with a point $\tilde{p}\in \tilde{C}$ and a character $\chi \in X^*(T)$ we can associate the line bundle $L_{\chi,\tilde{p}}$ on $\Bun_{T/\tilde{C}}$ whose fibre over a $T$-bundle $E$ is the fibre $(L_{\chi,\tilde{p}})_E=(E_\chi)_{\tilde{p}}$ of the line bundle $E_\chi=E\times^{T,\chi} \bbA^1$. We can regard $L_{\chi,\tilde{p}}$ as an element of $\bD(\Bun_{T/\tilde{C}})$, and consider its image under the map $\bD(\Bun_{T/\tilde{C}})\rightarrow \bD(\cP)$. We denote this image by $\mathscr{L}_{\chi,\tilde{p}}$. The \emph{tensorization operator} associated with $(\chi,\tilde{p})$ is the morphism
\begin{align*}
\mathbf{Tens}_{\chi,\tilde{p}}: \bD(\cP) & \longrightarrow \bD(\cP) \\
\mathscr{E} & \longmapsto \mathscr{E} \otimes\mathscr{L}_{\chi,\tilde{p}}.
\end{align*} 
The \emph{abelian Wilson operator} associated with $(\chi,\tilde{p})$ is the following morphism of derived categories
\begin{align*}
\bW^{\ab}_{\chi,\tilde{p}}: D^b(\QCoh(\cP)) & \longrightarrow D^b(\QCoh(\cP)) \\
\sF & \longmapsto \sF \otimes \sL_{\chi,\tilde{p}}.
\end{align*} 

\begin{corol}
The duality equivalence $\bS^{\ab}:D^b(\QCoh(\cP))\rightarrow D^b(\QCoh(\check{\cP}))$ intertwines the abelian Hecke and Wilson operators. More precisely, for every point $\tilde{p}\in \tilde{C}$ and for every cocharacter $\lambda \in X_*(T)$, the following diagram commutes
\begin{center}
\begin{tikzcd}
D^b(\QCoh(\cP)) \ar{r}{\bS^{\ab}} \ar{d}{\bH^{\ab}_{\lambda,\tilde{p}}} &  \ar{d}{\bW^{\ab}_{\lambda,\tilde{p}}} D^b(\QCoh(\check{\cP}))\\
 D^b(\QCoh(\cP))\ar{r}{\bS^{\ab}} & D^b(\QCoh(\check{\cP})).
\end{tikzcd}
\end{center}
\end{corol}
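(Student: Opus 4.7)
The plan is to reduce the corollary to the defining property of the Fourier--Mukai transform on the Beilinson $1$-motive $\cP$ together with the explicit description of the duality isomorphism $\mathrm{S}\colon \bD(\cP)\to \check{\cP}$ built from the norm map.

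First, recall from section \ref{sec:PicardStacks} that on a Beilinson $1$-motive $\cP$, the Fourier--Mukai transform $\Phi_{\cP}$ intertwines convolution with tensor product and sends the skyscraper sheaf $\sO_{x}$ at a point $x\in \cP$ to the line bundle $\mathbf{P}|_{\{x\}\times \bD(\cP)}$. Applied to $\sO_{\sS_{\lambda,\tilde{p}}}$, this yields a canonical isomorphism
$$\Phi_{\cP}(\sF * \sO_{\sS_{\lambda,\tilde{p}}}) \cong \Phi_{\cP}(\sF) \otimes \mathbf{P}|_{\{\sS_{\lambda,\tilde{p}}\}\times \bD(\cP)},$$
so after pulling back along $\mathrm{S}^{-1}$ the statement of the corollary is reduced to the following claim about the duality isomorphism: under $\mathrm{S}$, the line bundle $\mathbf{P}|_{\{\sS_{\lambda,\tilde{p}}\}\times \bD(\cP)}$ on $\bD(\cP)$ corresponds to the line bundle $\mathscr{L}_{\lambda,\tilde{p}}$ on $\check{\cP}$.

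Second, I would prove this identification of line bundles by chasing the defining commutative square of $\mathrm{S}$,
\begin{center}
\begin{tikzcd}
\bD(\Bun_{T/\tilde{C}}) \ar{r}{\sim} \ar{d}{\bD(\iota)} & \Bun_{T^\vee/\tilde{C}} \ar{d}{\Nm} \\
\bD(\cP) \ar{r}{\mathrm{S}} & \check{\cP}.
\end{tikzcd}
\end{center}
On the torus level, the canonical self-duality isomorphism on top sends the $T$-bundle $\sO_{\tilde{C}}(\lambda \tilde{p})$ to the line bundle $L_{\lambda,\tilde{p}}$, as computed in the example of section \ref{sec:PicardStacks}. Since $\sS_{\lambda,\tilde{p}}$ is by definition the image of $\sO_{\tilde{C}}(\lambda\tilde{p})$ under the norm-type map into $\cP$, and $\mathscr{L}_{\lambda,\tilde{p}}$ is the image of $L_{\lambda,\tilde{p}}$ under the norm map into $\check{\cP}$, the desired identification follows from the functoriality of the Poincaré line bundle under the morphisms of Picard stacks appearing in this square.

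Putting the two steps together, $\bS^{\ab}\circ \bH^{\ab}_{\lambda,\tilde{p}} \cong \bW^{\ab}_{\lambda,\tilde{p}}\circ \bS^{\ab}$, which is the content of the corollary. The main technical input is the second paragraph: verifying the compatibility of the Poincaré bundle of $\cP$ with that of $\Bun_{T/\tilde{C}}$ under the morphism $\bD(\iota)$, which in Deligne's dictionary amounts to a careful but routine check at the level of $2$-term complexes of sheaves.
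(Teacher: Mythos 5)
Your argument is correct and follows essentially the same route as the paper: reduce via the multiplicativity of the Fourier--Mukai transform to identifying the image of the skyscraper at $\sS_{\lambda,\tilde p}$, then chase the commutative square defining $\mathrm{S}$ to match it with $\mathscr{L}_{\lambda,\tilde p}$ via the explicit self-duality of $\Bun_{T/\tilde C}$ computed in section~\ref{sec:PicardStacks}. The paper states the latter identification more tersely as $(\mathrm{S}^{-1})^*\sS_{\lambda,\tilde p}=\mathscr{L}_{\lambda,\tilde p}$, referring to the same square and the same example, so the content is identical.
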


\begin{proof}
The key is the commutativity (by construction) of the following square
\begin{center}
\begin{tikzcd}
\bD(\Bun_{T/\tilde{C}}) \ar{r}{\sim} \ar{d} & \Bun_{T^\vee/\tilde{C}} \ar{d} \\
\bD(\cP) \ar{r}{\mathrm{S}} & \check{\cP},
\end{tikzcd}
\end{center}
and the properties of the canonical isomorphism $\Bun_{T^\vee/\tilde{C}}\rightarrow\bD(\Bun_{T/\tilde{C}})$ explained in section \ref{sec:PicardStacks}.
This implies that, for any $\tilde{p}\in \tilde{C}$ and for any character $\chi\in X^*(T)$, we have $\mathrm{S}(\mathscr{L}_{\chi,\tilde{p}})=\sS_{\chi,\tilde{p}}$ and, dually, for any cocharacter $\lambda \in X_*(T)$, 
\begin{equation*}
	(\mathrm{S}^{-1})^* \sS_{\lambda,\tilde{p}} = \mathscr{L}_{\lambda,\tilde{p}}.
\end{equation*} 
Here, we are regarding the point $\sS_{\lambda,\tilde{p}}$ of $\cP$ as a line bundle on  $\bD(\cP)$ and $\mathscr{L}_{\lambda,\tilde{p}}$ as a line bundle on $\check{\cP}$. Now, we can just check, for any object $\sF$ of $D^b(\QCoh(\cP))$,
\begin{align*}
\bS^{\ab}(\bH^{\ab}_{\lambda,\tilde{p}}(\sF))=\bS^{\ab}(\sF * \sO_{\sS_{\lambda,\tilde{p}}})=(\mathrm{S}^{-1})^* (\Phi_{\cP}(\sF) \otimes \Phi_{\cP}(\sO_{\sS_{\lambda,\tilde{p}}})) = \bS^{\ab}(\sF)\otimes \bS^{\ab}(\sO_{\sS_{\lambda,\tilde{p}}}).
\end{align*}
Thus, we just need to compute $\bS^{\ab}(\sO_{\sS_{\lambda,\tilde{p}}})$. The Fourier-Mukai transform of a skyscraper sheaf is easy to obtain
\begin{equation*}
\Phi_{\cP}(\sO_{\sS_{\lambda,\tilde{p}}}) =\sS_{\lambda,\tilde{p}}.
\end{equation*} 
Therefore, 
\begin{equation*}
\bS^{\ab}(\sO_{\sS_{\lambda,\tilde{p}}}) = (\mathrm{S}^{-1})^* \sS_{\lambda,\tilde{p}} = \mathscr{L}_{\lambda,\tilde{p}},
\end{equation*} 
so
\begin{equation*}
\bS^{\ab}(\bH^{\ab}_{\lambda,\tilde{p}}(\sF)) = \bS^{\ab}(\sF)\otimes\mathscr{L}_{\lambda,\tilde{p}} = \bW^{\ab}_{\lambda,\tilde{p}}(\bS^{\ab}(\sF)),
\end{equation*} 
as we wanted to show.
\end{proof}

\subsection{Generalized Hitchin fibrations and abelianization} 
All the ``generalized Hitchin fibrations" that we study in this paper have a similar structure. We start with a semisimple group $G$, and consider an affine $G$-variety $M$. We denote by $\fc_M=M\git G$ the corresponding invariant quotient. In order to  ``twist", we also consider a torus $Z$ with an action on $M$ compatible with the $G$-action. We want to study the sequence of stacks
\begin{equation*}
[M/G\times Z] \longrightarrow [\fc_M/Z] \longrightarrow BZ
\end{equation*} 
and its relative version over a curve $C$. That is, given a  $Z$-torsor $L\rightarrow C$, we can pullback this sequence through the map  $C\rightarrow BZ$ determined by  $L$, and consider the stacks
 \begin{equation*}
\cM=\cM_{M,L}= \Map_L(C,[M/G\times Z])
\end{equation*} 
and
 \begin{equation*}
\cA=\cA_{M,L}= \Map_L(C,[\fc_M/ Z]),
\end{equation*} 
and the induced ``generalized Hitchin fibration" 
\begin{align*}
h=h_{M,L}: \cM  \longrightarrow  \cA.
\end{align*} 
Note that the points of $\cM$ are ``Higgs pairs" of the form $(E,\varphi)$, where $E\rightarrow C$ is a  $G$-bundle and $\varphi$ is a section of the associated bundle  $(E,L)\times^{G\times Z} M$.

\begin{rmk}
More precisely, in our discussions we are not only fixing the $Z$-torsor $L$ but actually what we call the ``meromorphic datum", which carries the extra information of a section of some associated vector bundle. This distinction is not relevant for the discussion we have at hand.	
\end{rmk}

In each case, we check that the fibrations we consider have ``abelian symmetries" in the following sense. There exists some commutative group scheme $J\rightarrow C \times \cA$ and an action of the Picard stack $\cP=\Bun_{J/C}\rightarrow \cA$ on $\cM\rightarrow \cA$. Moreover, there is a Zariski open subset $M^{\reg}\subset M$ for which the map $[M^{\reg}/G\times Z]\rightarrow [\fc_M/Z]$ is a union of gerbes, and a smaller Zariski open subset $M^{\gl}\subset M$ for which the map $[M/G\times Z]\rightarrow [\fc_{M^{\gl}}/Z]$ is a single gerbe. This implies that there is a Zariski open subset $\cA^{\gl}\subset \cA$ over which $\cM\rightarrow \cA$ is a $\cP$-torsor. The existence of a ``Hitchin section" $\epsilon:\cA\rightarrow \cM$ provides a trivialization of this torsor. Summing up, we have the following.

\begin{prop}
There is an isomorphism of stacks 
\begin{align*}
u: \cP/\cA^{\gl} & \longrightarrow \cM/\cA^{\gl} \\
\mathscr{E} & \longmapsto \mathscr{E} \otimes \epsilon.
\end{align*} 
\end{prop}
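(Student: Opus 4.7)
The proposition is essentially a formal consequence of the two ingredients recorded in the paragraph that precedes it: (i) $\cM \to \cA$ is a $\cP$-torsor when restricted to $\cA^{\gl}$, and (ii) the Hitchin section $\epsilon : \cA \to \cM$ provides a global section over this locus. The plan is therefore to invoke the standard principle that a torsor under a group stack equipped with a global section is canonically trivialized by the action, and then to identify this trivialization with the map $u$.

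More concretely, I would first unpack the torsor hypothesis as the statement that the shearing map
\begin{equation*}
\alpha : \cP \times_{\cA^{\gl}} \cM \longrightarrow \cM \times_{\cA^{\gl}} \cM, \qquad (\mathscr{E}, m) \longmapsto (\mathscr{E} \otimes m,\, m),
\end{equation*}
is an equivalence of stacks over $\cA^{\gl}$. Composing the inclusion $\mathrm{id} \times \epsilon : \cP \to \cP \times_{\cA^{\gl}} \cM$ with $\alpha$ and then projecting onto the first factor of the target recovers exactly $u : \mathscr{E} \mapsto \mathscr{E} \otimes \epsilon$. The inverse $u^{-1}$ admits the explicit description: for an $S$-point $(E, \varphi) \in \cM(S)$ with image $a = h(E, \varphi) \in \cA^{\gl}(S)$, form the pair $((E, \varphi), \epsilon(a))$ in $\cM \times_{\cA^{\gl}} \cM$, and take its preimage under $\alpha$; its $\cP$-component is the desired $J$-torsor $\mathscr{E}$ with $\mathscr{E} \otimes \epsilon(a) \cong (E, \varphi)$. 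That $u \circ u^{-1}$ and $u^{-1} \circ u$ are identities then follows directly from the torsor axioms, i.e. from the equivalence property of $\alpha$.

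The only step requiring care is checking compatibility of automorphisms, since we are working with stacks rather than sets: one must verify that for every point of $\cM/\cA^{\gl}$, the action of $\cP$ identifies the band of the gerbe $\cM$ with the band of $\cP$ coming from $J$. This is where the choice of the locus $M^{\gl}$ enters: by definition, over $\cA^{\gl}$ the map $[M/G \times Z] \to [\fc_{M^{\gl}}/Z]$ is a single gerbe, so the inertia of $\cM/\cA^{\gl}$ matches the inertia of $\cP/\cA^{\gl}$, and both the Hitchin section $\epsilon$ and the action of $\cP$ preserve this matching.

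The genuine obstacle in the broader theory is not this proposition but the two inputs feeding into it: establishing that $\cM$ becomes a $\cP$-torsor over $\cA^{\gl}$ (which requires the regular-locus analysis and the identification of the band $J$ as a descent of the stabilizer group scheme), and constructing a Hitchin section $\epsilon$ that factors through the good locus. Once these preparations are available, the proposition itself is essentially a tautology, as the definitions have been arranged precisely so that the trivialization-by-section argument applies verbatim.
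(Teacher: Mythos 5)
Your proposal is correct and matches the paper's (implicit) argument: the proposition is introduced by ``Summing up, we have the following,'' i.e.\ it is stated as a direct consequence of the two facts recorded in the preceding paragraph --- that $\cM/\cA^{\gl}$ is a $\cP$-torsor and that $\epsilon$ gives a global section of it --- and the standard torsor-plus-section trivialization you spell out via the shearing map $\alpha$ and the explicit description of $u^{-1}$ is exactly what is being invoked. One thing worth flagging: the shorthand $\cM/\cA^{\gl}$ in the statement should be read as the regular locus $\cM^{\reg}/\cA^{\gl}$, as is made explicit in the concrete instantiations (for instance the theorem closing the subsection on Hitchin fibres and the global Steinberg section), since it is only $\cM^{\reg}_{G,M}$ that acquires the $\cP_{G,M}$-torsor structure; your abstract argument is unaffected, but applying it blindly to all of $\cM$ over $\cA^{\gl}$ would overreach.
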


The group $J$ has a very convenient  ``Galois description" as follows. There is a cameral curve
\begin{equation*}
\pi:\tilde{C} \longrightarrow C \times \cA
\end{equation*} 
and a Zariski open subset $\cA^{\sharp}\subset \cA^{\gl}$ such that the fibres $\pi_a:\tilde{C}_a\rightarrow C$, for $a\in \cA$ are smooth covers of $C$ with simple Galois ramification. This cameral curve is associated with some root system $\Phi_H$, and there exists some semisimple group  $H$ with maximal torus  $A\subset H$ such that the group  $J$ is a closed and open subgroup of the invariant Weil restriction
\begin{equation*}
J^1 = \pi_*(\tilde{C}\times A)^{W_H}.
\end{equation*} 
We conclude from the above that the Picard stack $\cP/\cA^\sharp$ is a Beilinson $1$-motive like the ones considered in section \ref{sec:CameralCurvesAbelianDuality}. 

Note that there is a natural embedding $A\subset G$, but that $A$ is not necessarily a maximal torus of $G$. However, in the cases we study we can easily check that the group $W_H$ is a subset of the Weyl group $W_G$ of $G$, and that the embedding $A\hookrightarrow T$, in some maximal torus  $T$ of  $G$, is equivariant. Therefore, if we consider the dominant cocharacters and characters $X_+(A)\subset X_*(A)$ and $X^+(A)\subset X^*(A)$, respectively, we have an inclusion
\begin{equation*}
X_+(A) \hookrightarrow X_+(G)
\end{equation*} 
and a projection
\begin{equation*}
X^+(G) \twoheadrightarrow X^+(A).
\end{equation*} 

Given a dominant character $\chi \in X^+(G)$, we can consider the corresponding highest weight representation $\rho_\chi:G \rightarrow \GL(V_\chi)$. For any point $p\in C$, we can define a vector bundle  $\mathscr{V}_{\chi,p}\rightarrow \cM$ with fibres
\begin{equation*}
	(\mathscr{V}_{\chi,p})_{(E,\varphi)} = (E \times^{\rho_\chi} V_\chi)_p.
\end{equation*} 
The \emph{Wilson operator} associated with $(\chi,p)$ is the following morphism
\begin{align*}
\bW_{\chi,p}: D^b(\cM) & \longrightarrow D^b(\cM) \\
\sF & \longmapsto \sF \otimes \mathscr{V}_{\chi,p}.
\end{align*} 

Consider now a character $\chi \in X^*(A)$, whose $W_H$-orbit is represented by an element of $X^+(A)$, which in turn lifts to a dominant character of $G$ that we also denote by $\chi$. For any point $\tilde{p}\in \tilde{C}_a$ lying over $(p,a)\in C\times \cA^{\sharp}$, the isomorphism $u:\cP/\cA^{\sharp}\rightarrow \cM/\cA^{\sharp}$ identifies the sheaf $\mathscr{L}_{\chi,\tilde{p}}\rightarrow \cP$ with the sheaf $\mathscr{V}_{\chi,p}\rightarrow \cM$ and in turn the abelian Wilson operator $\bW^{\ab}_{\chi,\tilde{p}}$ with the Wilson operator $\bW_{\chi,p}$.

A \emph{Hecke correspondence} between two pairs $(E_1,\varphi_1)$ and $(E_2,\varphi_2)$ in $\cM$ is an isomorphism  
\begin{equation*}
\psi: (E_1,\varphi_1)|_{C\setminus \left\{p\right\}} \rightarrow (E_2,\varphi_2)|_{C\setminus \left\{p\right\}}
\end{equation*} 
of the restrictions of the pairs $(E_i,\varphi_i)$ to the complement of some point $p\in C$. Trivializing  $E_1$ and $E_2$ in a formal neighbourhood $\mathbb{D}_p$ of  $p$, the map $\psi$ determines an element of the formal loop group $G(\mathbb{D}^\times_p)$, well defined up to the left and right action of the formal arc group $G(\mathbb{D}_p)$ induced by changing trivialization. This determines a $G(\mathbb{D}_p)$-orbit of the affine Grassmannian $\mathrm{Gr}_G=G(\mathbb{D}^\times_p)/G(\mathbb{D}_p)$ of $G$, labeled by some dominant cocharacter $\lambda(\psi)\in X_+(G)$. 

Fix a dominant cocharacter $\lambda \in X_+(G)$ and a point $p\in C$. The \emph{$\cM$-Hecke stack} $\cH_{\lambda,p}$ is the stack parametrizing tuples $((E_1,\varphi_1),(E_2,\varphi_2),\psi)$, where the $(E_i,\varphi_i)$ are pairs in $\cM$ and $\psi: (E_1,\varphi_1)|_{C\setminus \left\{p\right\}} \rightarrow (E_2,\varphi_2)|_{C\setminus \left\{p\right\}}$ is a Hecke correspondence between them with $\lambda(\psi)=\lambda$. There are natural projections
\begin{center}
\begin{tikzcd}
& \cH_{\lambda,p} \ar{ld}{p_1} \ar{rd}{p_2} & \\
	\cM && \cM.
\end{tikzcd}
\end{center}
The \emph{Hecke operator} associated with $(\lambda,p)$ is the integral transform
\begin{align*}
\bH_{\lambda,p}: D^b(\cM) & \longrightarrow D^b(\cM) \\
\sF & \longmapsto \R p_{2*}(\mathrm{L} p_1^*(\sF) \otimes \sO_{\cH_{\lambda,p}}).
\end{align*} 

Consider then a cocharacter $\lambda \in X_+(A)$ and its image in $X_+(G)$, that we also denote by $\lambda$. For any point  $\tilde{p}\in \tilde{C}_a$ lying over $(p,a)\in C\times \cA^\sharp$, the isomorphism  $u:\cP/\cA^\sharp \rightarrow \cM/\cA^{\sharp}$ identifies the graph of the translation operator $\mathbf{Trans}_{\lambda,\tilde{p}}$ with the $\cM$-Hecke stack $\cH_{\lambda,p}$, and in turn the abelian Hecke operator $\bH_{\lambda,p}^{\ab}$ with the Hecke operator $\bH_{\lambda,p}$.

\subsection{Generic global abelianized duality}
The isomorphism $u:\cP/\cA^{\sharp}\rightarrow \cM/\cA^{\sharp}$ allows us to understand $\cM/\cA^{\sharp}$ as a Beilinson $1$-motive like the ones from section \ref{sec:CameralCurvesAbelianDuality}. The dual Beilinson $1$-motive is isomorphic to $\check{\cP}/\cA^{\sharp}$, the stack of $\check{J}$-torsors, for $\check{J}\rightarrow C \times \cA^{\sharp}$ the open and closed subscheme of
\begin{equation*}
\check{J}^1 = \pi_*(\tilde{C}\times A^\vee)^W
\end{equation*} 
such that the isomorphism $J^1/J^0\rightarrow \check{J}^1/\check{J}^0$ matches $J^1/J$ with $\check{J}/\check{J}^0$.

In each case that we study in this paper, we find a ``dual generalized Hitchin fibration" $\check{\cM}\rightarrow \cA$ with an isomorphism $\check{u}:\check{\cP}/\cA^{\sharp}\rightarrow \check{\cM}/\cA^{\sharp}$. More precisely, we find a group $\check{G}$ and a $\check{G}$-variety $\check{M}$ with a $Z$-action such that
\begin{itemize}
	\item $M\git G$ is isomorphic to  $\check{M}\git \check{G}$,
	\item the cameral covers associated with $(G,M)$ and $(\check{G},\check{M})$ can be identified, and
	\item the corresponding group $\check{J}\rightarrow C\times \cA^{\sharp}$ is related with $J$ as above.
\end{itemize}

\begin{rmk}
Note that, despite what the notation might suggest, the groups $G$ and $\check{G}$ are not Langlands dual, but they do contain Langlands dual tori $A$ and $A^\vee$, respectively. In most cases, $J$ is in fact the corresponding  $J_{\pi,H}$, associated with the semisimple group $H$. However, there are notable exceptions where $J$ is not of that form, but rather a different open and closed subgroup of $J^1$.
\end{rmk}

The results of this section can be summarized as follows.

 \begin{thm} \label{thm:GlobalAbelianizedDuality}
There exists an equivalence of derived categories
\begin{equation*}
\bS: D^b(\QCoh(\cM_{G,M}/\cA^{\sharp})) \longrightarrow D^b(\QCoh(\cM_{\check{G},\check{M}}/\cA^{\sharp}))
\end{equation*} 
which satisfies the following properties:
\begin{enumerate}
	\item \emph{(Normalization)}. The structure sheaf of the Hitchin section $\epsilon(\cA^{\sharp})$ is mapped to the structure sheaf of the total space. That is,
		\begin{equation*}
		\bS(\sO_{\epsilon(\cA^{\sharp})}) = \sO_{\cM_{\check{G},\check{M}}/\cA^{\sharp}}.
		\end{equation*} 
	\item \emph{(Hecke compatibility)}. The equivalence intertwines Hecke and Wilson operators. That is, for every point $p\in C$ and every cocharacter $\lambda \in X_+(A)$, the following square commutes
\begin{center}
\begin{tikzcd}
D^b(\QCoh(\cM_{G,M}/\cA^{\sharp})) \ar{r}{\bS} \ar{d}{\bH_{\lambda,p}} &  \ar{d}{\bW_{\lambda,p}} D^b(\QCoh(\cM_{\check{G},\check{M}}/\cA^{\sharp}))\\
 D^b(\QCoh(\cM_{G,M}/\cA^{\sharp}))\ar{r}{\bS} & D^b(\QCoh(\cM_{\check{G},\check{M}}/\cA^{\sharp})).
\end{tikzcd}
\end{center}
\end{enumerate}
\end{thm}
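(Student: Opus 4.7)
The plan is to transport the abelian duality equivalence of Corollary \ref{corol:duality} along the Hitchin-section trivializations $u: \cP/\cA^{\sharp} \to \cM/\cA^{\sharp}$ and $\check{u}: \check{\cP}/\cA^{\sharp} \to \check{\cM}/\cA^{\sharp}$ constructed in the previous subsections. Concretely, I would set
\begin{equation*}
\bS := (\check{u}^{-1})^* \circ \bS^{\ab} \circ u^*,
\end{equation*}
where $\bS^{\ab}: D^b(\QCoh(\cP/\cA^{\sharp})) \to D^b(\QCoh(\check{\cP}/\cA^{\sharp}))$ is the relative version of the equivalence from Corollary \ref{corol:duality}, applied to the families of Beilinson $1$-motives $\cP/\cA^{\sharp}$ and $\check{\cP}/\cA^{\sharp}$ associated with the relative cameral curve $\pi: \tilde{C}\to C\times \cA^{\sharp}$. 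Since $u$ and $\check{u}$ are isomorphisms of stacks, and $\bS^{\ab}$ is an equivalence, $\bS$ is automatically an equivalence of derived categories.

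For the normalization property, the key observation is that $u$ is defined by $u(\mathscr{E}) = \mathscr{E} \otimes \epsilon$, so under $u^{-1}$ the Hitchin section $\epsilon(\cA^{\sharp})\subset \cM/\cA^{\sharp}$ corresponds to the unit section $e: \cA^{\sharp}\to \cP/\cA^{\sharp}$ of the relative Picard stack. Hence $u^*\sO_{\epsilon(\cA^{\sharp})} = \sO_{e(\cA^{\sharp})}$. For a Beilinson $1$-motive, the Fourier--Mukai transform of the skyscraper at the unit section is the trivial line bundle on the dual, since the Poincaré bundle restricts to the trivial sheaf over the unit; hence $\bS^{\ab}(\sO_{e(\cA^{\sharp})}) = \sO_{\check{\cP}/\cA^{\sharp}}$. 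Applying $(\check{u}^{-1})^*$, which takes the structure sheaf to the structure sheaf since $\check{u}$ is an isomorphism, gives $\bS(\sO_{\epsilon(\cA^{\sharp})}) = \sO_{\check{\cM}/\cA^{\sharp}}$.

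For Hecke compatibility, I would invoke the two operator identifications recorded just before the statement of the theorem. Namely, for every $\lambda \in X_+(A)$ and every $p\in C$ with lift $\tilde{p}\in \tilde{C}_a$, the isomorphism $u$ matches the $\cM$-Hecke stack $\cH_{\lambda,p}$ with the graph of the translation operator $\mathbf{Trans}_{\lambda,\tilde{p}}$ on $\cP$, so that $u^*\bH_{\lambda,p} u_*^{-1} = \bH^{\ab}_{\lambda,\tilde{p}}$; dually, $\check{u}$ matches the tautological bundle $\mathscr{V}_{\lambda,p}$ with the line bundle $\mathscr{L}_{\lambda,\tilde{p}}$, so that $\check{u}^*\bW_{\lambda,p}\check{u}_*^{-1} = \bW^{\ab}_{\lambda,\tilde{p}}$. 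Composing with the abelian compatibility $\bW^{\ab}_{\lambda,\tilde{p}}\circ \bS^{\ab} = \bS^{\ab}\circ \bH^{\ab}_{\lambda,\tilde{p}}$ proved in the preceding subsection, one obtains $\bW_{\lambda,p}\circ \bS = \bS \circ \bH_{\lambda,p}$, which is the required statement.

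The step I expect to be the main obstacle is not the assembly of these pieces, which is formal, but rather verifying that the operator identifications $\cH_{\lambda,p}\cong \mathrm{Graph}(\mathbf{Trans}_{\lambda,\tilde{p}})$ and $\mathscr{V}_{\lambda,p}\cong \mathscr{L}_{\lambda,\tilde{p}}$ genuinely hold in families over $\cA^{\sharp}$, not just fiberwise, and that both conventions match once a cocharacter $\lambda\in X_+(A)$ is pushed forward along the inclusion $A\hookrightarrow T\subset G$ on the $\cM$-side and along $A\overset{\sim}{\to} A^{\vee}$ on the $\check{\cM}$-side. This reduces to the careful bookkeeping of the Weyl-equivariant maps $X_*(A)\to X_*(T)$ and $X_*(A)\to X^*(A^\vee)$ already set up in the discussion of abelianization, so once it is granted fiberwise, relativization is automatic by functoriality of all the constructions over $\cA^{\sharp}$.
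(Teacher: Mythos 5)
Your proof reconstructs exactly the implicit argument the paper sketches just before the theorem: transport the abelian duality of Corollary \ref{corol:duality} along the Hitchin-section trivializations $u$ and $\check{u}$ of the regular loci, use that the Fourier--Mukai transform carries the skyscraper at the unit section to the structure sheaf of the dual for the normalization, and compose the operator identifications from the abelianization subsection with the abelian Hecke compatibility corollary for the Hecke property. This is precisely the intended route, so the proposal is correct.
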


\subsection{Summary of dualities}
We finish this section by providing a list of the pairs $(G,M)$ and $(\check{G},\check{M})$ that we are matching in this paper, and for which there is a result of the form of the above Theorem \ref{thm:GlobalAbelianizedDuality}.
\begin{itemize}
	\item \textbf{Untwisted multiplicative Hitchin fibrations with simply-laced group}. In this case, $G$ is any semisimple group of simply-laced type and $M$ is a reductive monoid such that the semisimple part of its unit group is the simply-connected cover $G^{\Sc}$ of $G$. The group $G$ acts on $M$ through the adjoint action. The variety $\check{M}$ is equal to $M$, and $\check{G}=G^\vee$ is the Langlands dual group of $G$. The action of $G^\vee$ on $M$ is again the adjoint action (which is well defined since $G$ is simply-laced and thus $\Ad(G)=\Ad(G^\vee)$).
	\item \textbf{Twisted multiplicative Hitchin fibrations with a simply-connected simply-laced group and untwisted multiplicative Hitchin fibrations with adjoint non-simply-laced group}. In this case, $G$ is a simply-connected semisimple group of simply-laced type and without factors of type $\sfA_{2\ell}$ and $M$ is again a reductive monoid such that the semisimple part of its unit group is $G$. However, the action of $G$ is no longer the adjoint action. Instead, we consider a diagram automorphism $\theta$ of $G$, and let $G$ act on $M$ through the $\theta$-twisted conjugation action. The group $\check{G}$ is the dual group $H^\vee$ of the fixed point group $H=G^\theta$, or, equivalently, the adjoint group of the coinvariant group $G_\theta$ constructed in section \ref{sec:folding}. The variety $\check{M}$ is the monoid $M_\theta$ constructed in section \ref{sec:coinv_monoids}, with unit group $G_\theta$. The action of $\check{G}$ on $\check{M}$ is the adjoint action.
	\item \textbf{Twisted multiplicative Hitchin fibrations. Case $\sfA^{(2)}_{2\ell}$.} In this case $G=\SL_{2\ell+1}$, for some natural number $\ell$, and  $M$ is a reductive monoid such that the semisimple part of its unit group is $G$. The action of  $G$ on $M$ is twisted conjugation under the involution $\theta$ preserving  the standard pinning of $\SL_{2\ell+1}$. The variety $\check{M}$ is equal to $M$, and we also take $\check{G}=G$. However, now we consider twisted conjugation under the order $4$ automorphism $\vartheta$  ``dual" to $\theta$ introduced in section \ref{sec:inv_Aeven2}. 
\end{itemize}

\section{Untwisted multiplicative Hitchin fibrations} \label{sec:UntwistedMultHitchin}

\subsection{Construction}
Let $C$ be a smooth complex projective curve. A \emph{multiplicative Hitchin fibration} over $C$ is determined by the following data:
 \begin{itemize}
	 \item A complex semisimple group $G$ of rank $r$. We denote by $G^{\Sc}$ its simply-connected cover.
	 \item A very flat \cite[Definition 2.3.10]{Griffin_Lemma} reductive monoid $M$, with unit group $M^\times$  and with a prescribed isomorphism 
		 $$M^{\der}\overset{\sim}{\longrightarrow} G^{\Sc},$$
where  $M^\der=(M^\times)^{\der}$ denotes the semisimple part of $M^\times$.
\end{itemize}

The group $G$ acts on  $M^\times$ via the adjoint action, and in turn this action extends to an action on the monoid $M$. We can thus consider the GIT quotient $\fc_{M}=M\git \Ad(G)$, which is isomorphic to $\fc_{G^{\Sc}} \times \bbA_M$; where $\fc_{G^{\Sc}}=G^{\Sc}\git \Ad(G)$ is the GIT quotient of $G^{\Sc}$ by the adjoint action on itself and $\bbA_M:= M \git (G^{\Sc} \times G^{\Sc})$ is the \emph{abelianization} of $M$ \cite{Vinberg}. 
Since $G^{\Sc}$ is simply-connected, the GIT quotient $\fc_{G^\Sc}$ is smooth and isomorphic to the affine space $\bbA^r$. 

Consider the center $Z_{M^\times}$ of $M^\times$ and its neutral connected component  $Z=Z^0_{M^\times}$, which is a torus. The natural left multiplication action of $Z$ on $M$ commutes with the adjoint action of $G$ and induces an action of $Z$ on $\bbA_M$. In fact,  $\bbA_M$ can be understood as a commutative monoid (in this case, a toric variety) with unit group  $$\bbA_M^\times=Z_{M^\times}/(Z_{M^\times} \cap M^\der).$$

Consider now the following sequence at the level of stacky quotients
\begin{center}
\begin{tikzcd}
	\left[M/(G\times Z)\right] \rar & \left[\fc_M / Z\right] \rar & \left[\bbA_M / Z\right] \rar & BZ.
\end{tikzcd}
\end{center}
Let $D$ be a map  $C\rightarrow[\bbA_M/Z]$, and consider the mapping stacks 
\begin{align*}
\cM_{G,M}&:=	\cM_{G,M,D} := \Map_{D}(C,\left[M/(G\times Z)\right]) = \Map_C (C,C \times_{[\bbA_M/Z],D} \left[M/(G\times Z)\right]), \\
\cA_M &:= \cA_{M,D} := \Map_{D}(C,\left[\fc_M/Z\right])= \Map_C (C,C \times_{[\bbA_M/Z],D}\left[\fc_M/Z\right]).
\end{align*} 
Since we are fixing $D$, we generally drop it from the notation, although both constructions depend on it. Moreover, note that $\cA_M$ only depends on $M$, and not on $G$, since $\fc_M=M\git \Ad(M^\times)$.

\begin{defn}
The \emph{multiplicative Hitchin fibration} associated with the data $(G,M)$ and with meromorphic datum $D$ is the natural map
 \begin{align*}
h:=h_{G,M,D}: \cM_{G,M,D} & \longrightarrow \cA_{M,D}, 
\end{align*} 
induced from the map $[M/(G\times Z)]\rightarrow [\fc_M/Z]$.
\end{defn}

\begin{rmk}
Note that a map $D:C\rightarrow [\bbA_M/Z]$ is given by the following data
\begin{itemize}
	\item a $Z$-torsor  $L\rightarrow C$,	
	\item a section $s \in H^0(C,L(\bbA_M))$, where $L(\bbA_M)$ is the vector bundle associated with $L$ and the action of $Z$ on $\bbA_M$. 
\end{itemize}
\end{rmk}

\subsection{Multiplicative Higgs bundles and the Vinberg monoid} Our notion of multiplicative Hitchin fibration is the one studied in the paper of G. Wang \cite{Griffin_Lemma}, considered also previously in the works of Bouthier and J. Chi \cite{Bouthier_Dimension,Bouthier_Hitchin,JChi}. In other works, like Elliott--Pestun \cite{Elliott-Pestun} and Hurtubise--Markman \cite{Hurtubise-Markman}, the multiplicative Hitchin fibration is introduced in terms of ``meromorphic" \emph{multiplicative Higgs bundles} (that is, Higgs pairs which are group-valued, instead of taking values on a vector space, but with rational singularities). 

More precisely, by a multiplicative Higgs bundle we mean a pair $(E,\varphi)$ formed by a $G$-bundle $E\rightarrow C$ and a rational section  $\varphi$ of the adjoint group bundle  $E(G)= E\times_{\Ad} G$. We can also take $\varphi$ to be a section of the group bundle $E(G^{\Sc})=E\times_{\Ad} G^{\Sc}$. This section is rational, meaning that it has singularities at some points $p_1,\dots,p_n \in C$. As a direct consequence of Iwahori's theorem, the singularity of the section $\varphi$ at each one of the $p_i$ is determined by a dominant cocharacter $\lambda_i$ of $G$ (provided a choice of Borel pair $(B,T)$ for $G$).

The choice of $(B,T)$ determines a set of simple roots $\Delta=\left\{\alpha_1,\dots \alpha_r\right\}$ for  $G$, and we let $\bbA_G$ denote the affine space  $\bbA^r$ with the natural $T$-action
\begin{equation*}
t\cdot \bm{x}= t \cdot (x_1,\dots,x_r) = (\alpha_1(t)\cdot x_1,\dots,\alpha_r(t) \cdot x_r) =: \bm{\alpha}(t) \cdot  \bm{x}.
\end{equation*} 
This action factors through an action of $T^{\ad}=T/Z_G$. Let $\sO_{C}(p)$ be the bundle defined by the divisor of a point $p\in C$, with its canonical section $z_{p}$. We can regard this as the map $C\rightarrow [\bbA^1/\Gm]$ sending $p$ to the closed point $0\in [\bbA^1/\Gm]$. A coweight $\lambda$ of  $G$ determines a map  $\lambda:\Gm \rightarrow T^{\ad}$, and we have, for $z_p \neq 0$
\begin{equation*}
\lambda(z_p) \cdot (1,\dots,1) = (z_p^{\langle \alpha_1,\lambda\rangle},\dots,z_p^{\langle \alpha_r,\lambda\rangle} ),
\end{equation*} 
which extends to $z_p=0$ if and only if  $\lambda$ is dominant. In other words, $\bbA_G$ is the affine toric embedding of  $T^{\ad}$ determined by the cone generated by the dominant coweights,
\begin{equation*}
\mathbb{R}_{\geq 0} X_+(T^{\ad}) \subset X_*(T^{\ad}) \otimes_{\mathbb{Z}} \mathbb{R}.
\end{equation*} 

The \emph{Vinberg monoid} or \emph{enveloping monoid} $\Env(G^{\Sc})$ of $G^{\Sc}$ \cite{Vinberg} is a reductive monoid with unit group equal to
\begin{equation*}
G^{\Sc}_+ := (G^{\Sc}\times T^{\Sc})/Z_{G^{\Sc}}
\end{equation*} 
and whose abelianization coincides with $\bbA_G$. Since the cocharacters $\lambda_i$ are dominant, the natural multiplicative map  
\begin{align*}
\bm{\lambda}=(\lambda_1,\dots,\lambda_n): \Gm^n & \longrightarrow T^{\ad} \\
(z_1,\dots,z_n) & \longmapsto \lambda_1(z_1)\cdots \lambda_n(z_n)
\end{align*} 
extends to a map $\bm{\lambda}:\bbA^n \rightarrow \bbA_G$. We can then consider the fibered product
\begin{equation*}
M_{\bm{\lambda}} = \Env(G^{\Sc}) \times_{\bbA_G, \lambda} \bbA^n,
\end{equation*} 
which is a reductive monoid with abelianization $\bbA^n$ and such that $M^\der = G^{\Sc}$. The multiplicative Hitchin fibration associated with the data $(G,M_{\bm{\lambda}})$ is precisely the fibration considered in the papers \cite{Hurtubise-Markman, Elliott-Pestun}, as explained in \cite{Bouthier_Hitchin,Griffin_Lemma}. 

In fact, all multiplicative Hitchin fibrations arise in a similar way. Indeed, Vinberg's classification \cite{Vinberg} implies that the Vinberg monoid is \emph{versal} among very flat reductive monoids, meaning that every very flat reductive monoid $M$ with  $M^\der=G^{\Sc}$ can be obtained as a fibered product of its abelianization with the Vinberg monoid $\Env(G^{\Sc})$.

\begin{rmk}
If $M$ is a very flat reductive monoid with  $M^\der=G^{\Sc}$ and we denote by $Z$ the connected center of $M^\times$, then what Vinberg classification tells us is that there is a natural map $\phi_{Z}:Z\rightarrow T^{\ad}$ extending to a map $\phi_M: \bbA_M \rightarrow \bbA_G$ of the abelianizations and that $M$ is obtained as the pullback of $\Env(G^{\Sc})$ through this map.
\end{rmk}

\begin{rmk}
The abelianization map $\Env(G^{\Sc})\rightarrow \bbA_G$ of the Vinberg monoid admits a natural section $\delta_{\Env(G^{\Sc})}: \bbA_G \rightarrow \Env(G^{\Sc})$ extending the map $T^{\ad} \rightarrow T^{\Sc}_+$, $t \mapsto (t,t^{-1})$, where $T^{\Sc}_+ = (T^{\Sc} \times T^{\Sc})/Z_{G^{\Sc}}$ is the maximal torus of $G^{\Sc}_+$. More generally, if $M$ is a very flat reductive monoid with $M^{\der}=G^{\Sc}$ and we consider the maximal torus $T_M=(Z_M \times T^{\Sc})/Z_{G^\Sc}$ of $M^\times$ determined by $T^{\Sc}$, then we can define the map  $Z  \rightarrow T_M$,  $z\mapsto (z,\phi_Z(z)^{-1})$, and this map extends to a section $\delta_M: \bbA_M \rightarrow M$ of the abelianization map  $M\rightarrow \bbA_M$.
\end{rmk}

\subsection{Steinberg quasi-sections} \label{sec:Steinberg}
As above, we fix a Borel pair $(B,T)$ for $G$, which determines a set of simple roots $\Delta$. Furthermore, we also fix a pinning of $G$. This is determined by the extra choice of $1$-parameter groups
$u_\alpha: \Ga  \rightarrow  U_\alpha$
parametrizing the root groups $U_\alpha$ associated with each root  $\alpha \in \Delta$ in such a way that, for every $t\in T$ and for every $x\in \Ga$,
 \begin{equation*}
\Ad_t(u_\alpha(x))=u_\alpha(\alpha(t) x).
\end{equation*} 

A \emph{Coxeter datum} $(\sigma,\dot{\bm{s}})$ \cite[Definition 2.2.7]{Griffin_Lemma} consists of the following
\begin{itemize}
	\item a bijection $\sigma:\left\{1,\dots,r\right\}\rightarrow \Delta$ (i.e. a total ordering of the simple roots)	
	\item a choice $\dot{\bm{s}}=\left\{\dot{s}_\alpha: \alpha \in \Delta\right\}$ of a representative $\dot{s}_\alpha \in N_G(T)$ of each simple reflection $s_\alpha \in W$ associated with a simple root $\alpha \in \Delta$.
\end{itemize}
Once $\Delta$ is fixed, we say that an element $w\in W$ of the Weyl group is a ($\Delta$-)\emph{Coxeter element} if it can be written as $w=w_\sigma=s_{\sigma(1)}\dots s_{\sigma(r)}$, for some total ordering $\sigma$ of $\Delta$. We denote by $\mathrm{Cox}(W,\Delta)$ the set of Coxeter elements of $W$.

If we fix a Coxeter datum $(\sigma,\dot{\bm{s}})$, we can consider the associated \emph{Steinberg quasi-section}
\begin{align*}
\epsilon^{(\sigma,\dot{\bm{s}})}: \fc_G \cong \bbA^r & \longrightarrow G^{\Sc} \\
(x_1,\dots,x_r) & \longmapsto \prod_{i=1}^r u_{\sigma(i)}(x_i) \dot{s}_{\sigma(i)}.
\end{align*} 
This is a quasi-section in the sense that, composed with the projection $G^{\Sc}\rightarrow \fc_G$ it is not necessarily equal to the identity, but rather to an automorphism of $\fc_G$. The image  $\epsilon^{(\sigma,\dot{\bm{s}})}(\fc_G)$ is called the \emph{Steinberg cross-section} associated with $(\sigma,\dot{\bm{s}})$. 

The dependence on the Coxeter datum is not too strong: if $(\sigma, \dot{\bm{s}})$ and $(\sigma', \dot{\bm{s}}')$ are such that $\sigma=\sigma'$, then there exists some  $t\in T$ such that the corresponding cross-sections are conjugate under $t$; on the other hand, if $\dot{\bm{s}}=\dot{\bm{s}}'$, then, for any $x, x' \in \bbA^r$ with $x_{\sigma(i)}=x_{\sigma'(i)}$ for every $i=1,\dots,r$, the elements  $\epsilon^{\sigma,\dot{\bm{s}}}(x)$ and $\epsilon^{\sigma',\dot{\bm{s}}}(x')$ are conjugate in $G^{\Sc}$. 

There is a natural way \cite[Proposition 2.4.4]{Griffin_Lemma} of extending a Steinberg quasi-section $\epsilon^{(\sigma,\dot{\bm{s}})}$ to a reductive monoid $M$ with derived group equal to $G^{\Sc}$, by putting
\begin{align*}
\epsilon_M^{(\sigma,\dot{\bm{s}})}: \fc_M \cong \bbA_M \times \fc_G & \longrightarrow M \\
(a,x) & \longmapsto \delta_M(a) \epsilon^{(\sigma,\dot{\bm{s}})}(x).
\end{align*} 
This provides a quasi-section of the GIT quotient $M\rightarrow \fc_M$. 
However, in this case the dependence of the section on the Coxeter datum is strong: indeed, for two different Coxeter data $(\sigma,\dot{\bm{s}})$ and $(\sigma',\dot{\bm{s}})$, the elements $\epsilon^{(\sigma,\dot{\bm{s}})}(0)$ and $\epsilon^{(\sigma',\dot{\bm{s}})}(0)$ are not conjugate (see \cite[Remark 2.2.18]{JChi}).

\subsection{Symmetries} An element $x\in M$ is \emph{regular} (under the $G$-action) if its stabilizer
\begin{equation*}
I_{G,M,x}:=\left\{g\in G: \Ad_g(x)=x\right\}
\end{equation*} 
has the minimal possible dimension for all elements of $M$. We denote by $M^{\reg}\subset M$ the subset of regular elements. We denote by $\cM_{G,M}^{\reg}$ the subset of sections in $\cM_{G,M}$ factorizing through $[M^{\reg}/(G\times Z)]$. Given any $a\in \cA_M(\C)$, we denote by $\cM^{\reg}_{G,M,a}$, the fibre of $a$ through the restriction $h|_{\cM_{G,M}^{\reg}}$.

The stabilizers define a group scheme $I_{G,M}\rightarrow M$ called the \emph{centralizer group scheme}, which restricts to a smooth commutative group scheme over $M^{\reg}$. Under some adaptation of the typical descent argument of Ngô, J. Chi \cite[Lemma 2.4.2]{JChi} proved the following (for the Vinberg monoid, but it generalizes immediately to any very flat monoid).

\begin{prop}[J. Chi]
There is a unique commutative group scheme $J_{G,M}\rightarrow \fc_M$ with a $G$-equivariant isomorphism
\begin{equation*}
\chi^*_M J_{G,M}|_{M^{\reg}} \overset{\sim}{\longrightarrow} I_{G,M}|_{M^{\reg}},
\end{equation*} 
which can be extended to a homomorphism $\chi_M^* J_{G,M}\rightarrow I_{G,M}$. Here, $\chi_M:M\rightarrow \fc_M$ denotes the natural projection to the GIT quotient. We call this $J_{G,M}$ the \emph{regular centralizer group scheme}.
\end{prop}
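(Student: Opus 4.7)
The plan is to follow Ngô's descent strategy for regular centralizers, adapted to the reductive monoid setting by using the Steinberg quasi-section $\epsilon_M$ of Section~\ref{sec:Steinberg} in place of Kostant's section. The first step is to fix any Coxeter datum $(\sigma,\dot{\bm{s}})$ and define
$$J_{G,M} := \epsilon_M^{\,*}\,(I_{G,M}|_{M^{\reg}}),$$
which makes sense since the image of $\epsilon_M$ lies in the regular locus and $I_{G,M}|_{M^{\reg}}$ is smooth commutative. The result is a smooth commutative group scheme over $\fc_M$; its independence (up to canonical isomorphism) from the Coxeter datum will drop out a posteriori from the uniqueness statement.

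The second and crucial step is to construct the $G$-equivariant isomorphism $\chi_M^*J_{G,M}|_{M^{\reg}} \cong I_{G,M}|_{M^{\reg}}$. Pointwise, for $x \in M^{\reg}$ the $\Ad(G)$-orbit of $x$ meets the Steinberg cross-section in the unique point $y=\epsilon_M(\chi_M(x))$; choosing $g \in G$ with $\Ad_g(y)=x$ yields an isomorphism $\Ad_g: I_{G,M,y} \xrightarrow{\sim} I_{G,M,x}$, and since any two such conjugators differ by an element of the commutative group $I_{G,M,y}$, this isomorphism is intrinsic to $x$. To promote this pointwise recipe to an actual morphism of group schemes, I would use the smooth covering
$$\mu: G \times_\C \epsilon_M(\fc_M) \twoheadrightarrow M^{\reg}, \qquad (g,y) \longmapsto \Ad_g(y);$$
the commutativity of the regular centralizers trivializes the descent datum for $I_{G,M}|_{M^{\reg}}$ along $\mu$, yielding the required $G$-equivariant isomorphism.

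Uniqueness of the pair $(J_{G,M}, \mathrm{iso})$ would then follow from the fact that any two candidates agree on the regular semisimple locus $\fc_M^{\rs}$, where $I_{G,M}|_{M^{\rs}}$ is literally a torus bundle pulled back from $\fc_M^{\rs}$, and where the extension across $\fc_M \setminus \fc_M^{\rs}$ is forced by smoothness of the candidates together with normality of $\fc_M$. Finally, to extend the isomorphism on $M^{\reg}$ to a \emph{homomorphism} $\chi_M^* J_{G,M} \to I_{G,M}$ on all of $M$, I would invoke that $\chi_M^* J_{G,M}$ is flat and affine over the normal scheme $M$, while $I_{G,M}$ is affine over $M$ (as a closed subgroup of $M \times G$), and that $M \setminus M^{\reg}$ has codimension at least $2$ in the smooth locus; reflexivity of $\sO_{\chi_M^* J_{G,M}}$ as an $\sO_M$-module then forces the unique extension of the corresponding Hopf-algebra morphism.

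The main obstacle is the second step: globalizing the pointwise isomorphism of centralizers into a bona fide $G$-equivariant morphism of group schemes. The commutativity of the regular centralizers, which eliminates the ambiguity of the conjugating element, is the key input and is the heart of Ngô's insight; once the isomorphism on $M^{\reg}$ is available, the remaining steps (uniqueness and extension to all of $M$) are rather formal applications of fppf descent, normality, and reflexive-sheaf extension.
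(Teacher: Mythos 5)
The paper does not reprove this proposition itself but cites it to J.\ Chi (Lemma 2.4.2 of \cite{JChi}), describing the argument as ``an adaptation of the typical descent argument of Ngô.'' Your proposal follows exactly the Ngô descent template, and it is the right general strategy, but it contains one genuine gap that the word ``adaptation'' is hiding, and which is in fact discussed explicitly a few paragraphs later in the paper.

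Your second step rests on the assertion that for $x\in M^{\reg}$ the $\Ad(G)$-orbit of $x$ meets the fixed Steinberg cross-section $\epsilon_M^{(\sigma,\dot{\bm{s}})}(\fc_M)$ in a unique point, i.e.\ that the action map $\mu:G\times \epsilon_M(\fc_M)\to M^{\reg}$ is surjective. This is false over the boundary of the monoid. As the paper states in the paragraph following the proposition: ``in general the fibres of the GIT quotient $M\to\fc_M$ contain several regular orbits, so the fibres of $M^{\reg}\to\fc_M$ are no longer homogeneous spaces,'' and moreover ``for two different Coxeter data $(\sigma,\dot{\bm{s}})$ and $(\sigma',\dot{\bm{s}})$, the elements $\epsilon^{(\sigma,\dot{\bm{s}})}(0)$ and $\epsilon^{(\sigma',\dot{\bm{s}})}(0)$ are not conjugate.'' A single Steinberg quasi-section therefore only hits one regular orbit per boundary fibre, so $\mu$ is not a cover of $M^{\reg}$ and fppf descent along it does not produce the isomorphism on all of $M^{\reg}$ — only on the orbit $\Ad(G)\cdot\epsilon_M(\fc_M)$.

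The fix, which makes the word ``adaptation'' non-trivial, is to restrict first to the group-like locus: over $\fc_M^{\gl}=\fc_M\setminus\fD_M^\times$ the paper notes that each regular fibre is a single orbit, so your argument works verbatim there, and the complement of $\fc_M^{\gl}$ has codimension at least $2$. One then extends the isomorphism from $M^{\reg}\cap\chi_M^{-1}(\fc_M^{\gl})$ to all of $M^{\reg}$ by the same normality/reflexivity argument you already invoke in your fourth step for the extension from $M^{\reg}$ to $M$. (Alternatively one works with the disjoint union of cross-sections over all Coxeter data, but then one has to produce the gluing isomorphisms, which is essentially the same codimension-two argument in disguise.) Your first, third and fourth steps are fine as sketched; the missing idea is the reduction to $\fc_M^{\gl}$ before running the descent.
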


For any Coxeter datum $(\sigma,\dot{\bm{s}})$ and any element $a\in \fc_M$, the element  $\epsilon^{(\sigma,\dot{\bm{s}})}(a)$ is regular. Moreover, it is a result of J. Chi \cite[Section 2.4.1]{JChi}, that every regular element of $M$ is conjugate to an element of the form $\epsilon^{(\sigma,\dot{\bm{s}})}(a)$ for some $a\in \fc_M$ and some Coxeter datum  $(\sigma,\dot{\bm{s}})$.
If $a\in \fc_{M^\times}:=M^\times \git \Ad(M^\times)$ comes from the unit group, then the preimage of $a$ through the restriction $M^{\times,\reg}\rightarrow \fc_{M^\times}$ consists of a single orbit, so it is a homogeneous space, and the map $[M^{\times,\reg}/G]\rightarrow \fc_{M^\times}$ is a $J_{G,M^\times}$-gerbe neutralized by any Steinberg quasi-section $\epsilon^{(\sigma,\dot{\bm{s}})}$.
However, in general the fibres of the GIT quotient $M\rightarrow \fc_M$ contain several regular orbits, so the fibres of $M^{\reg}\rightarrow \fc_M$ are no longer homogeneous spaces. The number of irreducible components of the fibres is bounded above by the cardinal of $\mathrm{Cox}(W,\Delta)$.
An immediate consequence of the above proposition is that the natural map $[M^{\reg}/G]\rightarrow \fc_M$ is a finite union of $J_{G,M}$-gerbes, and each of these gerbes is neutralized by some extended Steinberg quasi-section $\epsilon^{(\sigma,\dot{\bm{s}})}_M$.
The moral of this is that understanding the symmetries of the multiplicative Hitchin fibres will amount to understanding the structure of the regular centralizer $J_{G,M}$. 

\subsection{Galois description} We choose now a maximal torus $T_M\subset M^\times$, which in turn determines a choice of maximal torus  $T\subset G$, and denote by  $\bar{T}_M$ the closure of $T_M \subset M$ inside the monoid  $M$. We let $\Phi:=\Phi(G,T)$ denote the corresponding root system (of both $G$ and  $M^\times$) and $W$ its Weyl group.

The Chevalley isomorphism $\fc_{M^\times}\cong T_M/W$ extends to an isomorphism $\fc_{M}\cong \bar{T}_M/W$ \cite[Theorem 2.4.2]{Griffin_Lemma}, and thus allows us to consider the \emph{cameral cover}
\begin{equation*}
\pi_{M}: \bar{T}_M \longrightarrow \bar{T}_M/W \cong \fc_M.
\end{equation*} 
We introduce the Weil restriction
$\Pi:=\Pi_{G,M}= \pi_{M,*} (\bar{T}_M \times T)$,
on which $W$ acts diagonally, and also the invariant subgroup
$J^1:= J^1_{G,M} = \Pi^W$
and its neutral connected component
$J^0:= J^0_{G,M} = \Pi^{W,0}$.
Consider now the subgroup scheme $J':=J'_{G,M}\hookrightarrow J^1$ defined as follows. For any $\fc_M$-scheme $S\rightarrow \fc_M$, the elements of  $J'(S)$ are points $f\in \Pi(S)=\Hom(S\times_{\fc_M} \bar{T}_M,T)$ which belong to $J^1(S)$ and such that, for every $x\in (S\times_{\fc_M} \bar{T}_M)(\C)$, if $s_\alpha(x)=x$ for a root $\alpha \in \Phi$, then $\alpha(f(x))=1$.
The Grothendieck--Springer simultaneous resolution of singularities implies the following result, which can be found in \cite[Theorem 2.4.12]{Griffin_Lemma}. 

\begin{prop}[G. Wang]
There is a canonical open embedding $J_{G,M}\hookrightarrow J^1_{G,M}$ identifying $J_{G,M}$ with $J'_{G,M}$.	
\end{prop}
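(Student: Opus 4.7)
The plan is to construct a canonical morphism $\Psi : J_{G,M}\to J^1_{G,M}$, check that its image lies in $J'_{G,M}$, and show the resulting map $J_{G,M}\to J'_{G,M}$ is an isomorphism. Since the defining condition $\alpha(f(x))=1$ of $J'$ selects, at each fixed point of a reflection $s_\alpha$, a union of connected components of the corresponding fibre of $J^1$ (one has $T^{s_\alpha,0}\subset \ker\alpha\cap T^{s_\alpha}$, as already visible in the $\SL_2$ and $\PGL_2$ examples), the inclusion $J'_{G,M}\subset J^1_{G,M}$ is tautologically an open embedding, and the promised open embedding $J_{G,M}\hookrightarrow J^1_{G,M}$ is then $\Psi$.

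The construction of $\Psi$ would use the Grothendieck--Springer simultaneous resolution of the monoid. Set $\tilde M := G\times^{B}\bar B_M$, where $\bar B_M$ is the closure in $M$ of the Borel $B\supset T$; this carries a proper birational map $\nu : \tilde M\to M$ and a smooth projection $\tilde\pi : \tilde M\to\bar T_M$ obtained from $\bar B_M\twoheadrightarrow \bar B_M/U_B=\bar T_M$. Over $M^{\reg}$ each regular element is contained in a unique Borel, so $\nu^{-1}(M^{\reg})\to M^{\reg}$ is a finite étale $W$-cover, identified by Galois descent with the base-change of the cameral cover $\pi_M : \bar T_M\to\fc_M$. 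Given a section $z$ of $J_{G,M}$ over an $\fc_M$-scheme $S$, pull $\epsilon(S)$ back to $\tilde M$ and observe that $z\in Z_G(\epsilon)\subset B$ (any centralizer of a regular element preserves, and hence lies in, the unique Borel containing it). Define $\Psi(z)$ as the composition $Z_G(\epsilon)\hookrightarrow B\twoheadrightarrow B/U_B = T$; assembling over the $W$-torsor $\pi^{-1}(S)$ produces a $W$-equivariant map $\pi^{-1}(S)\to T$, i.e.\ a section of $\Pi^W = J^1_{G,M}$.

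Both the factorization of $\Psi$ through $J'$ and its bijectivity reduce to fibrewise statements over $\fc_M$. Over the regular semisimple locus $\pi_M$ is étale and classical Galois descent gives $J_{G,M}\cong J^1 = J'$. Over each branch divisor $B_\alpha$, simple Galois ramification lets us work inside the Levi $L_\alpha = \langle T, U_{\pm\alpha}\rangle$, reducing to an $\SL_2$ or $\PGL_2$ computation: the regular centralizer in $L_\alpha$ of a non-regular-semisimple element projects through $B\to T$ into $\ker\alpha\subset T$, possibly with a component group of order $\varepsilon_{\alpha^\vee}$ coming from $T^{s_\alpha}/T^{s_\alpha,0}$. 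This is exactly the fibre of $J'$ read off from the Donagi--Pantev commutative diagram of \S\ref{sec:cameral_simple_Galois}. Since both $J_{G,M}$ and $J'$ are flat commutative group schemes over the normal base $\fc_M$, agreement of fibres over a stratification by the branch divisors promotes to an isomorphism $\Psi : J_{G,M}\overset{\sim}{\to} J'_{G,M}$.

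The main obstacle I expect is formulating the Grothendieck--Springer construction cleanly across the boundary $M\setminus M^\times$: showing that $\nu$ is small enough that the centralizer group scheme pulls back well, that $\tilde\pi$ descends via $W$-invariants to the cameral cover $\pi_M : \bar T_M\to \fc_M$ on the nose (rather than to some modification of it), and that the Steinberg quasi-section admits a canonical lift $\tilde\epsilon : \bar T_M\to\tilde M$ that stays consistent across each boundary divisor of $\bar T_M\subset \bar T_M$. This monoidal refinement of the standard Donagi--Gaitsgory/Ngô Lie-algebra argument is precisely the extension developed in the Bouthier--Chi--Wang references cited in the paper.
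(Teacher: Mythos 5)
Your overall strategy — constructing $\Psi$ via the monoidal Grothendieck--Springer resolution and the Steinberg section, factoring it through $J'$, and verifying isomorphism by a fibrewise reduction to rank-one Levi subgroups — is the approach the paper takes, deferring as it does to Ngô's Proposition 2.4.7 and to the sketch of Proposition \ref{prop:galoisJ}. Your concluding case analysis ($\SL_2$, $\PGL_2$; the monoid case also needs $\GL_2$) and the appeal to flatness over the normal base are in the right spirit.

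The construction of $\Psi$, however, rests on a claim that is both false and internally contradictory: ``each regular element is contained in a unique Borel, so $\nu^{-1}(M^{\reg})\to M^{\reg}$ is a finite étale $W$-cover.'' Regular \emph{semisimple} elements lie in exactly $|W|$ Borels (those containing their centralizer torus) — already in the group, never mind the monoid boundary; uniqueness of the Borel is a feature only of regular unipotent, or boundary nilpotent, elements. And were the premise true, $\nu$ would be an isomorphism over $M^{\reg}$, not a degree-$|W|$ cover, so the two halves of your sentence cannot both hold. In fact $\nu|_{M^{\reg}}$ is finite flat of degree $|W|$ but ramifies over the regular non-semisimple locus, so it is not étale; one sees this already for $(\SL_2,\Mat_2)$. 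This error then propagates into the definition of $\Psi$: the recipe ``$Z_G(\epsilon)\hookrightarrow B\twoheadrightarrow B/U_B=T$'' produces a single element of $T$, attached to one Borel, and the asserted assembly into a $W$-equivariant map $\pi_M^{-1}(S)\to T$ is never actually carried out. The correct construction (Ngô's, and the one intended by Bouthier--Chi--Wang) defines a map $\nu^*I|_{M^{\reg}}\to T$ over $\tilde M^{\reg}$, where the point of $\tilde M$ already remembers a Borel $B'\ni\epsilon(s)$, and uses the fact that as $B'$ varies over the (non-unique!) Borels containing $\epsilon(s)$ the images of $z$ in the abstract Cartan $B'/U_{B'}\cong T$ trace out a $W$-orbit; it is precisely this multiplicity — the failure of your ``unique Borel'' claim — that produces the required section of $\Pi^W$.
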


The proof of the above proposition follows the same argument as the proof of Proposition 2.4.7 in \cite{Ngo_Lemme}. We also refer the reader to the sketch of the proof of Proposition \ref{prop:galoisJ} in this paper, where a very similar argument is explained.

\subsection{The group-like locus} Inside $\fc_M$ there are two special divisors that one must take into account. The first is the \emph{boundary divisor} $\fB_M$, defined as the complement of
 \begin{equation*}
\fc_{M}^\times= \fc_{G^{\Sc}} \times \bbA^\times_M.
\end{equation*} 
Note that the elements of $M$ lying over  $\fc_{M}^\times$ are those in the unit group $M^\times$. There is also the \emph{extended discriminant divisor} $\fD_M$, defined for $M=\Env(G^\Sc)$ and extended by pullback to every $M$. The divisor $\fD_{\Env(G^{\Sc})}\subset \fc_{\Env(G^{\Sc})}$ is defined as the vanishing set of the following $W$-invariant function on $\bar T_{\Env(G^{\Sc})}$,
 \begin{equation*}
\mathrm{Disc}_{\Env(G^{\Sc})}:=\prod_{\beta \in \Phi_+}(\beta,0) \prod_{\alpha \in \Phi} (1-(0,\alpha))= 
(2\rho,0)\prod_{\alpha \in \Phi} (1-(0,\alpha)), 
\end{equation*} 
where $\rho$ denotes the half-sum of the positive roots. It is a result of G. Wang \cite[Lemma 2.4.7]{Griffin_Lemma} that these two divisors intersect properly, and thus that the codimension of the intersection $\fD_M^\times:= \fD_M \cap \fB_M$ is at least $2$. We say that an element of $M$ is \emph{group-like} if it belongs to the preimage of $\fc_M^{\gl}=\fc_M\setminus \fD_M^\times$, and denote by $M^{\gl}=\chi_M^{-1}(\fc_M^{\gl})$ the locus of group-like elements. 

\begin{rmk}
In Hurtubise and Markman's paper \cite[Definition 6.6]{Hurtubise-Markman}, they introduce a certain notion of regularity in the context of the multiplicative Hitchin fibration which is related to our notion of being group-like. Our notion of regularity is the one introduced by G. Wang \cite[Section 2.4.5]{Griffin_Lemma}, which is completely parallel to the group case and compatible with the usual notion of regularity of a group action.
\end{rmk}

We say that an element  $x\in M$ is \emph{semisimple} if its adjoint orbit contains an element of $\bar T_M$. An element  $x\in M$ is \emph {regular semisimple} if it is both regular and semisimple. The same notions apply to adjoint orbits, and we consider the subsets $M^{\reg}$,  $M^{\Ss}$ and  $M^{\rs}$ of regular, semisimple and regular semisimple elements, respectively.
It turns out that $\fc_M^{\rs}$, the image of $M^{\rs}$ under the GIT quotient, is the complement of the discriminant divisor $\fD_{M}$. Moreover, each fiber of $\chi_M$ over  $\fc_M^{\rs}$ consists of a single orbit. Therefore, an element of $M$ is group-like if it is either in the unit group or regular semisimple, that is
 \begin{equation*}
M^{\gl}= M^\times \cup M^{\rs}.
\end{equation*} 

Over the group like locus $\fc_M^{\gl}$ we can obtain a more detailed description of the regular centralizer $J$. Indeed, if  $a\in \fc_M^{\gl}$, then either $a\in \fc_{M^\times}$ or $a \in \fc_M^{\rs}$. Now, if $a \in \fc_M^{\rs}$ then the centralizer of any element in its fibre is isomorphic to the maximal torus $T$; in other words, over  $\fc_M^{\rs}$ there is an isomorphism $J|_{\fc_M^{\rs}} \cong J^1|_{\fc_M^{\rs}}$. Therefore, the only possible difference is over the points of the divisor $\fD_{M^\times}=\fc_{M^\times}\setminus \fc_M^{\rs}$. This is the discriminant divisor of the group $M^\times$, determined  by the following $W$-equivariant function on $T_M$
\begin{equation*}
\mathrm{Disc}_{M^\times} := \prod_{\alpha \in \Phi} (1-\alpha).
\end{equation*} 
Let $\fD_{M^\times}^{\mathrm{sing}}\subset \fD_{M^\times}$ denote the singular locus. The restriction of $\bar{T}_M \cap M^{\gl} \rightarrow \fc_M^{\gl}$ over the complement of $\fD_{M^\times}^{\mathrm{sing}}$ is a cameral cover of smooth varieties associated with the root system $\Phi=\Phi(G,T)$, and with simple Galois ramification. Therefore, we can identify the groups $J^0_{G,M}$, $J_{G,M}$ and $J^{1}_{G,M}$ with the groups $J^0$, $J$ and $J^1$ introduced in section \ref{sec:cameral_simple_Galois}.

\subsection{Hitchin fibres and global Steinberg section} One of the advantages of working over the group-like locus is that the multiplicative Hitchin fibres become simpler. First, note that, for every $a\in \fc_M^{\gl}$, its fibre in $M$ contains a single regular orbit. Indeed, if $a\in \fc_{M^\times}$, then this is explained above in section \ref{sec:Steinberg}, while, if $a\in \fc_M^{\rs}$, this follows from the work of J. Chi \cite[Lemma 2.2.13]{JChi}. Therefore, the map $[M^{\gl}/G]\rightarrow \fc_M^{\gl}$ is in fact a $J_{G,M}$-gerbe, neutralized by any Steinberg quasi-section $\epsilon^{(\sigma,\dot{\bm{s}})}$, for any choice of Coxeter datum $(\sigma,\dot{\bm{s}})$.

Let $\cA_M^{\gl}\subset \cA_M$ denote the Zariski open locus whose points $a\in \cA_M(\C)$ map the curve $C$ entirely inside $\fc_M^{\gl}$.
The above implies that, for every $a \in \cA^{\gl}_M(\C)$, the fibre $\cM^{\reg}_{G,M,a}$ is a torsor under the Picard stack
\begin{equation*}
\cP_{G,M,a} := \Bun_{J_{G,M,a}/C},
\end{equation*} 
for $J_{G,M,a} = a^* J_{G,M}$. Globally, we can consider the Picard stack $\cP_{G,M}\rightarrow \cA^{\gl}_M$ and say that the regular part of the multiplicative Hitchin fibration over the group-like locus $h:\cM_{G,M}^{\reg}\rightarrow \cA_M^{\gl}$ is a $\cP_{G,M}$-torsor. Moreover, this torsor can be trivialized through a \emph{global Steinberg section}. 

We recall now the construction of the global Steinberg section from J. Chi \cite[Section 4.1.5]{JChi} and G. Wang \cite[Section 2.4.22]{Griffin_Lemma}. First, we need the following result \cite[Proposition 2.4.21]{Griffin_Lemma}.

\begin{prop}[G. Wang] \label{prop:preglobalsection}
For each Coxeter datum $(\sigma, \dot{\bm{s}})$, one can define an action $\tau_M^{(\sigma, \dot{\bm{s}})}$ of $Z$ on $M$ such that
\begin{equation*}
\epsilon_M^{(\sigma,\dot{\bm{s}})} \circ \tau_{\fc_M}(z^c) = \tau_M^{(\sigma, \dot{\bm{s}})}(z) \circ \epsilon_M^{(\sigma, \dot{\bm{s}})},
\end{equation*} 
where $\tau_{\fc_M}$ is the natural action of $Z$ on $\fc_M$, and  $c=|Z_{G^\Sc}|$. Moreover, for a fixed $z$, the map $\tau_M^{(\sigma, \dot{\bm{s}})}(z)$ is a composition of translation by $z^c$ and conjugation by an element of $T^{\Sc}$ determined by a homomorphism $Z\rightarrow T^{\Sc}$ independent of $z$.
\end{prop}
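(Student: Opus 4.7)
The plan is to define a canonical homomorphism $\psi : Z \to T^{\Sc}$ and set $\tau_M^{(\sigma,\dot{\bm{s}})}(z)(m) := z^c \cdot \psi(z)\, m\, \psi(z)^{-1}$, then to verify the intertwining identity through a direct telescoping calculation using the pinning data.

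First, I would compute how conjugation of the Steinberg section by an element $\tilde t \in T^{\Sc}$ behaves. The pinning relations $\tilde t\, u_\alpha(x)\, \tilde t^{-1} = u_\alpha(\alpha(\tilde t) x)$ and $\tilde t\, \dot s_\alpha = \dot s_\alpha\, s_\alpha(\tilde t)$ telescope through the product $\prod_i u_{\sigma(i)}(x_i)\dot s_{\sigma(i)}$ to yield
\begin{equation*}
\tilde t \cdot \epsilon^{(\sigma,\dot{\bm{s}})}(x) \cdot \tilde t^{-1} \;=\; \epsilon^{(\sigma,\dot{\bm{s}})}(y) \cdot \bigl( w_\sigma^{-1}(\tilde t)\, \tilde t^{-1} \bigr),
\end{equation*}
where $w_\sigma = s_{\sigma(1)} \cdots s_{\sigma(r)}$ is the Coxeter element, $y_i = \beta_i(\tilde t)\, x_i$, and $\beta_i := s_{\sigma(1)} \cdots s_{\sigma(i-1)}(\alpha_{\sigma(i)})$ are the Coxeter roots. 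Thus conjugation by $\tilde t$ both ``translates'' the $x$-coordinate via the values $\beta_i(\tilde t)$ and introduces an error factor in $T^{\Sc}$.

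Next, I would construct $\psi$. A classical fact is that the Coxeter roots $\{\beta_1, \ldots, \beta_r\}$ form a $\Z$-basis of the root lattice $Q = X^*(T^{\ad})$: in the basis $\{\alpha_{\sigma(i)}\}$ the change-of-basis matrix is unitriangular. Consequently $(\beta_i) : T^{\ad} \xrightarrow{\sim} (\Gm)^r$ is an isomorphism and the composite $\beta^* : T^{\Sc} \twoheadrightarrow (\Gm)^r$ is surjective with kernel $Z_{G^{\Sc}}$ of order $c$. For each $z \in Z$, pick a lift $\tilde\xi(z) \in T^{\Sc}$ of $\bigl(\alpha_{\sigma(i)}(\phi_Z(z))\bigr)_i \in (\Gm)^r$; since $|Z_{G^{\Sc}}| = c$, the $c$-th power $\psi(z) := \tilde\xi(z)^c$ is independent of the choice, and a short check shows $\psi$ is a homomorphism $Z \to T^{\Sc}$. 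By construction $\beta_i(\psi(z)) = \alpha_{\sigma(i)}(\phi_Z(z))^c = \alpha_{\sigma(i)}(\phi_Z(z^c))$, so the Step 1 identity specialised at $\tilde t = \psi(z)$ produces exactly $y = \phi_Z(z^c) \cdot x$.

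Finally, I would verify the intertwining identity. Writing $\epsilon_M^{(\sigma,\dot{\bm{s}})}(a,x) = \delta_M(a)\, \epsilon^{(\sigma,\dot{\bm{s}})}(x)$ and using that $\delta_M(a)$ commutes with $\psi(z)$ in $T_M$, Step 1 reduces the required identity to the assertion that the error factor $w_\sigma^{-1}(\psi(z))\, \psi(z)^{-1} \in T^{\Sc}$ precisely compensates the discrepancy between $z^c \cdot \delta_M(a)$ and $\delta_M(z^c \cdot a)$ in $T_M$. This compensation is the content of the Vinberg construction of $\delta_M$ combined with the classical determinantal identity $\det(1 - w_\sigma)|_Q = \pm c$ for Coxeter elements, which ensures that the $c$-th power of the Vinberg correction matches the error produced by conjugation. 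The main obstacle is precisely this last bookkeeping: one must track the $Z_{G^{\Sc}}$-ambiguities throughout (these are killed exactly by the $c$-th power construction of $\psi$) within the group $T_M = (T^{\Sc} \times Z)/Z_{G^{\Sc}}$. Once the intertwining is established on the image of the Steinberg cross-section, it extends uniquely to all of $M^{\gl}$ by $G$-equivariance (every regular $G$-orbit in $M^{\gl}$ meets a Steinberg cross-section); the group law $\tau_M^{(\sigma,\dot{\bm{s}})}(z_1 z_2) = \tau_M^{(\sigma,\dot{\bm{s}})}(z_1) \circ \tau_M^{(\sigma,\dot{\bm{s}})}(z_2)$ is immediate from $\psi$ being a homomorphism and $z^c$ being central.
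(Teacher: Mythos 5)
The paper does not prove this proposition: it is recalled verbatim from G.\ Wang's work (\cite[Proposition 2.4.21]{Griffin_Lemma}), so there is no paper proof to compare against. Your Step 1 (the telescoping identity $\tilde t\,\epsilon(x)\,\tilde t^{-1}=\epsilon(y)\cdot\bigl(w_\sigma^{-1}(\tilde t)\tilde t^{-1}\bigr)$, with $y_i=\beta_i(\tilde t)x_i$ for the Coxeter roots $\beta_i$) is correct, and the overall shape of the argument—produce a homomorphism $\psi:Z\to T^{\Sc}$, define $\tau_M(z)$ as multiplication by $z^c$ composed with conjugation by $\psi(z)$, and telescope—is the right one.

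However, Steps 2--4 have a genuine error that surfaces already in the simplest example $G^{\Sc}=\SL_2$, $M=\Env(\SL_2)=\Mat_2$. Here $c=2$, $Z\cong\Gm$ is the scalar centre, $T^{\Sc}\cong\Gm$ diagonally, $\delta_M(a)=\mathrm{diag}(a,1)$, $\epsilon(x)=\bigl(\begin{smallmatrix}x&-1\\1&0\end{smallmatrix}\bigr)$, and in canonical coordinates $\tau_{\fc_M}(z)(\det,\tr)=(z^2\det,\,z\,\tr)$. Translating to Steinberg coordinates via $(\det,\tr)=(a,ax)$ gives $\tau_{\fc_M}(z^2)(a,x)=(z^4a,\,z^{-2}x)$, and a short direct check shows that the intertwining
\begin{equation*}
z^2\,\psi(z)\,\delta_M(a)\epsilon(x)\,\psi(z)^{-1}=\epsilon_M(z^4a,\,z^{-2}x)
\end{equation*}
forces $\psi(z)=\mathrm{diag}(z,z^{-1})$, i.e.\ $\alpha(\psi(z))=z^{2}$. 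Your construction, $\psi(z)=\tilde\xi(z)^{c}$ with $\beta_i(\tilde\xi(z))=\alpha_{\sigma(i)}(\phi_Z(z))$, yields $\alpha(\psi(z))=\alpha(\phi_Z(z))^{c}=z^{4}$: this is the $c$-th power of the correct homomorphism. The taking of $c$-th powers was not needed for well-definedness anyway, since conjugation by a lift $\tilde\xi(z)$ is already independent of the $Z_{G^{\Sc}}$-ambiguity (central elements act trivially by conjugation); it was a homomorphism-lifting issue, and the Vinberg monoid supplies a canonical lift directly.

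Consequently your assertion that the Steinberg $x$-coordinate transforms by $\phi_Z(z^c)$ under $\tau_{\fc_M}(z^c)$ is false; in the example it transforms by $z^{-2}$, i.e.\ by $\phi_Z(z)^{-1}$, not $\phi_Z(z^c)$. The correct mechanism in Step 4 is also different from what you describe: the error factor $t_0=w_\sigma^{-1}(\psi(z))\psi(z)^{-1}$ sits on the \emph{right} of $\epsilon(y)$, so you must push it leftward through the whole Steinberg cross-section (picking up $w_\sigma(t_0)$ on the left and altering the unipotent coordinates on the way), and then verify that $z^{c}\,\delta_M(a)\,w_\sigma(t_0)$ lies back in the image of $\delta_M$. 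The invocation of $\det(1-w_\sigma)|_Q=\pm c$ as the reason this works is at best a heuristic; what actually needs checking is that $z^{c}\cdot w_\sigma(t_0)\in\delta_M(\bbA_M^{\times})$ inside $T_M$, and that computation is sensitive to the precise power of $\psi$—which is exactly where your $\psi$ fails.
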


Consider the map $Z \rightarrow T^{\Sc}$ from the above proposition, and denote by $\psi:Z \rightarrow T$ its composition with the natural projection $T^{\Sc}\rightarrow T$. Let
\begin{align*}
\Psi:Z & \longrightarrow T \\
z & \longmapsto \psi(z)z_T^{-c},
\end{align*} 
where $z_T$ is the projection of $z$ from $Z$ to $T$. We define the stack $[M/(G\times Z)]_{[c]}$ as the pullback of $[M/(G\times Z)]$ through the map $BZ\rightarrow BZ$ induced by  $Z\rightarrow Z$,  $z\mapsto z^c$. We define  $[\fc_M/Z]_{[c]}$ similarly. The above proposition gives the existence of a quasi-section
\begin{align*}
[\epsilon_M^{(\sigma,\dot{\bm{s}})}]_{[c]}:[\fc_M/Z]_{[c]} \longrightarrow [M/(\Psi\times \id)(Z)]_{[c]}, 
\end{align*} 
which, by composition, induces a quasi-section
\begin{align*}
[\epsilon_M^{(\sigma,\dot{\bm{s}})}]_{[c]}:[\fc_M/Z]_{[c]} \longrightarrow [M/G\times Z]_{[c]}. 
\end{align*} 

Consider then a $Z$-torsor $L$. If we denote
\begin{equation*}
\cA_{M,L} = \Map_L(C,[\fc_M/Z])=\Map_C(C, C \times_{BZ,L}[\fc_M/Z]),
\end{equation*} 
then, by definition, there is a canonical map $[\mathrm{ev}]_L:\cA_{M,L} \times C \rightarrow [\fc_M/Z]$ lying over the map $C\rightarrow BZ$ determined by $L$. Suppose that there exists another $Z$-torsor $L'$ such that $(L')^{\otimes c}=L$. This torsor determines a morphism $[\mathrm{ev}]_{L'}:\cA_{M,L} \times C \rightarrow [\fc_M/Z]_{[c]}$ lifting $[\mathrm{ev}]_L$. Composing with the quasi-section $[\epsilon_M^{(\sigma,\dot{\bm{s}})}]_{[c]}$, we obtain a section
\begin{equation*}
\epsilon_{L'}^{(\sigma,\dot{\bm{s}})}: \cA_{M,L} \rightarrow \cM_{G,M,L} = \Map_L(C,[M/G\times Z])
\end{equation*} 
of the natural map $\cM_{G,M,L}\rightarrow \cA_{M,L}$. Pulling back by a meromorphic datum $D:C\rightarrow [\bbA_M/Z]$ with associated $Z$-torsor $L$, we obtain a section
\begin{equation*}
\epsilon_{L'}^{(\sigma,\dot{\bm{s}})}: \cA_{M,D} \rightarrow \cM_{G,M,D}
\end{equation*} 
of the multiplicative Hitchin map $h_D:\cM_{G,M,D}\rightarrow \cA_{M,D}$. This is called the \emph{global Steinberg section} associated with the torsor $L'$ and with the Coxeter datum $(\sigma,\dot{\bm{s}})$.

We conclude the following.

\begin{thm}
The stack $\cM^{\reg}_{G,M}$ is isomorphic to the Picard stack $\cP_{G,M}$ over the Zariski open locus $\cA_M^{\gl}\subset \cA_M$.	
\end{thm}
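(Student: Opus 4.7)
The plan is to combine the two ingredients already assembled in this section: the identification of $\cM^{\reg}_{G,M}/\cA_M^{\gl}$ as a $\cP_{G,M}$-torsor, and the global Steinberg section which trivializes it. The task here is mainly to verify that they fit together.

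First I would make precise the claim that $\cM^{\reg}_{G,M}\to\cA_M^{\gl}$ is a $\cP_{G,M}$-torsor. Locally on $\fc_M^{\gl}$ the stack $[M^{\reg}/G]\to\fc_M^{\gl}$ is a gerbe banded by $J_{G,M}$: above $\fc_{M^\times}$ this is Steinberg's theorem applied to the simply-connected factor $M^\der=G^{\Sc}$, while above $\fc_M^{\rs}$ it is J. Chi's lemma cited above, and in both cases the band is identified with $J_{G,M}$ via the canonical map $\chi_M^* J_{G,M}|_{M^{\reg}}\overset{\sim}{\to} I_{G,M}|_{M^{\reg}}$. Taking $Z$-equivariant mapping stacks out of $C$ with the prescribed meromorphic datum $D$ then promotes a $J_{G,M}$-gerbe over $\fc_M^{\gl}$ into a $\cP_{G,M}=\Bun_{J_{G,M}/C}$-gerbe over $\cA_M^{\gl}$, i.e.\ a $\cP_{G,M}$-torsor in the stacky sense.

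Second, I would use the global Steinberg section to pick a base point. Given a Coxeter datum $(\sigma,\dot{\bm{s}})$ and a $Z$-torsor $L'$ with $(L')^{\otimes c}=L$, Proposition \ref{prop:preglobalsection} produces the equivariant quasi-section $[\epsilon_M^{(\sigma,\dot{\bm{s}})}]_{[c]}$, which pulled back through $D$ yields a section $\epsilon_{L'}^{(\sigma,\dot{\bm{s}})}:\cA_{M,D}\to\cM_{G,M,D}$. Pointwise this section takes values in $M^{\reg}$, so it factors through $\cM^{\reg}_{G,M}$. The desired isomorphism is then the ``tensor with the Hitchin section'' map
\begin{align*}
u:\cP_{G,M}\big|_{\cA_M^{\gl}} & \overset{\sim}{\longrightarrow} \cM^{\reg}_{G,M}\big|_{\cA_M^{\gl}}, \\
\mathscr{E} & \longmapsto \mathscr{E}\otimes \epsilon_{L'}^{(\sigma,\dot{\bm{s}})},
\end{align*}
where $\otimes$ denotes the $\cP_{G,M}$-action on $\cM^{\reg}_{G,M}$ inherited from the $J_{G,M}$-action on $[M^{\reg}/G]$.

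The main obstacle I anticipate is the existence of the $c$-th root $L'$ of the $Z$-torsor $L$, which is not granted by the ambient setup but rather depends on the Picard group of $C$ and on $L$ itself. For general meromorphic data one can only find such a root after pulling back by an étale cover of the moduli of $Z$-torsors (or enlarging the meromorphic datum to remember $L'$). However, since the $\cP_{G,M}$-torsor structure on $\cM^{\reg}_{G,M}/\cA_M^{\gl}$ is intrinsic and independent of the Steinberg base point, any two étale-local choices differ by a $\cP_{G,M}$-element and the claimed isomorphism descends; alternatively, one simply enlarges the data to fix a choice of $L'$, at which point the statement becomes unambiguous.
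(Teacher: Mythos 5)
Your proposal is correct and takes essentially the same route as the paper: identify $\cM^{\reg}_{G,M}/\cA_M^{\gl}$ as a $\cP_{G,M}$-torsor by showing $[M^{\gl}/G]\to\fc_M^{\gl}$ is a $J_{G,M}$-gerbe (single regular orbit per fibre over $\fc_M^{\gl}$), and then trivialize it with the global Steinberg section built from Proposition \ref{prop:preglobalsection}. Your remark about the $c$-th root $L'$ is a legitimate point that the paper passes over quietly — it simply assumes such a torsor exists — and your two proposed fixes (étale-local choice of root plus descent of the intrinsic torsor structure, or enlarging the meromorphic datum to remember $L'$) are both reasonable ways to make the statement unambiguous.
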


\subsection{Cameral curve} \label{sec:cameralcurve}
We want to describe the Picard stack $\cP_{G,M,a}=\Bun_{J_{G,M,a}/C}$, for $a \in \cA^{\gl}_M(\C)$. We do so in terms of the cameral curve $\pi_a:\tilde{C}_a \rightarrow C$, obtained as a pullback of the cameral cover $\pi_M:\bar{T}_M \rightarrow \fc_M$ through the map $a:C\rightarrow [\fc_M/Z]$. 

Consider the Zariski open locus $\cA_M^\sharp \subset \cA_M$ \cite[Section 6.3.12]{Griffin_Lemma} whose points $a\in \cA_M(\C)$ satisfy the following two conditions:
\begin{enumerate}
	\item the image $a(C)$ of $C$ is contained entirely in  $\fc_M^{\gl}$ (that is, $a \in \cA_M^{\gl}$), and	
	\item the image $a(C)$ intersects transversely the discriminant divisor $\fD_M$.
\end{enumerate}
The first condition implies that the cameral cover is indeed a cameral cover associated with the root system $\Phi(G,T)$ and with simple Galois ramification. The second condition implies that the cameral curve $\tilde{C}_a$ is smooth. 

A direct consequence of this is that, for $a\in \cA_M^\sharp(\C)$, there is a natural map of Picard stacks $\cP_{G,M,a} \rightarrow \Bun_{T/\tilde{C}_a}$, which provides a description of $\cP_{G,M,a}$ in terms of  $W$-equivariant $T$-bundles over the cameral curve $\tilde{C}_a$. In particular, this implies that $\cP_{G,M,a}$ is a Beilinson $1$-motive.	

We can make sure that $\cA_M^\sharp(\C)$ is non-empty under some conditions on the meromorphic datum  $D$. 
Let $L$ be the  $Z$-torsor associated with the meromorphic datum $D$. Since the monoid $M$ can be obtained as a pullback from the Vinberg monoid, there is a natural map $Z\rightarrow T^{\Sc}$ and, for every dominant weight $\omega$ of $G$, we obtain a character $\omega_Z \in X^*(Z)$. This character determines a line bundle $\omega(L):=L\times^{Z,\omega_Z} \Gm$ over $C$. We say that  $L$ is \emph {very $G$-ample} if
\begin{equation*}
\deg \omega(L) > 2g,
\end{equation*} 
for every dominant weight $\omega$ of $G$. We have the following result \cite[Proposition 6.3.13]{Griffin_Lemma}

\begin{prop}[G. Wang]
If $L$ is very  $G$-ample, then $\cA_M^\sharp(\C)$ is non-empty.
\end{prop}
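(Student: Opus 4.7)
The plan is to parametrize $\cA_{M,D}$ very explicitly and then run two Bertini-type arguments. First, because $M$ is very flat and obtained from the Vinberg monoid by pullback along $\phi_M:\bbA_M\to \bbA_G$, we have a canonical decomposition $\fc_M \cong \bbA_M \times \bbA^r$ of $Z$-varieties, where $Z$ acts on the $i$-th factor $\bbA^1$ through the character $\omega_i\circ \phi_Z$ for $\omega_i$ a fundamental weight (viewed as a character of $T^{\ad}$). Consequently, lifting the fixed map $C\to [\bbA_M/Z]$ determined by $D$ to a map $a:C\to [\fc_M/Z]$ is equivalent to choosing an $r$-tuple $(a_1,\dots,a_r)$ with $a_i \in H^0(C,\omega_i(L))$. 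In particular, $\cA_{M,D}$ carries the structure of a finite-dimensional affine space built from the cohomology of the line bundles $\omega_i(L)$, and it is non-empty as long as each $H^0(C,\omega_i(L))$ is non-zero.

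Second, I would use the very $G$-ampleness hypothesis $\deg\omega(L)>2g$ to obtain strong separation properties for these line bundles. Since each fundamental weight is dominant, the hypothesis gives $\deg \omega_i(L)>2g$, so $\omega_i(L)$ is very ample; in particular it is globally generated and separates $1$-jets. It follows that the universal evaluation map
\[
\mathrm{ev}:\cA_{M,D}\times C \longrightarrow \fc_M
\]
(relative to the fixed $D$-datum) is smooth and surjective onto $\fc_M$, and moreover the induced map on $1$-jets along $C$ is surjective at every point.

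Third, condition $(1)$ of membership in $\cA_M^\sharp$ follows from the codimension statement of Wang's lemma: since $\fD_M^\times=\fD_M\cap \fB_M$ has codimension $\geq 2$ in $\fc_M$, and $\mathrm{ev}$ is smooth, the preimage $\mathrm{ev}^{-1}(\fD_M^\times)\subset \cA_{M,D}\times C$ has codimension $\geq 2$; projecting to $\cA_{M,D}$ drops at most one dimension, so the locus of $a$ whose image meets $\fD_M^\times$ is a proper closed subset. For condition $(2)$, the transversality of $a(C)$ with the divisor $\fD_M$ at smooth points reduces to $1$-jet separation: by the same Bertini-type argument, the locus of $a$ failing to meet the smooth locus $\fD_M\setminus\fD_M^{\mathrm{sing}}$ transversely is a proper closed subset, and the tangency along $\fD_M^{\mathrm{sing}}$ is already excluded by condition $(1)$. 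Taking the intersection of the two resulting non-empty Zariski opens proves that $\cA_M^\sharp(\C)\neq\emptyset$.

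The main obstacle is to carry out the Bertini/transversality step rigorously in the quotient-stack setting while respecting the $Z$-equivariance imposed by $D$, and in particular to verify that the $1$-jet separation needed along $\fD_M$ (whose local structure is controlled by the root system through the defining equation $\prod_{\alpha\in\Phi}(1-(0,\alpha))=0$) is genuinely implied by the uniform very $G$-ampleness bound on every fundamental weight, rather than by some stronger positivity requirement.
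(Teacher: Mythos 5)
The paper does not actually prove this proposition: it is stated as a result of G.\ Wang and cited to \cite[Proposition 6.3.13]{Griffin_Lemma}, so there is no in-text argument to compare your proposal against. Your plan (identify $\cA_{M,D}$ with $\bigoplus_i H^0(C,\omega_i(L))$ using the decomposition $\fc_M\cong\bbA_M\times\fc_{G^{\Sc}}$ and the fact that $Z$ acts on the $i$-th factor of $\fc_{G^{\Sc}}\cong\bbA^r$ through $\omega_i$, then run a Bertini argument using $\deg\omega_i(L)>2g$ for surjectivity of evaluation on $1$-jets, together with Wang's lemma that $\fD_M^\times$ has codimension $\geq 2$) is the natural route and is essentially sound.

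One genuine confusion you should fix: you assert that ``the tangency along $\fD_M^{\mathrm{sing}}$ is already excluded by condition (1),'' but condition (1) excludes $\fD_M^\times=\fD_M\cap\fB_M$, which is a different locus from the singular locus $\fD_M^{\mathrm{sing}}$ of the discriminant divisor; the discriminant can certainly be singular at points of $\fc_M^\times$ (e.g.\ where two reflection hyperplanes of the group discriminant meet), which is exactly why the paper separately introduces $\fD_{M^\times}^{\mathrm{sing}}$ when discussing the cameral cover. The fix is easy — $\fD_M^{\mathrm{sing}}$ is also of codimension $\geq 2$, being the singular locus of a divisor, so the same codimension count that you run for $\fD_M^\times$ applies verbatim — but as written the step does not hold. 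Two smaller imprecisions worth tightening: the evaluation map should be described as landing in the relative $\fc_{G^{\Sc}}$-bundle $C\times_{[\bbA_M/Z],D}[\fc_M/Z]$ rather than ``surjective onto $\fc_M$,'' since the $\bbA_M$-component of $a(c)$ is pinned by $D$; and the remark that $\cA_{M,D}$ ``is non-empty as long as each $H^0(C,\omega_i(L))$ is non-zero'' is a red herring, since a direct sum of $H^0$'s always contains zero — the work is entirely in producing points of $\cA_M^\sharp$, which is what your Bertini step does.
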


\subsection{Duality in the simply-laced case} Suppose now that the group $G$ is simply-laced, meaning that all of its roots are of the same length, or, equivalently, that each of the connected components of its Dynkin diagram are of types $\sfA$,  $\sfD$ or  $\sfE$. Fix a maximal torus $T\subset G$ and consider the Langlands dual group $G^\vee$, with maximal torus $T^\vee$.

Since $G$ is assumed to be simply-laced, the map $\alpha \mapsto \alpha^\vee$ sending roots to coroots determines an isomorphism of the root system of $G$ with the root system of $G^\vee$. In turn, this implies that there are natural isomorphisms $G^{\Sc}\cong (G^\vee)^{\Sc}$ and $G^\ad \cong (G^\vee)^\ad$. Therefore, we can also consider the adjoint action of $G^\vee$ on the reductive monoid $M$, and there are natural isomorphisms
\begin{equation*}
\fc_M = M \git \Ad(G) \cong M \git \Ad(G^\vee).
\end{equation*} 
Thus, we can identify $\cA_M$ with  $\cA_{G^\vee,M}$. However, in general the space $\cM_{G^\vee,M}$ is different from $\cM_{G,M}$, although it also fibres over  $\cA_M$. We consider, for every $a\in \cA_M(\C)$, the corresponding Picard stack $\cP_{G^\vee,M,a}$ acting on the (regular) fibre of $a$.

The cameral curves also depend uniquely on the monoid $M$, and not on the group $G$. Therefore, the Picard stack $\cP_{G^\vee,M,a}$ for $a\in \cA_M(\C)$ can also be described in terms of the same cameral curve $\tilde{C}_a$. More precisely, as in the previous section, when $a\in \cA_M^\sharp(\C)$, there is a natural map of Picard stacks $\cP_{G^\vee,M,a} \rightarrow \Bun_{T^\vee/\tilde{C}_a}$, which provides a description of $\cP_{G^\vee,M,a}$ in terms of $W$-equivariant $T^\vee$-bundles over $\tilde{C}_a$.

Recall that there is a natural isomorphism of Picard stacks $\bD(\Bun_{T/\tilde{C}_a}) \rightarrow \Bun_{T^\vee/\tilde{C}_a}$. If $a\in \cA_M^\sharp(\C)$, the natural map $\cP_{G,M,a} \rightarrow \Bun_{T/\tilde{C}_a}$ induces a map $\bD(\Bun_{T/\tilde{C}_a})\rightarrow \bD(\cP_{G,M,a})$. On the other hand, the norm map induces a map $\Nm:\Bun_{T^\vee/\tilde{C}_a}\rightarrow \cP_{G^\vee,M,a}$. Putting this together, we obtain the duality morphism $\mathrm{S}_a:\bD(\cP_{G,M,a}) \rightarrow \cP_{G^\vee,M,a}$, as in the following square
\begin{center}
\begin{tikzcd}
\bD(\Bun_{T/\tilde{C}_a}) \ar{r} \ar{d} & \Bun_{T^\vee/\tilde{C}_a} \ar{d}{\Nm} \\
\bD(\cP_{G,M,a}) \ar{r}{\mathrm{S}_a} & \cP_{G^\vee,M,a}.
\end{tikzcd}
\end{center}
As an immediate application of Theorem \ref{thm:duality} we obtain the following. This is Theorem \ref{thm:duality_simplylaced} in the introduction. 

\begin{thm} \label{thm:duality_simplylaced_text}
For every $a\in \cA_M^\sharp(\C)$, the map $\mathbf{S}_a:\bD(\cP_{G,M,a})\rightarrow \cP_{G^\vee,M,a}$ is an isomorphism of Beilinson $1$-motives. In particular, the Fourier-Mukai transform  $\Phi_{\cP_{G,M,a}}$ induces an isomorphism of derived categories
\begin{equation*}
\mathbf{S}_a: D^b(\QCoh(\cP_{G,M,a})) \longrightarrow D^b(\QCoh(\cP_{G^\vee,M,a})).
\end{equation*} 
\end{thm}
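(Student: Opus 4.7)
The plan is to reduce the statement to Corollary \ref{corol:duality} by identifying the Picard stacks on both sides with the abstract ones associated with cameral covers of Donagi--Gaitsgory type studied in Section \ref{sec:AbelianCameral}. The map $\mathbf{S}_a$ is, by construction, the same diagram chase that produces the isomorphism of Corollary \ref{corol:duality}, so once the ``cameral identification'' is in place on both sides, there is nothing left to do.

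Concretely, I would proceed in three steps. First, for $a\in \cA_M^\sharp(\C)$, the pulled-back cameral cover $\pi_a:\tilde{C}_a\to C$ is a smooth cameral cover associated with the root system $\Phi=\Phi(G,T)$ with simple Galois ramification, by the two defining conditions of $\cA_M^\sharp$. Pulling back the identification $J_{G,M}\cong J'_{G,M}\hookrightarrow J^1_{G,M}=\pi_{M,*}(\bar{T}_M\times T)^W$ from G.\ Wang's proposition along $a$, one obtains a canonical isomorphism of $J_{G,M,a}$ with the group scheme $J=J_{\pi_a,G}$ defined fibrewise as the Galois-invariant subscheme of $\pi_{a,*}(\tilde{C}_a\times T)$ cut out by the condition $\alpha(f(\tilde{x}))=1$ at ramification points, exactly as in Section \ref{sec:cameral_simple_Galois}. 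Hence $\cP_{G,M,a}=\Bun_{J/C}$.

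Second, the same argument applied to $G^\vee$ identifies $J_{G^\vee,M,a}$ with the analogous group scheme $\check{J}=J_{\pi_a,G^\vee}$ built from the dual torus $T^\vee$ and the dual root system $\Phi^\vee$. Here the simply-laced hypothesis enters crucially: the Killing form, normalised so that $(\alpha,\alpha)=2$ for every root, induces a $W$-equivariant isomorphism $\ft^*\overset{\sim}{\to}\ft$ which exchanges $\Phi$ with $\Phi^\vee$, and in particular allows the same cameral cover $\tilde{C}_a\to C$ to play the role of a cameral cover for $\Phi^\vee$ (this is where the argument fails for non-simply-laced $G$ and motivates the later twisted theory). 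Under this identification, $\check{J}$ is precisely the group scheme from Section \ref{sec:CameralCurvesAbelianDuality} built from $T^\vee$ and the kernel-of-coroot condition. Moreover the duality morphism $\mathbf{S}_a$ defined by the square
\begin{equation*}
\begin{tikzcd}
\bD(\Bun_{T/\tilde{C}_a}) \ar{r}{\sim} \ar{d} & \Bun_{T^\vee/\tilde{C}_a} \ar{d}{\Nm} \\
\bD(\cP_{G,M,a}) \ar{r}{\mathbf{S}_a} & \cP_{G^\vee,M,a}
\end{tikzcd}
\end{equation*}
is, by inspection, the very map $\mathrm{S}$ constructed in Corollary \ref{corol:duality}.

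Third, Corollary \ref{corol:duality} (together with Remark \ref{rmk:dualJ} in case the group schemes involved are intermediate between $J^0$ and $J^1$, which is needed for factors of rank one) directly yields that $\cP_{G,M,a}$ and $\cP_{G^\vee,M,a}$ are dual Beilinson $1$-motives with $\mathbf{S}_a$ an isomorphism. The ``in particular'' clause is then the standard fact, recalled in Section \ref{sec:PicardStacks}, that every Beilinson $1$-motive is good in the sense of Arinkin, so the Fourier--Mukai functor $\Phi_{\cP_{G,M,a}}$ induces an equivalence $D^b(\QCoh(\cP_{G,M,a}))\overset{\sim}{\to} D^b(\QCoh(\bD(\cP_{G,M,a})))$, which we transport across $\mathbf{S}_a$ to $D^b(\QCoh(\cP_{G^\vee,M,a}))$.

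The only genuinely non-routine ingredient is the first step—the identification of the regular centraliser $J_{G,M}$ with the ``kernel-of-root'' subgroup $J'\subset J^1$ over the generic locus—which however is established in the text prior to the statement and rests on the Grothendieck--Springer resolution applied to the monoid, following Ng\^o's descent method. Everything else is a formal consequence of the Killing form identification (made possible by the simply-laced hypothesis) and an invocation of Corollary \ref{corol:duality}.
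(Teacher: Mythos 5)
Your proof is correct and follows essentially the same route as the paper, which sets up the duality square in the preceding paragraph and concludes by declaring the theorem an immediate application of Theorem \ref{thm:duality}. You have simply spelled out the two cameral identifications (via G.\ Wang's proposition for $J_{G,M}$ and its pullback along $a$, and the Killing-form identification of $\Phi$ with $\Phi^\vee$), and correctly observed that the precise statement needed for general simply-laced $G$ (neither simply-connected nor adjoint, or with $\sfA_1$ factors) is Corollary \ref{corol:duality} rather than Theorem \ref{thm:duality} itself.
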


\subsection{An example in low rank: \texorpdfstring{$\SL_2$ and $\PGL_2$}{SL2 and PGL2}} 

We consider now the untwisted multiplicative Hitchin fibration associated with $\SL_2$ and with the Vinberg monoid $M=\Env(\SL_2)=\Mat_2$, which for $\SL_2$ is just the monoid $\Mat_2$ of  $2\times 2$ matrices, with unit group  $\Mat_2^\times=\GL_2$. The center $Z_M\cong \Gm$ is just the center of  $\GL_2$. 

The GIT quotient  $\SL_2\git\SL_2$ is isomorphic to  $\bbA^1$, the natural projection being the trace map $\tr: \SL_2 \rightarrow \bbA^1$. On the other hand, the abelianization $\bbA_M=\GL_2 \git (\SL_2\times \SL_2)$ is as well isomorphic to $\bbA^1$, with the natural projection being the determinant map $\det:\GL_2 \rightarrow \bbA^1$. The  $\Gm$ action induced on  $\bbA^1$ from the central $\Gm$ action on $\GL_2$ is the squaring action  $z \cdot a = z^2 a$. We will denote $\bbA^1_{(2)}$ to indicate that $\bbA^1$ is endowed with the squaring $\Gm$ action. A map $D: C \rightarrow [\bbA^1_{(2)}/\Gm]$ is determined by the following data  
\begin{itemize}
	\item a line bundle $L\rightarrow C$, 
	 \item a section $b \in H^0(C,L^2)$.
\end{itemize}

Therefore, we can identify the stack
\begin{equation*}
\cM_{\SL_2,\Mat_2,(L,b)} = \Map_{(L,b)}(C,[\Mat_2/\SL_2 \times \Gm])
\end{equation*} 
with the classifying stack of pairs $(E,\varphi)$ with
\begin{itemize}
	\item $E\rightarrow C$ a vector bundle of rank  $2$ and trivial determinant  $\det E \overset{\sim}{\rightarrow} \sO_C$,	
	\item a twisted endomorphism $\varphi:E\rightarrow E\otimes L$ with  $\det \varphi = b$.
\end{itemize}

The multiplicative Hitchin base can be identified as
 \begin{equation*}
\cA_{\Mat_2,(L,b)}= \Map_{(L,b)}(C,[\bbA^1_{(1)}\times \bbA^1_{(2)}/\Gm]) = H^0(C,L) \times \left\{b \in H^0(C,L^2)\right\}.
\end{equation*} 
The multiplicative Hitchin fibration in this case is then the map
\begin{align*}
h=h_{\SL_2,\Mat_2,(L,b)}: \cM_{\SL_2,\Mat_2,(L,b)} & \longrightarrow \cA_{\Mat_2,(L,b)} \\
(E,\varphi) & \longmapsto (\tr(\varphi),\det(\varphi)=b).
\end{align*} 

Note that in this case we are simply studying a subvariety of the usual Hitchin moduli space for $L$-twisted $\GL_2$-Higgs bundles. More precisely, the multiplicative Hitchin base naturally embeds inside the usual Hitchin base $H^0(C,L)\times H^0(C,L^2)$, and we are imposing the extra restriction that $\det E$ is trivial. The usual Hitchin fibration restricts to the multiplicative Hitchin fibration in this situation. Therefore, the description of the fibres of $h$ can be reduced to the well known description of the fibres of the usual Hitchin fibration in terms of spectral curves \cite{Hitchin_Systems}. We obtain the following.

\begin{prop}
If $a\in H^0(C,L)$ and $b\in H^0(C,L^2)$ are such that $a^2-4b \neq 0$, then the coarse moduli space of the multiplicative Hitchin fibre $h^{-1}(a,b)$ is a torsor under the Prym variety
\begin{equation*}
P_{(a,b)} = \mathrm{Prym}(Y_{(a,b)}),
\end{equation*} 
where $\pi:Y_{(a,b)}\rightarrow C$ is the spectral curve inside the total space of $L\rightarrow C$ determined locally by the equation
 \begin{equation*}
y^2 + ay + b = 0.
\end{equation*} 
\end{prop}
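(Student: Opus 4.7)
The plan is to reduce the assertion to the classical Beauville--Narasimhan--Ramanan (BNR) spectral correspondence. Unpacking the statement, a point of $h^{-1}(a,b)$ is a pair $(E,\varphi)$ consisting of a rank $2$ vector bundle $E\to C$ equipped with a trivialization $\det E \overset{\sim}{\to}\sO_C$, together with an $L$-twisted endomorphism $\varphi:E\to E\otimes L$ satisfying $\tr\varphi = a$ and $\det\varphi = b$. Such a $\varphi$ automatically satisfies the characteristic equation $\varphi^2 - a\varphi + b = 0$, so, forgetting the trivialization of $\det E$, this is simply an $L$-twisted $\GL_2$-Higgs bundle whose spectral curve is the double cover $\pi:Y_{(a,b)}\to C$ cut out in the total space of $L$ by the equation $y^2 - ay + b = 0$ (up to sign, this is the curve $Y_{(a,b)}$ of the statement). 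The hypothesis $a^2-4b \neq 0$ as a section of $L^2$ ensures that $Y_{(a,b)}$ is reduced.

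By BNR, the pushforward $\mathcal{L}\mapsto \pi_*\mathcal{L}$ yields an equivalence between the groupoid of rank one torsion-free sheaves on $Y_{(a,b)}$ (of the appropriate degree) and the groupoid of rank $2$ $L$-twisted Higgs pairs $(E,\varphi)$ on $C$ with characteristic polynomial $y^2-ay+b$, the Higgs field being multiplication by the tautological section of $\pi^*L$. It then remains to impose the $\SL_2$ condition $\det E \cong \sO_C$. Using the standard identity $\det(\pi_*\mathcal{L}) \cong \Nm(\mathcal{L})\otimes \det(\pi_*\sO_{Y_{(a,b)}})$ together with the explicit computation $\pi_*\sO_{Y_{(a,b)}}\cong \sO_C\oplus L^{-1}$ (and hence $\det(\pi_*\sO_{Y_{(a,b)}})\cong L^{-1}$), the trivial-determinant condition translates into the norm equation $\Nm(\mathcal{L})\cong L$.

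Finally, the set of isomorphism classes of sheaves $\mathcal{L}$ of the prescribed degree with $\Nm(\mathcal{L})\cong L$ is a torsor under the kernel of the norm map $\Nm:\Jac(Y_{(a,b)})\to\Jac(C)$, whose connected component of the identity is by definition the Prym variety $\mathrm{Prym}(Y_{(a,b)})$; passing to coarse moduli spaces yields the stated torsor structure. The main technical subtlety is handling the singularities of $Y_{(a,b)}$: at the zeros of $a^2-4b$ the spectral curve acquires nodes, which forces one to work with torsion-free rank one sheaves rather than line bundles and to interpret the Prym as a suitable semi-abelian variety. On the open locus $\cA_M^{\sharp}$ where the discriminant section vanishes only transversely (the generic locus appearing in section~\ref{sec:cameralcurve}), the curve $Y_{(a,b)}$ is smooth and $\mathrm{Prym}(Y_{(a,b)})$ is an honest abelian variety, recovering the expected generic fibre description consistent with the general cameral description of multiplicative Hitchin fibres.
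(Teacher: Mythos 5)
Your argument is correct and is essentially the same reduction the paper itself performs: recognize the multiplicative fibre as the subvariety of the $\GL_2$ Hitchin fibre cut out by the condition $\det E\cong\sO_C$, apply the BNR spectral correspondence, and observe that the determinant constraint translates via $\det(\pi_*\mathcal{L})\cong\Nm(\mathcal{L})\otimes\det(\pi_*\sO_{Y_{(a,b)}})$ into the norm condition defining a $\mathrm{Prym}$-torsor. You additionally fill in the norm computation explicitly and rightly flag the harmless sign convention in the displayed spectral equation ($y^2+ay+b$ versus $y^2-ay+b$, related by $y\mapsto -y$), neither of which changes the substance.
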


We now want to make sense of the stack
\begin{equation*}
\cM_{\PGL_2,\Mat_2,(L,b)} = \Map_{(L,b)}(C,[\Mat_2/\PGL_2 \times \Gm]),
\end{equation*} 
and identify the fibres of the $\PGL_2$ multiplicative Hitchin fibration
 \begin{equation*}
\check{h}:\cM_{\PGL_2,\Mat_2,(L,b)} \rightarrow \cA_{\Mat_2,(L,b)}.
\end{equation*} 
The standard way to do this is to consider the action of the group $\Jac(C)[2]$, parametrizing order $2$ line bundles on $C$, on the stack $\cM_{\SL_2,\Mat_2,(L,b)}$ and defined as follows
\begin{align*}
\Jac(C)[2] \times \cM_{\SL_2,\Mat_2,(L,b)}& \longrightarrow \cM_{\SL_2,\Mat_2,(L,b)} \\
(\xi,(E,\varphi)) & \longmapsto (E\otimes \xi, \varphi).
\end{align*} 
The stack $\cM_{\PGL_2,\Mat_2,(L,b)}$ is identified with the quotient stack $[\cM_{\SL_2,\Mat_2,(L,b)}/\Jac(C)[2]]$. 

Moreover, the spectral correspondence allows us to identify the fibres $\check{h}^{-1}(a,b)$ as torsors under the abelian varieties
\begin{equation*}
\hat{P}_{(a,b)} = P_{(a,b)}/\pi^*\Jac(C)[2].
\end{equation*} 
As explained in appendix \ref{app:Prym}, the abelian varieties $P_{(a,b)}$ and $\hat{P}_{(a,b)}$ are dual.

\section{Twisted invariant theory} \label{sec:TwistedInvTheory}

\subsection{Folding} \label{sec:folding}
Let $G$ be a semisimple simply-laced and simply-connected group over $\C$. We also assume here that the Dynkin diagram of $G$ does not have any component of type  $\sfA_{2\ell}$, for $\ell \in \mathbb{N}$, but we lift this assumption in a different section \ref{sec:inv_Aeven2}. 

The symmetry group of the Dynkin diagram of $G$ is identified with the group $\Out(G)$ of outer classes of automorphisms of $G$. This group is the quotient of  $\Aut(G)$ by the group of \emph{inner automorphisms}, that is, automorphisms of the form $\Ad_g$ for  $g\in G$. A choice of pinning $(B,T,\left\{u_\alpha\right\})$ for $G$ determines a section  $\Out(G) \rightarrow \Aut(G)$. Therefore, if we fix a pinning, every symmetry of the Dynkin diagram of  $G$ induces a unique automorphism  $\theta \in \Aut(G)$. We let $m$ denote the order of this automorphism. It is a well-known result that the pairs $(G,\theta)$, for $G$ simple, are classified by twisted affine Dynkin diagrams (see Table \ref{tab:DiagramAutomorphisms}). More generally, for $G$ semisimple, we can also have  ``trivially induced" automorphisms, which permute several isomorphic simple factors.

The colored nodes of the affine Dynkin diagrams from the list determine a Dynkin diagram of finite type, called the \emph{folded Dynkin diagram}. It determines the \emph{folded root system} $\Phi^\theta$, which is the image of the root system $\Phi=\Phi(G,T)$ under the natural projection $X^*(T)\rightarrow X^*(T)_\theta$. Here, $X^*(T)_\theta=X^*(T)/(1-\theta)$ denotes the \emph{coinvariant group}  associated with the action of the symmetry group on $X^*(T)$. Since $G$ is simply-connected  $X^*(T)$ is the weight lattice of $G$, and the coinvariant group $X^*(T)_\theta$ is in fact a lattice. The norm map induces a natural injection $X^*(T)_\theta \hookrightarrow X^*(T)^\theta$, which identifies $\Phi^\theta$ as a root system inside  $\bbE=X^*(T)^\theta \otimes_\Z \Q $. More precisely, we have
\begin{equation*}
\Phi^\theta = \left\{\Nm(\alpha)=\alpha + \theta(\alpha) + \cdots + \theta^{m-1}(\alpha): \alpha \in \Phi\right\}.
\end{equation*} 
The dual root system of $\Phi^\theta$ is the \emph{cofolded root system} $\Phi_\theta$, defined as follows.
Consider the quotient set $\Phi/\theta$. Given any orbit $\xi \in \Phi/\theta$, we define the corresponding root  $\beta_\xi \in \Phi_\theta$ as
 \begin{equation*}
\beta_\xi = \sum_{\alpha \in \xi} \alpha.
\end{equation*} 
Clearly, the element $\beta_\xi$ is fixed under  $\theta$, so there is a natural inclusion  $\Phi_\theta \subset X^*(T)^\theta$. Note that we have, for every $\alpha \in \Phi$,
\begin{equation*}
\Nm(\alpha) = \tfrac{m}{\mathrm{card}([\alpha])} \beta_{[\alpha]}.
\end{equation*} 
Therefore, $\Phi^\theta$ and $\Phi_\theta$ are indeed dual as root systems inside  $\bbE$. 

If $\Delta=\Delta(G,B,T)=\left\{\alpha_1,\dots,\alpha_r\right\}$ is the set of simple roots, then the orbit set $\Delta/\theta = \left\{\xi_1,\dots,\xi_\ell\right\}$ determines the simple roots $\Delta_\theta=\left\{\beta_1,\dots,\beta_\ell\right\}$ by putting $\beta_i = \sum_{\alpha \in \xi_i} \alpha$. Similarly, if $\left\{\omega_1,\dots,\omega_r\right\} \subset X^*(T)$ is the set of fundamental weights, with orbit set $\left\{\eta_1,\dots,\eta_\ell\right\}$, we obtain a set of fundamental weights $\left\{\varpi_1,\dots,\varpi_\ell\right\}\subset X^*(T)^\theta$ for $\Phi_\theta$ by putting  $\varpi_i=\sum_{\omega \in \eta_i} \omega$. From here it follows that the root lattice $\mathbf{R}(\Phi_\theta)$ associated with the cofolded root system $\Phi_\theta$ is the invariant lattice $\mathbf{R}(\Phi)^\theta$, where $\mathbf{R}(\Phi)=X_*(T)=X^*(T^{\ad})$ is the root lattice of $\Phi$. On the other hand, the root lattice $\mathbf{R}(\Phi^\theta)$ associated with the folded root system $\Phi^\theta$ is the coinvariant lattice  $\mathbf{R}(\Phi)_\theta$ and, dually, the weight lattice of $\Phi_\theta$ is  $X^*(T)^\theta$.

The group $G^\theta$ determined by the root datum
\begin{equation*}
\Psi^\theta = (X^*(T)_\theta, \Phi^\theta, X_*(T)^\theta,\Phi_\theta)
\end{equation*} 
is the fixed point subgroup of $G$ under $\theta$. The subgroup $T^\theta \subset G^\theta$ is a maximal torus.
On the other hand, the group $G_\theta$ determined by the root datum
 \begin{equation*}
\Psi_\theta = (X^*(T)^\theta, \Phi_\theta, X_*(T)_{\theta}, \Phi^\theta)
\end{equation*} 
is called the \emph{coinvariant group}. It has as maximal torus the coinvariant torus $T_\theta=T/(1-\theta)(T)$. Note that the character lattice of $G_\theta$ is the weight lattice of  $\Phi_\theta$. This means that $G_\theta$ is simply-connected. Consider now the Langlands dual group $(G^\theta)^\vee$, determined by the root datum
\begin{equation*}
	(\Psi^\theta)^\vee = (X_*(T)^\theta, \Phi_\theta, X^*(T)_\theta,\Phi^\theta).
\end{equation*} 
The character lattice of $(G^\theta)^\vee$ is the root lattice of $\Phi_\theta$. Therefore, $(G^\theta)^\vee$ is of adjoint type and in fact $(G^\theta)^\vee =(G_\theta)^{\ad}$, or equivalently, $((G^\theta)^\vee)^{\Sc}=G_\theta^{\Sc}$.

\subsection{Twisted conjugation} Given a pair $(G,\theta)$ as above, we can consider the \emph{$\theta$-twisted conjugation} action of $G$ on itself, by setting
 \begin{equation*}
g * x = gx\theta(g)^{-1},
\end{equation*} 
for $g,x \in G$. When $\theta$ is the identity, we recover the adjoint action. Equivalently, we can consider the (untwisted) conjugation action  of $G$ on the exterior component $G\theta$ of the non-connected group  $\tilde{G}=G \rtimes \langle \theta \rangle$. That is, given  $g\in G$ and  $x\theta \in G\theta$, we have
 \begin{equation*}
\Ad_g(x\theta) = g (x\theta) g^{-1} = (g x \theta(g)^{-1}) \theta = (g* x) \theta.
\end{equation*} 

For each $i=1,\dots,\ell$, consider the highest weight representation $(V_i,\rho_i)$ of $G_\theta$ associated with the fundamental weight  $\varpi_i$, and let  $\chi_i=\mathrm{tr}(\rho_i)$. Since $\varpi_i$ is also a dominant weight of  $G$, we can also consider the representation  $(V_{\varpi_i},\rho_{\varpi_i})$ of $G$, with highest weight  $\varpi_i$. Let  $\tilde{\rho}_i$ the lift of this representation to a representation of the non-connected group $\tilde{G}$, and let $\tilde{\chi}_i=\mathrm{tr}(\tilde{\rho}_i)$. The restriction $\tilde{\chi}_i|_{T\theta}$ can be regarded as a function in $k[T]$ via the natural isomorphism $T\rightarrow T\theta$. It is a result of Mohrdieck \cite{Mohrdieck} that this function factors through $T_\theta$ and coincides with $\chi_i$ as an element of $k[T_\theta]\subset k[T]$.

We can now state the main result of twisted invariant theory for a group, which relates the GIT quotient $\chi_{G\theta}:G\rightarrow\fc_{G\theta}:=G\git_* G=G\theta\git \Ad(G)$ of a group by its twisted conjugation action, and the GIT quotient of its coinvariant group under the adjoint action. A proof can be found in G. Wang's paper \cite{Griffin_Twisted}, where it is attributed to Mohrdieck \cite{Mohrdieck}.

\begin{thm}[Mohrdieck--Wang]
There are natural isomorphims
\begin{equation*}
\fc_{G\theta} \overset{\sim}{\longleftarrow} T\git ((1-\theta)T\rtimes W^\theta) \overset{\sim}{\longrightarrow} T_\theta /W^\theta \overset{\sim}{\longrightarrow} G_\theta\git \Ad(G_\theta)= \fc_{G_\theta}.
\end{equation*} 
Moreover, we have
\begin{equation*}
\Spec k[\tilde \chi_{1},\dots,\tilde\chi_{\ell}] = T^\theta \theta /\tilde{W} \overset{\sim}{\longrightarrow} T_\theta/W^\theta = \Spec k[\chi_1,\dots,\chi_\ell],
\end{equation*} 
where $\tilde{W}= [(1-\theta)(T)\cap T^\theta]\rtimes W^\theta$ denotes the \emph{exterior Weyl group} of $(G,\theta)$ and $W^\theta\subset W$ is the fixed point subgroup under the natural action of  $\theta$ on $W$.
\end{thm}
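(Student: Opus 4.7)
The plan is to prove the chain of isomorphisms as a twisted version of the classical Chevalley restriction theorem, handling each arrow separately and in each case reducing to a computation in invariant theory. For the first isomorphism $T\git[(1-\theta)T\rtimes W^\theta]\to\fc_{G\theta}$, I would start from the morphism induced by the inclusion $T\hookrightarrow T\theta\hookrightarrow G\theta$, $t\mapsto t\theta$. A direct computation gives that twisted conjugation by $g\in T$ translates $T\theta$ by $(1-\theta)(T)$, since $g(t\theta)g^{-1}=((1-\theta)(g)\cdot t)\theta$; and twisted conjugation by a lift $\dot w\in N_G(T)$ of $w\in W$ preserves $T\theta$ precisely when $\dot w\theta(\dot w)^{-1}\in T$, i.e.\ $w\in W^\theta$, and induces on $T$ the standard Weyl action. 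This identifies $(1-\theta)(T)\rtimes W^\theta$ as the subgroup of $G$ whose twisted conjugation action stabilizes $T\theta$.

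The heart of this first step is the twisted Steinberg density theorem: every closed $G$-orbit in $G\theta$ meets $T\theta$, and two elements of $T\theta$ in the same closed $G$-orbit differ by the action of $(1-\theta)(T)\rtimes W^\theta$. The strategy here, due to Mohrdieck, is to prove that every $\theta$-semisimple element of $G\theta$ is contained in $T'\theta$ for some $\theta$-stable maximal torus $T'\subset G$, and that all such $\theta$-stable maximal tori are $G^\theta$-conjugate (using that $G^\theta$ is connected, since $G$ is simply connected). Passing to invariants, the pullback $k[G\theta]^G\overset{\sim}{\to}k[T]^{(1-\theta)(T)\rtimes W^\theta}$ is shown to be an isomorphism of finitely generated $k$-algebras.

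The second isomorphism $T\git[(1-\theta)T\rtimes W^\theta]\to T_\theta/W^\theta$ is essentially definitional, since $T/(1-\theta)(T)=T_\theta$ is the coinvariant torus and the residual $W^\theta$-action descends. The third isomorphism $T_\theta/W^\theta\to\fc_{G_\theta}$ is the classical Chevalley restriction theorem applied to the connected reductive group $G_\theta$: its maximal torus is $T_\theta$ by the defining root datum $\Psi_\theta$, and its Weyl group is $W^\theta$, a standard combinatorial feature of the cofolded root system $\Phi_\theta$.

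For the ``moreover'' statement, the strategy is to rerun the same argument with $T^\theta$ in place of $T$. Here $T^\theta$ is a torus (connected because $G$ is simply connected), and the composition $T^\theta\hookrightarrow T\twoheadrightarrow T_\theta$ is a surjection of tori of the same dimension with finite kernel $T^\theta\cap(1-\theta)(T)$: surjectivity follows by composing with the norm $\bar\Nm:T_\theta\to T^\theta$, which yields multiplication by $m=|\langle\theta\rangle|$ on $T^\theta$ and hence is itself surjective. Therefore $T^\theta/[T^\theta\cap(1-\theta)(T)]\cong T_\theta$, and quotienting by $W^\theta$ produces the isomorphism $T^\theta\theta/\tilde W\cong T_\theta/W^\theta$. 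The identification with $\Spec k[\tilde\chi_1,\dots,\tilde\chi_\ell]$ follows from Mohrdieck's identity $\tilde\chi_i|_{T\theta}=\chi_i|_{T_\theta}$ recalled in the excerpt, together with the fact that the $\chi_i$ generate $k[T_\theta]^{W^\theta}$ by classical Chevalley for $G_\theta$. The main obstacle throughout is the twisted density theorem at the scheme-theoretic level: one must control invariant functions and orbit closures, not merely closed points, which requires adapting Steinberg's arguments for group-valued Chevalley restriction to the twisted setting.
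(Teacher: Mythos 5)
The paper does not prove this theorem: immediately after stating it, it says ``A proof can be found in G.\ Wang's paper \cite{Griffin_Twisted}, where it is attributed to Mohrdieck \cite{Mohrdieck}.'' Your proposal is essentially an outline of that Mohrdieck--Wang argument (orbit of $T\theta$ under twisted conjugation by $T$ and by $\theta$-stable lifts, twisted density of $T\theta$ in $G\theta$, $G^\theta$-conjugacy of $\theta$-stable maximal tori via connectedness of $G^\theta$, classical Chevalley for $G_\theta$ and Mohrdieck's character identity for the ``moreover''), and you correctly flag that the real content is the twisted Chevalley/Steinberg density step at the level of invariant rings, not just closed points; so the strategy is aligned with what the paper is citing.

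One small logical slip is worth fixing. To show $p: T^\theta \hookrightarrow T \twoheadrightarrow T_\theta$ is surjective you compose as $\bar\Nm \circ p : T^\theta \to T_\theta \to T^\theta$, observe it equals $[m]$, and conclude ``hence is itself surjective.'' But surjectivity of $\bar\Nm \circ p$ only forces $\bar\Nm$ to be surjective; it gives no information about the image of $p$. What you want is the other composition: for $\bar t \in T_\theta$ with lift $t \in T$, since $\theta^i(t)t^{-1} \in (1-\theta)(T)$ one has $p(\bar\Nm(\bar t)) = m\,\bar t$, so $p \circ \bar\Nm = [m]$ on the \emph{connected} torus $T_\theta$, which is surjective, hence so is $p$. (Alternatively: the kernel $T^\theta \cap (1-\theta)(T)$ is finite because $X^*(T)\otimes\Q$ decomposes as $X^*(T)^\theta\otimes\Q \oplus (1-\theta)X^*(T)\otimes\Q$; since $\dim T^\theta = \dim T_\theta = \ell$ and $T_\theta$ is connected, the image, a subtorus of full dimension, must be all of $T_\theta$.) The rest of your outline is consistent with the theorem as stated, including the identification of the $W^\theta$-action via $\theta$-stable lifts $\dot s_\alpha$ so that $\dot w\theta(\dot w)^{-1}=1$, which you are implicitly using when you say the induced action on $T$ is the standard Weyl action.
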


Consider now a very flat reductive monoid $M$ with  $M^\der=G$, and suppose that the automorphism  $\theta$ of  $G$ extends to an automorphism $\theta_M$ of the whole unit group $M^\times$. In particular, if $M=\Env(G)$ is the Vinberg monoid, we can always extend $\theta$ to an automorphism  $\theta_+ \in \Aut(G_+)$ by putting
\begin{align*}
\theta_+: G_+ & \longrightarrow G_+ \\
[g,t] & \longmapsto [\theta(g),\theta(t)].
\end{align*} 

The $\theta$-twisted conjugation action of  $G$ on itself extends to the $\theta_M$-twisted conjugation action of $M^\times$ on itself, so in turn we can consider the $\theta$-twisted conjugation action of $G$ on  $M$. Even though $\theta_M$ might not extend to an automorphism of the whole monoid  $M$, we can at least formally understand the $\theta$-twisted conjugation action as a conjugation action on an ``exterior component" $M\theta$. We can then consider the corresponding invariant quotient
 \begin{equation*}
\chi_{M\theta}: M \longrightarrow \fc_{M\theta}:=M\git_* G= M\theta \git \Ad(G).
\end{equation*} 
The following is a result of G. Wang \cite{Griffin_Twisted}.

\begin{prop}[G. Wang]
The inclusion $T_M \hookrightarrow M^\times$ of the maximal torus, induces an isomorphism	
\begin{equation*}
\bar{T}_M \theta \git [\Ad(T^{\Sc} \rtimes W^\theta)]\overset{\sim}{\longrightarrow}\fc_{M\theta}.
\end{equation*} 
Moreover, since $M$ is very flat, we also have
\begin{equation*}
\fc_{M\theta} \cong \fc_{G\theta} \times \bbA_M.
\end{equation*} 
\end{prop}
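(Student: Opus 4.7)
The plan is to deduce this from the Mohrdieck--Wang theorem on the group level by a two-step reduction: first from the monoid $M$ to the universal Vinberg monoid $\Env(G)$ via versality, and then from the full monoid $\Env(G)$ to the unit group $G_+$ by extending the Chevalley-type restriction across the closure $\bar{T}_M$. The factorization $\fc_{M\theta}\cong \fc_{G\theta}\times\bbA_M$ will emerge as a byproduct of this reduction.

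First I would invoke Vinberg versality: since $M$ is very flat, it fits in a pullback square $M=\Env(G)\times_{\bbA_G}\bbA_M$, and $\theta$ extends to $\theta_+$ on $\Env(G)$ acting trivially on $\bbA_G$. Because the twisted $G$-conjugation on $M$ is trivial on the abelianization factor, the formation of the GIT quotient commutes with this pullback, so it suffices to prove the statement for $M=\Env(G)$ and then base-change. On $\Env(G)$ one has the canonical section $\delta_{\Env(G)}:\bbA_G\to \Env(G)$ of the abelianization map landing in $\bar T_{\Env(G)}$ and commuting with both $T^{\Sc}$ and $W^\theta$, which already gives a splitting of the map $\fc_{\Env(G)\theta_+}\to \bbA_G$; combined with fibrewise analysis over $\bbA_G$ this will produce the product decomposition $\fc_{\Env(G)\theta_+}\cong \fc_{G\theta}\times \bbA_G$.

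Next I would handle the heart of the proof, namely the identification on the torus. On the open unit locus $M^\times$ this is exactly the Mohrdieck--Wang theorem recalled just above the statement, after observing that the twisted conjugation action of $T^{\Sc}$ on $T\theta$ factors through $(1-\theta)T$, so that
\begin{equation*}
T_M\theta\git[T^{\Sc}\rtimes W^\theta]\overset{\sim}{\longrightarrow}T\theta\git[(1-\theta)T\rtimes W^\theta]\overset{\sim}{\longrightarrow}\fc_{M^\times\theta_M}.
\end{equation*}
To promote this to the closure, I would verify that (i) the twisted $T^{\Sc}$-action on $T$ extends uniquely to an action on the toric variety $\bar T_M$, as $\bar T_M$ is a $T_M$-toric embedding and $T^{\Sc}$ acts via a homomorphism to $T_M$; (ii) $W^\theta$ preserves $\bar T_M$, because it preserves $T_M$ and acts by automorphisms of the toric fan fixed by $\theta$; and (iii) the natural morphism $\bar T_M\theta\to \fc_{M\theta}$ is $T^{\Sc}\rtimes W^\theta$-invariant, surjective, and injective on the quotient. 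Surjectivity comes from the fact that every closed $G$-orbit in $M\theta$ contains a $\theta$-semisimple element, which by the Mohrdieck--Wang theory of twisted semisimple elements can be $G$-conjugated into $\bar T_M\theta$; injectivity (modulo $T^{\Sc}\rtimes W^\theta$) then follows by combining the unit case with a limit/specialization argument using flatness of $M$ over $\bbA_M$ and the fact that $T_M$ is dense in $\bar T_M$.

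The main obstacle I anticipate is step (iii), the surjectivity statement at the boundary: for points of $\bbA_M$ outside $\bbA_M^\times$, the corresponding monoid fibre is a degeneration of $G_+$ in which the twisted Jordan decomposition is not directly available. I would handle this by stratifying $M$ along $\bbA_M$, using that each boundary stratum is itself a reductive monoid (or an orbit closure in $\Env(G)$) with a residual torus conjugate to a face of $\bar T_M$, and then applying the Mohrdieck--Wang statement to each of these smaller reductive groups. Together with the fibrewise application of this argument over $\bbA_G$, inherited from the Vinberg pullback, this should deliver both claims of the proposition simultaneously.
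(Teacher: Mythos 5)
Note first that the paper itself does not prove this proposition: it is attributed to G. Wang's unpublished work \cite{Griffin_Twisted}, and the only cue the paper leaves is the explicit formula for $\chi_{\Env(G)\theta}$ stated immediately afterward, which suggests that the intended proof writes down the invariant ring directly via the twisted characters $\tilde\chi_1,\dots,\tilde\chi_\ell$ together with the abelianization coordinates, in the spirit of Vinberg's original argument in the untwisted case.

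Your reduction to $M=\Env(G)$ and the identification on the unit locus are both sound (and commutation of GIT with the Vinberg pullback needs only linear reductivity of $G$, not flatness of $\bbA_M\to\bbA_G$). The genuine gap is in the boundary analysis, step (iii). When you invoke ``the Mohrdieck--Wang theory of twisted semisimple elements'' to $G$-conjugate closed orbits in $M\theta$ into $\bar T_M\theta$, you are applying a group-level twisted Jordan decomposition to the monoid closure, which is precisely the content that needs to be established, not assumed; the paper even flags that $\theta$ need not extend to an automorphism of the whole monoid, so the notion of $\theta$-semisimplicity is delicate over $\bbA_M\setminus\bbA_M^\times$. Your stratification sketch is plausible in spirit but not a proof: the boundary strata of a reductive monoid are orbit closures attached to idempotents, not reductive groups equipped with a diagram automorphism to which Mohrdieck--Wang applies off the shelf, and the ``limit/specialization'' argument for injectivity does not show that distinct $T^{\Sc}\rtimes W^\theta$-orbits over a boundary point of $\bbA_M$ stay separated by $G$-invariant functions. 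A more robust route, and the one consistent with the formula displayed after the proposition, is to exhibit the invariants $\tilde\chi_1,\dots,\tilde\chi_\ell$ and the abelianization coordinates, prove they generate $k[M]^G$, and verify the isomorphism on $\bar T_M$ by direct computation; this sidesteps any appeal to a twisted Jordan decomposition at the boundary.
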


In particular, for $M=\Env(G)$, we have
\begin{align*}
\chi_{\Env(G)\theta}: \Env(G) & \longrightarrow \bbA^\ell \times \bbA^r \\
[(g,t)] & \longmapsto (\tilde{\chi}_1(g\theta),\dots,\tilde{\chi}_\ell(g\theta),\alpha_1(t),\dots,\alpha_r(t)).
\end{align*} 

\subsection{Monoids and coinvariant group} \label{sec:coinv_monoids}
The inclusion of the root lattice $\mathbf{R}(\Phi_\theta)=\mathbf{R}(\Phi)^\theta \subset \mathbf{R}(\Phi)$ induces a surjection at the level of toric varieties $\bbA_G \rightarrow \bbA_{G_\theta}$.
Here, we recall that $\bbA_{G_\theta}$ denotes the abelianization of the Vinberg monoid $\Env(G_\theta)$;  that is, $\bbA_{G_\theta}$ is the affine space $\bbA^\ell$ with the  $T_\theta$-action
 \begin{equation*}
t \cdot \bm{y} = t\cdot (y_1,\dots,y_\ell) = (\beta_1(t)\cdot y_1,\dots,\beta_\ell(t)\cdot y_\ell) =: \bm{\beta}(t)\cdot \bm{y}.
\end{equation*} 
The surjection $\bbA_G\rightarrow \bbA_{G_\theta}$ is equivariant with respect to the actions of $T$ and $T_\theta$, respectively, and the natural projection $T\rightarrow T_\theta$.

Let  $M$ be a very flat monoid with derived group $G$. By Vinberg's classification, there is a map $\bbA_M \rightarrow \bbA_G$ such that  $M$ is obtained from $\Env(G)$ as a pullback through this map. Composing with the map $\bbA_G \rightarrow \bbA_{G_\theta}$, we obtain a map $\bbA_M\rightarrow \bbA_{G_\theta}$. We define $M_\theta$ as the pullback of  $\Env(G_\theta)$ through this map. This $M_\theta$ is by construction a very flat monoid with derived group  $G_\theta$ and abelianization $\bbA_M$. The construction $M\mapsto M_\theta$ determines a functor from the category  $\mathcal{VF}(G)$ of very flat monoids with derived group $G$ and excellent maps (i.e. maps obtained as pullbacks of maps of the abelianizations, see \cite[Definition 2.3.9]{Griffin_Lemma}) to the category $\mathcal{VF}(G_\theta)$ of very flat monoids with derived group $G_\theta$ and excellent maps. Moreover, this functor is fully faithful since, for any two objects $M$ and $N$ of  $\mathcal{VF}(G)$, we have
\begin{equation*}
\Hom_{\mathcal{VF}(G_\theta)}(M_\theta, N_\theta) = \Hom_{\text{comm. monoids}}(\bbA_M,\bbA_N) = \Hom_{\mathcal{VF}(G)}(M,N).
\end{equation*} 
Moreover, note that, for every very flat monoid $M$ with derived group $G$, we have
\begin{equation*}
\fc_{M\theta} \cong \fc_{G\theta} \times \bbA_M \cong \fc_{G_\theta} \times \bbA_M \cong \fc_{M_\theta}.
\end{equation*} 

\subsection{Twisted Steinberg quasi-sections}
By assumption, the automorphism $\theta$ of  $G$ preserves a pinning. In particular, this means that $\theta$ acts on the simple roots and that, for each simple root $\alpha \in \Delta$, the corresponding  $1$-parameter group $u_\alpha:\Ga\rightarrow U_\alpha$ satisfies
 \begin{equation*}
\theta(u_\alpha(1)) = u_{\theta(\alpha)}(1).
\end{equation*} 

A \emph{twisted Coxeter datum} $(q,\sigma,\dot{\bm{s}})$ \cite{Griffin_Twisted} consists of the following
\begin{itemize}
	\item a section $q:\Delta/\theta\rightarrow \Delta$,
	 \item a bijection $\sigma:\left\{1,\dots,\ell\right\}\rightarrow \Delta/\theta$,
	 \item a choice $\dot{\bm{s}}=\left\{\dot{s}_\alpha:\alpha \in \Delta\right\}$ of a representative $\dot{s}_\alpha \in N_G(T)$ of each simple reflection $s_\alpha \in W$ which is  $\theta$-stable, in the sense that
		  \begin{equation*}
		\theta(\dot{s}_\alpha) = \dot{s}_{\theta(\alpha)}.
		 \end{equation*} 
\end{itemize}
Once $\Delta$ is fixed, an element $w\in W \rtimes \langle \theta \rangle$ is called a \emph{twisted Coxeter element} if it can be written as $w=w_{q,\sigma}=s_{q\circ \sigma(1)}\dots s_{q\circ \sigma(r)}\theta$, for some $q$ and $\sigma$ as above. We denote by $\mathrm{TCox}(W,\Delta,\theta)$ the set of twisted Coxeter elements.

With a twisted Coxeter datum $(q,\sigma,\dot{\bm{s}})$ we can associate the corresponding \emph{twisted Steinberg quasi-section}
\begin{align*}
\epsilon^{(q,\sigma,\dot{\bm{s}})}: \fc_{G\theta} \cong \bbA^\ell & \longrightarrow G\theta \\
(x_1,\dots,x_\ell) & \longmapsto \left( \prod_{i=1}^\ell u_{q\circ \sigma(i)}(x_i) \dot{s}_i \right) \theta.
\end{align*} 
As in the untwisted case, the dependence on the Coxeter datum is not too strong. Indeed, G. Wang \cite{Griffin_Twisted} proved the following. If  $(q,\sigma,\dot{\bm{s}})$ and $(q,\sigma',\dot{\bm{s}}')$ are two different twisted Coxeter data with $\sigma=\sigma'$, then the corresponding sections are conjugate under some  $t\in T$. On the other hand, if $\dot{\bm{s}}=\dot{\bm{s}}'$, then, for any $x,x'\in \fc_{G\theta}$ such that $x_i=x'_j$ if  $q\circ \sigma(i)$ and $q\circ \sigma'(j)$ are in the same $\theta$-orbit, the elements $\epsilon^{(q,\sigma,\dot{\bm{s}})}(x)$ and $\epsilon^{(q,\sigma',\dot{\bm{s}})}(x')$ are $G$-conjugate.

Again, if $M$ is a reductive monoid with $M^{\der}=G$, we can extend the twisted Steinberg quasi-section to a quasi-section
\begin{align*}
\epsilon_{M\theta}^{(q,\sigma,\dot{\bm{s}})}: \fc_{M\theta} \cong \bbA_M \times \fc_{M\theta} & \longrightarrow M\theta \\
(a,x) & \longmapsto \delta_M(a)\epsilon^{(q,\sigma,\dot{\bm{s}})}(x)
\end{align*} 
of the GIT quotient $\chi_{M\theta}:M\rightarrow \fc_{M\theta}$. It is clear, already from the untwisted case, that the dependence of this quasi-section on the twisted Coxeter datum is strong.

\subsection{Stabilizers and special loci}
Let $M$ be a very flat monoid with derived group $G$. Similarly to the untwisted case, we say that an element $x \in M$ is \emph{$\theta$-regular} if the stabilizer
\begin{equation*}
I_{G,M\theta,x}:=\left\{g\in G: gx\theta(g)^{-1}=x\right\}
\end{equation*} 
has the minimal possible dimension. We say that $x$ is \emph{$\theta$-semisimple} if its $\theta$-twisted conjugation orbit contains an element of $\bar{T}_M$, and we say that it is \emph{$\theta$-regular semisimple} if it is both $\theta$-regular and  $\theta$-semisimple. 
The same notions apply to $\theta$-twisted adjoint orbits and we consider the subsets  $M^{\treg}$, $M^{\tSs}$ and $M^{\trs}$ of $\theta$-regular,  $\theta$-semisimple and $\theta$-regular semisimple elements, respectively. 

The stabilizers define a group scheme $I_{G,M\theta}\rightarrow M$ called the \emph{$\theta$-centralizer group scheme}, which restricts to a smooth commutative group scheme over $M^{\treg}$, and as in the untwisted case, the descent argument of Ngô can be adapted to yield the following \cite{Griffin_Twisted}.
\begin{prop}[G. Wang] \label{prop:TwistedRegCent}
There is a unique commutative group scheme $J_{G,M\theta}\rightarrow \fc_{G,M\theta}$ with a $G$-equivariant isomorphism
 \begin{equation*}
\chi_{G,M\theta}^* J_{G,M\theta}|_{M^{\treg}} \overset{\sim}{\longrightarrow} I_{G,M\theta}|_{M^{\treg}},
\end{equation*} 
which can be extended to a homomorphism $\chi^*_{G,M\theta}J_{G,M\theta}\rightarrow I_{G,M\theta}$. We call this $J_{G,M\theta}$ the \emph{regular $\theta$-centralizer} group scheme.
\end{prop}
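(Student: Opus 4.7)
The plan is to adapt Ng\^o's descent argument, employed in the untwisted case by J. Chi in the preceding proposition, to the twisted setting. The key new ingredient is the twisted Steinberg quasi-section $\epsilon_{M\theta}^{(q,\sigma,\dot{\bm{s}})}$ constructed above, which plays the role of the ordinary Steinberg quasi-section.

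First I would verify that $I_{G,M\theta}|_{M^{\treg}}$ is smooth and commutative. Smoothness follows from the minimality of the dimension of the fibres imposed by $\theta$-regularity, together with generic flatness of the stabilizer group scheme. For commutativity, since every $\theta$-regular $\theta$-twisted $G$-orbit meets the image of some twisted Steinberg quasi-section, it is enough to verify the claim on points of the form $\epsilon_{M\theta}^{(q,\sigma,\dot{\bm{s}})}(a)$; there a direct calculation, completely parallel to the untwisted one, shows that the $\theta$-twisted centralizer is contained in $T_M$ and is therefore abelian.

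Next, I would define $J_{G,M\theta}$ on an étale cover of $\fc_{G,M\theta}$ by open sets supporting twisted Steinberg quasi-sections, by setting
\[
J_{G,M\theta} := \bigl(\epsilon_{M\theta}^{(q,\sigma,\dot{\bm{s}})}\bigr)^*\! I_{G,M\theta}.
\]
By Wang's results recalled in the previous subsection, two twisted Coxeter data with the same $\sigma$ yield $T$-conjugate quasi-sections, and two with the same $\dot{\bm{s}}$ yield quasi-sections whose values at matching base points are $G$-conjugate; in either case the commutativity established above turns conjugation into a \emph{canonical} isomorphism of the pullbacks, independent of the conjugating element. Hence the local definitions glue to a well-defined commutative smooth group scheme $J_{G,M\theta}$ on $\fc_{G,M\theta}$, together with a canonical $G$-equivariant isomorphism $\chi^*_{G,M\theta}J_{G,M\theta}|_{M^{\treg}} \overset{\sim}{\to} I_{G,M\theta}|_{M^{\treg}}$. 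Uniqueness of $J_{G,M\theta}$ is then formal, as descent along the surjective $G$-invariant morphism $\chi_{G,M\theta}|_{M^{\treg}}$ is unique once commutativity is in place.

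Finally, I would extend this isomorphism to a homomorphism $\chi^*_{G,M\theta}J_{G,M\theta}\to I_{G,M\theta}$ over all of $M$. Following Ng\^o's strategy, the non-regular locus $M\setminus M^{\treg}$ has codimension at least $2$ in the normal variety $M$, and the target $\chi^*_{G,M\theta}J_{G,M\theta}$ is a smooth affine group scheme, so the morphism extends uniquely across the complement by a fibrewise Hartogs' argument. The main obstacle in all of this is the gluing step: on parts of the branch locus of $\chi_{G,M\theta}$ several $\theta$-regular $G$-orbits can lie in a single fibre, so the argument genuinely requires both Wang's conjugacy statements for distinct twisted Coxeter data and the commutativity of the regular twisted stabilizers — which together are precisely what makes the descent canonical.
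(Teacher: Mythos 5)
The paper itself offers no proof for this proposition -- it simply cites G.\ Wang's unpublished work \cite{Griffin_Twisted} and remarks that ``the descent argument of Ngô can be adapted,'' so there is no internal argument to compare your proposal against line by line. Your overall plan (Steinberg section as surrogate for the Kostant section, commutativity making conjugation canonical, Hartogs-style extension across the codimension-$\geq 2$ non-regular locus) is in the right spirit.

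However, the step that does the real work has a genuine gap. To make the isomorphism $\chi^*_{G,M\theta}J_{G,M\theta}|_{M^{\treg}}\cong I_{G,M\theta}|_{M^{\treg}}$ canonical, you invoke Wang's conjugacy statements for Steinberg quasi-sections with different twisted Coxeter data. But those statements hold only on the level of the \emph{group} $G\theta$, i.e.\ over $\fc_{G\theta}$: the paper explicitly records that on the \emph{monoid} ``the dependence of this quasi-section on the twisted Coxeter datum is strong'' and that $\epsilon^{(\sigma,\dot{\bm{s}})}_M(0)$ and $\epsilon^{(\sigma',\dot{\bm{s}})}_M(0)$ are \emph{not} conjugate. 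This is exactly the locus where a single fibre of $\chi_{M\theta}$ contains several $\theta$-regular orbits, so the descent cannot be rendered canonical by conjugating between sections. Concretely, your argument only produces the isomorphism over $\chi^{-1}(\fc^{\times}_{M\theta})\cap M^{\treg}$, and the complement of this inside $M^{\treg}$ is a \emph{divisor} (the preimage of the boundary $\fB_{M\theta}$), so the Hartogs extension across codimension $\geq 2$ does not reach it. Identifying stabilizers across non-conjugate regular orbits in a boundary fibre needs a different mechanism -- in practice the cameral/Grothendieck--Springer description (the content of the subsequent Proposition~\ref{prop:galoisJ}) or a careful degeneration argument from the unit group, not conjugacy of sections. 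A smaller issue: your assertion that the $\theta$-twisted centralizer of a Steinberg-section point lies in $T_M$ fails at unipotent-type points (even in the untwisted $\SL_2$ case the regular centralizer is $\{\pm 1\}\times U$, not a torus); the commutativity of regular centralizers is true, but not for that reason.
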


We can repeat the discussion of the untwisted case. G. Wang \cite{Griffin_Twisted}, shows that, for every choice of Coxeter datum, the Steinberg quasi-section $\epsilon^{(q,\sigma,\dot{\bm{s}})}$ maps $\fc_{G\theta}$ inside the regular locus, and in fact it cuts transversally each regular orbit. Therefore $[(M^\times)^{\treg}\theta/G]\rightarrow \fc_{M^\times \theta}$ is a $J_{G,M^\times \theta}$-gerbe neutralized by any twised Steinberg quasi-section. However, for some points of $\fc_{M\theta}$, the fibre can have several regular orbits (whose number is bounded above by the number of twisted Coxeter elements). Thus $[M^{\treg}\theta /G]\rightarrow \fc_{M\theta}$ is in general a finite union of $J_{G,M\theta}$-gerbes, each of which is neutralized by some twisted Steinberg quasi-section. 

Like in the untwisted case, we want to identify two special divisors inside the GIT quotient $\fc_{M\theta}\cong \fc_{M_\theta}$. The \emph{twisted boundary divisor} $\fB_{M\theta}$ is analogously defined as the complement of
\begin{equation*}
\fc^\times_{M\theta}:= \fc_{G\theta} \times \bbA_M^\times.
\end{equation*} 
We can also define the \emph{extended twisted discriminant divisor} $\fD_{M\theta}$ by taking the pullback of the divisor $\fD_{\Env(G_\theta)}\subset \fc_{\Env(G_\theta)}$ through the natural map $\fc_{M_\theta}\rightarrow \fc_{\Env(G_\theta)}$. Recall that the divisor $\fD_{\Env(G_\theta)}$ is the vanishing set of the following $W$-invariant function on $(\bar T_{\theta})_+$,
\begin{equation*}
\mathrm{Disc}_{\Env(G_\theta)}:= \prod_{\beta \in (\Phi_\theta)+} (\beta,0) \prod_{\alpha \in \Phi_\theta} (1-(0,\alpha)) = (2\rho_\theta,0) \prod_{\alpha \in \Phi_\theta} (1-(0,\alpha)),
\end{equation*} 
where $\rho_\theta$ is the half-sum of the roots in $\Phi_\theta$.
Note that this definition makes sense because, by definition, $\Phi_\theta$ is the root system of $G_\theta$. It follows from the untwisted case that these two divisors intersect properly and in turn that the codimension of $\fD_{M\theta}^\times:=\fD_{M\theta}\cap \fB_{M\theta}$ is at least $2$. We say that an element of  $M$ is \emph{$\theta$-group-like} if it belongs to the preimage of $\fc_{M\theta}^{\gl}:=\fc_{M\theta}\setminus \fD_{M\theta}^\times$, and denote by $M^{\tgl}=\chi_{M\theta}^{-1}(\fc_{M\theta}^{\gl})$ the locus of $\theta$-group-like elements.
In the twisted case it is also true that  $\fc_{M\theta}^{\trs}$, the image of $M^{\trs}$ under the GIT quotient, is the complement of the discriminant divisor $\fD_{M\theta}$, and each fibre of $\chi_{M\theta}$ over $\fc_{M\theta}^{\trs}$ consists of a single twisted adjoint orbit. It follows that an element of $M$ is $\theta$-group-like if it is either in the unit group or $\theta$-regular semisimple, that is
\begin{equation*}
M^{\tgl} = M^\times \cup M^{\trs}.
\end{equation*} 
More generally, the same argument from the untwisted case shows that each fibre of $\chi_{M\theta}$ over $\fc_{M\theta}^{\tgl}$ has a single regular orbit, so the map $[M^{\tgl} \theta/G]\rightarrow \fc_{M\theta}^{\tgl}$ is a $J_{G,M\theta}$-gerbe, neutralized by any twisted Steinberg quasi-section $\epsilon^{(q,\sigma,\dot{\bm{s}})}$, for any choice of twisted Coxeter datum $(q,\sigma,\dot{\bm{s}})$.

\subsection{Galois description} Recall the isomorphisms
$\fc_{M\theta} \cong \fc_{M_\theta} \cong \bar{T}_{M_\theta}/W^\theta$. We can thus consider the cameral cover
\begin{equation*}
\pi_{M\theta}: \bar{T}_{M_\theta} \longrightarrow \fc_{M\theta}.
\end{equation*} 
We introduce the Weil restriction
\begin{equation*}
\Pi_{M\theta} = \pi_{M\theta,*} (\bar{T}_{M_\theta} \times T^\theta),
\end{equation*} 
on which $W$ acts diagonally, and also the invariant subgroup
 \begin{equation*}
J^1_{M\theta} = \Pi_{M\theta}^W
\end{equation*} 
and its neutral connected component
\begin{equation*}
J^0_{M\theta}= \Pi_{M\theta}^{W,0}.
\end{equation*} 

As explained in \cite{Griffin_Twisted}, a twisted version of the Grothendieck--Springer resolution provides an open embedding $J_{M\theta} \hookrightarrow J^1_{M\theta}$, which restricts to an isomorphism over $\fc^{\trs}_{M\theta}$. In fact, the isomorphism extends over the whole base, that is, we have the following.
\begin{prop} \label{prop:galoisJ}
The embedding $J_{M\theta}\hookrightarrow J^1_{M\theta}$ is an isomorphism.	
\end{prop}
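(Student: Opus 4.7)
The plan is to adapt the Ngô--Chi strategy for the untwisted analogue (cf.\ Ngô's Proposition~2.4.7 in \cite{Ngo_Lemme} and J.~Chi's version in \cite{JChi}) to the twisted setting. Since the open embedding $J_{M\theta}\hookrightarrow J^1_{M\theta}$ is already an isomorphism over the dense open $\theta$-regular semisimple locus $\fc^{\trs}_{M\theta}\subset\fc_{M\theta}$ (that is the content of the twisted Grothendieck--Springer construction used to define it), it suffices to check fiberwise surjectivity along the remaining locus. First I would reduce to a local cameral model: étale-locally on $\fc_{M\theta}$, the cameral cover $\pi_{M\theta}:\bar{T}_{M_\theta}\rightarrow\fc_{M\theta}$ is a pullback of the standard cameral data for the root system $\Phi^\theta=\Phi(G^\theta,T^\theta)$ with Weyl group $W^\theta$, and so $J^1_{M\theta}$ identifies locally with the invariant Weil restriction $\pi_{*}(\bar{T}_{M_\theta}\times T^\theta)^{W^\theta}$, while $J_{M\theta}$ is characterized by the uniqueness assertion of Proposition~\ref{prop:TwistedRegCent}.

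The key computation is then fiberwise along each simple ramification divisor $B_\beta\subset\fc_{M\theta}$, indexed by a root $\beta\in\Phi^\theta$. The fiber of $J^1_{M\theta}$ along $B_\beta$ is the fixed-point subgroup $(T^\theta)^{s_\beta}$. On the other hand, choosing a twisted Steinberg quasi-section $\epsilon^{(q,\sigma,\dot{\bm{s}})}_{M\theta}$ whose image meets $B_\beta$ and computing the stabilizer under $\theta$-twisted conjugation of the resulting $\theta$-regular element yields the subgroup $\ker(\beta\colon T^\theta\rightarrow\Gm)$ as the fiber of $J_{M\theta}$; this Bruhat-type calculation is a direct twisted variant of the one in \cite{JChi}, resting on Mohrdieck's structural description of the twisted centralizer. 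The decisive structural input is the running hypothesis that $G$ has no factor of type $\sfA_{2\ell}$: by the discussion following Section~\ref{sec:folding} (cf.\ Table~\ref{tab:DiagramAutomorphisms}), this ensures that the invariant group $G^\theta$ is simply-connected. Applying the general formula recalled in Section~\ref{sec:cameral_simple_Galois}, for a simply-connected group one has $(T^\theta)^{s_\beta}=\ker(\beta)$ for every root $\beta$, so the ``root-kernel correction'' that in general distinguishes $J$ from $J^1$ in the Ngô--Chi framework is vacuous in our setting.

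This yields the desired equality along each codimension-one stratum. To globalize, I would use the flatness of both $J_{M\theta}$ and $J^1_{M\theta}$ together with the codimension $\geq 2$ estimate $\mathrm{codim}\,\fD_{M\theta}^\times\geq 2$ recalled above, together with the normality of $\fc_{M\theta}$, to conclude that the open embedding, once it is an isomorphism along each divisorial ramification stratum, must be an isomorphism everywhere. The main obstacle is the explicit Bruhat computation of the stabilizer of a twisted Steinberg element along $B_\beta$: although the untwisted analogue is classical, its twisted version requires careful bookkeeping of the interplay between $\theta$, the twisted Coxeter element $s_{q\circ\sigma(1)}\cdots s_{q\circ\sigma(\ell)}\theta$, and the chosen pinning; once this identification is in place, the simply-connectedness of $G^\theta$ does all the remaining work.
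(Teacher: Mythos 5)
Your overall architecture matches the paper's: both reduce by the codimension-$\geq2$ estimate to the $\theta$-group-like locus away from $\fD^{\mathrm{sing}}$, localize along a single ramification divisor, and then do a local analysis. However, there is a genuine gap at the heart of your argument.

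You assert that the fiber of $J_{M\theta}$ along $B_\beta$ is $\ker(\beta\colon T^\theta\to\Gm)$, and you describe this as a ``direct twisted variant'' of the untwisted Bruhat computation in \cite{JChi}. But this assertion is precisely where the $\sfA_{2\ell}$-exclusion is needed, and your proof applies that hypothesis elsewhere. The $\sfA_2^{(2)}$ analysis in Section~\ref{sec:inv_Aeven2} shows what goes wrong: there $G^\theta\cong\mathrm{SO}_3\cong\PGL_2$, so the root-kernel $\ker\beta\subset T^\theta$ is \emph{trivial}, hence the candidate ``root-kernel group'' $J'$ has fiber $J^0_b$ at both branch points; yet the actual regular $\theta$-centralizer $J_\theta$ is $J_-$, whose fiber is $J^0_b$ over $a=-2$ but the full $J^1_b$ over $a=2$. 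So the twisted regular centralizer is \emph{not} always the root-kernel group, and the untwisted dichotomy $J^0\subset J'\subset J^1$ does not exhaust the possibilities in the twisted case. The Bruhat-type computation you invoke therefore does not go through as a formal analogue of the untwisted one; one really has to look at which twisted rank-one subgroup $G_\beta$ sits over the divisor and compute the stabilizer there, and whether the answer is $\ker\beta$ (equivalently $J^1_b$) depends on the shape of the $\theta$-orbit $\xi\subset\Delta$.

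The paper's proof sketch is organized around exactly that issue: it replaces the fiberwise computation by an honest étale-local model $J_\beta\to\fc_\beta$ for the $\theta$-stable subgroup $G_\beta$ generated by $T$ and the root groups $U_\alpha$, $\alpha\in\xi$, and uses the hypothesis to conclude that the nodes in $\xi$ are pairwise non-adjacent. Only then is $G_\beta$ (up to isogeny) a product of rank-one groups and tori permuted by $\theta$, and only then is the verification $J_\beta=J^1_\beta$ a case-by-case check over $\GL_2,\SL_2,\PGL_2$. In your proposal the $\sfA_{2\ell}$-exclusion is invoked only to pass from $\ker\beta$ to $(T^\theta)^{s_\beta}$ via simple-connectedness of $G^\theta$, which, while correct, is the cosmetic half of the argument; the decisive half, namely establishing that the fiber of $J_{M\theta}$ is $\ker\beta$ in the first place, is left unsupported and would itself already prove the proposition (so the final step buys you nothing). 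To repair this, you would need to either prove a twisted analogue of the statement $J_{G,M}\cong J'_{G,M}$ from the untwisted theory, which is exactly as hard as the proposition, or carry out the rank-one reduction and verify the result on each local model, which is what the paper does.
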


\begin{proof}[Sketch of the proof]
For the sake of clarity, we explain the main argument of the proof of the proposition. First of all note that, since the complement of the group-like locus $\fc^{\tgl}_{M\theta}$ has codimension at least $2$, it suffices to show the isomorphism for $a$ in this locus. Actually, we can further restrict to the complement $\fc^{\tgl}_{M\theta} \setminus \fD_{G_\theta}^{\mathrm{sing}}$, since the singular locus of the discriminant divisor also has codimension at least $2$.
For such an $a$, either $a$ is $\theta$-regular semisimple, and there is nothing to prove, or  $a \in \fD_{G_\theta}\setminus \fD_{G_\theta}^{\mathrm{sing}}$.

Suppose then that $a\in \fD_{G_\theta}\setminus \fD_{G_\theta}^{\mathrm{sing}}$. In this case we can find an element $t\in T_\theta$ mapping to $a\in \fc_{G_\theta}\cong T_\theta/W^\theta$, and such that there exists a unique root $\beta\in \Phi_\theta$ with $t^{\beta}=1$.
By construction of the cofolded root system, $\beta=\beta_{\xi}$, where $\xi \in \Phi/\theta$ is the an  orbit of roots in $\Phi$. We let now $G_\beta \subset G$ be the reductive $\theta$-stable subgroup generated by the maximal torus $T$ and by the root groups corresponding to the roots in $\xi$. 

Consider the quotient $\fc_\beta:= T_\theta/s_\beta$, with the projection $\pi_\beta:T_\theta \rightarrow \fc_\beta$. We let $I_\beta \rightarrow G_\beta$ denote the group scheme of stabilizers of the $\theta$-twisted action of $G_\beta$ on itself and  $J_\beta\rightarrow \fc_\beta$ the corresponding regular  $\theta$-centralizer. We can also consider $J^1_\beta=[\pi_{\beta,*} (T_\theta \times T^\theta)]^{s_\beta}$.

If we let $\bar{a}\in \fc_{\beta}$ be the image of $t$ under the natural projection, it is clear that the natural finite flat map  $p_{\beta}:\fc_{\beta}\rightarrow \fc_{G\theta}$ sends $a$ to $\bar{a}$ and is étale at $a$. This means that, near $a$, the canonical map $p_\beta^* J_{M\theta}\rightarrow J_{\beta}$ is an isomorphism, and the same is true for $p_\beta^* J^1_{M\theta}\rightarrow J^1_{\beta}$. Therefore, we are reduced to show that the embedding $J_\beta\hookrightarrow J^1_\beta$ is an isomorphism.

Without loss of generality, we can assume that $\beta$ is a simple root, and thus the orbit $\xi$ corresponds to an orbit of nodes in the Dynkin diagram of $G$. From our assumptions on  $G$  (that is, since $G$ does not have factors of type $\sfA_{2\ell}$), the nodes in $\xi$ are disjoint. Therefore, $G_\beta$ is obtained up to isogeny as a product of isomorphic reductive groups of rank $1$ with some tori.  The automorphism $\theta$ acts on  $G_\beta$ by permuting these factors and by inversion in some of the tori. Every reductive group of rank $1$ is isomorphic to either $\GL_2$,  $\SL_2$ or $\PGL_2$. The isomorphism of $J_\beta$ and $J^1_\beta$ can then be verified by direct computation on a case by case study. We refer to \cite{Griffin_Twisted} for more details.
\end{proof}

Recall that, when restricted to the group-like locus, the cameral cover $\pi_{M_\theta}:\bar{T}_{M_\theta}\rightarrow \fc_{M_\theta}$ is a cameral cover associated with the root system $\Phi_\theta$. The above shows that  $J_{M\theta}=J_{\pi_{M\theta},G^\theta}$. Again, this makes sense because $\Phi_\theta$ is the coroot system of $G^\theta$.

\subsection{The special case \texorpdfstring{$\sfA^{(2)}_{2\ell}$}{A2l}} \label{sec:inv_Aeven2}
We return now to the case excluded at the beginning of section \ref{sec:folding}. Assume then for simplicity that $G=\SL_{2\ell+1}$, for $\ell \in \mathbb{N}$. Consider the standard pinning of $G$; that is, $T$ consists of diagonal matrices, $B$ of upper triangular ones, the simple roots are
\begin{equation*}
\alpha_i(\mathrm{diag}(x_1,\dots,x_{2\ell})) = x_i x_{i+1}^{-1},
\end{equation*} 
and the root groups are parametrized by
\begin{equation*}
u_i(x) = I_{2n} + xE_{i,i+1},
\end{equation*} 
for $E_{i,i+1}$ the matrix with $1$ on the position $(i,i+1)$ and $0$ elsewhere.

The only non-trivial element of $\Out(\SL_{2\ell+1})$ is the equivalence class of the automorphism
\begin{equation*}
A \mapsto (A^T)^{-1}.
\end{equation*} 
This automorphism does not preserve the standard pinning, but the following does
\begin{equation*}
\theta_\ell: A \mapsto N_{2\ell+1} (A^T)^{-1} N_{2\ell+1}^{-1},
\end{equation*} 
where $N_{2\ell+1}$ is the antidiagonal matrix $N_{2\ell+1}=\mathrm{antidiag}(1,-1,\dots,-1,1)$.

Twisted conjugation and Proposition \ref{prop:TwistedRegCent} can also be formulated for this situation, since they do not use the assumption of $G$ not having components of type $\sfA_{2\ell}$ (see \cite{Griffin_Twisted}). The main difference is that the embedding $J_{M\theta} \hookrightarrow J^1_{M\theta}$ from Proposition \ref{prop:galoisJ} is no longer an isomorphism. Indeed, if we try to emulate the proof of Proposition \ref{prop:galoisJ} without the assumption, we are led to consider the extra case in which the orbit $\xi$ consists of two nodes  of the Dynkin diagram joined by an edge. Therefore,  $G_{\beta}$ is a reductive group with semisimple part isomorphic to $\SL_3$, on which  $\theta$ acts as
 \begin{equation*}
\theta|_{G_{\beta}^{\der}}=\theta_1:A \mapsto N_3 (A^T)^{-1} N_3^{-1},
\end{equation*} 
for $N_3=\mathrm{antidiag}(1,-1,1)$.
We are thus reduced to study the pair $(\SL_3,\theta_1)$.

The action of $\theta=\theta_1$ on the maximal torus of $\SL_3$ sends $\mathrm{diag}(x_1,x_2,x_3)$ to the element $\mathrm{diag}(x_{3}^{-1},x_{2}^{-1},x_{1}^{-1})$. Therefore, the fixed point torus $T^\theta$ is
 \begin{equation*}
T^\theta = \left\{X = \mathrm{diag}(x_1,1,x_1^{-1}) : x_1 \in \Gm\right\}
\end{equation*} 
This determines an isomorphism $T^\theta \cong \Gm$.
We can write the coordinate ring $k[T]=k[x_1^{\pm 1}, x_2^{\pm 1}]$. The action of $\theta$ on this ring permutes $x_1$ with $x_1 x_2$ so, writing $z=x_1^2 x_2$ we have $k[T_\theta]=k[z^{\pm 1}]$, which determines an isomorphism $T_\theta \cong \Gm$.

The only non-trivial fixed element of the Weyl group is represented by the matrix $N_3$, whose conjugation action on  $T$ permutes $x_{1}$ and $x_{3}$. Thus, $W^\theta$ acts on $T^\theta\cong \Gm$ and on $T_\theta \cong \Gm$ as the inversion $z\mapsto z^{-1}$ on $\Gm$. The quotient $\Gm/(z\mapsto z^{-1})$ is isomorphic to the affine line $\bbA^{1}$, with the projection map
\begin{align*}
\pi:\Gm  \longrightarrow \bbA^1, \ z  \longmapsto a=z + z^{-1}.
\end{align*} 
This is a finite cover ramified at the points $a=2$ and  $a=-2$. Therefore, the fibres $J^1_{a}$ of the group scheme
 \begin{equation*}
J^1 = \pi_*(T_\theta \times T^\theta)^{W^\theta}
\end{equation*} 
are isomorphic to $\Gm$ for  $a\neq \pm 2$ and to  $\Ga \times \mu_2$ for  $a=2$ or  $a=-2$.

The GIT quotient $\SL_3\theta \git \SL_3$ is isomorphic to  $\bbA^1$ and we can consider the regular  $\theta$-centralizer  $J \rightarrow \bbA^1$, which as usual is defined as the descent over the quotient $\bbA^1$ of the group scheme of stabilizers of the  $\theta$-twisted conjugation action on $\SL_3$. There is an open and closed embedding $J \hookrightarrow J^1$ which restricts to an isomorphism over $\bbA^1 \setminus \left\{\pm 2\right\}$. Therefore, $J\rightarrow \bbA^1$ must be isomorphic to one of the following
 \begin{enumerate}
	 \item the whole group $J^1\rightarrow \bbA^1$,	
	 \item the fiberwise neutral connected component $J^0\rightarrow \bbA^1$, 
	 \item the group $J_{+}\rightarrow \bbA^1$, with one connected component over $a=2$ and two connected components over $a=-2$, 
	 \item the group $J_{-}\rightarrow \bbA^1$, with one connected component over $a=-2$ and two connected components over $a=2$.
\end{enumerate}
An explicit computation by G. Wang \cite{Griffin_Twisted} shows that $J$ is in fact isomorphic to $J_{-}$. 

For general rank $(\SL_{2\ell +1},\theta=\theta_\ell)$, we can define the cofolded root system by putting
\begin{equation*}
\beta_{ \left\{\alpha_i,\alpha_{2\ell+1-i}\right\}} = \alpha_i + \alpha_{2\ell+1-i},
\end{equation*} 
for $i \neq \ell$, and 
\begin{equation*}
\beta_{\left\{\alpha_\ell, \alpha_{\ell+1}\right\}} = 2 (\alpha_\ell + \alpha_{\ell +1}),
\end{equation*} 
which determines a root system of type $\sfC_\ell$. The Weyl group of $\sfC_\ell$ is generated by $\ell$ simple reflections $s_1,\dots,s_{\ell}$. The first $\ell-1$ of these reflections $s_i$ act on  $T_\theta \cong \Gm^\ell$ by permuting the $i$-th and $(i+1)$-th coordinates. The last reflection $s_l$ sends the last coordinate $z_\ell$ to its inverse $z_\ell^{-1}$. The subset of fixed points in $\Gm^\ell$ under $s_l$ projects on $T_\theta/W(\sfC_\ell) \cong \bbA^\ell$ to the pair of hyperplanes
\begin{equation*}
H_{\pm}=\left\{(a_1,\dots,a_\ell)\in \bbA^\ell: a_\ell = \pm 2\right\}.
\end{equation*} 
The GIT quotient  $\SL_{2\ell +1} \theta \git \SL_{2\ell +1}$ is isomorphic to $\bbA^\ell$, and it follows from our discussion for $\SL_3$ that $J^1$ has two fibrewise connected components over $H_{\pm}$ and one elsewhere, and that the regular centralizer $J\rightarrow \bbA^\ell$ is isomorphic to the subgroup $J_-\rightarrow \bbA^\ell$ of $J^1$ with one connnected component over  $a_\ell=-2$ and two connected components over  $a_\ell=2$. 

Note that the ``dual"  of the group scheme $J_-$, in the sense of Remark \ref{rmk:dualJ} at the end of section \ref{sec:CameralCurvesAbelianDuality} is the group scheme $\check{J}_+$, for $\check{J}=\pi_*(T_\theta \times (T^\theta)^\vee)^{W^\theta}$.
There is a way of obtaining $\check{J}_+$ as the regular centralizer of something, which is consistent with the approach of Tachikawa \cite{Tachikawa}.	
We can introduce an automorphism $\vartheta=\vartheta_\ell$ of  $\SL_{2\ell +1}$ which plays the role of a ``dual" for $\theta_\ell$.  This automorphism $\vartheta$ is the ``standard" automorphism of order  $4$ introduced by Kac in his book \cite{Kac} (see also the paper of Besson and Hong \cite{Besson-Hong}), which is almost compatible the standard pinning, but sends the root group parametrization $u_{\ell}(x)$ to $u_{\ell+1}(-ix)$, for $i$ a square root of $-1$. More precisely, we define $\vartheta_\ell$ as
 \begin{equation*}
\vartheta_\ell = \Ad_{R_\ell} \circ \theta_\ell,
\end{equation*} 
where $R_\ell$ is the diagonal matrix $R_{\ell}=\mathrm{diag}(1,i,-i,\dots,i,-i)$. The fixed points of $\vartheta_\ell$ form a group whose neutral connected component is isomorphic to the symplectic group $\mathrm{Sp}_{2\ell}$. Therefore, the tori $T^{\theta}$ and $T^{\vartheta}$ are in natural duality. When we restrict to $\ell=1$, a direct computation for $\SL_3$, similar to the one in \cite{Griffin_Lemma} shows that the regular $\vartheta$-centralizer $J_\vartheta\rightarrow \bbA^1$ coincides with $\check{J}_{+}$. The same arguments as in Proposition \ref{prop:galoisJ} allow us to give an isomorphism $J_\vartheta \cong \check{J}_+$ for general $\ell$.

\section{Twisted multiplicative Hitchin fibrations} \label{sec:TwistedMultHitchin}
\subsection{Construction} Let $C$ be a smooth complex projective curve. A \emph{twisted multiplicative Hitchin fibration} over $C$ is associated with the following data:
\begin{itemize}
	\item A semisimple simply-connected and simply-laced group $G$ of rank $r$ over  $\C$, with a prescribed pinning $(B,T,\left\{u_\alpha\right\})$.
	 \item An automorphism $\theta$ of  $G$ of order $m$ preserving the prescribed pinning.
	 \item A reductive monoid $M$ with derived group $G$.
\end{itemize}

\begin{rmk}
A slight difference with the untwisted case is that we are imposing the group $G$ to be simply-connected, and thus it is determined by the monoid $M$, since $G=M^\der$. In the untwisted case, we assume that $M^\der$ is simply-connected, but the group $G$ acting on $M$ through the adjoint action can be $M^\der$ or any other semisimple group centrally isogenous to it. Possibly, the restriction on  $G$ being simply-connected that we are imposing in the twisted case can be lifted as well, and one can consider the twisted action on $G$ of other isogenous groups, mimicking the untwisted situation. We do not pursue this direction here. 
\end{rmk}

The $\theta$-twisted conjugation action of $G$ on itself extends to an action of  $G$ on the reductive monoid $M$. As in the untwisted case, the natural left multiplication action of $Z=Z^0_{M^\times}$ on $M$ commutes with the action of $G$. Therefore, we can consider the following sequence of stacky quotients
\begin{center}
\begin{tikzcd}
	\left[M\theta/(G\times Z)\right] \rar & \left[\fc_{M\theta} / Z\right] \rar & \left[\bbA_M / Z\right] \rar & BZ.
\end{tikzcd}
\end{center}
Let $D$ be a map  $C\rightarrow[\bbA_M/Z]$, and consider the mapping stacks 
\begin{align*}
\cM_{M\theta}&:=	\cM_{M\theta,D} := \Map_{D}(C,\left[M\theta/(G\times Z)\right]) = \Map_C (C,C \times_{[\bbA_M/Z],D} \left[M\theta/(G\times Z)\right]), \\
\cA_{M\theta} &:= \cA_{M\theta,D} = \Map_{D}(C,\left[\fc_{M\theta}/Z\right])= \Map_C (C,C \times_{[\bbA_M/Z],D}\left[\fc_{M\theta}/Z\right]).
\end{align*} 
Note that we have a natural isomorphism $\cA_{M\theta} \cong \cA_{M_\theta}$, induced by the isomorphism $\fc_{M\theta} \cong \fc_{M_\theta}$.
\begin{defn}
The (twisted) \emph{multiplicative Hitchin fibration} associated with the data $(G,\theta,M)$ and with meromorphic datum $D$ is the natural map
 \begin{align*}
h_{M\theta}:=h_{M\theta,D}: \cM_{M\theta,D} & \longrightarrow \cA_{M\theta,D}, 
\end{align*} 
induced from the map $[M\theta/(G\times Z)]\rightarrow [\fc_{M\theta}/Z]$.
\end{defn}

\begin{rmk}
The twisted multiplicative Hitchin fibration can also be interpreted in terms of ``twisted" multiplicative Higgs bundles. By this we mean a pair $(E,\varphi)$ formed by a $G$-bundle $E\rightarrow C$ and a rational section  $\varphi$ of the bundle of groups  $E_\theta(G)=E\times_{\Ad} G\theta$ associated with the $\theta$-twisted conjugation action of $G$ on itself. 
The section $\varphi$ is rational, so it has singularities at some points $p_1,\dots,p_n \in C$. Let  $p_i$ be one of this points and consider a local coordinate $z$ near $p_i$. As in the untwisted situation, by Iwahori's theorem, the singularity of $\varphi$ is determined by $\lambda_i$, where $\lambda_i\in X_*(T)_+$ is a dominant cocharacter. 
\end{rmk}

\subsection{Global twisted Steinberg section}
In order to construct an equivariant Steinberg section, G. Wang \cite{Griffin_Twisted} gives a twisted analogue of Proposition \ref{prop:preglobalsection}.
\begin{prop} \label{prop:preglobalsection_twisted}
For each twisted Coxeter datum $(q,\sigma, \dot{\bm{s}})$, one can define an action $\tau_{M\theta}^{(q,\sigma, \dot{\bm{s}})}$ of $Z$ on $M\theta$ such that
\begin{equation*}
\epsilon_{M\theta}^{(q,\sigma,\dot{\bm{s}})} \circ \tau_{\fc_M\theta}(z^{c_\theta}) = \tau_{M\theta}^{(q,\sigma, \dot{\bm{s}})}(z) \circ \epsilon_{M\theta}^{(q,\sigma, \dot{\bm{s}})},
\end{equation*} 
where $\tau_{\fc_M}$ is the natural action of $Z$ on $\fc_M$, and  $c_\theta=|Z_{G}^\theta|$. Moreover, for a fixed $z$, the map $\tau_{M\theta}^{(q,\sigma, \dot{\bm{s}})}(z)$ is a composition of translation by $z^{c_\theta}$ and conjugation by an element of $T$ determined by a homomorphism $Z\rightarrow T$ independent of $z$.
\end{prop}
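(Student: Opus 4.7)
The plan is to imitate the proof of the untwisted Proposition \ref{prop:preglobalsection}, with the modifications necessary to accommodate the twist. I would define the candidate action $\tau_{M\theta}^{(q,\sigma,\dot{\bm{s}})}(z)$ as the composition of left translation by $z^{c_\theta}$ with conjugation inside the non-connected group $G\rtimes\langle\theta\rangle$ by an element $\psi_\theta(z)\in T$ yet to be constructed. Since conjugation of $m\theta\in M\theta$ by $t\in T$ equals $(t\,m\,\theta(t)^{-1})\theta$, the identity to establish reduces, after stripping the trailing $\theta$, to
\begin{equation*}
\epsilon_M^{(q,\sigma,\dot{\bm{s}})}(z^{c_\theta}\cdot x)\;=\;z^{c_\theta}\,\psi_\theta(z)\,\epsilon_M^{(q,\sigma,\dot{\bm{s}})}(x)\,\theta(\psi_\theta(z))^{-1}
\end{equation*}
inside $M$, where $\epsilon_M^{(q,\sigma,\dot{\bm{s}})}$ denotes the monoid part of the twisted Steinberg quasi-section. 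Since $\delta_M$ is $Z$-equivariant by construction, it is enough to check this on the semisimple factor $\fc_{G\theta}$.

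I would then push $\psi_\theta(z)$ rightward through the product $\prod_{i=1}^\ell u_{q\circ\sigma(i)}(x_i)\dot{s}_{q\circ\sigma(i)}$ using the pinning identities $t\,u_\alpha(x)=u_\alpha(\alpha(t)x)\,t$ and $t\,\dot{s}_\alpha=\dot{s}_\alpha\,s_\alpha(t)$. Writing $c_i(t):=\alpha_{q\circ\sigma(i)}\bigl(s_{q\circ\sigma(1)}\cdots s_{q\circ\sigma(i-1)}(t)\bigr)$ and denoting by $w^{\circ}:=s_{q\circ\sigma(1)}\cdots s_{q\circ\sigma(\ell)}\in W$ the Weyl part of the twisted Coxeter element $w_{q,\sigma}=w^{\circ}\theta$, a straightforward induction on $\ell$ gives
\begin{equation*}
\psi_\theta(z)\prod_{i=1}^\ell u_{q\circ\sigma(i)}(x_i)\dot{s}_{q\circ\sigma(i)}\;=\;\Bigl(\prod_{i=1}^\ell u_{q\circ\sigma(i)}\bigl(c_i(\psi_\theta(z))\,x_i\bigr)\,\dot{s}_{q\circ\sigma(i)}\Bigr)\cdot w^{\circ}(\psi_\theta(z)).
\end{equation*}
Combining with the trailing factor $\theta(\psi_\theta(z))^{-1}$ and using the centrality of $z$, the target identity splits into a tail equation $z^{c_\theta}\,w^{\circ}(\psi_\theta(z))\,\theta(\psi_\theta(z))^{-1}=1$ in $T$ together with coordinate matchings $c_i(\psi_\theta(z))=\varpi_i(z)^{c_\theta}$, where $\varpi_i\in X^*(Z)$ is the character by which $Z$ scales the $i$-th coordinate of $\fc_{G\theta}$.

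To construct $\psi_\theta$ I would invoke the classical property that the twisted Coxeter element $w_{q,\sigma}=w^{\circ}\theta$ has no fixed vectors on $\ft$, the twisted analogue of the corresponding fact for ordinary Coxeter elements, provable by the same eigenvalue argument inside the Cartan of the associated twisted affine Lie algebra. It follows that the operator $O\colon T\to T$ defined by $O(t):=\theta(t)\,w^{\circ}(t)^{-1}$, i.e.\ $\theta-w^{\circ}$ in additive notation on $X_*(T)$, is an isomorphism after tensoring with $\bbR$, and hence has finite cokernel on the cocharacter lattice. The Kostant--Steinberg type argument that governs the untwisted case extends to identify the order of this cokernel with $c_\theta=|Z_G^\theta|$. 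Hence the $c_\theta$-th power map $Z\hookrightarrow T$ factors through the image of $O$, and any compatible lift produces a homomorphism $\psi_\theta\colon Z\to T$ solving the tail equation; linearity of $O$ ensures $\psi_\theta$ is a homomorphism, and thus independent of $z$ in the sense of the statement.

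The hardest step will be verifying that this $\psi_\theta$ simultaneously fulfils the coordinate matchings $c_i(\psi_\theta(z))=\varpi_i(z)^{c_\theta}$. I would address this through an abstract uniqueness argument: both sides of the target identity lie in the image of the twisted Steinberg quasi-section, sit inside the single Bruhat double coset determined by $w_{q,\sigma}$, and project to the same point $z^{c_\theta}\cdot x\in\fc_{M\theta}$. The transversality of the quasi-section to each regular $\theta$-twisted orbit, already established in the previous subsection, then forces the two elements to coincide, and reading off their coordinates in $\fc_{G\theta}$ yields the coordinate matchings as a consequence.
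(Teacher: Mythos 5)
The paper does not contain a proof of this proposition: it is stated as a result of G.~Wang and cited from his unpublished work \cite{Griffin_Twisted}, so there is no in-paper argument to compare against directly. Your plan is the natural one --- mimic the untwisted Proposition~\ref{prop:preglobalsection}, push the conjugating element $\psi_\theta(z)$ through the Steinberg product, and split the identity into a ``tail'' constraint and coordinate scalings --- and this is almost certainly the shape of the actual proof. But as written the proposal has two genuine gaps.

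First, the reduction to the semisimple factor rests on the assertion that ``$\delta_M$ is $Z$-equivariant by construction.'' This is false. The section $\delta_M$ extends $z\mapsto(z,\phi_Z(z)^{-1})\in T_M$, so $\delta_M(z^{c_\theta}a)=\phi_Z(z^{c_\theta})^{-1}\cdot z^{c_\theta}\cdot\delta_M(a)$, and the discrepancy $\phi_Z(z^{c_\theta})^{-1}\in T^{\Sc}$ does not disappear --- it is precisely the term that must be absorbed into the conjugation. Consequently the ``tail equation'' should read, after carrying out the reduction honestly, something of the form $w^\circ\bigl(\phi_Z(z^{c_\theta})\,\psi_\theta(z)\bigr)=\theta(\psi_\theta(z))$ in $T$, not $z^{c_\theta}\,w^\circ(\psi_\theta(z))\,\theta(\psi_\theta(z))^{-1}=1$; your version places the central element $z^{c_\theta}\in Z$ inside an identity ``in $T$'' of the semisimple group, which does not parse, and more importantly it omits $\phi_Z$, which is where the monoid data actually enters. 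This is not a cosmetic slip: the reason the relevant power is $c_\theta$ rather than $1$ is bound up with how $\phi_Z$ interacts with $w^\circ$ and $\theta$, and suppressing $\phi_Z$ hides exactly the content that the proposition is about.

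Second, the construction of $\psi_\theta$ hinges on the claim that $\ker\bigl(O\colon T\to T\bigr)$, for $O(t)=\theta(t)w^\circ(t)^{-1}$ and $w^\circ\theta$ a twisted Coxeter element, equals $Z_G^\theta$ (equivalently, that $\operatorname{coker}(O^*)$ on $X^*(T)$ has exponent $c_\theta$). You assert that this ``extends'' from the Kostant--Steinberg argument, but you do not prove it, and it is the heart of the proposition. The untwisted fact $T^w=Z_{G^{\Sc}}$ for a Coxeter element $w$ has a clean proof via regular elements and exponents; the twisted statement requires the twisted exponents and a twisted analogue of Kostant's theorem (or a direct computation on the cocharacter lattice case by case). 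Without it, the existence of a lattice homomorphism $\psi_\theta\colon Z\to T$ solving the tail equation is unestablished, and the ``coordinate matchings'' argument by uniqueness of the cross-section --- which is in itself fine, granted that $\chi_{M\theta}\circ\epsilon_{M\theta}$ is $Z$-equivariant --- has nothing to feed on. (As a small further point, the scalar $c_i(t)$ obtained by commuting $t$ past the first $i-1$ factors of the product is $\alpha_{q\circ\sigma(i)}\bigl(s_{q\circ\sigma(i-1)}\cdots s_{q\circ\sigma(1)}(t)\bigr)$, with the reflections applied in the reverse order from what you wrote.)
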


Fix a twisted Coxeter datum $(q,\sigma,\dot{\bm{s}})$ and let $T_0\subset T$ denote the common kernel of all the roots $\alpha_1,\dots,\alpha_\ell$ in  $q(\Delta/\theta)$. Here, we are reordering the simple roots $\Delta=\left\{\alpha_1,\dots,\alpha_r\right\}$ in such a way that $\alpha_i=q(\sigma(i))$. Consider the fundamental weights $\omega_1,\dots,\omega_r$ and recall that we have the corresponding weights
\begin{equation*}
\varpi_i = \Nm_{\theta} (\omega_i), \ \ i=1,\dots,\ell
\end{equation*} 
of $G_\theta$. There is a lattice homomorphism
\begin{align*}
-\theta^{-1}+\Nm_\theta: X^*(T) & \longrightarrow X^*(T) \\
\mu=\sum_{i=1}^r n_i \omega_i & \longmapsto \mu_0:= -\theta^{-1}(\mu) + \sum_{i=1}^\ell n_i \varpi_i,
\end{align*} 
whose kernel contains the fixed point sublattice $X^*(T)^\theta\subset X^*(T)$. The image of the map $(1-\theta):T_0 \rightarrow T$ coincides with $(1-\theta)(T)$, since $T_0$ and $T^\theta$ generate $T$. Since  $T_0\cap T^\theta \subset Z_G^\theta$, the kernel of the induced map $(1-\theta)^*: X^*(T)\rightarrow X^*(T_0)$ is exactly $X^*(T)^\theta$, and the image has finite index divisible by $c_\theta$. We conclude that there is a unique arrow  $X^*(T_0)\rightarrow X^*(T)$ making the following square commute
\begin{center}
\begin{tikzcd}
X^*(T) \ar{r}{\mu \mapsto \mu_0} \ar{d}{(1-\theta)^*} & X^*(T) \ar{d}{\mu \mapsto c_\theta \mu} \\
X^*(T_0) \ar{r} & X^*(T). 
\end{tikzcd}
\end{center}
This induces a homomorphism $T\rightarrow T_0$,  $t\mapsto t_0$ such that
\begin{equation*}
\mu_0(t^{c_\theta}) = \mu((1-\theta)(t_0));
\end{equation*} 
and also $\alpha_i(t_0)=1$, for $i=1,\dots,\ell$, by definition.

Consider the map $Z\rightarrow T$ from Proposition \ref{prop:preglobalsection_twisted} above, and let
\begin{align*}
\Psi: Z & \longrightarrow  T\\
z & \longmapsto \psi(z)z_T^{-c_\theta}(z_T)_0.
\end{align*} 
We obtain a quasi-section
\begin{equation*}
[\epsilon_{M\theta}^{(q,\sigma,\dot{\bm{s}})}]_{[c_\theta]}: [\fc_{M\theta}/Z]_{[c_\theta]} \longrightarrow [M\theta/\Psi(Z)\times Z]_{[c_\theta]}
\end{equation*} 
which, by composition, induces a quasi-section
\begin{equation*}
[\epsilon_{M\theta}^{(q,\sigma,\dot{\bm{s}})}]_{[c_\theta]}: [\fc_{M\theta}/Z]_{[c_\theta]} \longrightarrow [M\theta/G\times Z]_{[c_\theta]}.
\end{equation*} 

Consider then a $Z$-torsor $L$ and suppose that there exists another $Z$-torsor $L'$ such that $(L')^{\otimes c_\theta}=L$. Like in the untwisted case, this torsor determines a morphism $[\mathrm{ev}]_{L'}:\cA_{M\theta,L}\times C \rightarrow [\fc_{M\theta}/Z]_{[c_\theta]}$ lifting the canonical map $[\mathrm{ev}]_{L}:\cA_{M\theta,L}\times C \rightarrow [\fc_{M\theta}/Z]$. Composing with $[\epsilon_{M\theta}^{(q,\sigma,\dot{\bm{s}})}]_{[c_\theta]}$, we obtain a section
 \begin{equation*}
\epsilon_{L'}^{(q,\sigma,\dot{\bm{s}})}: \cA_{M\theta,L} \longrightarrow \cM_{M\theta,L}
\end{equation*} 
of the natural map $\cM_{M\theta,L}\rightarrow \cA_{M\theta,L}$. Pulling back by a meromorphic datum $D:C\rightarrow[\bbA_M/Z]$, we obtain a section
 \begin{equation*}
\epsilon_{L'}^{(q,\sigma,\dot{\bm{s}})}: \cA_{M\theta,D} \longrightarrow \cM_{M\theta,D}
\end{equation*} 
of the twisted multiplicative Hitchin map $h:\cM_{M\theta,D}\rightarrow \cA_{M\theta,D}$. This is the \emph{global twisted Steinberg section} associated with $L'$ and $(q,\sigma,\dot{\bm{s}})$.

We conclude the following. 
\begin{thm}
Over the Zariski open locus $\cA_{M\theta}^{\gl}\subset \cA_{M\theta}$ of sections mapping $C$ entirely inside $\fc_{M\theta}^{\tgl}$, the stack $\cM_{M\theta}^{\reg}$ of sections factorizing through the $\theta$-regular locus $M^{\treg}$ is isomorphic to the Picard stack $\cP_{M\theta}\rightarrow \cA^{\gl}_{M\theta}$ with fibres
\begin{equation*}
\cP_{M\theta,a}= \Bun_{J_{M\theta,a}/C},
\end{equation*} 
for $J_{M\theta,a}=a^* J_{M\theta}$.
\end{thm}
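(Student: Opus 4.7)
The plan is to mirror the argument outlined in the untwisted case, in three steps: construct a natural $\cP_{M\theta}$-action on $\cM_{M\theta}^{\reg}$ over $\cA_{M\theta}^{\gl}$, show this action is simply transitive (i.e.\ makes $\cM_{M\theta}^{\reg}$ a $\cP_{M\theta}$-torsor), and then trivialize the torsor by means of the global twisted Steinberg section constructed just above.

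First, I would construct the action. By Proposition \ref{prop:TwistedRegCent}, we have a canonical homomorphism $\chi_{M\theta}^* J_{M\theta} \to I_{G,M\theta}$ of group schemes over $M$, which is an isomorphism over $M^{\treg}$. This action is tautologically equivariant for the $Z$-action coming from left multiplication, because $J_{M\theta}$ is defined on the GIT quotient $\fc_{M\theta}$ and the isomorphism on $M^{\treg}$ is $G$-equivariant. Hence, pulling back along any map $a: C \to [\fc_{M\theta}/Z]$ in $\cA_{M\theta}^{\gl}$ produces a commutative group scheme $J_{M\theta,a} \to C$ whose stack of torsors $\cP_{M\theta,a}$ acts on the fibre $\cM_{M\theta,a}^{\reg}$ by twisting the Higgs field inside its regular stabilizer: given a pair $(E,\varphi) \in \cM_{M\theta,a}^{\reg}$ and a $J_{M\theta,a}$-torsor $\mathscr{E}$, the associated bundle $\mathscr{E} \times^{J_{M\theta,a}} E$ retains a canonical $\theta$-twisted Higgs field via the map $J_{M\theta} \to I_{G,M\theta}$. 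These constructions assemble into an action of $\cP_{M\theta}$ on $\cM_{M\theta}^{\reg}$ over $\cA_{M\theta}^{\gl}$.

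Second, I would check that this action is simply transitive. By the discussion preceding the theorem, the morphism $[M^{\tgl}\theta/G] \to \fc_{M\theta}^{\gl}$ is a $J_{M\theta}$-gerbe, because each fibre of $\chi_{M\theta}$ over $\fc_{M\theta}^{\gl}$ consists of a single $\theta$-regular orbit, whose stabilizer identifies with $J_{M\theta}$ on that fibre. Quotienting further by $Z$ (which acts freely on $\fc_{M\theta}^{\gl}$ modulo the $Z$-torsor structure determined by the meromorphic datum) and passing to the relative mapping stacks over the curve $C$, this gerbe structure globalizes to the statement that $\cM_{M\theta}^{\reg} \to \cA_{M\theta}^{\gl}$ is a $\cP_{M\theta}$-torsor. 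Concretely, at each $a \in \cA_{M\theta}^{\gl}(\C)$, the fibre $\cM_{M\theta,a}^{\reg}$ is the stack of sections of $C\times_{[\fc_{M\theta}/Z]}[M^{\treg}\theta/(G\times Z)] \to C$, which is a $J_{M\theta,a}$-gerbe on $C$, and its stack of sections is thus a $\Bun_{J_{M\theta,a}/C}=\cP_{M\theta,a}$-torsor.

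Third and finally, I would use the global twisted Steinberg section $\epsilon_{L'}^{(q,\sigma,\dot{\bm{s}})}: \cA_{M\theta,D} \to \cM_{M\theta,D}$ to trivialize this torsor. By construction, the Steinberg quasi-section lands in the $\theta$-regular locus, so its image lies in $\cM_{M\theta}^{\reg}$. Any section of a $\cP$-torsor canonically identifies it with $\cP$ itself via $\mathscr{E} \mapsto \mathscr{E}\otimes \epsilon_{L'}^{(q,\sigma,\dot{\bm{s}})}$, giving the desired isomorphism $\cP_{M\theta}/\cA_{M\theta}^{\gl} \overset{\sim}{\longrightarrow} \cM_{M\theta}^{\reg}/\cA_{M\theta}^{\gl}$. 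The main subtlety in the whole argument is not really the torsor structure itself, which is formally identical to the untwisted case, but rather the verification that the twisted Steinberg section is both well-defined over the whole base (hence the $Z$-equivariance packaged in Proposition \ref{prop:preglobalsection_twisted} and the auxiliary torsor $L'$ with $(L')^{\otimes c_\theta}=L$) and actually takes values in the $\theta$-regular locus; both of these have been checked by G. Wang in \cite{Griffin_Twisted} and recalled in the previous subsections.
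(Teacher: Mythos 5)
Your proposal is correct and follows the same route the paper leaves implicit when it writes ``we conclude the following'': the gerbe structure on $[M^{\tgl}\theta/G]\rightarrow\fc_{M\theta}^{\tgl}$ (each fibre over the group-like locus containing a single $\theta$-regular orbit, with stabilizer identified with $J_{M\theta}$) yields the $\cP_{M\theta}$-torsor structure on $\cM^{\reg}_{M\theta}/\cA^{\gl}_{M\theta}$, and the global twisted Steinberg section then trivializes this torsor. The one inaccuracy is the parenthetical claim that $Z$ acts freely on $\fc_{M\theta}^{\gl}$, which is both unnecessary and likely false (the toric variety $\bbA_M$ has $Z$-fixed points); the descent of the gerbe through the $Z$-quotient is a routine consequence of $Z$-equivariance of $J_{M\theta}$ and of the regular locus, and does not require freeness.
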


\subsection{Cameral description} \label{sec:cameralcurve_twisted}
Given a point $a\in \cA_{M\theta}(\C)$, we want to give a Galois description of $\cP_{M\theta,a}$ in terms of the cameral curve $\pi_a:\tilde{C}_a \rightarrow C$ obtained as a pullback of the cameral cover $\pi_{M\theta}=\pi_{M_\theta}: \bar{T}_{M_\theta}\rightarrow \fc_{M\theta}$. Recall from section \ref{sec:cameralcurve} that we had the Zariski open locus $\cA_{M_\theta}^\sharp$ of sections that map $C$ inside the group-like locus, and intersecting transversely the discriminant divisor. Moreover, this locus is non-empty if the $Z$-torsor $L$ associated with the meromorphic datum $D$ is very $G_\theta$-ample.

Recall from Proposition \ref{prop:galoisJ} that $J_{M\theta}$ is isomorphic to $J^1_{M\theta}=\pi_{M\theta,*}(\bar{T}_{M_\theta}\times T^\theta)^{W^\theta}$. Therefore, we have an isomorphism 
\begin{equation*}
\cP_{M\theta,a} \cong \cP^1_{M\theta,a}:=\Bun_{J^1_{M\theta,a}/C},
\end{equation*} 
for $J^1_{M\theta,a}=a^* J^1_{M\theta}= \pi_{a,*}(\tilde{C}_a \times T^\theta)^{W^\theta}$.
A direct consequence of this cameral description is that there is a natural map of Picard stacks $\cP_{M\theta,a}\rightarrow \Bun_{T^\theta/\tilde{C}_a}$, which provides a description of $\cP_{M\theta,a}$ in terms of $W^\theta$-equivariant  $T^\theta$-bundles over the cameral curve $\tilde{C}_a$. In particular, this implies that $\cP_{M\theta,a}$ is a Beilinson $1$-motive.

\subsection{Duality: twisted vs untwisted} 
Assume for now that the group $G$ does not have a factor of type $\sfA_{2\ell}$.
As we note in the previous sections, the GIT quotients $\fc_{M\theta}$ and $\fc_{M_\theta}$ are isomorphic, and the $\theta$-regular centralizer admits a Galois description in terms of the cameral cover $\pi_{M\theta}:\bar{T}_{M_\theta}\rightarrow \fc_{M_\theta}$, which is also the cameral cover for the adjoint action of any semisimple group centrally isogenous to the coinvariant group $G_\theta$ on the monoid  $M_\theta$.

Let  $H=G^\theta$ be the fixed point subgroup of  $G$ under the $\theta$-action. Let us denote by $A=T^\theta$ its maximal torus. As we explain in \ref{sec:folding}, its Langlands dual group $H^\vee=(G^\theta)^\vee$ is of adjoint type, and in fact $H^\vee=(G_\theta)^\ad$. Therefore, we can consider the adjoint action of $H^\vee$ on the monoid $M_\theta$, and the corresponding untwisted multiplicative Hitchin fibration
\begin{equation*}
h_{H^\vee, M_\theta} : \cM_{H^\vee,M_\theta} \rightarrow \cA_{M_\theta}\cong \cA_{M\theta},
\end{equation*} 
with the same meromorphic datum $D:C\rightarrow [\bbA_{M}/Z]$. For every $a\in \cA_{M_\theta}(\C)$, we can consider the Picard stack $\cP_{H^\vee,M_\theta,a}$ acting on the regular fibre of $a$. Recall that if  $a\in \cA^\sharp_{M_\theta}(\C)$, then there is a natural map of Picard stacks $\cP_{H^\vee,M_\theta,a}\rightarrow \Bun_{A^\vee/\tilde{C}_a}$, which provides a description of $\cP_{H^\vee,M_\theta,a}$ in terms of $W^\theta$-equivariant $A^\vee$-bundles over  $\tilde{C}_a$. More precisely, since $H^\vee$ is of adjoint type, we have that the regular centralizer for the action of $H^\vee$ on $M_\theta$ is  
\begin{equation*}
J_{H^\vee,M_\theta,a} \cong J^0_{H^\vee,M_\theta,a} = \pi_*(\tilde{C}_a \times A^\vee)^{W^\theta,0}.
\end{equation*} 
Therefore,
\begin{equation*}
\cP_{H^\vee,M_\theta,a} \cong \cP^0_{H^\vee,M_\theta,a} = \Bun_{J^0_{H^\vee,M_\theta,a}/C}.
\end{equation*} 

Summing up, $\pi_a:\tilde{C}_a\rightarrow C$ is a cameral cover of smooth curves, with simple Galois ramification, and associated with the root system $\Phi_\theta$. The group $J_{H^\vee,M_\theta,a}$ is then identified with the group $J_{\pi_a,H^\vee}$, and in section \ref{sec:cameralcurve_twisted} we show that the group $J_{M\theta,a}$ is identified with $J_{\pi_a,H}$.

We consider then the natural isomorphism $\bD(\Bun_{A/\tilde{C}_a}) \rightarrow \Bun_{A^\vee/\tilde{C}_a}$. If $a\in \cA_{M_\theta}^\sharp(\C)$, the natural map $\cP_{M\theta,a} \rightarrow \Bun_{A/\tilde{C}_a}$ induces a map $\bD(\Bun_{A/\tilde{C}_a})\rightarrow \bD(\cP_{M\theta,a})$. On the other hand, the norm map induces a map $\Nm:\Bun_{A^\vee/\tilde{C}_a}\rightarrow \cP_{H^\vee,M_\theta,a}$. Putting this together, we obtain the duality morphism $\mathrm{S}_a:\bD(\cP_{M\theta,a}) \rightarrow \cP_{H^\vee,M_\theta,a}$, as in the following square
\begin{center}
\begin{tikzcd}
\bD(\Bun_{A/\tilde{C}_a}) \ar{r} \ar{d} & \Bun_{A^\vee/\tilde{C}_a} \ar{d}{\Nm} \\
\bD(\cP_{M\theta,a}) \ar{r}{\mathrm{S}_a} & \cP_{H^\vee,M_\theta,a}.
\end{tikzcd}
\end{center}
As an immediate application of Theorem \ref{thm:duality} we obtain the following. This is Theorem \ref{thm:duality_twisted} in the introduction.

\begin{thm}\label{thm:duality_twisted_text}
For every $a\in \cA_M^\sharp(\C)$, the map $\mathrm{S}_a:\bD(\cP_{M\theta,a})\rightarrow \cP_{H^\vee,M_\theta,a}$ is an isomorphism of Beilinson $1$-motives. In particular, the Fourier-Mukai transform  $\Phi_{\cP_{M\theta,a}}$ induces an isomorphism of derived categories
\begin{equation*}
\mathbf{S}_a: D^b(\QCoh(\cP_{M\theta,a})) \longrightarrow D^b(\QCoh(\cP_{H^\vee,M_\theta,a})).
\end{equation*} 
\end{thm}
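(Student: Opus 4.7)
The plan is to reduce the theorem to a direct application of Theorem \ref{thm:duality} (Donagi--Pantev) by carefully matching the Picard stacks on both sides with the abstract constructions of Section \ref{sec:cameral_simple_Galois}, applied to the cameral cover $\pi_a : \tilde{C}_a \to C$ and to the semisimple group $H = G^\theta$.

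First I would fix $a \in \cA_{M\theta}^\sharp(\C) \cong \cA_{M_\theta}^\sharp(\C)$ and identify the cameral cover on both sides. By the isomorphism $\fc_{M\theta} \cong \fc_{M_\theta} \cong \bar{T}_{M_\theta}/W^\theta$ the same finite map $\pi_{M\theta}$ serves as the cameral cover for the twisted action of $G$ on $M$ and for the adjoint action of $H^\vee$ on $M_\theta$. By the definition of $\cA_{M_\theta}^\sharp$ (see Section \ref{sec:cameralcurve}), $\tilde{C}_a$ is smooth and the cover $\pi_a$ has simple Galois ramification, so it is a cameral cover in the sense of Section \ref{sec:cameral_simple_Galois} associated with the root system $\Phi_\theta = \Phi(H^\vee, A^\vee) = \Phi(H,A)^\vee$, where $A = T^\theta$.

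Next I would match the two regular centralizers with the abstract groups $J^1$ and $J^0$ from Section \ref{sec:cameral_simple_Galois}. On the twisted side, Proposition \ref{prop:galoisJ} (whose hypothesis $G$ has no factor of type $\sfA_{2\ell}$ is exactly the standing assumption) gives $J_{M\theta} \cong J^1_{M\theta}$, and hence after pullback
\begin{equation*}
J_{M\theta, a} \;\cong\; J^1_{M\theta, a} \;=\; \pi_{a,*}(\tilde{C}_a \times A)^{W^\theta} \;=\; J^1_{\pi_a, H}.
\end{equation*}
On the untwisted dual side, since $(H^\vee)^{\Sc} = G_\theta$ and $H^\vee = (G_\theta)^{\ad}$ is of adjoint type, the general principle recalled in Section \ref{sec:cameral_simple_Galois} gives $J_{H^\vee, M_\theta} = J^0_{H^\vee, M_\theta}$ and therefore
\begin{equation*}
J_{H^\vee, M_\theta, a} \;\cong\; \pi_{a,*}(\tilde{C}_a \times A^\vee)^{W^\theta, 0} \;=\; J^0_{\pi_a, H^\vee}.
\end{equation*}
Consequently, the Picard stacks in the statement are $\cP_{M\theta, a} = \Bun_{J^1_{\pi_a, H}/C}$ and $\cP_{H^\vee, M_\theta, a} = \Bun_{J^0_{\pi_a, H^\vee}/C}$; these are exactly the stacks $\cP^1$ and $\check{\cP}^0$ of Section \ref{sec:CameralCurvesAbelianDuality} attached to $(H, H^\vee)$ and the cameral cover $\pi_a$.

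Finally, I would check that the map $\mathrm{S}_a$ defined by the square in the statement coincides with the canonical Donagi--Pantev map $\mathrm{S}_0$ from Theorem \ref{thm:duality}. Both maps are constructed in exactly the same way: the top arrow is the canonical isomorphism $\bD(\Bun_{A/\tilde{C}_a}) \xrightarrow{\sim} \Bun_{A^\vee/\tilde{C}_a}$ induced by the universal bundle, the left vertical arrow is the dual of $\iota: \cP^1 \to \Bun_{A/\tilde{C}_a}$, and the right vertical arrow is the norm $\Nm: \Bun_{A^\vee/\tilde{C}_a} \to \check{\cP}^0$. Theorem \ref{thm:duality} then gives that both stacks are Beilinson $1$-motives and that $\mathrm{S}_a$ is an isomorphism of Picard stacks; the claimed derived equivalence follows from the fact that Beilinson $1$-motives are good in the sense of Arinkin, so that the Fourier--Mukai transform $\Phi_{\cP_{M\theta, a}}$ is an equivalence of derived categories.

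The main obstacle is verifying the Galois matching of the regular centralizers, which is precisely the content of Proposition \ref{prop:galoisJ}; this is where the exclusion of $\sfA_{2\ell}$-factors is essential. Once that identification is in hand, the theorem is a formal consequence of Donagi--Pantev applied to the group $H$.
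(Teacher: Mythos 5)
Your proof is correct and follows exactly the same route as the paper's: identify the two regular centralizers with $J^1_{\pi_a,H}$ and $J^0_{\pi_a,H^\vee}$ via Proposition \ref{prop:galoisJ} and the adjoint-type observation, then apply Theorem \ref{thm:duality} to the cameral cover $\pi_a$ and the group $H=G^\theta$. Nothing to add.
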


\subsection{Duality in the case \texorpdfstring{$\sfA^{(2)}_{2\ell}$}{A2l}} 
Let us then consider the case where $G=\SL_{2\ell +1}$ and $\theta=\theta_\ell$ is the involution introduced in section \ref{sec:inv_Aeven2}. There we also defined the ``dual" automorphism $\vartheta=\vartheta_\ell=\Ad_{R_\ell}\circ \theta_\ell$.  For every $a\in \cA_{M\theta}(\C)$ we can consider the corresponding cameral cover $\tilde{C}_a \rightarrow C$.  Since $\theta$ and $\vartheta$ are in the same outer class, we can identify  $\cA_{M\theta}(\C)$ with $\cA_{M\vartheta}(\C)$, and the corresponding cameral covers.
We can then consider the following group schemes over $C$
\begin{equation*}
J_{M\theta,a}:= a^* J_{M\theta} = a^* J_{-}
\end{equation*} 
and
\begin{equation*}
J_{M\vartheta,a}:= a^* J_{M\vartheta} = a^* \check{J}_{+}.
\end{equation*} 
Note that these two group schemes are dual in the sense of Remark \ref{rmk:dualJ} at the end of section \ref{sec:CameralCurvesAbelianDuality}, since $T^\theta$ and  $T^\vartheta$ are dual tori.

Consider the natural isomorphism $\bD(\Bun_{T^\theta/\tilde{C}_a}) \rightarrow \Bun_{T^\vartheta/\tilde{C}_a}$. If $a\in \cA_{M\theta}^\sharp(\C)$, the natural map $\cP_{M\theta,a} \rightarrow \Bun_{T^\theta/\tilde{C}_a}$ induces a map $\bD(\Bun_{T^\theta/\tilde{C}_a})\rightarrow \bD(\cP_{M\theta,a})$. On the other hand, the norm map induces a map $\Nm:\Bun_{T^\vartheta/\tilde{C}_a}\rightarrow \cP_{M\vartheta,a}$. Putting this together, we obtain the duality morphism $\mathrm{S}_a:\bD(\cP_{M\theta,a}) \rightarrow \cP_{M\vartheta,a}$, as in the following square
\begin{center}
\begin{tikzcd}
\bD(\Bun_{T^\theta/\tilde{C}_a}) \ar{r} \ar{d} & \Bun_{T^\vartheta/\tilde{C}_a} \ar{d}{\Nm} \\
\bD(\cP_{M\theta,a}) \ar{r}{\mathrm{S}_a} & \cP_{M\vartheta,a}.
\end{tikzcd}
\end{center}
As an immediate application of Corollary \ref{corol:duality} (and the subsequent Remark \ref{rmk:dualJ}) we obtain the following. This is Theorem \ref{thm:duality_Aeven2} in the introduction.

\begin{thm}\label{thm:duality_Aeven2_text}
For every $a\in \cA_M^\sharp(\C)$, the map $\mathrm{S}_a:\bD(\cP_{M\theta,a})\rightarrow \cP_{M\vartheta,a}$ is an isomorphism of Beilinson $1$-motives. In particular, the Fourier-Mukai transform  $\Phi_{\cP_{M\theta,a}}$ induces an isomorphism of derived categories
\begin{equation*}
\mathbf{S}_a: D^b(\QCoh(\cP_{M\theta,a})) \longrightarrow D^b(\QCoh(\cP_{M\vartheta,a})).
\end{equation*} 
\end{thm}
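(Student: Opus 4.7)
The plan is to reduce the statement to a direct application of Corollary \ref{corol:duality} extended by Remark \ref{rmk:dualJ}, by globalizing the rank-one calculations of section \ref{sec:inv_Aeven2} over the cameral curve and verifying the required component-group compatibility.

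First, I would fix the cameral data. For $a \in \cA_M^\sharp(\C)$, the cover $\pi_a: \tilde C_a \to C$ is a smooth $W^\theta$-cover with simple Galois ramification, whose associated root system is the cofolded $\Phi_\theta$ of type $\sfC_\ell$ (section \ref{sec:cameralcurve_twisted}). Since $T^\theta$ and $T^\vartheta$ are Langlands-dual tori by the discussion at the end of section \ref{sec:inv_Aeven2}, the framework of section \ref{sec:cameral_simple_Galois} applies verbatim to the pair
\begin{equation*}
J^1_a := \pi_{a,*}(\tilde C_a \times T^\theta)^{W^\theta}, \qquad \check J^1_a := \pi_{a,*}(\tilde C_a \times T^\vartheta)^{W^\theta},
\end{equation*}
together with their fibrewise neutral components $J^0_a$ and $\check J^0_a$.

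Second, I would globalize the identifications $J \cong J_-$ and $J_\vartheta \cong \check J_+$ from section \ref{sec:inv_Aeven2}. The argument there reduced, étale-locally around the branch divisor, to the rank-one case $(\SL_3,\theta_1)$; the same étale reduction can be performed along each smooth component of the ramification divisor of $\pi_a$, yielding open and closed embeddings $a^*J_- \hookrightarrow J^1_a$ and $a^*\check J_+ \hookrightarrow \check J^1_a$, together with isomorphisms $J_{M\theta,a} \cong a^* J_-$ and $J_{M\vartheta,a} \cong a^* \check J_+$. This uses the cameral description of $J_{M\theta}$ from Proposition \ref{prop:galoisJ}, together with its analogue for $\vartheta$ explained in the closing paragraphs of section \ref{sec:inv_Aeven2}.

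Third, and this is the decisive step, I would verify the hypothesis of Remark \ref{rmk:dualJ}, namely that the natural isomorphism $J^1_a/J^0_a \overset{\sim}{\to} \check J^1_a/\check J^0_a$ induced by the Killing form on $\Phi_\theta$ (see section \ref{sec:cameral_simple_Galois} and appendix \ref{app:Proof}) matches $J^1_a/J_{M\theta,a}$ with $J_{M\vartheta,a}/\check J^0_a$. This is a component-by-component check along the branch divisor: over the components lying above the hyperplane $a_\ell = +2$ one has $J_- = J^0$ and $\check J_+ = \check J^1$, so both quotients equal $\mu_2$, while over the components above $a_\ell = -2$ one has $J_- = J^1$ and $\check J_+ = \check J^0$, so both quotients are trivial. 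Remark \ref{rmk:dualJ} then produces the isomorphism $\mathrm{S}_a: \bD(\cP_{M\theta,a}) \to \cP_{M\vartheta,a}$ of Beilinson $1$-motives, and the derived equivalence follows from the goodness of Beilinson $1$-motives recalled in section \ref{sec:PicardStacks}.

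The main obstacle I foresee is the bookkeeping of signs and components in the compatibility step: one must track carefully how the Killing-form-induced component isomorphism on the root system $\sfC_\ell$ exchanges the ramification over the two hyperplanes $H_\pm \subset \cA_M$, and how this swap is precisely what forces the dual partner of $\theta$ to be the order-four automorphism $\vartheta = \Ad_{R_\ell}\circ \theta$ rather than $\theta$ itself—an asymmetry which has no analogue in the twisted vs.\ untwisted duality of Theorem \ref{thm:duality_twisted_text} and is the source of the ``discrete theta-angle'' phenomenon of Tachikawa highlighted in the introduction.
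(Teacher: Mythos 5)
Your proposal follows essentially the same route as the paper's own proof: identify $J_{M\theta,a}$ with $a^*J_-$ and $J_{M\vartheta,a}$ with $a^*\check J_+$ via the analysis of section \ref{sec:inv_Aeven2}, observe that $T^\theta$ and $T^\vartheta$ are dual tori, and then invoke Corollary \ref{corol:duality} via Remark \ref{rmk:dualJ}. The paper leaves the component-group compatibility implicit (``these two group schemes are dual in the sense of Remark \ref{rmk:dualJ}''), whereas you spell it out as a fibrewise check along the branch divisor; this extra detail is correct in spirit and is precisely what the remark requires.

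There is, however, a bookkeeping error in your explicit check. By the definitions recalled in section \ref{sec:inv_Aeven2}, $J_-$ has \emph{two} fibrewise components over $a_\ell = +2$ and \emph{one} over $a_\ell = -2$, while $\check J_+$ has \emph{one} over $a_\ell = +2$ and \emph{two} over $a_\ell = -2$. Thus at $a_\ell = +2$ one has $J_- = J^1$ and $\check J_+ = \check J^0$ (so $J^1/J_-$ and $\check J_+/\check J^0$ are both trivial), whereas at $a_\ell = -2$ one has $J_- = J^0$ and $\check J_+ = \check J^1$ (so both quotients are $\mu_2$) --- exactly the opposite of what you wrote. The two assignments happen to be interchanged consistently, so the quotients still match and Remark \ref{rmk:dualJ} still applies; the conclusion survives, but the stated labels do not reflect the definitions. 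Since you flagged ``bookkeeping of signs and components'' as the main pitfall, it is worth noting that you fell into it, even though harmlessly.
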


\begin{rmk}
Note that, since $\theta$ and $\vartheta$ determine the same class in $\Out(G)$, the stacks $[M\theta/G]$ and $[M\vartheta/G]$, and in turn the moduli stacks $\cM_{M\theta}$ and $\cM_{M\vartheta}$ are isomorphic. The above result can then be reinterpreted as the existence of an involution $\iota:\cM_{M\theta}\rightarrow \cM_{M\theta}$, inducing an involution $\bar{\iota}:\cA_{M\theta} \rightarrow \cA_{M\theta}$ such that, for $a\in \cA^\sharp_{M\theta}(\C)$ the fibres $\cM_{M\theta,a}$ and $\cM_{M\theta,\bar{\iota}(a)}$ are torsors under dual Beilinson $1$-motives. We give an explicit description of this involution for the case $\sfA_2^{(2)}$ in section \ref{sec:A22}.	
\end{rmk}

\section{Rank 3 bilinear bundles: \texorpdfstring{$\sfA_2^{(2)}$}{A22} explicitly} \label{sec:A22}
\subsection{Invariant theory of bilinear forms}
Let $R$ be a commutative ring, let $V$ be a rank $n$ free module over $R$ and let  $\Bil(V)=\left\{b:V\times V \rightarrow R, \text{ $R$-bilinear}\right\}$ denote the space of bilinear forms on $V$. If we fix a basis of  $V$, a bilinear form $b \in \Bil(V)$ is represented by a $n\times n$ matrix  $B\in \Mat_{n}(R)$. Changing basis we obtain a different matrix $B'$, of the form  $B'=A^T B A$, where  $A\in \GL_n(R)$ is the matrix of change of basis. We are concerned with this action of the reductive group $\GL_n$ on the reductive monoid $\Mat_{n}$. If we fix a volume form $\nu \in \wedge^n V^*$, we can consider volume-preserving change of basis, which corresponds to the restricted action of the subgroup $\SL_n \subset \GL_n$.

Suppose now that $n=3$ and that $R=\C$.
There is a natural involution on  $\Bil(V)$ given by dualizing: $b\mapsto b^*$, for  $b^*(v,w)=b(w,v)$. We can decompose $\Bil(V)=S^2 V^* \oplus \wedge^2 V^*$ by putting $b=g+\omega$, for  $g=(b+b^*)/2 \in S^2 V^*$ and $\omega=(b-b^*)/2 \in \wedge^2 V^*$. 
The volume form $\nu \in \wedge^3 V^*$ induces a natural isomorphism 
 \begin{align*}
\tilde{\nu}:V & \longrightarrow \wedge^2 V^*, \  v  \longmapsto \nu(v,-,-).
\end{align*} 

Spaltenstein \cite{Spaltenstein} proved that the invariant ring $k[\Bil(V)]^{\SL_3}=k[a_0,a_1]$ is generated by the functions $a_0(b), a_1(b): \Bil(V)\rightarrow k$ determined by
\begin{equation*}
p_b(x_0,x_1)=\det(x_0 B + x_1 B^T) = a_0(b) x_0^3 + a_1(b) x_0^2 x_1 + a_1(b) x_0 x_1^2 + a_0(b) x_1^3,
\end{equation*} 
where $B$ is a matrix representing  $b$ in any basis. Note that $a_0(b)$ is equal to the determinant of $B$. Now, it is easy to show that we can rewrite
\begin{equation*}
a_0(b) = \det(g) + g(v_\omega,v_\omega),
\end{equation*} 
where $v_\omega \in V$ is such that  $\omega=\tilde{\nu}(v_\omega)$. Therefore, if we denote $s(b)=\det(g)$ and $t(b)=g(v_\omega,v_\omega)$, we can write
\begin{equation*}
p_b(x_0,x_1)=(x_0+x_1)( s(b)(x_0+x_1)^2 + t(b)(x_0-x_1)^2).
\end{equation*} 
Changing variables $y_0=x_0+x_1$ and $y_1=x_0-x_1$ we can rewrite the above polynomial as
\begin{equation*}
q_b(y_0,y_1)=y_0(s(b)y_0^2 + t(b)y_1^2).
\end{equation*} 
Thus, the functions $s$ and $t$ are also generators of  $k[\Bil(V)]^{\SL_3}$.

We wish to consider the natural sequence of maps
\begin{center}
\begin{tikzcd}
	\Bil(V) \rar{(s,t)} & \bbA^2 \rar{\mathrm{pr}_1} & \bbA^1.
\end{tikzcd}
\end{center}
Here, we are regarding $\bbA^2$ as the GIT quotient  $\bbA^2 \cong \Bil(V)\git \SL_3$, and $\bbA^1$ as the image of  $\Bil(V)$ under the determinant map.
For every $\lambda \in \Gm$, we have $(s(\lambda b),t(\lambda b)) = \lambda^3(s(b),t(b))$. In other words, the above sequence induces the following sequence of stacks
\begin{center}
\begin{tikzcd}
	\left[\Bil(V)/\SL_3 \times \Gm \right] \rar{(s,t)} & \left[\bbA_{(3)}^1\times\bbA_{(3)}^1/\Gm \right] \rar{\mathrm{pr}_1} & \left[\bbA_{(3)}^1/\Gm \right].
\end{tikzcd}
\end{center}

\subsection{Bilinear bundles and Hitchin-type fibration}
Consider a smooth complex projective curve $C$ and let $L\rightarrow C$ be a line bundle. By a rank $3$ \emph{($L$-twisted) bilinear bundle} we mean a pair $(V,\gamma)$, where $V \rightarrow C$ is a rank $3$ vector bundle and  $\gamma: V \rightarrow V^*\otimes L$ is an ``$L$-twisted  bilinear form" on  $V$. We can identify such a pair with a map $C \rightarrow [\Bil(V)/\GL_3 \times \Gm]$, lying over the map $C\rightarrow B\Gm$ determined by  $L$.
For any bilinear bundle $(V,\gamma)$, we can always decompose $\gamma = g + \omega$, with  $g\in H^0(C,S^2 V^* \otimes L)$ and $\omega \in H^0(C,\wedge^2 V^* \otimes L)$. 

Given any bilinear bundle $(V,\gamma=g+\omega)$, the determinant of its symmetric part defines a section $\det g \in H^0(C,(\det V)^{-2} \otimes L^3)$. For any non-trivial section $s\in H^0(C,L^3)$ (assuming it exists), we let $\cM=\cM_{(L,s)}$ denote the classifying stack of $L$-twisted bilinear bundles $(V,\gamma=g+\omega)$ with a trivialization $\nu:\det V \cong \sO_C$ identifying $\det g = s$. Equivalently, $\cM$ is the stack of maps $C\rightarrow [\Bil(V)/\SL_3 \times \Gm]$ lying over the map $C\rightarrow [\bbA_{(3)}^1/\Gm]$ determined by the pair $(L,s)$.

We can then consider the corresponding twisted multiplicative Hitchin fibration
\begin{align*}
\cM  \longrightarrow \cA:=H^0(C,L^3),\ (V,\gamma) \longmapsto t(\gamma).
\end{align*} 
The section $t(\gamma) \in H^0(C,L^3)$ can be explicitly constructed as follows. The trivialization $\nu: \det V \rightarrow \sO_C$ can be regarded as a section $\nu \in H^0(C,\wedge^3 V^*)$ and in turn determines an isomorphism $\tilde{\nu}:V\rightarrow \wedge^2 V^*$. Therefore, $\omega$ defines a section of $V \otimes L$ or, equivalently, a morphism  $v_\omega:L^{-1}\rightarrow V$ . Composing with $g:V\otimes V \rightarrow L$, we obtain a morphism $g(v_\omega,v_\omega):L^{-2}\rightarrow L$, that is, a section $t(\gamma):=g(v_\omega,v_\omega) \in H^0(C,L^3)$.

\subsection{Spectral data of a bilinear bundle}
Let us denote $X=C\times \bbP^1$ with $\pi:X\rightarrow C$ being the natural projection. Given any two sections $s,t \in H^0(C,L^3)$, we can consider the section $\sigma_{s,t} \in H^0(X, \pi^* (L^3))$ defined as
\begin{equation*}
\sigma_{s,t}((p,(y_0,y_1))) = s(p) y_0^2 + t(p) y_1^2.
\end{equation*} 
The zero locus of this section $S_{s,t}=(\sigma_{s,t})_0$ is a one-dimensional subscheme of $X$ that we call the \emph{spectral curve} associated with the pair $(s,t)$. The natural projection $\pi:X\rightarrow C$ induces a finite cover $\pi_{s,t}:S_{s,t}\rightarrow C$.

Let $(V,\gamma=g+\omega)$ be a pair in $\cM=\cM_{(L,s)}$. If $t=t(\gamma)$ is not the zero section, then $\omega$ is generically non-zero and the morphism  $v_\omega:L^{-1}\rightarrow V$ is necessarily injective. We denote by $V_0 \subset V$ the image of this morphism, and by $E=V/V_0$ the quotient bundle. Note that the trivialization $\nu:\det V \rightarrow \sO_C$ induces an isomorphism $\eta:\wedge^2 E \rightarrow V_0^{-1}\cong L$. In turn this determines an isomorphism $\tilde{\eta}:E \rightarrow E^* \otimes L$,  $e\mapsto \eta(e,-)$.

If $p\in C$ is such that $t(p)=g_p(v_{\omega,p},v_{\omega,p})\neq 0$, then ``diagonalizing by congruence"  we can $g$-orthogonally decompose $V_p= V_{0,p} \perp E_p$, and we obtain an induced symmetric bilinear form $h_{p}:S^2 E_p \rightarrow L_p$ such that $g= g|_{V_{0,p}^2} \perp h_p$. Globally, we get an $L$-twisted symmetric bilinear form $h:E_t:=E|_{C\setminus (t)_0} \rightarrow (E^* \otimes L)|_{C\setminus (t)_0}$. Composing with $\tilde{\eta}^{-1}$, we obtain a morphism $$\Phi_t=\tilde{\eta}^{-1}\circ h: E_t\rightarrow E_t.$$ Since $h$ is symmetric, we have  $\tr \Phi_t=0$. Moreover, the orthogonal decomposition of $g$ implies that  $\det \Phi_t = s/(t|_{C\setminus (t)_0}) \in H^0(C \setminus (t)_0,\sO_C)$.  Therefore, the restriction $\Phi_{ts}:=\Phi_t|_{C\setminus ((t)_0 \cup (s)_0)}$ is an automorphism of $E_{st}:=E_t|_{C\setminus ((t)_0 \cup (s)_0)}$. We can then consider its inverse $\Phi_{st}=(\Phi_{ts})^{-1}$. Since $\det \Phi_{st}=t/(s|_{C\setminus (s)_0})$, the automorphism $\Phi_{st}$ extends to a morphism $\Phi_s: E_s:= E|_{C\setminus (s)_0}\rightarrow E_s$.

The data $(E,\Phi_t,\Phi_s)$ determines a graded $k[x_0,x_1]$-module structure on $E$, that is, it allows us to regard  $E$ as a direct image $E=\pi_* U$ of some sheaf $U$ on $C\times \bbP^1$. Since $\Phi$ is traceless, $\det \Phi_t = s/t$ and  $\det \Phi_s = t/s$, we deduce that the sheaf $U$ must in fact be supported in the spectral curve $S_{s,t}\subset C$, so $E=\pi_{s,t,*} U$, for some line bundle $U\rightarrow S_{s,t}$. The isomorphism $\eta:\det E \rightarrow L$ induces an isomorphism
\begin{equation*}
L \otimes ( \det \pi_{s,t,*}\sO_{S_{s,t}})^{-1} \overset{\sim}{\longrightarrow} \Nm_{\pi_{s,t}} U.
\end{equation*} 
Now, $\det \pi_{s,t,*}\sO_{S_{s,t}}\cong L^{-1}$, so in fact we obtain an isomorphism
\begin{equation*}
L^4 \overset{\sim}{\longrightarrow} \Nm_{\pi_{s,t}} U.
\end{equation*} 

\subsection{Reconstruction} Suppose now that we are given the data of a line bundle $U\rightarrow S_{s,t}$, together with an isomorphism
$\eta:L \otimes ( \det \pi_{s,t,*}\sO_{S_{s,t}})^{-1} \rightarrow \Nm_{\pi_{s,t}} U$. Taking direct image we recover the rank $2$ vector bundle $E=\pi_{s,t,*} U$, endowed with the morphisms $\Phi_s:E_s\rightarrow E_s$ and  $\Phi_t:E_t\rightarrow E_t$, with  $\Phi_{st}=(\Phi_{ts})^{-1}$, and with an isomorphism $\tilde{\eta}:E \rightarrow E^* \otimes L$. Away from $(t)_0$ we can recover $V$ as the direct sum $L^{-1}\oplus E$ and reconstruct $g :V\rightarrow V^* \otimes L$ from $t$ and $h:= \tilde{\eta} \circ \Phi_t$. However, near $(t)_0$ generally we only have an extension
\begin{center}
\begin{tikzcd}
	0 \rar & L^{-1} \rar & V \rar & E \rar & 0
\end{tikzcd}
\end{center}
and to recover $g$ we need the symmetric bilinear form $h':=\Phi_s \circ \tilde{\eta}^{-1}$ (which is equivalent to $h$ away from $(t)_0 \cup (s)_0$), but we also need the extra data of a morphism $\psi:L^{-1} \rightarrow E^*\otimes L$ over $(t)_0$, or equivalently, of an element $\psi \in \Hom_{(t)_0}(L^{-2},E^*)$. The choice of $\psi$ is not arbitrary: we need $s=\det g$, so $\psi$ must satisfy the compatibility condition
\begin{equation}\label{eq:compatibility}
s = \psi^* \circ h' \circ \psi:L^{-1}\rightarrow L^2. 
\end{equation}

Dually, we can recover $V^*$ as an extension
\begin{center}
\begin{tikzcd}
	0 \rar & E^* \rar & V^* \rar & L \rar & 0.
\end{tikzcd}
\end{center}
Such an extension is determined by a class $\xi \in \mathrm{Ext}^1(L,E^*)$. Regarding $t \in H^0(C,L^3)$ as a morphism $t:L^{-2}\rightarrow L$, we obtain the following short exact sequence of sheaves
\begin{center}
\begin{tikzcd}
	0 \rar & \underline{\Hom}(L,E^*) \rar{- \circ t} &\underline{\Hom}(L^{-2},E^*) \rar & \underline{\Hom}(L^{-2},E^*)_{(t)_0} \rar & 0,
\end{tikzcd}
\end{center}
which induces a map
\begin{align*}
\delta: \Hom_{(t)_0}(L^{-2},E^*)\longrightarrow  \mathrm{Ext}^1(L,E^*).
\end{align*} 
We claim that $\xi=\delta(\psi)$. Indeed, over $C\setminus ((t)_0 \cup (s)_0)$ we can split $V^*=E^*\oplus L$ and, via the isomorphism $g:V\rightarrow V^* \otimes L$,
we can identify the inclusion  $L^{-1}\hookrightarrow V$, with a map
\begin{align*}
L^{-2} & \longrightarrow  V^* = E^* \oplus L \\
v & \longmapsto (u(v), t(v)),
\end{align*} 
where $u$ is an extension of $\psi$ from  $(t)_0$ to $C\setminus (s)_0$. Therefore a \v{C}ech cocycle for the extension class is defined by
\begin{equation*}
u(v)/t(v) \in H^0(C\setminus (s)_0,\Hom(L^{-2},E^*)).
\end{equation*} 
But this is precisely a representative of $\delta(\psi)$.

Summing up, we found the following.

\begin{thm}
Let $s,t \in H^0(C,L^3)$ be two sections such that the spectral curve $S_{s,t}$ is smooth.
The constructions above give an equivalence of categories between the following:
\begin{itemize}
	\item The category of $L$-twisted bilinear bundles  $(V,\gamma)$ with a trivialization $\nu:\det V \rightarrow \sO_C$ and $s(\gamma)=s$ and $t(\gamma)=t$.
	\item The category of triples $(U,\eta,\psi)$ given by
		\begin{itemize}
			\item a line bundle $U\rightarrow S_{s,t}$,	
			\item an isomorphism $\eta:L^4 \rightarrow \Nm_{\pi_{s,t}} U$,
			\item a vector $\psi_p \in (\pi_{s,t,*}U)_p^* \otimes L_p^2$ for each  $p\in (t)_0$,
		\end{itemize}
		and satisfying the compatibility condition \eqref{eq:compatibility}.
\end{itemize}
\end{thm}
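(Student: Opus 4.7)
The plan is to establish the equivalence by constructing two functors in opposite directions and verifying they are mutually inverse. Both functors have already been sketched in the preceding paragraphs; the task is to organize the construction carefully and verify the matching of data, using the smoothness of $S_{s,t}$ as the key hypothesis that guarantees we stay in the line-bundle realm rather than working with more general sheaves.

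The forward functor, sending $(V,\gamma)$ to $(U,\eta,\psi)$, proceeds in three stages. First, I would show that when $t(\gamma)\neq 0$ the morphism $v_\omega\colon L^{-1}\to V$ is injective and its image is a subbundle (this uses that $v_\omega$ vanishes only where $\omega$ is degenerate, and smoothness of $S_{s,t}$ forces generic non-degeneracy). This yields the short exact sequence $0\to L^{-1}\to V\to E\to 0$ with $\det E\cong L$, and allows one to produce $\Phi_s\colon E_s\to E_s$ and $\Phi_t\colon E_t\to E_t$ as constructed, satisfying $\tr\Phi_t=0$, $\det\Phi_t=s/t$ and $\Phi_{st}=(\Phi_{ts})^{-1}$. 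Second, I would invoke a variant of the Beauville--Narasimhan--Ramanan spectral correspondence: the pair $(\Phi_t,\Phi_s)$ gives $E$ the structure of a coherent sheaf on $X=C\times \bbP^1$ scheme-theoretically supported on $S_{s,t}$ (the characteristic relations follow from the identities on $\Phi_s,\Phi_t$), and since $S_{s,t}$ is smooth and $E$ has rank $2$ while $\pi_{s,t}$ is of degree $2$, this sheaf is a line bundle $U\to S_{s,t}$. Third, I would match the determinants: a direct computation using $\det\pi_{s,t,*}\sO_{S_{s,t}}\cong L^{-1}$ and the standard formula $\det\pi_{s,t,*}U\cong \Nm_{\pi_{s,t}}U\otimes\det\pi_{s,t,*}\sO_{S_{s,t}}$ produces $\eta$. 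The datum $\psi\in\Hom_{(t)_0}(L^{-2},E^*)$ is then extracted from the extension class of $V^*$ via the connecting map $\delta$, as in the excerpt, and the identity $s=\det g$ immediately yields the compatibility condition \eqref{eq:compatibility} by evaluating on fibres over $(t)_0$.

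The reverse functor is essentially forced by the argument preceding the theorem. Given $(U,\eta,\psi)$, I would set $E=\pi_{s,t,*}U$ and recover $\Phi_s,\Phi_t$ and $\tilde\eta\colon E\to E^*\otimes L$ from the graded module structure and $\eta$. Away from $(t)_0\cup(s)_0$ the pair $V=L^{-1}\oplus E$ with $g$ reconstructed from $\tilde\eta\circ\Phi_t$ produces a bilinear bundle with the correct invariants. To globalize across $(t)_0$, I would use $\delta(\psi)\in\mathrm{Ext}^1(L,E^*)$ to build the dual extension $0\to E^*\to V^*\to L\to 0$, and dualize to obtain $V$. Condition \eqref{eq:compatibility} is exactly what is needed for the symmetric part $g$ to extend over $(t)_0$ with $\det g=s$, and a check on stalks at points of $(t)_0$ completes the construction. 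The antisymmetric part $\omega$ is recovered via $\tilde\nu$ applied to the canonical generator of $V_0=L^{-1}\subset V$.

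Finally, I would verify functoriality and that the two constructions are inverse to each other. The round trip starting from $(V,\gamma)$ reproduces $V_0$ as $L^{-1}\subset V$ by construction and reproduces $E$ and $g$ on the generic locus; the key point is that both the extension class $\xi$ and the local datum $\psi$ in the reverse construction are built from the same \v{C}ech cocycle, so $\delta(\psi)=\xi$ on the nose. The round trip starting from $(U,\eta,\psi)$ reproduces the spectral sheaf because $\Phi_s,\Phi_t$ are recovered from the graded module structure on $\pi_{s,t,*}U$, which is precisely how $U$ was encoded. The main obstacle I anticipate is the bookkeeping at $(t)_0$: matching the local datum $\psi$ with the extension class via $\delta$ requires choosing compatible local trivializations and verifying the cocycle computation sketched in the excerpt, and one must check that the compatibility condition \eqref{eq:compatibility} is not just necessary but also sufficient for $g$ (equivalently, $\gamma$) to extend. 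Once this local-to-global analysis over $(t)_0$ is handled carefully, both the functoriality and the verification of mutual inverseness become formal.
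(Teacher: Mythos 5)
Your proposal follows essentially the same approach as the paper: the two functors you describe are exactly the constructions laid out in the paper's preceding subsections ``Spectral data of a bilinear bundle'' and ``Reconstruction,'' including the passage from $(V,\gamma)$ to $(E,\Phi_s,\Phi_t)$ via the exact sequence $0\to L^{-1}\to V\to E\to 0$, the BNR-type spectral correspondence using smoothness of $S_{s,t}$, the determinant bookkeeping to produce $\eta$, and the identification $\xi=\delta(\psi)$ via a \v{C}ech cocycle for the verification of inverseness. The paper presents the theorem as a summary of those two constructions without a separately written-out proof, and your plan reproduces that argument faithfully.
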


\begin{corol}
The ``Hitchin fibre" over a general point $(s,t)\in H^0(C,L^3)^2$ is isomorphic to the variety $\tilde{P}_{(t)_0}$ constructed as in section \ref{sec:AbelianVariety_Subset} from the Prym variety $P(S_{s,t},C)$ and the set $(t)_0\subset (st)_0 \subset C$. In particular, the fibre is a torsor under the abelian variety $P_{s,t}:=P_{(t)_0}$. Moreover, this abelian variety is dual to the abelian variety $P_{t,s}=P_{(s)_0}$.
\end{corol}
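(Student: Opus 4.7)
The plan is to build on the equivalence of categories from the preceding theorem, which identifies the Hitchin fibre over $t$ (with $s$ fixed by the trivialization of $\det V$) with the moduli of triples $(U,\eta,\{\psi_p\}_{p\in (t)_0})$ on the spectral curve $S_{s,t}$ satisfying the compatibility \eqref{eq:compatibility}. I will first isolate an underlying Prym torsor, then incorporate the $\psi_p$ data, and finally deduce the duality.

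First I would forget the datum $\{\psi_p\}$ and consider the moduli of pairs $(U,\eta)$, where $U\to S_{s,t}$ is a line bundle and $\eta:L^4\overset{\sim}{\to}\Nm_{\pi_{s,t}}U$ is a norm trivialization. For a general Hitchin point the cover $\pi_{s,t}:S_{s,t}\to C$ is a smooth double cover of $C$ ramified exactly along $(s)_0\sqcup(t)_0$, so the fibres of the norm map $\Nm_{\pi_{s,t}}:\Pic(S_{s,t})\to\Pic(C)$ are torsors under the Prym variety $P(S_{s,t},C)$ recalled in appendix \ref{app:Prym}. This gives a forgetful morphism from the Hitchin fibre down to a $P(S_{s,t},C)$-torsor.

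Next I would analyze the datum $\{\psi_p\}$. For $p\in (t)_0$ and generic Hitchin data, the induced pairing $h'_p$ is a nondegenerate symmetric form, so the compatibility \eqref{eq:compatibility} carves out a smooth affine conic inside the two-dimensional space $E_p^*\otimes L_p^{2}$ (one quadratic equation in a $2$-dimensional space). Letting $U$ vary along its Prym torsor and $\psi_p$ vary along the attached conic at each $p\in (t)_0$, the resulting total space should be precisely the variety $\tilde P_{(t)_0}$ constructed in appendix section \ref{sec:AbelianVariety_Subset} from the cameral double cover $S_{s,t}\to C$ and the subset $(t)_0$ of its ramification divisor; the acting abelian variety $P_{s,t}=P_{(t)_0}$ is assembled from the Prym together with the $1$-dimensional affine groups tangent to the conics at each marked point.

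Finally, for the duality $P_{s,t}^\vee\cong P_{t,s}$, I would exploit the manifest symmetry of the spectral equation $sy_0^2+ty_1^2=0$: the $\bbP^1$-involution $(y_0:y_1)\leftrightarrow (y_1:y_0)$ yields an isomorphism $S_{s,t}\cong S_{t,s}$ as covers of $C$ that swaps the ramification subsets $(s)_0$ and $(t)_0$. Thus $P_{t,s}$ is built from the same underlying cameral cover as $P_{s,t}$ but from the complementary subset of its ramification divisor, and the duality reduces to the general Cartier-duality statement for this construction, which is proved in appendix \ref{sec:AbelianVariety_Subset}. The main obstacle I expect is the second step: verifying that the family of fibrewise conics glues coherently into the torsor $\tilde P_{(t)_0}$ of the appendix without additional étale quotients or monodromy corrections, and in particular checking that the resulting acting group matches the cameral $P_{s,t}$ predicted abstractly by Theorem \ref{thm:duality_Aeven2_text}; this is effectively the content of the new construction in section \ref{sec:AbelianVariety_Subset}, which must therefore do the real work.
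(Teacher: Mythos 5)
Your third step, identifying the duality via the involution $(y_0:y_1)\leftrightarrow(y_1:y_0)$ of the spectral equation that carries $S_{s,t}$ to $S_{t,s}$ and exchanges the ramification subsets $(s)_0$ and $(t)_0$, is correct and is exactly what makes the appendix result $\hat{P}_A\cong P_{B\setminus A}$ applicable. Your first step (peeling off the Prym torsor by forgetting $\psi$) also matches the paper, modulo a normalization: one must twist $U$ by $\pi^*L^{-2}$ so that $\Nm_\pi U_0\cong\sO_C$ before $(U_0,\eta)$ lands in $\ker\Nm_\pi$.

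The gap is in your second step, and it is substantive. You assert that for $p\in(t)_0$ the induced pairing $h'_p$ is nondegenerate, so that the compatibility condition \eqref{eq:compatibility} cuts out a smooth irreducible affine conic, and you then try to fold ``$1$-dimensional affine groups tangent to the conics'' into the acting group. Both claims are false. By construction $h'=\Phi_s\circ\tilde\eta^{-1}$ with $\det\Phi_s=t/s$, so at $p\in(t)_0$ (with $s(p)\neq 0$) the endomorphism $\Phi_s(p)$ is a nonzero traceless nilpotent and $h'_p$ is a \emph{rank-one degenerate} symmetric form. The locus $\{\psi_p:h'_p(\psi_p,\psi_p)=s(p)\}$ is therefore a degenerate conic: two parallel affine lines, equivalently two cosets of the one-dimensional radical of $h'_p$. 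It is precisely this \emph{disconnectedness} — a $\{\pm 1\}$-valued discrete ambiguity at each point of $(t)_0$ — that reproduces the defining data $\xi\in H^0((t)_0,U_0)$ with $\xi^2=\tilde\eta|_{(t)_0}$ of $\tilde{P}_{(t)_0}$, and hence the finite $2^{|(t)_0|}$-sheeted cover structure $\tilde P_{(t)_0}\to\ker\Nm_\pi$. A smooth irreducible conic, being connected, would not produce this cover at all. Dimensionally your proposal also cannot work: $P_{(t)_0}$ is a quotient of $P^1$ by a finite subgroup of $P^1[2]$, hence an abelian variety \emph{isogenous} to the Prym and of the same dimension, not an extension of the Prym by a product of $\mathbb{G}_a$'s of dimension $\dim P^1+|(t)_0|$. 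The paper's proof avoids all of this by simply observing that $\psi_p$ determines, via the evaluation map at the ramification point, a section of $U_0^{-1}$ over $(t)_0$ whose square equals $\tilde\eta|_{(t)_0}$ by \eqref{eq:compatibility} — the radical of $h'_p$ is absorbed at that step — which is verbatim the definition of a point of $\tilde P_{(t)_0}$.
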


\begin{proof}
The theorem tells us that a pair $(V,\gamma)$ in the fibre corresponds to $(U,\eta,\psi)$ as above. We have the isomorphism $\eta: L^4 \rightarrow \Nm_{\pi_{s,t}}U$, and also note that $L^4 \cong \Nm_\pi(\pi^* L^2)$, so $\eta$ in fact yields an isomorphism  $\Nm_\pi(U\otimes \pi^* L^{-2}) \rightarrow \sO_{X}$. Therefore, the line bundle $U_0:=U\otimes \pi^* L^{-2}$ together with $\eta$ determines a point in $\ker \Nm_\pi$. Moreover, a choice of vector $\psi_p \in (\pi_{s,t,*}U)^*_p \otimes L^2_p$ for each $p\in (t)_0$ determines a section of
\begin{equation*}
\pi^*((\pi_{s,t,*}U)^* \otimes L^2) \cong U^{-1} \otimes \pi^* L^2 \cong U_0^{-1}
\end{equation*} 
over the set $(t)_0$. The compatibility condition \eqref{eq:compatibility} implies that the square of this section coincides with the associated section $\tilde{\eta}|_{(t)_0}\in H^0((t)_0,U_0^{-2})$.
\end{proof}

\appendix
\section{Multiplicative geometric Langlands program} \label{app:MultLanglands}
\subsection{Multiplicative nonabelian Hodge theory}
Let $C$ denote a compact Riemann surface and $K$ a compact Lie group, with Lie algebra $\mathfrak{k}$ and with complexification $K^{\C}=G$. We let $\bM_{\Hit}(C,G)$ denote the moduli space of solutions to the \emph{Hitchin equations}: these are tuples $(E,A,\Phi)$ formed by a $K$-bundle $E\rightarrow C$, a  $K$-connection $A$ on $E$ and a $1$-form $\Phi\in \Omega^1(C,\ad(E))$ such that
\begin{equation*}
	\begin{cases}
F_A - \tfrac{1}{2}[\Phi,\Phi] =0 \\
d_A \Phi = d_A^* \Phi = 0.
	\end{cases}
\end{equation*} 
Nonabelian Hodge theory can be interpreted as the existence of a twistor $\CP^1$-family of Kähler structures on $\bM_{\Hit}(C,G)$. For $\lambda=0 \in \CP^1$, we obtain  $\bM_{\mathrm{Dol}}(C,G)$, the ``Dolbeault" moduli space of stable $G$-Higgs bundles on $C$. For $\lambda \in \C^*$, we obtain $\bM_{\lambda\text{-dR}}(C,G)$, the ``$\lambda$-de Rham" moduli space parametrizing stable $G$-$\lambda$-connections on $C$. 

The Hitchin equations were originally obtained by Hitchin \cite{Hitchin_SelfDuality} when he studied a $2$-dimensional reduction of the self-dual Yang-Mills equations on $\RR^4$. Since the resulting equations are conformally invariant, they can be considered over any Riemann surface. On the other hand, multiplicative nonabelian Hodge theory arises by instead studying a $3$-dimensional reduction of the Yang-Mills equations. The Yang-Mills equations reduced to $\RR^3$ become the \emph{Bogomolny equations}
\begin{equation*}
F_A - \star d_A \Phi = 0,
\end{equation*} 
for a $K$-bundle $E$ on $\RR^3$, a $K$-connection $A$ on $E$, and a section $\Phi \in \Omega^0(\RR^3,\ad(E))$. The solutions $(E,A,\Phi)$ to the Bogomolny equations are called ($K$-)\emph{monopoles}.

We want to consider monopoles that are either \emph{periodic} (\emph{rational} case), \emph{doubly-periodic} (\emph{trigonometric} case) or \emph{triply-periodic} (\emph{elliptic} case). The corresponding moduli spaces of monopoles will be denoted by $\bM^1_{\Mon}(G)$, for periodic monopoles, $\bM^2_{\Mon}(G)$, for doubly-periodic monopoles, and $\bM^3_{\Mon}(G)$, for triply-periodic ones. Mochizuki showed in a series of papers \cite{Mochizuki_Doubly,Mochizuki_Periodic,Mochizuki_Triply} that each of these moduli spaces $\bM^i_{\Mon}(G)$ admits a $1$-parameter family of complex structures, corresponding to \emph{difference modules}.

If $X$ is a complex analytic space and $\tau$ is an automorphism of $X$, a $\tau$-difference module is a module for the sheaf $\mathcal{D}_{\tau}=\sO_X[\Psi,\Psi^{-1}]$ of non-commutative rings generated by $\sO_X$ and an invertible generator  $\Psi$ with the relation  $\Psi f = \tau^* (f) \Psi$. We denote by $\Diff_\tau(X)$ the category of quasi-coherent $\tau$-difference modules on $X$. We can also consider $G$-versions of (locally-free) difference modules. These are \emph{difference connections}. A $\tau$-difference $G$-connection consists of a $G$-bundle $E\rightarrow X$ and an isomorphism of $G$-bundles $E|_{X\setminus D} \rightarrow \tau^* E|_{X\setminus D}$ over the complement of some divisor $D\subset X$.

More precisely, we let $C$ be one of the following Riemann surfaces, endowed with automorphisms $\tau:C\rightarrow C$:
\begin{itemize}
	\item $C=\C$ in the rational case, $\tau_\lambda(z)=z+\lambda$, $\lambda \in \C$,	
	\item $C=\C^*$ in the trigonometric case, $\tau_q(z)=qz$, $q\in \C^*$,
	\item $C=E_\Lambda=\C/\Lambda$, an elliptic curve, with lattice $\Lambda$, in the elliptic case, $\tau_a([z])=[z+a]$, $a\in \C$.
\end{itemize}
A $\tau$-difference module on $\C$, $\C^*$ or $E_\Lambda$, for $\tau$ one of the automorphisms above, is called a rational, trigonometric or elliptic difference module, respectively. Similarly, we define rational, trigonometric or elliptic difference $G$-connections. We denote their corresponding moduli spaces (with the appropriate notion of stability) by $\bM^1_{\Diff,\tau}(G)$, $\bM^2_{\Diff,\tau}(G)$ and $\bM^3_{\Diff,\tau}(G)$, respectively. For each $\tau$, the complex structure over $\bM^i_{\Mon}(G)$ in Mochizuki's family identifies it with $\bM^i_{\Diff,\tau}(G)$. 
Note that for $\tau=\id$, a $\tau$-difference $G$-connection on $C$ is just a multiplicative $G$-Higgs bundle. We denote $\bM_{\mHiggs}(C,G)=\bM^i_{\Diff,\id}(G)$. 

\begin{rmk}
Throughout all the discussion we are dismissing the existence of meromorphic data. These can be easily incorporated in the story. At the level of monopoles we need to consider \emph{Dirac-type singularities}. As explained in \cite{Charbonneau-Hurtubise}, these type of singularities induce on the multiplicative Higgs field precisely the kind of meromorphic singularities that we are interested in. At the level of $q$-difference connections, Dirac-type singularities induce what Mochizuki calls \emph{parabolic structures}. If we incorporate these singularities, the moduli space $\bM_{\mHiggs}(C,G)$ can be interpreted as a moduli space associated with the stacks appearing in the multiplicative Hitchin fibrations of section \ref{sec:UntwistedMultHitchin}.
\end{rmk}

\begin{rmk}
The above discussion also ignores twisting by a diagram automorphism of $G$. This can as well be easily incorporated by considering monopoles which are ``twisted-periodic" in one of the directions, meaning that $(E,A,\Phi)(x,y,t+T)=\theta(E,A,\Phi)(x,y,t)$. Such monopoles can be obtained as fixed points in $\bM^i_{\Mon}(G)$ under a combined action of the cyclic group $\langle \theta \rangle$ on $G$ and on the circle.	
\end{rmk}

\begin{rmk}
In order to relate the discussion of this appendix with the results of the paper we are understanding the Riemann surface $C$ as the analytification of the algebraic curve $C$ considered throughout the rest of the paper. However, we are always assuming that $C$ is projective, but $\C$ and  $\C^*$ are not compact. This issue can be fixed by considering \emph{framed} multiplicative Higgs bundles on $\CP^1$, as explained in \cite{Elliott-Pestun}.
\end{rmk}

\subsection{Derived geometry point of view}
To establish their conjectures, Elliott and Pestun \cite{Elliott-Pestun} use the language of \emph{derived algebraic geometry}; we refer to Remark 1.10 in their paper for a compilation of relevant references.  In this section, we use this language very roughly, and avoiding many technicalities, just with the aim of giving an intuitive explanation of the program.

In particular they consider the ``Betti circle" $S^1_B$, which is a derived stack corresponding to the homotopy type of the circle. If $X$ is a complex analytic space and $\tau$ is an automorphism of $X$, we can consider the corresponding ``mapping torus"   
\begin{equation*}
X_{\tau} = X\times_\tau S^1_B.
\end{equation*} 
This is constructed as a derived fibre product $X_\tau = X\times_{X\times X} X$, where the two maps $X \rightarrow X\times X$ are the diagonal and the  $\tau$-twisted diagonal $x\mapsto (x,\tau(x))$. The derived stack of $\tau$-difference $G$-connections on $G$ can then be recovered as the mapping stack
$$\Conn_G^{\tau}(X)=\Map(X_\tau,BG).$$
In particular, the derived stack of multiplicative $G$-Higgs bundles is the mapping stack
$$\bmHiggs_G(X)=\Map(X\times S^1_B,BG)= \Map(X,G/G).$$
We can also consider the derived category of $\tau$-difference modules, which can be interpreted as
\begin{equation*}
\dDiff_\tau(X)= \QC(X_\tau),
\end{equation*} 
where $\QC$ denotes the derived category of quasi-coherent sheaves.

\begin{rmk}
Again, we are dismissing the presence of singularities. If we do take them into account then $\bmHiggs$ can be thought of as (a derived version of) the stacks $\cM$ that we study in this paper. 	
\end{rmk}

We can also consider twists by outer automorphisms in the derived picture. Let $\theta \in \Aut(G)$ be a diagram automorphism, and consider the natural action of the cyclic group $\langle \theta \rangle\cong \Z/m$ on the circle by rotating with angle $2\pi/m$. The derived stack of $\theta$-twisted multiplicative $G$-Higgs bundles is recovered as the ``fixed points" 
\begin{equation*}
\bmHiggs_{G,\theta}(X) = \bmHiggs_G(X)^{\theta} = \Map(X\times S^1_B,BG)^\theta = \Map(X,G\theta/G).
\end{equation*} 

\subsection{S-duality for 5D theories twisted on a circle}
Elliott and Pestun explain that the stack $\bmHiggs(C,G)$, where $C$ is a Calabi-Yau Riemann surface and $G$ is a complex semisimple simply-laced group, arises from a super Yang-Mills theory in 5 dimensions. More precisely, they construct some physical supersymmetric gauge theory such that its  ``space of solutions" can be interpreted as the shifted cotangent complex $T^*[1]\bmHiggs(C,G)$. The basis for their argument is based on a dimensional reduction from a 6-dimensional theory. 

A little bit more precisely. This  6-dimensional theory is defined on a compact Calabi-Yau $3$-fold $X_6$, and has as space of solutions the shifted cotangent complex of the stack
\begin{equation*}
\mathbf{T}_6(X_6)=\mathbf{Bun}_G(X_6)=\Map(X_6,BG).
\end{equation*} 
Consider a $5$-dimensional manifold $X_5$ with automorphism $\tau$ such that the mapping torus $X_5 \times_{\tau} S_R^1$ is Calabi-Yau, for $S^1_R$ a circle of radius $R$. We obtain a $5$-dimensional theory by evaluating
\begin{equation*}
\mathbf{T}_5^{\tau}(X_5)=\lim_{R\rightarrow 0} \mathbf{T}_6(X_5 \times_{\tau} S_R^1)=\Map(X_5,BG) \times_{\tau^*} S^1_B = \mathrm{Bun}_G(X_5)_{\tau^*}.
\end{equation*} 
Therefore, we can identify
\begin{equation*}
\bmHiggs_G(C)_{\tau^*} = \Map(C \times S^1_B,BG) \times_{\tau^*} S^1_B = \mathbf{T}^{\tau}(C\times S^1 \times \bbR^2).
\end{equation*} 

For the twisted case this is even more interesting. Suppose that the diagram automorphism $\theta \in \Aut(G)$ acts on the $6$-manifold  $C\times_{\tau} S_{R_1}^1 \times S_{R_2}^1 \times \bbR^2$ by rotation on the circle $S^1_{R_2}$. We can then consider two different $2$-dimensional theories
\begin{align*}
\mathbf{T}^\tau_A(C)&=\mathbf{T}_5^\tau(C\times S^1_{R_2} \times \bbR^2) = \lim_{R_1 \rightarrow 0} \mathbf{T}_6(C\times_{\tau} S^1_{R_1} \times S^1_{R_2} \times \bbR^2)^\theta \\ & = \Map(C\times S^1_B,BG)_{\tau^*}^\theta = \bmHiggs_{G,\theta}(C)_{\tau^*}, 
\end{align*} 
and
\begin{align*}
	\mathbf{T}^\tau_B(C)&=\mathbf{T}^{\id}_5(C\times_{\tau} S^1_{R_1} \times \bbR^2) = \lim_{R_2 \rightarrow 0} \mathbf{T}_6(C\times_\tau S^1_{R_1} \times S^1_{R_2} \times \bbR^2)^\theta \\ & = \Map(C\times_\tau S^1_B,B(G^\theta))_{\id} = \Conn^\tau_{G^\theta}(C)_{\id}.
\end{align*} 

The linearizations of these two theories are the \emph{$\mathbf{A}^\tau$-model}
\begin{equation*}
\mathbf{A}^\tau_{G,\theta}(C) = \QC(\bmHiggs_{G,\theta}(C)_{\tau^*}) = \dDiff_{\tau^*}(\bmHiggs_{G,\theta}(C))
\end{equation*} 
and the $\mathbf{B}^{\tau}$-\emph{model}
\begin{equation*}
\mathbf{B}^\tau_{G^\theta}(C) = \QC(\Conn^\tau_{G^\theta}(C)_{\id}) = \dDiff_{\id}(\Conn^\tau_{G^\theta}(C)_{\id}).
\end{equation*} 

Elliott and Pestun conjecture that $\mathbf{S}$-duality for these two physical theories should exchange $G^\theta$ with its Langlands dual, and the parameter  $\tau$ with its inverse $\tau^{-1}$. This is the \emph{multiplicative geometric Langlands correspondence}.

\begin{conj}
Let $G$ be a semisimple simply-laced complex group, $\theta$ a diagram automorphism of $G$, and $C$ a Calabi-Yau Riemann surface. For any $\tau \in \Aut(C)$ there is a derived equivalence
\begin{equation*}
\mathbf{mGLC}:\dDiff_{\tau^{-1}}(\bmHiggs_{G,\theta}(C)) \longrightarrow \dDiff_{\id}(\Conn^{\tau}_{(G^\theta)^\vee}(C)).
\end{equation*} 
\end{conj}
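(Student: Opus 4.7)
The plan is to deduce the conjecture from a $\tau$-quantization of the relative Fourier--Mukai equivalence constructed in this paper, using the multiplicative Hitchin fibration as the abelianization device. The case $\tau = \id$ is essentially the content of Theorem \ref{thm:GlobalAbelianizedDuality}: then $\Conn^{\id}_{(G^\theta)^\vee}(C) = \bmHiggs_{(G^\theta)^\vee}(C)$ and $\dDiff_{\id} = \QC$, so the conjectured equivalence specializes to the Fourier--Mukai duality between $\cM_{M\theta}/\cA^\sharp$ and $\cM_{H^\vee,M_\theta}/\cA^\sharp$. The first step in my approach would therefore be to upgrade this generic duality to a global equivalence by extending Fourier--Mukai across the complement of $\cA^\sharp$. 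This is a multiplicative analogue of Arinkin's extension of Fourier--Mukai to compactified Jacobians of singular cameral curves, and I would expect it to follow by combining his techniques with Wang's local models for the multiplicative Hitchin fibration.

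For general $\tau$, I would adopt Elliott--Pestun's derived-geometric reformulation:
\begin{equation*}
\dDiff_{\tau^{-1}}(\bmHiggs_{G,\theta}(C)) = \QC\bigl(\bmHiggs_{G,\theta}(C)_{\tau^{-1}}\bigr),
\end{equation*}
where the subscript denotes the derived mapping torus along the graph of $(\tau^{-1})^*$. Pushing forward along the multiplicative Hitchin map, the $\tau^{-1}$-action descends to an action on the relative Beilinson $1$-motive $\cP_{M\theta}/\cA^\sharp$, and fiberwise over $a\in \cA^\sharp$ the problem reduces to understanding how the isomorphism $\mathrm{S}_a: \bD(\cP_{M\theta,a}) \to \cP_{H^\vee,M_\theta,a}$ transforms a $\tau^{-1}$-twist on the source into a difference-connection structure on the target.

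The key fiberwise statement to prove is that on a dual pair of Beilinson $1$-motives, Fourier--Mukai exchanges $\tau^{-1}$-difference module structures with $\tau$-flat connection structures. Heuristically, since translations on an abelian scheme are intertwined by Fourier--Mukai with tensorization by degree-zero line bundles, the $\tau^{-1}$-twist on the Higgs side exponentiates under Fourier--Mukai to a $\tau$-difference operator on the dual side, built from the variation of the Poincar\'e bundle along the graph of $\tau$. Assembling these equivalences over $\cA^\sharp$ yields an equivalence between $\tau^{-1}$-difference data on $\bmHiggs_{G,\theta}(C)$ and abelianized $\tau$-difference data on the dual. The final step is then to identify the latter with genuine $\tau$-difference $(G^\theta)^\vee$-connections on $C$, which is the multiplicative analogue of the Beilinson--Drinfeld localization of opers and should be compatible with Mochizuki's correspondence between quasi-periodic monopoles and $\tau$-difference modules.

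The hard part will be this last de-abelianization step. In the additive geometric Langlands program, passing from abelianized $\cD$-modules back to honest $\cD$-modules on $\Bun_G$ requires the deep Beilinson--Drinfeld theory of opers, Feigin--Frenkel duality, and Gaitsgory's refinements; a complete analogue of this machinery for $\tau$-difference modules is presently available only in very special cases. Further obstructions include (i) extending Fourier--Mukai across the discriminant locus in $\cA$, as in the classical base case; (ii) controlling the globalization of the fiberwise $\tau$-quantization against the cameral monodromy, with particular care needed for discrete refinements of $J$ as illustrated by the $\sfA_{2\ell}^{(2)}$ case; and (iii) restricting to those $\tau$ for which $C \times_\tau S^1_B$ carries a Calabi--Yau structure, which physically constrains $\tau$ to the rational, trigonometric, or elliptic cases.
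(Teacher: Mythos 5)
This is a Conjecture (due to Elliott--Pestun, reproduced here as Appendix A), and the paper offers no proof of it. What the paper proves is the abelianized, generic-fiber shadow: Theorem \ref{thm:GlobalAbelianizedDuality} and its instances (Theorems \ref{thm:duality_simplylaced_text}, \ref{thm:duality_twisted_text}, \ref{thm:duality_Aeven2_text}), which give Fourier--Mukai equivalences between Picard stacks of regular-centralizer torsors over the open locus $\cA^\sharp$, intertwining abelian Hecke and Wilson operators. The paper positions these results as \emph{evidence} for the conjecture, not as a proof. So your submission should be read as a roadmap for an open problem, which is how you present it; the honest enumeration of obstructions at the end is the right posture, and the overall shape (abelianize via the multiplicative Hitchin fibration, extend Fourier--Mukai across the discriminant, de-abelianize) is consistent with how the paper itself frames the relationship between its theorems and the conjecture.

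Two corrections to the roadmap itself. First, even at $\tau=\id$ the conjecture does not reduce to Theorem \ref{thm:GlobalAbelianizedDuality}: that theorem is a relative statement over $\cA^\sharp$ about torsor stacks for the regular centralizer, not about $\QC$ of the nonabelian moduli stack $\bmHiggs_{G,\theta}(C)$; and in the paper's own conventions $\dDiff_{\id}(X)=\QC(X_{\id})=\QC(X\times S^1_B)$, i.e.\ $\QC$ of the derived loop (inertia) stack of $X$, not $\QC(X)$, so the passage at the end of Appendix~A from $\dDiff_{\id}$ to $\QC$ is itself a further forgetful step and not an identification. Second, and more fundamentally, the ``de-abelianization'' step you rightly flag is not an extension problem but is essentially the entire content of a multiplicative, difference-module geometric Langlands conjecture; it is open already in the additive, untwisted, $\tau=\id$ setting, and there is at present no analogue of the Beilinson--Drinfeld / Feigin--Frenkel / Gaitsgory machinery for difference modules. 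A further hidden hypothesis in your fiberwise reduction is that $\tau$ must preserve the Hitchin base, the cameral data, and the meromorphic datum $D$, so only $\tau$-invariant choices of singular divisor are admissible; this should be made explicit. In short, the proposal is a sensible research program and correctly identifies where the difficulties lie, but it is not a proof, and the gaps you list are genuine open problems rather than technical lacunae fillable by citing existing results.
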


Furthermore, we can now consider a ``classical limit" of this correspondence, by taking $\tau=\id$:
\begin{equation*}
 \mathbf{mGLC}_{\mathrm{cl}}:\dDiff_{\id}(\bmHiggs_{G,\theta}(C)) \longrightarrow \dDiff_{\id}(\bmHiggs_{(G^\theta)^\vee}(C)).
\end{equation*} 
In particular, this implies the existence of an equivalence
\begin{equation*}
\QC(\bmHiggs_{G,\theta}(C)) \longrightarrow \QC(\bmHiggs_{(G^\theta)^\vee}(C)).
\end{equation*} 
Now, this last formula is precisely the form of the main statements of this paper.

\section{The proof of Theorem \ref{thm:duality}} \label{app:Proof}
\subsection{Matching finite groups.} The first step is showing that $\mathrm{S}_0$ induces an isomorphism at the level of the inertia and the component groups of $\bD(\cP^1)$ and $\check{\cP^0}$. The inertia group of $\cP^1$ is
 \begin{equation*}
\Aut_0(\cP^1) = H^0(C,J^1) = T^W,
\end{equation*} 
so the component group of $\bD(\cP^1)$ is its Cartier dual 
\begin{equation*}
\pi_0(\bD(\cP^1))=(T^W)^* = X^*(T)_W.
\end{equation*} 
That is, the component group is the coinvariant group of the character lattice $X^*(T)$. The map $\bD(\iota)$ induces the natural projection $X^*(T)\rightarrow X^*(T)_W$ at the level of component groups. On the other hand, as explained in a paper of Ngô \cite[Section 6]{Ngo_Hitchin}, applying a lemma of Kottwitz one gets an isomorphism $\pi_0(\check{\cP}^0)\cong X^*(T)_W$. Since the norm map on $X^*(T)$ factors through the natural projection to the coinvariant group, we conclude that $\mathrm{S}_0$ induces an isomorphism from $\pi_0(\bD(\cP^1))$ to $\pi_0(\check{\cP}^0)$.

Consider now the exact sequence
\begin{center}
\begin{tikzcd}
	H^0(C,J^1/J^0)=\pi_0(H^0(C,J^1/J^0)) \rar & \pi_0(\cP^0)\cong X_*(T)_W \rar & \pi_0(\cP^1) \rar & 0.	
\end{tikzcd}
\end{center}
Its Cartier dual is the exact sequence
\begin{center}
\begin{tikzcd}
	0 \rar & \pi_0(\cP^1)^*=\Aut_0(\bD(\cP^1)) \rar & (T^\vee)^W \rar & H^0(C,J^1/J^0).
\end{tikzcd}
\end{center}
Compare with the exact sequence
\begin{center}
\begin{tikzcd}
	0 \rar & \Aut_0(\check{\cP}^0) \rar & \Aut_0(\check{\cP}^1)=(T^\vee)^W \rar & H^0(C,\check{J}^1/\check{J}^0).
\end{tikzcd}
\end{center}
Recall from \ref{sec:cameral_simple_Galois} that the Killing form matches the component group $J^1/J^0$ with $\check{J}^1/\check{J}^0$. This induces an isomorphism $H^0(C,J^1/J^0)\rightarrow H^0(C,\check{J}^1/\check{J}^0)$ and, in turn, an isomorphism $\Aut_0(\bD(\cP^1)) \rightarrow \Aut_0(\check{\cP}^0)$.

\subsection{\texorpdfstring{$\cP^1$}{P1} and equivariant bundles} Given any $T$-bundle $E\rightarrow \tilde{C}$, and an element $w\in W$, we can define another $T$-bundle by putting $w\cdot E= w_{\tilde{C}}^* w^{-1}_T(E)$, where $w_{\tilde{C}}$ is the corresponding monodromy action on $\tilde{C}$ and $w_T$ the automorphism of $T$ induced by the $W$-action. A \emph{strong $W$-equivariant structure} on $E$ is a set $\left\{\eta_w: w\in W\right\}$ of isomorphisms $\eta_w:w\cdot E \rightarrow E$ such that
\begin{equation*}
\eta_{ww'}=\eta_w \circ w\cdot \eta_{w'}.
\end{equation*} 
A \emph{strongly $W$-equivariant $T$-bundle} on $\tilde{C}$ is a pair $(E,\eta)$ formed by a $T$-bundle $E$ and a strong  $W$-equivariant structure on it. We denote by $\Bun_{T/\tilde{C}}^W$ the stack of strongly $W$-equivariant bundles on $\tilde{C}$. 

Every element of $\cP^1$ can be lifted to a strongly $W$-equivariant $T$-bundle on $\tilde{C}$ and a typical descent argument (see, for example, Lemma 3.1.2 in the paper of Chen--Zhu \cite{Chen-Zhu}) in fact provides an exact sequence 
\begin{center}
\begin{tikzcd}
	0 \rar & \cP^1 \rar & \Bun_{T/\tilde{C}}^W \rar & \bigoplus_{\alpha \in \Phi_+} B_{\alpha} \times T_{s_{\alpha}}.
\end{tikzcd}
\end{center}
The strong $W$-equivariant structures on a trivial $T$-bundle are classified by the group cohomology group $H^1(W,T)$. Therefore, the kernel of the map $\iota:\cP^1 \rightarrow \Bun_{T/\tilde{C}}$ coincides with the kernel of the induced map $H^1(W,T)\rightarrow \bigoplus_{i=1}^b T_{w_i}$. However, a lemma of Chen and Zhu \cite[Lemma 3.6.2]{Chen-Zhu} implies that this map is in fact injective. As a consequence, we can identify the neutral connected component $P^1=|\cP^1|^0$ as the abelian variety 
\begin{equation*}
P^1 = (\Jac(\tilde{C})\otimes X_*(T))^{W,0}.
\end{equation*} 
The dual of this abelian variety is the coinvariant abelian variety
\begin{equation*}
	(P^1)^\wedge = (\Jac(\tilde{C})\otimes X^*(T))_W.
\end{equation*} 
Therefore, we have reduced the proof of the theorem to the following lemma.

\begin{lemma} \label{lemma:coinvariant}
The duality map $\mathrm{S}_0$ induces an isomorphism
 \begin{equation*}
\mathrm{S}_0: (\Jac(\tilde{C})\otimes X^*(T))_W \longrightarrow \check{P}^0:= |\check{\cP}^0|^0.
\end{equation*} 
\end{lemma}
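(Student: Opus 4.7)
The plan is to identify both sides of the claimed isomorphism explicitly, and then verify that $\mathrm{S}_0$ realizes an isomorphism between them. First, I would repeat the analysis that produced $P^1 = (\Jac(\tilde{C})\otimes X_*(T))^{W,0}$, but with $G$ replaced by $G^\vee$ and $T$ by $T^\vee$. Since the Killing form identifies the cameral cover $\tilde{C}\to C$ for $G$ with the one for $G^\vee$, and since $X_*(T^\vee) = X^*(T)$, this yields $\check{P}^1 := |\check{\cP}^1|^0 = (\Jac(\tilde{C})\otimes X^*(T))^{W,0}$. The short exact sequence $1\to \check{J}^0 \to \check{J}^1\to \check{J}^1/\check{J}^0 \to 1$ has finite étale cokernel supported on the branch locus; consequently the induced long exact sequence of cohomology groups on $C$ has only finite kernel and cokernel contributions around $H^1(C,\check{J}^0)\to H^1(C,\check{J}^1)$, so the neutral components agree: $\check{P}^0 = \check{P}^1 = (\Jac(\tilde{C})\otimes X^*(T))^{W,0}$.

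Next, I would trace the definition of $\mathrm{S}_0$ on neutral components. The norm map $\Nm:\Bun_{T^\vee/\tilde{C}}\to \check{\cP}^0$ satisfies $\Nm(w\cdot E) = \Nm(E)$, hence descends at the level of neutral components to a map from the coinvariants $(\Jac(\tilde{C})\otimes X^*(T))_W \to \check{P}^0$. To see this is indeed $\mathrm{S}_0$ on $(P^1)^\wedge$, I would invoke the identification $(P^1)^\wedge \cong (\Jac(\tilde{C})\otimes X^*(T))_W$ coming from the self-duality of the Jacobian: the inclusion $P^1\hookrightarrow \Jac(\tilde{C})\otimes X_*(T)$ dualizes under the principal polarization to the canonical surjection $\Jac(\tilde{C})\otimes X^*(T)\twoheadrightarrow (\Jac(\tilde{C})\otimes X^*(T))_W$, since the annihilator of $A^{W,0}$ inside $A^\vee$ equals $\sum_{w\in W}(1-w)A^\vee$ whenever $W$ preserves the polarization $\lambda:A\to A^\vee$. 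Putting everything together, $\mathrm{S}_0$ on the neutral components becomes the norm-induced isogeny
\[
\overline{\Nm}: (\Jac(\tilde{C})\otimes X^*(T))_W \longrightarrow (\Jac(\tilde{C})\otimes X^*(T))^{W,0}.
\]

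The final, and hardest, step is to show that $\overline{\Nm}$ is actually an \emph{isomorphism} and not merely an isogeny. I plan to compute its kernel using the Tate cohomology group $\hat{H}^0(W, H_1(\tilde{C},\Z)\otimes X^*(T))$. Under the simple Galois ramification hypothesis, the branch locus decomposes as $\bigcup_{\alpha\in\Phi_+} B_\alpha$ and this Tate cohomology can be analyzed one root at a time, reducing the computation to a collection of $\Z/2$-actions associated with the simple reflections $s_\alpha$. The finite groups arising from these local computations must match the identification of inertia and component groups already established at the beginning of the proof, which forces the kernel of $\overline{\Nm}$ to be trivial. The main obstacle is precisely this last step: carefully cancelling the contributions of each $B_\alpha$ and using the relationship between $J^0\subset J\subset J^1$ on one side and $\check{J}^0\subset \check{J}\subset \check{J}^1$ on the other to show that no finite kernel survives the comparison with the already-matched finite data.
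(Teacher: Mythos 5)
Your identification $\check{P}^1 := |\check{\cP}^1|^0 = (\Jac(\tilde{C})\otimes X^*(T))^{W,0}$ is correct, but the following claim --- that $\check{P}^0 = \check{P}^1$ because the cokernel $\check{J}^1/\check{J}^0$ is a finite skyscraper on the branch locus --- is false. The long exact sequence for $1\to\check{J}^0\to\check{J}^1\to\check{J}^1/\check{J}^0\to 1$ only gives a surjective \emph{isogeny} $\check{P}^0\to\check{P}^1$, whose kernel is the image of $H^0(C,\check{J}^1/\check{J}^0)$ modulo the image of $H^0(C,\check{J}^1)$, restricted to the neutral component; nothing forces this kernel to be trivial, and in general it is not. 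The paper's Appendix \ref{app:Prym} computes this kernel explicitly in the $\SL_2/\PGL_2$ case: for a double cover with $2k$ branch points the isogeny $P^0 \to P^1$ has kernel $(\mu_2)^{2k-2}$, nontrivial whenever $k\geq 2$.

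Once you set $\check{P}^0 = \check{P}^1$, your target becomes showing that the norm isogeny $\overline{\Nm}: (\Jac(\tilde{C})\otimes X^*(T))_W \to (\Jac(\tilde{C})\otimes X^*(T))^{W,0}$ is an isomorphism, which is equally false: its kernel is a genuine Tate-cohomological defect, exactly the group your final paragraph sets out to kill. The Lemma is true only because the two defects cancel. The map $\overline{\Nm}$ factors as $\hat{P}^1 \to \check{P}^0 \to \check{P}^1$, and what needs to be shown is that $\ker\overline{\Nm}$ coincides with $\ker(\check{P}^0\to\check{P}^1)$ so that the first factor is an isomorphism. Your root-by-root analysis cannot make $\ker\overline{\Nm}$ vanish, because it doesn't.

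The paper's proof avoids this trap by never identifying $\check{P}^0$ with the invariant abelian variety. It computes $H_1(\check{P}^0)$ directly: tensoring the exponential sequence with $j_*\fL$, for $\fL = (\pi^0_*X^*(T))^W$ the local system on $U = C\setminus B$, and applying Hodge theory, one gets $H_1(\check{P}^0)\cong H^1(C,j_*\fL)_{\mathrm{tf}}$. This is then identified with $\mathrm{im}[H^1_c(U,\fL)\to H^1(U,\fL)]$, and finally matched with $H^1(\tilde{C},X^*(T))_{W,\mathrm{tf}}$ via Poincar\'e--Verdier duality together with the injectivity of the norm on the \emph{torsion-free} coinvariant lattice. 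The essential point your proposal misses is that $\check{P}^0$ sits strictly between the coinvariant and invariant abelian varieties, precisely at the image of compactly supported cohomology in ordinary cohomology; it is that intermediate object which matches $\hat{P}^1$, not the invariants.
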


\subsection{Homology of \texorpdfstring{$\check{P}^0$}{P0}} 
\begin{rmk}
	So far, our arguments could be easily generalized to positive characteristic. The main difference between the case of characteristic $0$ and positive characteristic is in the proof of the previous lemma.
Donagi and Pantev \cite{Donagi-Pantev} proved it over the complex numbers and Chen and Zhu \cite{Chen-Zhu} over positive characteristic. The proof given in \cite{Donagi-Pantev} uses a Hodge theoretic argument and singular homology, and thus only works over the complex numbers. The proof in positive characteristic describes the Tate module of $\check{P}^0$ and requires techniques from $\ell$-adic cohomology that we do not wish to deploy here.
\end{rmk}

To show that the map 
$\mathrm{S}_0: (\Jac(\tilde{C})\otimes X^*(T))_W \rightarrow \check{P}^0$ is an isomorphism of abelian varieties, we just need to show that it defines an isomorphism on the singular homology groups. For the first abelian variety, we have
\begin{equation*}
H_1 (\Jac(\tilde{C})\otimes X^*(T))_W = H^1(\tilde{C},X^*(T))_{W,\mathrm{tf}}.
\end{equation*} 
where the subscript tf indicates taking the torsion-free part.

Consider now the following local system $\fL=(\pi^0_* X^*(T))^W$ over $U=C\setminus B$, where $\pi^0$ is the restriction of the ramified cover  $\pi:\tilde{C}\rightarrow C$ to the étale locus $\pi^{-1}(U)$. We consider as well the inclusion $j:U\hookrightarrow C$ and the direct image  $j_*\fL$. The sheaf $\check{\mathscr{J}}^0$ of sections of $\check{J}^0$ is naturally isomorphic to the sheaf $j_*\fL \otimes \sO_C^*$. Taking the exponential exact sequence
\begin{center}
\begin{tikzcd}
	0 \rar & \Z \rar & \sO_C \rar & \sO_C^* \rar & 1,
\end{tikzcd}
\end{center}
and tensoring with $j_* \fL$ we obtain a short exact sequence
\begin{center}
\begin{tikzcd}
	0 \rar & j_*\fL \rar & j_* \fL \otimes \sO_C \rar & \check{\mathscr{J}}^0 \rar & 1,
\end{tikzcd}
\end{center}
since the sheaf $\underline{\mathrm{Tor}}_1(j_*\fL,\sO_C^*)$ is supported on the finite set $B$, while $j_*\fL$ has no compactly supported sections. The long exact sequence in cohomology provides an isomorphism
 \begin{equation*}
H^1(C,j_* \fL \otimes \sO_C)/H^1(C,j_*\fL)_{\mathrm{tf}} \overset{\sim}{\longrightarrow} \check{P}^0.
\end{equation*} 
Now, from Hodge theory we know that $H^1(C,j_* \fL \otimes \sO_C) \cong H^1(C,j_* \fL)_{\mathrm{tf}} \otimes \mathbb{R}$, so $$\check{P}^0=H^1(C,j_* \fL)_{\mathrm{tf}} \otimes S^1$$ and thus
\begin{equation*}
H_1(\check{P}^0) \cong H^1(C,j_* \fL)_{\mathrm{tf}}.
\end{equation*} 
Consider now the cohomology group $H^1(U,\fL)$, and the compactly supported cohomology group $H^1_c(U,\fL)$. There is a natural map $H_c^1(U,\fL)\rightarrow H^1(U,\fL)$, which induces a map on the torsion-free parts.

The following easy lemma can be found in Donagi--Pantev \cite[Lemma 6.3]{Donagi-Pantev}.

\begin{lemma}[Donagi--Pantev]
	The cohomology of $j_*\fL$ satisfies
\begin{equation*}
H^1(C,j_*\fL) = \mathrm{im}[H^1_c(U,\fL) \rightarrow H^1(U,\fL)].
\end{equation*} 	
\end{lemma}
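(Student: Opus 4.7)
The strategy is to factor the comparison map $H^1_c(U, \fL) \to H^1(U, \fL)$ through $H^1(C, j_*\fL)$, and to show that the two resulting maps are respectively surjective and injective. Both facts will come from short exact sequences of sheaves on $C$ together with the fact that the complement $B = C \setminus U$ is zero-dimensional.

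For the surjectivity of $H^1_c(U, \fL) \twoheadrightarrow H^1(C, j_*\fL)$, I would use the short exact sequence
\[
0 \to j_!\fL \to j_*\fL \to i_*\mathcal{Q} \to 0,
\]
where $i\colon B\hookrightarrow C$ is the closed inclusion and $\mathcal{Q}$ is the skyscraper sheaf whose stalk at $b\in B\cap B_\alpha$ is the monodromy-invariant subgroup $\fL^{I_b}$ under the local inertia $I_b=\langle s_\alpha \rangle$. Since $B$ is zero-dimensional we have $H^1(C, i_*\mathcal{Q})=0$, and the long exact sequence produces the desired surjection $H^1(C, j_!\fL) = H^1_c(U, \fL) \twoheadrightarrow H^1(C, j_*\fL)$.

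For the injectivity of $H^1(C, j_*\fL) \hookrightarrow H^1(U, \fL)$, I would use the Leray spectral sequence for the open immersion $j$, whose $E_2$ page reads $E_2^{p,q}=H^p(C, R^q j_*\fL)$ and converges to $H^{p+q}(U, \fL)$. The sheaf $R^1 j_*\fL$ is supported on $B$, with stalk at $b$ the coinvariants $\fL_{I_b}$ (local cohomology of the punctured disk). The associated five-term exact sequence begins
\[
0 \to H^1(C, j_*\fL) \to H^1(U, \fL) \to H^0(C, R^1 j_*\fL) \to H^2(C, j_*\fL) \to H^2(U, \fL),
\]
which provides the required injection as its first arrow.

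Finally, both maps above are induced by applying cohomology to the sheaf morphism $j_!\fL \to j_*\fL$ and restricting sections from $C$ to $U$, and this morphism is the identity on $U$; so their composition is the standard comparison $H^1_c(U, \fL) \to H^1(U, \fL)$. The image of this comparison therefore coincides exactly with $H^1(C, j_*\fL)$, proving the lemma. The only mildly delicate point is the identification of the stalks of $j_*\fL$ and $R^1 j_*\fL$ at the branch points; this reduces to a local computation on a disk around each $b\in B$, where the hypothesis of simple Galois ramification ensures that the local monodromy is a single reflection $s_\alpha$, making the invariants $\fL^{s_\alpha}$ and coinvariants $\fL_{s_\alpha}$ straightforward to compute.
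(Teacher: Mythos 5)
Your argument is correct and is the standard proof of this statement. The surjectivity of $H^1_c(U,\fL) = H^1(C,j_!\fL) \to H^1(C,j_*\fL)$ via the short exact sequence $0 \to j_!\fL \to j_*\fL \to i_*\mathcal{Q} \to 0$ (with $\mathcal{Q}$ a skyscraper on $B$, hence acyclic in positive degrees), the injectivity of $H^1(C,j_*\fL) \to H^1(U,\fL)$ from the start of the five-term exact sequence of the Leray spectral sequence for $j$, and the identification of the composite with the forgetful map $H^1_c \to H^1$ via the factorization $j_!\fL \to j_*\fL \to Rj_*\fL$ — all of these are correct, and the local computations of stalks (invariants $\fL^{I_b}$ for $j_*\fL$, coinvariants $\fL_{I_b}$ for $R^1 j_*\fL$, via the homotopy type of the punctured disk) are also right. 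The paper cites this result from Donagi--Pantev without reproducing the argument, so there is no in-text proof to compare against, but this is precisely the expected proof.
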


Finally, the last step for proving Lemma \ref{lemma:coinvariant} is the following.

\begin{lemma}
There is a natural isomorphism
\begin{equation*}
H^1(\tilde{C},X^*(T))_{W,\mathrm{tf}} \cong \mathrm{im}[H_c^1(U,\fL)_{\mathrm{tf}}\rightarrow H^1(U,\fL)].
\end{equation*} 
\end{lemma}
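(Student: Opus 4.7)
The plan is to compute both sides via the cohomology of the étale locus $\tilde{U}:=\pi^{-1}(U)$ inside $\tilde{C}$, exploiting the fact that the restricted covering $\pi^0:\tilde{U}\to U$ is an étale $W$-torsor while $\tilde{C}\setminus \tilde{U}$ is a finite set of points in a smooth projective curve. First I would treat the left-hand side: since $\tilde{C}$ is smooth projective and $\tilde{U}\subset \tilde{C}$ is the complement of a finite set, the long exact sequence of the pair (or equivalently Deligne's theory of weights in the mixed Hodge structure on $H^1(\tilde{U},\Q)$) shows that
\begin{equation*}
\mathrm{im}\bigl[H_c^1(\tilde{U},X^*(T))\longrightarrow H^1(\tilde{U},X^*(T))\bigr]_{\mathrm{tf}} = H^1(\tilde{C},X^*(T))_{\mathrm{tf}},
\end{equation*}
since the weight-one part of $H^1(\tilde{U})$ is precisely the image from the compactly supported cohomology and coincides with $H^1(\tilde{C})$. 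This identification is $W$-equivariant since $W$ acts on $\tilde{C}$ preserving the complement $\tilde{C}\setminus \tilde{U}$. Taking $W$-coinvariants and torsion-free parts then produces $H^1(\tilde{C},X^*(T))_{W,\mathrm{tf}}$.

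Next I would translate the cohomology of $\tilde{U}$ to cohomology of $U$ via the étale map $\pi^0$. Since $\pi^0$ is a $W$-torsor, Leray (or simply the adjunction $\pi^0_*=\pi^0_!$ for an étale finite cover) yields natural $W$-equivariant isomorphisms
\begin{equation*}
H^i_c(\tilde{U},X^*(T)) \cong H^i_c(U,\pi^0_* X^*(T)),\quad H^i(\tilde{U},X^*(T)) \cong H^i(U,\pi^0_* X^*(T)).
\end{equation*}
Locally on $U$ the sheaf $\pi^0_* X^*(T)$ is the regular representation $\Z[W]\otimes X^*(T)$, so the inclusion $(\pi^0_* X^*(T))^W = \fL \hookrightarrow \pi^0_* X^*(T)$ and the norm map $\pi^0_* X^*(T) \twoheadrightarrow (\pi^0_* X^*(T))_W$ satisfy: the composite $\fL \hookrightarrow \pi^0_* X^*(T) \twoheadrightarrow (\pi^0_* X^*(T))_W$ is an isomorphism after inverting $|W|$, and its cokernel is a torsion sheaf supported generically on $U$.

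Combining these facts I would conclude as follows. The exactness of $W$-coinvariants up to torsion means that the image construction from Step~1 commutes with $(-)_W$ modulo torsion, giving
\begin{equation*}
H^1(\tilde{C},X^*(T))_{W,\mathrm{tf}} = \mathrm{im}\bigl[H_c^1(U,\pi^0_* X^*(T))_W \to H^1(U,\pi^0_* X^*(T))_W\bigr]_{\mathrm{tf}}.
\end{equation*}
Passing from coinvariants of $\pi^0_* X^*(T)$ to invariants via the norm map (and noting $\fL = (\pi^0_* X^*(T))^W$), the natural comparison map between the two is an isomorphism on $H^1_c$ and $H^1$ after taking torsion-free parts, yielding the desired identification with $\mathrm{im}[H_c^1(U,\fL)_{\mathrm{tf}} \to H^1(U,\fL)]$.

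The main obstacle I anticipate is keeping careful track of the torsion during the passage between the local system $\pi^0_* X^*(T)$ and its $W$-invariant subsheaf $\fL$; in particular one must verify that the discrepancy between $W$-invariants and $W$-coinvariants, which lives in finite cohomology groups like $H^*(W, X^*(T))$ tensored with contributions at the ramification points, is entirely absorbed when restricting to torsion-free parts, and that naturality of the image of $H^1_c \to H^1$ with respect to both the norm map and $W$-coinvariants is preserved throughout.
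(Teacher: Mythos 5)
Your proposal takes a genuinely different route from the paper, and it contains a real gap at its crucial last step.

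The paper's proof does not attempt to exhibit an isomorphism directly between the two sides. Instead it constructs a commutative square in which the map $H^1_c(U,\fL)_{\mathrm{tf}} \to H^1(U,\fL)$ is factored as a \emph{surjection} onto $H^1(\tilde{C},X^*(T))_{W,\mathrm{tf}}$ (obtained by restricting from $\tilde{C}$ to $\tilde{U}$, taking $W$-invariants, and then applying Poincar\'e--Verdier duality on the open curve $U$), followed by an \emph{injection} (the norm $H^1(\tilde{C},X^*(T))_{W,\mathrm{tf}}\to H^1(\tilde{C},X^*(T))^W$, which is injective on torsion-free parts because its kernel is annihilated by $|W|$, followed by the restriction into $H^1(U,\fL)$). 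The image is then forced to coincide with $H^1(\tilde{C},X^*(T))_{W,\mathrm{tf}}$. Your first two steps — identifying $H^1(\tilde{C},X^*(T))$ with $\mathrm{im}[H^1_c(\tilde{U})\to H^1(\tilde{U})]$ via the Gysin/weight argument, and then commuting the image construction with $W$-coinvariants and torsion-free parts — are fine and give a nice alternative picture upstairs.

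The problem is your third step. You need the norm-induced maps $H^1_c(\tilde{U},X^*(T))_{W,\mathrm{tf}}\to H^1_c(U,\fL)_{\mathrm{tf}}$ and $H^1(\tilde{U},X^*(T))_{W,\mathrm{tf}}\to H^1(U,\fL)_{\mathrm{tf}}$ to be \emph{isomorphisms}. What the Cartan--Leray/Tor-type arguments actually give is that these maps have torsion kernel and torsion cokernel. On torsion-free parts this yields an injection whose cokernel is finite — i.e., a finite-index sublattice — but this is \emph{not} an isomorphism in general: passing to torsion-free parts does not repair a map like $2\Z\hookrightarrow\Z$, whose cokernel is torsion and whose source and target are already torsion-free. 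You anticipate this in your closing caveat, but the resolution you propose ("entirely absorbed when restricting to torsion-free parts") is precisely the step that fails. As the lemma must produce an exact isomorphism — not merely an isogeny of lattices, since the downstream application is to conclude the duality map $\mathrm{S}_0$ is an isomorphism and not just an isogeny of Beilinson $1$-motives — this is a genuine gap. The way out, as in the paper, is to split the desired image identification into a surjective piece and an injective piece, using duality to produce the surjection, rather than trying to force a single norm map to be an isomorphism.
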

\begin{proof}
There is a natural injective map $H^1(\tilde{C},X_*(T))\hookrightarrow H^1(\pi^{-1}(U),X_*(T))$	given by restriction. Taking invariants, we obtain an injective map $$H^1(\tilde{C},X_*(T))^W \hookrightarrow H^1(\pi^{-1}(U),X_*(T))^W.$$ Now, since $\pi$ is étale over $U$, the second of these groups is in fact isomorphic to  $H^1(U,\fL^\vee)$, for the local system $\fL^\vee=(\pi_*^0 X_*(T))^W$. We obtain an injective map $H^1(\tilde{C},X_*(T))^W \hookrightarrow H^1(U,\fL^\vee)$ and, dualizing lattices, we get a surjection
\begin{equation*}
	H^1(U,\fL^\vee)^\vee_{\mathrm{tf}} \longrightarrow H^1(\tilde{C},X_*(T))_{W,\mathrm{tf}}.
\end{equation*} 
Now, by a version of Poincaré-Verdier duality \cite[Lemma 6.1]{Donagi-Pantev}, we have $H^1(U,\fL^\vee)^\vee_\mathrm{tf}=H^1_c(U,\fL)_{\mathrm{tf}}$. By the same argument, we have an injective map $H^1(\tilde{C},X^*(T))^W \hookrightarrow H^1(U,\fL)$. The norm map
\begin{align*}
\Nm: H^1(\tilde{C},X^*(T)) & \longrightarrow H^1(\tilde{C},X^*(T)) \\
\gamma & \longmapsto \sum_{w\in W} w\cdot \gamma
\end{align*} 
induces a map $\Nm:H^1(\tilde{C},X^*(T))_{W,\mathrm{tf}}\rightarrow H^1(\tilde{C},X^*(T))^W$. The lemma follows from the fact that the square
\begin{center}
\begin{tikzcd}
H^1_c(U,\fL)_{\mathrm{tf}} \ar{r} \ar[twoheadrightarrow]{d} & H^1(U,\fL)  \\
H^1(\tilde{C},X^*(T))_{W,\mathrm{tf}} \ar{r}{\Nm} &H^1(\tilde{C},X^*(T))^W. \ar[hook]{u}
\end{tikzcd}
\end{center}
is commutative.
\end{proof}

\section{Review of Prym varieties} \label{app:Prym}
\subsection{The Prym variety}
Let $S$ and $C$ be smooth projective curves over $k$ and let $\pi:S\rightarrow C$ be a degree $2$ finite map. We denote by $B\subset C$ the branch divisor of $\pi$. It is well known that $B$ consists of an even number of single points, $|B|=2k$. The Galois group of the cover $\pi$ is generated by an involution $\sigma: S\rightarrow S$. We can identify $B$ with its preimage $\pi^{-1}(B)$, which is the set of fixed points $\pi^{-1}(B)=S^\sigma$.

Consider the norm map
\begin{align*}
\Nm_\pi: \Pic S & \longrightarrow \Pic C \\
\sO_S(D) & \longmapsto \sO_C(\pi(D)).
\end{align*} 
The \emph{Prym variety} $P^1=P(S,C)$ is defined as the neutral connected component of the kernel of the norm map, $P^1:=(\ker \Nm_\pi)^0$. Note that we can write
\begin{equation*}
\ker \Nm_\pi = \left\{(U,\eta): U \in \Pic S, \eta: U^{-1} \overset{\sim}{\rightarrow} \sigma^* U \right\}/\mathrm{iso}.
\end{equation*} 
For such a point $(U,\eta) \in \ker \Nm_\pi$ we obtain an isomorphism $\eta|_B : U^{-1}|_B \rightarrow U|_B$ or, equivalently, a section $\tilde{\eta} \in H^0(B,U^2)$. We can then identify the Prym variety as
\begin{equation*}
P^1 = \left\{(U,\eta) \in \ker \Nm_\pi \text{ such that there exists } \xi \in H^0(B,U) \text{ with } \tilde{\eta}=\xi^2\right\}/\text{iso}.
\end{equation*} 
We denote by $P^0=\bD(P(S,C))$ its dual abelian variety.

\subsection{Polarizations}
Recall that a polarization of an abelian variety $P$ is an isogeny $\rho:P\rightarrow \hat{P}$ between $P$ and its dual abelian variety $\hat{P}$. If it is an isomorphism, then it is called a principal polarization.
The Jacobian of a smooth projective curve is principally polarized. That is, we have isomorphisms $\Jac(C)\cong \bD(\Jac(C))$ and $\Jac(S)\cong \bD(\Jac(S))$ between the Jacobians of $S$ and $C$ and their duals. Under these isomorphisms, the dual of the norm map $\Nm_\pi$ is the pullback  $\pi^*: \Jac(C)\rightarrow \Jac(S)$. 
Therefore, we obtain an isomorphism $P^0 \cong \Jac(S)/\pi^* \Jac(C)$ and we can construct a polarization $\rho:P^1\rightarrow P^0$ through the diagram
 \begin{center}
\begin{tikzcd}
	P^1 \rar{\rho} \dar & P^0 \cong \Jac(S)/\pi^* \Jac(C)  \\
	\Jac(S) \rar & \Jac(S). \uar
\end{tikzcd}
\end{center}
But note now that $\Nm_\pi \pi^* U = U^2$, so in fact $\rho(P^1)\subset P^1$ and we can write
 \begin{equation*}
P^0 \cong P^1 / P^1 \cap \pi^* \Jac(C). 
\end{equation*} 
Moreover, $\pi^* U\in \pi^* \Jac(C) \cap P^1$ means that $U^2=\Nm_\pi \pi^* U = \sO_C$, so in fact we get 
\begin{equation*}
P^0 = P^1 / \pi^* \Jac(C)[2],
\end{equation*} 
where $\Jac(C)[2]$ is the finite subgroup of order $2$ elements of the Jacobian of $C$.

\subsection{The squaring map} Consider the squaring map 
\begin{align*}
2: P^1 & \longrightarrow P^1 \\
(U,\eta) & \longmapsto (U^2, \eta^2).
\end{align*} 
This map is surjective, since $P^1$ is connected, and has a finite kernel $P^1[2]$, the set of order two elements of $P^1$. The inclusion $\pi^* \Jac(C) \hookrightarrow P^1$ determines an inclusion $\iota:\pi^* \Jac(C)[2] \hookrightarrow P^1[2]$. We can determine now the cokernel of this inclusion. 

First of all, recall that for any pair $(U,\eta) \in \ker \Nm_\pi$ we have the section $\tilde{\eta}\in H^0(B,U^2)$. Now, if $(U,\eta)$ is $2$-torsion element, then by definition $U^2 \cong \sO_S$ and $\tilde{\eta} \in H^0(B,\mu_2)$. Since $(U,\eta)$ is defined up to isomorphism, the element $\tilde{\eta}$ is well-defined up to sign. Therefore we obtain a diagram
\begin{center}
\begin{tikzcd}
& 0 \dar & 0 \dar & 0 \dar & \\
	0 \rar & \pi^* \Jac(C)[2] \rar{\iota} \dar{\id} & P^1[2] \rar \dar & \coker \iota \dar \rar & 0 \\
	0 \rar & \pi^* \Jac(C)[2] \rar \dar & \ker \Nm_\pi [2] \rar \dar & H^0(B,\mu_2)/\mu_2 \dar \rar & 0 \\
	& 0 \rar & \mu_2 \rar & \mu_2 \rar & 0 ,
\end{tikzcd}
\end{center}
so we can identify
\begin{equation*}
\coker \iota = \left\{m=(m_b)_{b \in B}: m_b \in \mu_2, \prod_{b\in B} m_b = 1\right\} / (m \sim -m) \cong (\mu_2)^{2k-1}/\mu_2 = (\mu_2)^{2k-2}.
\end{equation*} 

We obtain the following.
\begin{corol}
The Prym variety $P^1$ is principally polarized if and only if  $k=0$ or  $k=1$.	
\end{corol}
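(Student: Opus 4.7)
The plan is to deduce the corollary directly from the cokernel computation $|\coker \iota| = 2^{2k-2}$ derived just above. I will first make precise what ``principally polarized'' means in this setting: the polarization $\rho: P^1 \to P^0$, obtained by restricting the principal theta polarization of $\Jac(S)$ to $P^1$, has kernel equal to $\pi^* \Jac(C)[2]$ (as already shown), and this kernel lies automatically inside $P^1[2]$ because $\Nm_\pi \circ \pi^* = [2]$ forces every element of $\pi^* \Jac(C) \cap P^1$ to be $2$-torsion. Consequently, $\rho$ admits a factorization $\rho = [2] \circ \Xi$ through multiplication by $2$ on $P^1$ precisely when $\ker \rho = P^1[2]$, and in that case the resulting $\Xi: P^1 \xrightarrow{\sim} P^0$ is a principal polarization. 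Equivalently, the Prym $P^1$ is principally polarized in this canonical way if and only if the inclusion $\iota: \pi^* \Jac(C)[2] \hookrightarrow P^1[2]$ is surjective, i.e., $\coker \iota = 0$.

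Once this equivalence is in place, the remainder of the argument is immediate. The order formula for the cokernel gives $|\coker \iota| = 2^{2k-2}$ when $k \geq 1$; and for $k = 0$ the branch locus $B$ is empty, so $H^0(B, \mu_2) = 0$ and $\coker \iota$ vanishes tautologically. Hence $\coker \iota = 0$ if and only if $2k - 2 \leq 0$, i.e., $k \in \{0, 1\}$, proving the corollary.

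The only step with substance is the interpretation of ``principally polarized'' as the vanishing of $\coker \iota$, and this is essentially a formal consequence of the factorization property of $\rho$ through $[2]$. After that, the corollary is a one-line arithmetic reading of the cokernel formula.
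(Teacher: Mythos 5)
Your proof is correct and spells out the argument the paper leaves implicit. The decisive step is exactly the right one: the restricted theta polarization $\rho\colon P^1\to P^0$ has kernel $\pi^*\Jac(C)[2]$, this kernel lies inside $P^1[2]$ because $(\pi^*U)^2=\pi^*(U^2)=\sO_S$ whenever $U^2=\sO_C$, and $\rho$ factors as an isomorphism composed with multiplication by $2$ (equivalently, $\rho$ is twice a principal polarization) precisely when $\ker\rho=P^1[2]$, i.e.\ when $\coker\iota$ vanishes. Reading $2^{2k-2}$ off the paper's cokernel formula then gives $k\leq 1$.

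One point does need a little more care: the reduction to $\coker\iota=0$ for $k=0$ is \emph{not} ``tautological.'' The paper's three-by-three diagram is not literally valid when $B=\emptyset$: the bottom-row map $\mu_2\to H^0(B,\mu_2)$ fails to be injective, and exactness of the third column would force a surjection $0\twoheadrightarrow\mu_2$. So the displayed computation of $\coker\iota$ genuinely presupposes $k\geq 1$. The conclusion $\coker\iota=0$ in the étale case is nonetheless true (it is Wirtinger's classical theorem that the Prym of an étale double cover is principally polarized), and one can see it directly: for $k=0$ the map $\pi^*$ has a $\mu_2$ kernel, $\ker\Nm_\pi$ has two connected components, and an order count gives $\pi^*\Jac(C)[2]\cap P^1=P^1[2]$. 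A sentence acknowledging this degenerate case, rather than appealing to the formula, would make the argument airtight; otherwise the proof matches the paper's intended approach.
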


\subsection{Abelian variety associated with a subset of the ramification divisor} \label{sec:AbelianVariety_Subset}
Consider now a proper non-empty subset $A\subset B$. We have a natural short exact sequence 
\begin{center}
\begin{tikzcd}
	0 \rar & H^0(B\setminus A, \mu_2) \rar & H^0(B,\mu_2) \rar & H^0(A,\mu_2) \rar & 0,
\end{tikzcd}
\end{center}
which induces a short exact sequence
	\begin{equation*}
	0 \longrightarrow \ker(H^0(B\setminus A,\mu_2)\rightarrow \mu_2) \longrightarrow \ker(H^0(B,\mu_2)/\mu_2 \rightarrow \mu_2) \longrightarrow H^0(A,\mu_2)/\mu_2 \longrightarrow 0.
	\end{equation*} 
Associated with this subset $A$ we can define the variety
 \begin{equation*}
\tilde{P}_A = \left\{(U,\eta,\xi): (U,\eta) \in \ker \Nm_\pi, \xi \in H^0(A,U), \text{ such that } \tilde{\eta}|_{A}=\xi^2\right\}/ \text{iso.}
\end{equation*} 
This variety is a finite cover of $P^1$, and we also have a natural map
\begin{align*}
u_A:P^1 & \longrightarrow \tilde{P}_A \\
(U,\eta) & \longmapsto (U^2,\eta^2,\tilde{\eta}|_A).
\end{align*} 
The kernel of this map is 
\begin{align*}
	\ker u_A &= \left\{(U,\eta)\in P^1: U^2\cong \sO_S, \eta^2=1, \tilde{\eta}|_A = 1 \right\}/ \text{iso.} 
\end{align*} 
Note that there is an inclusion $\iota_A:\ker u_A\rightarrow P^1[2]$, and a short exact sequence
\begin{center}
\begin{tikzcd}
	0 \rar & \ker u_A \rar & P^1[2] \rar & H^0(A,\mu_2)/\mu_2 \rar & 0.
\end{tikzcd}
\end{center}
So we also have a short exact sequence
\begin{center}
\begin{tikzcd}
	0 \rar & \pi^* \Jac(C)[2] \rar & \ker u_A \rar & \ker(H^0(B\setminus A,\mu_2)\rightarrow \mu_2) \rar & 0.
\end{tikzcd}
\end{center}
We define
\begin{equation*}
P_A = P^1/\ker u_A.
\end{equation*} 
This is a new abelian variety, which acts freely and transitively on $\tilde{P}_A$.

Note that, since $P^0=P^1/\pi^* \Jac(C)[2]$, we have an isomorphism $$P_A \cong P^0/\ker(H^0(B\setminus A,\mu_2)\rightarrow \mu_2).$$ Thus, by dualizing on the exact sequence defining this quotient, we obtain a short exact sequence
\begin{center}
\begin{tikzcd}
	0 \rar & H^0(B\setminus A, \mu_2)/\mu_2 \rar & \hat{P}_A \rar & P^1 \rar & 0.
\end{tikzcd}
\end{center}
But note that we have the diagram
\begin{center}
\begin{tikzcd}
	 \ker u_{B\setminus A} \rar \dar & \ker u_{B\setminus A} \rar \dar & 0 \dar  \\
	 P^1[2] \rar \dar & P^1 \rar{2} \dar & P^1 \dar  \\
	 H^0(B\setminus A,\mu_2)/\mu_2 \rar & P_{B\setminus A} \rar & P^1 ,
\end{tikzcd}
\end{center}
with short exact rows and columns. This yields an isomorphism 
\begin{equation*}
\hat{P}_A \cong P_{B\setminus A}.
\end{equation*} 
In particular, if $A$ has exactly  $k$ elements, then we can use a bijection between $A$ and $B\setminus A$ to determine an isomorphism $\hat{P}_A \cong P_A$, and thus a principal polarization on $P_A$.

\subsection{Cameral interpretation}
The finite map $\pi:S\rightarrow C$ is a cameral cover associated with the root system $\sfA_1$,  $\Phi=\left\{\alpha,-\alpha\right\}$. Clearly, it has simple Galois ramification, and we are assuming that $S$ is smooth. 

The kernel of the norm map $\Nm_\pi:\Pic S \rightarrow \Pic C$ can be understood as the coarse moduli space of strongly $\Z/2$-equivariant $\Gm$-torsors on $S$.
The Prym variety $P^1$ is the coarse moduli space of the Picard stack of  $J^1$, torsors, for 
\begin{equation*}
J^1 = \pi_*(S\times \Gm)^{\Z/2},
\end{equation*} 
and the inclusion $P^1\rightarrow \ker \Nm_\pi$ is induced by the inclusion of $\cP^1=\Bun_{J^1}$ in $\Bun_{\Gm/S}^{\Z/2}$. Indeed, the extra condition of the section $\tilde{\eta}\in H^0(B,U^2)$ associated with an isomorphism $\eta:U^{-1}\rightarrow \sigma^* U$ being a square, corresponds to $(U,\eta)$ being in the kernel of the map $$\ker \Nm_\pi \rightarrow B \times H^1(\Z/2,\Gm)\cong H^0(B,\mu_2).$$
The Picard stack $\cP^1$ is connected, but it has a nontrivial inertia group, coming from the fact that $\Gm^{\Z/2}=\mu_2$.

Over each ramification point, the group $J^1$ has two connected components, and we let  $J^0\subset J^1$ be the subgroup of fiberwise neutral connected components. This inclusion induces a long exact sequence in cohomology
\begin{center}
\begin{tikzcd}
	0 \rightarrow H^0(C,J^0)  \rar & H^0(C,J^1) \rar & H^0(B,\mu_2) \rar & H^1(C,J^0) \rar & H^1(C,J^1)\rightarrow 0.
\end{tikzcd}
\end{center}
The Picard stack $\cP^0=\Bun_{J^0}$ is actually a scheme: it has trivial inertia group, but it has two connected components $\pi_0(\cP^0)=\Z/2$. Its neutral connected component is the variety $P^0$, the dual of the Prym variety. Indeed, the above map induces an isogeny $P^0\rightarrow P^1$ with kernel $(\mu_2)^{2k-2}$, and we saw already that this kernel is precisely the cokernel of the inclusion $\pi^* \Jac(C)[2]\hookrightarrow P^1[2]$.

More generally, for each subset $A$ of the branch locus $B$, we can consider the subgroup $J_A$ of  $J^1$ containing $J^0$ and with one connected component over the points of $A$ and two connected components over the points of the complement $B\setminus A$. The inclusion $J^0\subset J_A$ induces an exact sequence
\begin{center}
\begin{tikzcd}
	0 \rightarrow H^0(C,J^0)  \rightarrow H^0(C,J_A) \rightarrow H^0(B\setminus A,\mu_2) \rightarrow H^1(C,J^0) \rightarrow H^1(C,J_A)\rightarrow 0.
\end{tikzcd}
\end{center}
This sequence induces an isogeny $P^0\rightarrow |\Bun_{J_A}|^0$ with kernel equal to $\ker(H^0(B\setminus A,\mu_2)\rightarrow \mu_2)$. Therefore, the abelian variety $|\Bun_{J_A}|^0$ is precisely the abelian variety $P_A$. Its dual  $\hat{P}_A$ is exactly the abelian variety  $P_{B\setminus A} = |\Bun_{J_{B\setminus A}}|^0$. Indeed, $J^1/J_A= A \times \mu_2 = J_{B\setminus A}/J^0$.

\printbibliography
\vfill
\hrule
\end{document}